\newtheorem{theorem}{Theorem}
\newtheorem{lemma}[theorem]{Lemma}
\newtheorem{corollary}[theorem]{Corollary}
\newtheorem{remark}[theorem]{Remark}
\newtheorem{proposition}[theorem]{Proposition}
\newcommand{\R}{\mathbb{R}}
\newcommand{\N}{\mathbb{N}}
\newcommand{\eps}{\varepsilon}
\newcommand{\etaa}{\eta}
\newcommand{\dis}{\displaystyle}
\newcommand{\abil}{a^\star}
\newcommand{\weight}{\chi}
\begin{document}

\title{Stable approximation\\ of the advection-diffusion equation\\ using the invariant measure}
\author{Claude Le Bris, Fr\'ed\'eric Legoll and Fran\c cois Madiot
\\
Ecole des Ponts and INRIA,
\\
77455 Marne-la-Vall\'ee, France 
\\
{\small \tt \{claude.le-bris,frederic.legoll,francois.madiot\}@enpc.fr}}

\date{\today}

\maketitle

\begin{abstract}
We consider an advection-diffusion equation that is both non-coercive and advection-dominated. We present a possible numerical approach, to our best knowledge new, and based on the invariant measure associated to the original equation. The approach has been summarized in~\cite{cras-madiot}. We show that the approach allows for an unconditionally well-posed finite element approximation. We provide a numerical analysis and a set of comprehensive numerical tests showing that the approach can be stable, as accurate as, and more robust than a classical stabilization approach. 
\end{abstract}


\section{Introduction and motivation}
Our purpose is to study the advection-diffusion equation 
\begin{equation}
-\Delta u+ b\cdot\nabla u= f \quad\text{in $\Omega$}, 
\label{pb_adv_diff_non_coercive_without_bc}
\end{equation}
more specifically in the regime where it is both non-coercive and possibly unstable. We present a new numerical strategy, based upon the utilization of the invariant measure associated to~\eqref{pb_adv_diff_non_coercive_without_bc}, namely the solution~$\sigma$ to the adjoint equation
\begin{equation}
-\text{div}(\nabla \sigma + b\sigma) = 0\quad\text{in $\Omega$},
\label{pb_adv_diff_non_coercive_without_bc-adjoint}
\end{equation}
supplied with suitable boundary conditions and normalization constraints.

Equation~\eqref{pb_adv_diff_non_coercive_without_bc} arises in a huge variety of contexts of the engineering sciences, either in its stationary form~\eqref{pb_adv_diff_non_coercive_without_bc}, or in its time-dependent form. It may also contain a reaction term~$c\,u$, with $c\geq 0$. The second order (diffusion) operator can be chosen more general than a pure Laplacian, in the form of a divergence operator $-\text{div}(A\,\nabla \cdot)$, with a suitable matrix-valued function $A$. All such situations proceed from straightforward applications of our discussions below, which, for brevity and clarity, we limit to the simple, stationary case~\eqref{pb_adv_diff_non_coercive_without_bc}.

A typical difficulty associated with equation~\eqref{pb_adv_diff_non_coercive_without_bc} is the possible lack of coercivity of the bilinear form, owing to the presence of the advection term $b \cdot \nabla u$. More severely, not only coercivity, but also stability may be affected by the advection term, when the latter is ``large'' in a certain sense. The equation is then said advection-dominated. Studies abound in the literature, that describe the theory necessary to prove well-posedness of that problem under those difficult circumstances. Similarly, the works presenting possible numerical discretization techniques specifically targeted to this context are countless. Our purpose here is to propose yet another way of addressing the difficulties mentioned above. The computational approach we present actually originates from the theoretical proof of well-posedness of the problem.

As is well-known, a classical proof of the well-posedness of~\eqref{pb_adv_diff_non_coercive_without_bc}, when it is not coercive, proceeds by the Fredholm alternative. On the other hand, well-posedness is also, using the Banach-Ne\v{c}as-Babu\v{s}ka Theorem, equivalent to a set of two conditions, namely the inf-sup condition
\begin{equation}
\label{eq:inf-sup}
\exists\alpha>0 \quad \hbox{\rm such that} \quad \inf_{w\in W} \ \sup_{v\in V} \ \frac{a(w,v)}{\|w\|_W \|v\|_V} \geq \alpha>0
\end{equation}
where here
\begin{equation}
\label{eq:def_bil_a}
a(w,v) = \int_\Omega \nabla w \cdot \nabla v + \int_\Omega (b \cdot \nabla w) \, v
\end{equation}
and~$V=W=H^1_0(\Omega)$, and the additional condition 
\begin{equation}
\label{eq:inf-sup_encore}
\forall v \in V, \ \ (\forall w \in W, \ a(w,v)=0) \Rightarrow v=0.
\end{equation}
We refer to, e.g.,~\cite[p.~85]{EG} for a comprehensive exposition of the theory and general references therein for the study and approximation of~\eqref{pb_adv_diff_non_coercive_without_bc}. We shall recall some basic facts in Section~\ref{ssec:inf-sup} of this article. 

In practice, two questions arise: to prove that the above conditions~\eqref{eq:inf-sup} and~\eqref{eq:inf-sup_encore} hold, thereby providing a proof of well-posedness that is independent from Fredholm theory, and to make the constant~$\alpha$ in~\eqref{eq:inf-sup} explicit. For this twofold purpose, the classical argument is to consider the invariant measure associated to~\eqref{pb_adv_diff_non_coercive_without_bc}, namely the solution~$\sigma$ to~\eqref{pb_adv_diff_non_coercive_without_bc-adjoint}, satisfying~$\sigma(x) \geq \underline{\sigma} > 0$ almost everywhere in~$\Omega$, $\displaystyle |\Omega|^{-1} \, \int_\Omega \sigma=1$, along with an adequate boundary condition (think of the natural Neumann boundary condition, but other boundary conditions might be considered, in particular because of the various boundary conditions~\eqref{pb_adv_diff_non_coercive_without_bc} itself may be supplied with). The existence and uniqueness of a suitable~$\sigma$ follows from the Fredholm theory. In short, \eqref{eq:inf-sup} is then obtained as follows. One multiplies~\eqref{pb_adv_diff_non_coercive_without_bc} by the product~$\sigma\,v$ and integrates over the domain $\Omega$:
\begin{equation}
\label{eq:integration}
\int_\Omega (-\Delta u + b \cdot \nabla u) \, \sigma v
=
\int_\Omega \sigma \, \nabla u \cdot \nabla v + \int_\Omega (\nabla\sigma+\sigma\,b) \, \cdot \, \nabla u \, v - \int_{\partial\Omega} (\nabla u \cdot n) \, \sigma \, v.
\end{equation}
The rightmost term cancels out when homogeneous Dirichlet boundary conditions are imposed on $u$ (and thus on the test function~$v$), which is the setting we adopt throughout this article. Considering~\eqref{pb_adv_diff_non_coercive_without_bc-adjoint}, this formally yields
\begin{equation}
\label{eq:calcul-adjoint}
a(u,\sigma\,u) = \int_\Omega \sigma \, |\nabla u|^2 \quad \text{for any $u \in H^1_0(\Omega)$},
\end{equation}
which readily implies~\eqref{eq:inf-sup}, as soon as $\sigma$ is positive and bounded away from zero. The classical approach is then to use a finite element discretization that also satisfies the inf-sup condition~\eqref{eq:inf-sup}, at least for a sufficiently small mesh size~$h$, and is therefore, ``by continuity'', also well-posed, using the same type of argument. 

The above observation on how the consideration of the invariant measure allows to transform the original problem~\eqref{pb_adv_diff_non_coercive_without_bc} into a coercive problem seems to not have been exploited computationally (except in the very specific case when $b$ is irrotational~\cite{brezzi}, for which $\sigma$ is then analytically known). This is our purpose to do so. Formally, \eqref{eq:integration} and~\eqref{eq:calcul-adjoint} suggest a Petrov-Galerkin formulation of the problem using test functions of the form $\sigma \, v$ instead of a classical Galerkin formulation. (Formally) equivalently, one may perform a Galerkin approximation of the modified equation
\begin{equation}
\label{eq:adv-diff-modifiee}
-\hbox{\rm div}(\sigma\,\nabla u) + (\nabla\sigma + \sigma \, b) \cdot \nabla u=\sigma \, f.
\end{equation}
The point is of course that the modified advection field
$$
B = \nabla\sigma+\sigma\,b
$$ 
is divergence-free because of~\eqref{pb_adv_diff_non_coercive_without_bc-adjoint}. Problem~\eqref{eq:adv-diff-modifiee}, complemented by homogeneous Dirichlet boundary conditions, is consequently coercive. And the numerical analysis of its finite element approximation is amenable to standard arguments. 

\medskip

The definite added value of the approach is that it provides an unconditionally well-posed approximation, irrespective of the discretization parameter --the meshsize-- adopted for approximating $u$, provided $\sigma$ itself is correctly approximated (which in particular implies that some positivity of $\sigma$ is preserved at the discrete level). This unconditional well-posedness may be most useful in problems where one can only afford a coarse approximation of $u$. Multiscale problems, where the Laplacian operator is replaced by $-\hbox{\rm div}(a(x/\varepsilon) \, \nabla\, \cdot)$, are prototypical examples of such a context. Problems such as inverse problems, or time-dependent problems (once semi-discretized in time using, say, an implicit Euler scheme), where the solution to the advection-diffusion equation is required repeatedly, are also problems of choice for the approach. A rather coarse approximation might be employed, while the additional computational workload to solve the adjoint equation~\eqref{pb_adv_diff_non_coercive_without_bc-adjoint} is required only once. A definite improvement of the total computational time may be observed. In addition, in the advection dominated context, the approach enjoys particular stability properties that lead to numerical results qualitatively comparable to those obtained with classical, state-of-the-art stabilization approaches~\cite{franca-frey,johnson,quarteroni1994numerical}. Our results show that the approach is accurate, robust and can be made effective in terms of computational cost. Applications to several other, more general contexts, may be envisioned.

On the theoretical level, one advantage of the approach is that we can establish (see Section~\ref{sec:discretization}) a complete numerical analysis. In contrast, and to the best of our knowledge, the added value of a classical \emph{stabilized} finite element approximation is not proven theoretically when the advection-diffusion equation is \emph{not} coercive. 

\medskip

Our article is articulated as follows. 
Section~\ref{sec:mathematical-setting} collects some preparatory material. We need to recall a few results, first on the Banach-Ne\v{c}as-Babu\v{s}ka theory and the inf-sup condition, and second on the invariant measures that may be associated to the problem. The former ones are very classical, and we briefly overview them in Section~\ref{ssec:inf-sup}. The latter ones are slightly less standard and are the purpose of Section~\ref{ssec:invariant}. Of course, the reader familiar with the theory may easily skip our recollection and directly proceed to Section~\ref{sec:discretization}, where we present the specific discretization we use and analyze theoretically our approximation strategy. Our main result is Theorem~\ref{prop_convergence_sigma_h_p1}. An ingredient of our numerical analysis is the adaptation to the case of the advection-diffusion equation with invariant measure~$\sigma$ of classical arguments from~\cite{BS} that allow for an error estimate in $W^{1,p}(\Omega)$, $p>2$, of a $\mathbb{P}^1$ finite element approximation of $\sigma$ (see Proposition~\ref{prop_brenner_scott_estimate} below). This extension is, in our opinion, nontrivial and has an interest on its own. We present its proof in Appendix~\ref{sec:appendix_BS}. Our final Section~\ref{sec:numerical-results} presents a comprehensive set of numerical tests that demonstrate the accuracy, stability and efficiency of the approach. 
 
\medskip
 
Our main conclusions are as follows. The approach based upon the precomputation of an approximation of~$\sigma$ solution to~\eqref{pb_adv_diff_non_coercive_without_bc-adjoint} and next the approximation of~$u$ as the solution to~\eqref{eq:adv-diff-modifiee} provides with an approximation that is (i) well-posed unconditionally in the meshsize and amenable to a precise numerical analysis of convergence, and (ii) as stable and accurate as a direct typical stabilized solution of~\eqref{pb_adv_diff_non_coercive_without_bc}. It is slightly more robust with respect to the mesh size used for the approximation of $u$. If the precomputation time for~$\sigma$ is not accounted for, the approach has an equal computational cost. And if it is, then roughly ten solutions of the advection-diffusion equation are necessary to make the approach profitable. In many of the contexts we mentioned above, this is clearly the case. 
 
\medskip
 
In short, the approach presented here does not provide spectacular results but constitutes an interesting, certified and efficient alternative to more established approaches. It can be shown to outperform them in certain situations when only a coarse mesh can be afforded and/or when the advection-diffusion equation needs to be solved repeatedly.
 
\medskip

The present work complements an earlier publication~\cite{cras-madiot} which already summarized the approach. It provides the numerical analysis of the approach and extensive numerical tests to assess its performance. 

%
%
%
%
%

\section{Mathematical setting and theoretical results}
\label{sec:mathematical-setting}

We consider equation~\eqref{pb_adv_diff_non_coercive_without_bc} on a domain $\Omega$ and for an advection field $b$ that \emph{at least} satisfy, throughout the article, the following two conditions:
\begin{equation}
\label{hyp_H0_debut}
\left\{
\begin{aligned}
&\text{$\Omega$ is an open bounded domain of $\R^d$, $d\geq 2$};
\\
&b\in(L^\infty(\Omega))^d.
\end{aligned}
\right.
\end{equation}
We additionnally assume, in this Section~\ref{sec:mathematical-setting}, that
\begin{equation}
\label{hyp_H0_fin}
\text{$\Omega$ is of class $\mathcal{C}^1$}.
\end{equation}
For some of our results of Section~\ref{sec:mathematical-setting}, we will have to make stronger assumptions (see in particular~\eqref{hyp_H_11} below).

The right-hand side $f$ of equation~\eqref{pb_adv_diff_non_coercive_without_bc} is assumed $H^{-1}(\Omega)$. Again, in some instances, we will need to assume a better regularity (typically $L^p(\Omega)$) on this function~$f$. 

The boundary conditions we supply~\eqref{pb_adv_diff_non_coercive_without_bc} with affect the boundary conditions we need to impose in the definition of the invariant measure(s) we introduce. For simplicity, our discussion assumes homogeneous Dirichlet boundary conditions
\begin{equation}
\label{boundary_cond_dirichlet_homog}
u=0 \quad\text{on $\partial\Omega$},
\end{equation}
although other boundary conditions may be considered. As the definition of the invariant measure $\sigma$ is essentially a matter of integration by parts (in the spirit of~\eqref{eq:integration} above), we leave to the reader the adaptation to other boundary conditions. We emphasize, however, that some of the arguments that follow might require some additional work. 

For the sake of consistency, we are now about to recall a set of basic results we need, on the inf-sup theory and on the invariant measure. The results are in particular interesting to motivate the specific discretization approach we introduce. We reiterate that the reader familiar with the classical theory may easily skip the sequel and directly proceed to Section~\ref{sec:discretization}. 

\subsection{Inf-sup theory}
\label{ssec:inf-sup}

The advection-diffusion equation~\eqref{pb_adv_diff_non_coercive_without_bc} supplied with the data we have just described and the boundary condition~\eqref{boundary_cond_dirichlet_homog} can be studied in the context of the Banach-Ne\v{c}as-Babu\v{s}ka theory. Defining $U=V=H^1_0(\Omega)$, 
\begin{align}
a(u,v)&=\int_\Omega \nabla u\cdot\nabla v + (b\cdot\nabla u) v,
\label{def_a}
\\
F(v)&=\left\langle f, v\right\rangle_{H^{-1}(\Omega),H^1_0(\Omega)}, 
\label{def_F}
\end{align}
it leads to a particular case of the general variational formulation:
\begin{equation}
\text{Find $u\in U$ such that, for all $v\in V$, \quad $a(u,v)=F(v)$},
\label{varf_coercivity_loss_continuous}
\end{equation}
where $U$ is a Banach space, $V$ is a reflexive Banach space, $a\in\mathcal{L}(U\times V;\R)$ and $F\in V'$. 
The well-posedness of~\eqref{pb_adv_diff_non_coercive_without_bc}--\eqref{boundary_cond_dirichlet_homog}, recast as~\eqref{def_a}--\eqref{def_F}--\eqref{varf_coercivity_loss_continuous}, is known to be equivalent to the following two conditions:
\begin{enumerate}[$(\textnormal{BNB}1)$]
\item There exists $\alpha>0$ such that $\displaystyle \inf_{u\in H^1_0(\Omega)} \ \sup_{v\in H^1_0(\Omega)} \ \frac{a(u,v)}{\|u\|_{H^1(\Omega)}\|v\|_{H^1(\Omega)}} \geq \alpha;
$ 
\item For all $v\in H^1_0(\Omega)$, $\displaystyle \left(\forall u\in H^1_0(\Omega), \ a(u,v)=0 \right) \Longrightarrow (v=0).
$	
\end{enumerate}

\medskip

\noindent
Introducing a function~$\sigma$ satisfying
\begin{enumerate}[(C1)]
\item $-\text{div }(\nabla \sigma + b\sigma)=0$ in $\Omega$,
\item $\displaystyle \inf_\Omega \sigma>0$,
\item for all $u\in H^1_0(\Omega)$, ($\sigma \, u \in H^1_0(\Omega)$ and) $\|\sigma \, u\|_{H^1(\Omega)} \leq C_\sigma \|u\|_{H^1(\Omega)}$,
\end{enumerate}
it is easy to see that the conditions~$(\textnormal{BNB}1)$ and~$(\textnormal{BNB}2)$ are satisfied in our case. Using~\textnormal{(C3)} and~\textnormal{(C1)}, a simple calculation indeed yields, for any $u\in H^1_0(\Omega)$,
\begin{align*}
a(u,\sigma u) 
&= 
\int_\Omega\sigma \nabla u \cdot\nabla u -\int_\Omega \text{div }(\nabla \sigma + b\sigma)\frac{u^2}{2}
\\
&=\int_\Omega\sigma \nabla u \cdot\nabla u 
\\
&\geq \left(\inf_\Omega \sigma\right)\|\nabla u\|_{L^2(\Omega)}^2
\\
&\geq C\,\left(\inf_\Omega \sigma\right) \|u\|_{H^1(\Omega)}^2.
\end{align*}
Using~\textnormal{(C2)} and~\textnormal{(C3)}, we obtain 
$$
\frac{a(u,\sigma u)}{\|u\|_{H^1(\Omega)}\|\sigma u\|_{H^1(\Omega)}} \geq C \frac{\left(\inf_\Omega \sigma\right)}{C_\sigma} > 0
$$
and thus the inf-sup inequality~$(\textnormal{BNB}1)$. The second condition~$(\textnormal{BNB}2)$ is a consequence of the maximum principle (see e.g.~\cite[Theorem 8.1]{gilbarg2001elliptic}): a function $v \in H^1_0(\Omega)$ that satisfies $a(u,v)=0$ for all $u\in H^1_0(\Omega)$ is a solution to $-\text{div} \, \left(\nabla v+ b v\right)= 0$ in $\Omega$ and therefore vanishes.

\medskip

The following, very classical proposition collects the properties established. 

\begin{proposition} 
\label{prop_inf_sup}
We assume~\eqref{hyp_H0_debut}--\eqref{hyp_H0_fin} and that there exists $\sigma\in H^1(\Omega)$ satisfying conditions~\textnormal{(C1)}, \textnormal{(C2)} and~\textnormal{(C3)}. Then~\eqref{pb_adv_diff_non_coercive_without_bc}--\eqref{boundary_cond_dirichlet_homog} is well-posed, that is, it has a unique solution in $H^1_0(\Omega)$, and the map $H^{-1}(\Omega)\ni f \mapsto u\in H^1_0(\Omega)$ is continuous. 
\end{proposition}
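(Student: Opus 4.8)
The plan is to verify the two Banach-Ne\v{c}as-Babu\v{s}ka conditions $(\textnormal{BNB}1)$ and $(\textnormal{BNB}2)$ directly, since the excerpt just above the statement has essentially done exactly this computation. Concretely, I would first fix an arbitrary $u\in H^1_0(\Omega)$ and use hypothesis \textnormal{(C3)} to justify that $\sigma u$ is an admissible test function in $H^1_0(\Omega)$. Then I would reproduce the integration by parts $a(u,\sigma u) = \int_\Omega \sigma\,|\nabla u|^2 - \int_\Omega \operatorname{div}(\nabla\sigma + b\sigma)\,\frac{u^2}{2}$, in which the second term vanishes by \textnormal{(C1)}, so that $a(u,\sigma u) = \int_\Omega \sigma\,|\nabla u|^2 \geq (\inf_\Omega\sigma)\,\|\nabla u\|_{L^2(\Omega)}^2$, and invoke the Poincaré inequality on $H^1_0(\Omega)$ to bound this below by $C\,(\inf_\Omega\sigma)\,\|u\|_{H^1(\Omega)}^2$. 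Combining with \textnormal{(C2)} (to know $\inf_\Omega\sigma>0$) and \textnormal{(C3)} (to bound $\|\sigma u\|_{H^1(\Omega)}\leq C_\sigma\|u\|_{H^1(\Omega)}$), taking $v=\sigma u$ in the supremum yields
\[
\sup_{v\in H^1_0(\Omega)} \frac{a(u,v)}{\|u\|_{H^1(\Omega)}\|v\|_{H^1(\Omega)}} \geq \frac{a(u,\sigma u)}{\|u\|_{H^1(\Omega)}\|\sigma u\|_{H^1(\Omega)}} \geq \frac{C\,(\inf_\Omega\sigma)}{C_\sigma} > 0,
\]
and taking the infimum over $u$ gives $(\textnormal{BNB}1)$ with $\alpha = C\,(\inf_\Omega\sigma)/C_\sigma$.

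For $(\textnormal{BNB}2)$, I would argue that if $v\in H^1_0(\Omega)$ satisfies $a(u,v)=0$ for all $u\in H^1_0(\Omega)$, then, integrating by parts, $v$ is a weak solution of the adjoint homogeneous Dirichlet problem $-\operatorname{div}(\nabla v + bv)=0$ in $\Omega$, $v=0$ on $\partial\Omega$; by the weak maximum principle for such operators (as cited, \cite[Theorem 8.1]{gilbarg2001elliptic}), $v\equiv 0$. Finally, I would conclude by the Banach-Ne\v{c}as-Babu\v{s}ka theorem (see \cite[p.~85]{EG}) that \eqref{pb_adv_diff_non_coercive_without_bc}--\eqref{boundary_cond_dirichlet_homog}, recast as \eqref{def_a}--\eqref{def_F}--\eqref{varf_coercivity_loss_continuous}, admits a unique solution $u\in H^1_0(\Omega)$, with the a priori estimate $\|u\|_{H^1(\Omega)} \leq \alpha^{-1}\|f\|_{H^{-1}(\Omega)}$, which is precisely the asserted continuity of $f\mapsto u$.

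Since all the analytical ingredients are either hypotheses of the proposition or classical results invoked verbatim in the surrounding text, there is no serious obstacle; the proof is a bookkeeping exercise assembling \textnormal{(C1)}--\textnormal{(C3)}, Poincaré, the maximum principle, and the abstract BNB theorem. The only point requiring a modicum of care is the justification that $\sigma u \in H^1_0(\Omega)$ and the validity of the integration by parts producing the term $\int_\Omega \operatorname{div}(\nabla\sigma+b\sigma)\,u^2/2$: this rests on \textnormal{(C3)} (which builds $\sigma u\in H^1_0(\Omega)$ into the hypotheses) together with the fact that $u^2 \in W^{1,1}_0(\Omega)$ can be used against the distribution $\operatorname{div}(\nabla\sigma+b\sigma)\in H^{-1}(\Omega)$; I would phrase the computation as first writing $a(u,\sigma u) = \int_\Omega \nabla u\cdot\nabla(\sigma u) + \int_\Omega(b\cdot\nabla u)\sigma u$ and then rearranging $\nabla u\cdot\nabla(\sigma u) = \sigma|\nabla u|^2 + (\nabla\sigma\cdot\nabla u)u$ and $(b\cdot\nabla u)\sigma u = (\sigma b\cdot\nabla u)u$, recognizing $(\nabla\sigma + \sigma b)\cdot\nabla u\, u = \tfrac12(\nabla\sigma+\sigma b)\cdot\nabla(u^2)$, and using \textnormal{(C1)} in the form $\int_\Omega(\nabla\sigma+\sigma b)\cdot\nabla\varphi = 0$ for all $\varphi\in H^1_0(\Omega)$ with $\varphi = u^2$ (noting $u^2\in W^{1,1}_0$ suffices after a density/truncation argument, or one simply quotes that \eqref{pb_adv_diff_non_coercive_without_bc-adjoint} holds in $\mathcal{D}'(\Omega)$ and $u^2\in W^{1,1}_0\cap L^\infty$ is borderline, handled by truncating $u$). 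Everything else is immediate.
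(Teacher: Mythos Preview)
Your proposal is correct and follows essentially the same approach as the paper: verify $(\textnormal{BNB}1)$ via the test function $v=\sigma u$ and the identity $a(u,\sigma u)=\int_\Omega\sigma|\nabla u|^2$ obtained from \textnormal{(C1)}--\textnormal{(C3)} together with Poincar\'e, and obtain $(\textnormal{BNB}2)$ from the maximum principle for the adjoint equation. Your discussion of the integration-by-parts step (why $\int_\Omega(\nabla\sigma+\sigma b)\cdot\nabla(u^2)=0$) is in fact more careful than the paper, which simply writes the divergence term and invokes \textnormal{(C1)} without further comment.
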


Simply observing that one may harmlessly multiply and divide by a function~$\sigma$ enjoying the properties~\textnormal{(C2)} and~\textnormal{(C3)}, and that the following formal integration by parts holds
\begin{align*}
\int_\Omega(-\Delta u + b\cdot\nabla u)\,\sigma v
&= -\int_{\partial\Omega}(\nabla u\cdot n)\, \sigma v+\int_\Omega\nabla u\cdot\nabla(\sigma v) +\int_\Omega (\sigma\,b\cdot\nabla u ) \,v
\\
&= -\int_{\partial\Omega}(\sigma\nabla u\cdot n )\, v+\int_\Omega(\sigma \nabla u)\cdot\nabla v + \int_\Omega((\nabla \sigma + b\sigma)\cdot\nabla u)\, v
\\
&= \int_{\Omega}(-\textnormal{div} (\sigma \nabla u))\,v+\int_\Omega((\nabla \sigma + b\sigma)\cdot\nabla u)\, v,
\end{align*}
one readily obtains the following result.

\begin{proposition}
\label{prop_equiv_modified_problem}
Under the assumptions of Proposition~\ref{prop_inf_sup}, \eqref{pb_adv_diff_non_coercive_without_bc}--\eqref{boundary_cond_dirichlet_homog} is equivalent to the problem
\begin{equation}
-\textnormal{div} (\sigma \nabla u) + (\nabla \sigma + b \sigma)\cdot\nabla u = \sigma f \quad\text{in $\Omega$}, \qquad u=0 \quad\text{on $\partial\Omega$},
\label{eq_modified_problem}
\end{equation}
which is therefore also well-posed.
\end{proposition}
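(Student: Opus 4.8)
The plan is to show that the variational formulation of the modified problem~\eqref{eq_modified_problem} is \emph{verbatim} the variational formulation of~\eqref{pb_adv_diff_non_coercive_without_bc}--\eqref{boundary_cond_dirichlet_homog} after the change of test function $v\leftrightarrow\sigma v$, and then to read off well-posedness from Proposition~\ref{prop_inf_sup}. Concretely, \eqref{pb_adv_diff_non_coercive_without_bc}--\eqref{boundary_cond_dirichlet_homog} reads: find $u\in H^1_0(\Omega)$ with $a(u,v)=F(v)$ for all $v\in H^1_0(\Omega)$, with $a$, $F$ as in~\eqref{def_a}--\eqref{def_F}; while~\eqref{eq_modified_problem} reads: find $u\in H^1_0(\Omega)$ with $\widetilde a(u,w)=\langle\sigma f,w\rangle_{H^{-1}(\Omega),H^1_0(\Omega)}$ for all $w\in H^1_0(\Omega)$, where $\widetilde a(u,w):=\int_\Omega\sigma\,\nabla u\cdot\nabla w+\int_\Omega\big((\nabla\sigma+b\sigma)\cdot\nabla u\big)w$ and $\langle\sigma f,w\rangle:=\langle f,\sigma w\rangle_{H^{-1}(\Omega),H^1_0(\Omega)}$; the latter pairing is meaningful precisely because $\sigma w\in H^1_0(\Omega)$ by~\textnormal{(C3)}.

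The algebraic heart of the argument is the identity $\widetilde a(u,w)=a(u,\sigma w)$, valid for all $u,w\in H^1_0(\Omega)$: it follows from the Leibniz rule $\nabla(\sigma w)=\sigma\nabla w+w\nabla\sigma$ (legitimate since $\sigma\in L^\infty(\Omega)$ and $\sigma w\in H^1_0(\Omega)$ by~\textnormal{(C3)}, so that $w\nabla\sigma=\nabla(\sigma w)-\sigma\nabla w\in L^2(\Omega)$) by exactly the computation displayed just above the statement, read from top to bottom. Consequently $u$ solves the modified problem if and only if $a(u,\sigma w)=F(\sigma w)$ for all $w\in H^1_0(\Omega)$. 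One implication is then immediate: a solution $u$ of~\eqref{pb_adv_diff_non_coercive_without_bc}--\eqref{boundary_cond_dirichlet_homog}, tested against $v=\sigma w\in H^1_0(\Omega)$ (admissible by~\textnormal{(C3)}), solves the modified problem.

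For the converse one must justify that one may ``divide by $\sigma$''; I expect this to be the only genuinely delicate point. The cleanest route is to observe that multiplication by $\sigma$ is a bijection of $H^1_0(\Omega)$: from $\inf_\Omega\sigma>0$ (condition~\textnormal{(C2)}) and $\sigma\in H^1(\Omega)$ one gets $1/\sigma\in L^\infty(\Omega)$ and $\nabla(1/\sigma)=-\nabla\sigma/\sigma^2\in L^2(\Omega)$, hence $1/\sigma\in H^1(\Omega)\cap L^\infty(\Omega)$; then, using~\textnormal{(C3)} together with the regularity of the invariant measure recalled in Section~\ref{ssec:invariant} (which makes $\sigma$ \emph{and} $1/\sigma$ genuine multipliers on $H^1_0(\Omega)$), $v\in H^1_0(\Omega)$ implies $v/\sigma\in H^1_0(\Omega)$. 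Given a solution $u$ of the modified problem and an arbitrary $v\in H^1_0(\Omega)$, write $v=\sigma(v/\sigma)$ and compute $a(u,v)=a\big(u,\sigma(v/\sigma)\big)=\widetilde a(u,v/\sigma)=\langle\sigma f,v/\sigma\rangle=\langle f,v\rangle=F(v)$, so $u$ solves~\eqref{pb_adv_diff_non_coercive_without_bc}--\eqref{boundary_cond_dirichlet_homog}. (Alternatively, and bypassing the bijectivity statement, one may rewrite the modified equation distributionally as $\sigma\,(-\Delta u+b\cdot\nabla u-f)=0$ in $\mathcal D'(\Omega)$ — expanding $-\text{div}(\sigma\nabla u)=\sigma(-\Delta u)-\nabla\sigma\cdot\nabla u$ and cancelling the $\nabla\sigma\cdot\nabla u$ terms — and then divide by $\sigma>0$ after a density argument invoking~\textnormal{(C3)} and $1/\sigma\in H^1(\Omega)\cap L^\infty(\Omega)$.)

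With the equivalence of the two problems in hand the conclusion is immediate: by Proposition~\ref{prop_inf_sup} the original problem has a unique solution $u\in H^1_0(\Omega)$ depending continuously on $f$, hence so does~\eqref{eq_modified_problem}. (Equivalently, one notes that $\widetilde a$ is continuous and coercive on $H^1_0(\Omega)$ — indeed $\widetilde a(u,u)=a(u,\sigma u)=\int_\Omega\sigma|\nabla u|^2\geq(\inf_\Omega\sigma)\,\|\nabla u\|_{L^2(\Omega)}^2$ by the computation of Section~\ref{ssec:inf-sup} — so well-posedness of~\eqref{eq_modified_problem} also follows directly from the Lax--Milgram theorem.)
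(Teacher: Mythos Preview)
Your proof is correct and follows essentially the same route as the paper: the key identity $\widetilde a(u,w)=a(u,\sigma w)$ obtained by the Leibniz rule (the integration by parts displayed just before the proposition), combined with the observation that multiplication by~$\sigma$ and by~$1/\sigma$ are harmless on $H^1_0(\Omega)$ thanks to~\textnormal{(C2)}--\textnormal{(C3)}. You are simply more explicit than the paper, which dispatches the whole argument in one sentence (``one may harmlessly multiply and divide by a function~$\sigma$ enjoying the properties~\textnormal{(C2)} and~\textnormal{(C3)}''); in particular your care about the bijectivity of $v\mapsto\sigma v$ on $H^1_0(\Omega)$, and your alternative Lax--Milgram argument via the coercivity $\widetilde a(u,u)=\int_\Omega\sigma|\nabla u|^2$, are both in the spirit of what the paper leaves implicit.
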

Note that property~\textnormal{(C1)} is actually not required to show that~\eqref{pb_adv_diff_non_coercive_without_bc}--\eqref{boundary_cond_dirichlet_homog} and~\eqref{eq_modified_problem} are equivalent. This equivalence thus also holds for an approximation of the invariant measure, a fact we will use in the sequel.

\subsection{On the invariant measure}
\label{ssec:invariant}

We now turn to elements of theory regarding the invariant measure $\sigma$ solution to~\eqref{pb_adv_diff_non_coercive_without_bc-adjoint}. As mentioned above, \eqref{pb_adv_diff_non_coercive_without_bc-adjoint} does not completely characterize $\sigma$ since a boundary condition and possibly an additional normalization need to be supplied. Because of the homogeneous Dirichlet boundary condition~\eqref{boundary_cond_dirichlet_homog} we have imposed on $u$ for~\eqref{pb_adv_diff_non_coercive_without_bc}, it turns out that we have some flexibility on the boundary condition we may impose on~$\sigma$. In other situations, the integration by parts performed to employ the adjoint equation might require more stringent conditions on~$\sigma$ on the boundary. The adaptation is, as we said, left to the reader.

\medskip

We are going to consider two different choices for $\sigma$, respectively studied in the next two sections. 

\subsubsection{First choice of invariant measure}
\label{sssec:invariant1}

The first case is (formally) defined by
\begin{equation}
\left\{
\begin{aligned}
&-\textnormal{div }(\nabla \sigma+ b \sigma)= 0 \quad\text{in $\Omega$}, 
\\
&(\nabla \sigma+ b \sigma)\cdot n=0 \quad\text{on $\partial\Omega$},
\end{aligned}
\right.
\label{invariant_measure_droniou}
\end{equation}
with the normalization constraint 
\begin{equation}
\fint_\Omega \sigma:=|\Omega|^{-1} \, \int_\Omega \sigma =1,
\label{invariant_measure_droniou-int-1}
\end{equation}
and the property
\begin{equation}
\inf_\Omega\sigma\geq c>0.
\label{invariant_measure_droniou-pos}
\end{equation}
The existence (and uniqueness) of such a function $\sigma$, along with some additional regularity properties, is established below. In the subsequent sections, this function is denoted by~$\sigma_1$. 

\medskip

The classical results concerning this case are contained in the following Lemma. 

\begin{lemma}[Theorem~1.1 of~\cite{droniou2009noncoercive}]
\label{theorem_droniou}

In addition to~\eqref{hyp_H0_debut}--\eqref{hyp_H0_fin}, we assume that $\Omega$ is connected (for the uniqueness part of our statements). Then
\begin{enumerate}[i)]
\item There exists $\sigma\in H^1(\Omega)$, unique up to a multiplicative constant, solution to~\eqref{invariant_measure_droniou}. Up to a change of sign, we may require $\sigma>0$ a.e. on $\Omega$;
\item Let $g \in (H^1(\Omega))'$. The problem 
\begin{equation}
  \label{eq:pourquoi_moi}
  \left\{
  \begin{aligned}
    & \text{Find $v \in H^1(\Omega)$ such that, for any $\varphi \in H^1(\Omega)$},
    \\
    & \int_\Omega (\nabla \varphi)^T (\nabla v+ b v) = \langle g, \varphi \rangle_{(H^1(\Omega))',H^1(\Omega)},
  \end{aligned}
  \right.
\end{equation}
admits at least one solution if and only if 
$$
\left\langle g, 1\right\rangle_{(H^1(\Omega))',H^1(\Omega)}=0.
$$
In this case, the set of all solutions is $v+\R \sigma$. In addition, the application $g\mapsto v$ is a bounded linear map from 
$$
V_{\# 1} = \left\{ g\in(H^1(\Omega))', \quad \left\langle g, 1\right\rangle_{(H^1(\Omega))',H^1(\Omega)}=0 \right\}
$$
to 
$$
H^1_{\int=0}(\Omega) = \left\{ v\in H^1(\Omega), \quad \int_\Omega v=0 \right\};
$$
\item Let $f \in (H^1(\Omega))'$. The problem 
\begin{equation}
  \label{eq:pourquoi_moi2}
\left\{
\begin{aligned}
  & \text{Find $u \in H^1(\Omega)$ such that, for any $\varphi \in H^1(\Omega)$},
    \\
    & \int_\Omega \nabla \varphi \cdot \nabla u + \varphi b \cdot \nabla u = \langle f, \varphi \rangle_{(H^1(\Omega))',H^1(\Omega)},
\end{aligned}
\right.
\end{equation}
admits at least one solution (which is unique up to the addition of a constant) if and only if 
$$
\left\langle f, \sigma\right\rangle_{(H^1(\Omega))',H^1(\Omega)}=0.
$$
In addition, the application $f\mapsto u$ is a bounded linear map from 
$$ 
V_{\# \sigma} = \left\{f \in (H^1(\Omega))', \quad \left\langle f, \sigma\right\rangle_{(H^1(\Omega))',H^1(\Omega)}=0 \right\}
$$ 
to $H^1_{\int=0}(\Omega)$.
\end{enumerate}
\end{lemma}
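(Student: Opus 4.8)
This is \cite[Theorem~1.1]{droniou2009noncoercive}; we outline the structure of a proof. The plan is to realize the adjoint bilinear form as a Fredholm operator of index zero on $H^1(\Omega)$, to read off the solvability conditions, the kernels and the bounded inverses from abstract functional analysis together with the maximum principle, and to treat the positivity assertion separately, as it is the only genuinely non-algebraic point. Define $T\in\mathcal{L}(H^1(\Omega);(H^1(\Omega))')$ by $\langle Tv,\varphi\rangle = \int_\Omega (\nabla\varphi)^T(\nabla v + bv)$; since $H^1(\Omega)$ is reflexive, its transpose $T^\star$ is well defined and is exactly the operator of the primal problem~\eqref{eq:pourquoi_moi2}, namely $\langle T^\star u,\varphi\rangle = \int_\Omega \nabla\varphi\cdot\nabla u + \varphi\,(b\cdot\nabla u)$. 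Write $T=A+K$ with $\langle Av,\varphi\rangle = \int_\Omega (\nabla v\cdot\nabla\varphi + v\varphi)$, which is an isomorphism by Lax--Milgram, and $\langle Kv,\varphi\rangle = \int_\Omega v\,(b\cdot\nabla\varphi) - \int_\Omega v\varphi$. Because the embedding $H^1(\Omega)\hookrightarrow L^2(\Omega)$ is compact (using~\eqref{hyp_H0_debut}--\eqref{hyp_H0_fin}) and $b\in(L^\infty(\Omega))^d$, the operator $K$ is compact, so $T$ and $T^\star$ are Fredholm of index zero.

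Next I would identify the kernels. A function $u\in\ker T^\star$ is a weak solution of $-\Delta u + b\cdot\nabla u = 0$ in $\Omega$ with $\partial_n u = 0$ on $\partial\Omega$; the weak maximum principle (\cite[Theorem~8.1]{gilbarg2001elliptic}, adapted to the Neumann problem) together with the connectedness of $\Omega$ forces $u$ to be constant, hence $\ker T^\star = \R$. Index zero and the closed range theorem then give $\dim\ker T = \dim\ker T^\star = 1$, so $\ker T = \R\sigma$ for any nonzero $\sigma\in H^1(\Omega)$ solving~\eqref{invariant_measure_droniou}; this settles the existence and uniqueness (up to a multiplicative constant) in~i). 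The same theorems give $\mathrm{Range}\,T = (\ker T^\star)^\circ = \{g\in(H^1(\Omega))':\langle g,1\rangle = 0\}$ and $\mathrm{Range}\,T^\star = (\ker T)^\circ = \{f\in(H^1(\Omega))':\langle f,\sigma\rangle = 0\}$, which are precisely the solvability conditions of~ii) and~iii), the corresponding solution sets being $v+\R\sigma$ and $u+\R$. Once $\int_\Omega\sigma\neq 0$ (granted by the last step), each coset contains a unique representative of zero mean, and applying the open mapping theorem to the restrictions of $T$ and $T^\star$ to $H^1_{\int=0}(\Omega)$ produces the announced bounded linear maps $g\mapsto v$ and $f\mapsto u$ into $H^1_{\int=0}(\Omega)$.

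The main obstacle, and the only step that is not abstract nonsense, is the sign and the lower bound in~i). Here I would regularize: for $\eps>0$ the coercive Neumann problem associated with $-\mathrm{div}(\nabla\sigma_\eps+b\sigma_\eps)+\eps\sigma_\eps=\eps$ has a unique solution $\sigma_\eps\in H^1(\Omega)$ by Lax--Milgram, which automatically satisfies $\fint_\Omega\sigma_\eps=1$ (test against $1$) and which is nonnegative by a maximum-principle argument on the regularized problem. A uniform $H^1$ bound on $\sigma_\eps$ is obtained by splitting $\sigma_\eps = 1 + \sigma_\eps^0$ with $\sigma_\eps^0$ of zero mean, noting that $\sigma_\eps^0$ is the zero-mean solution of $Tv = \eps(1-\sigma_\eps) - T(1)$, whose right-hand side lies in $V_{\#1}$, and invoking the bounded inverse from the previous step to absorb the $O(\eps)$ contribution; this yields $\|\sigma_\eps^0\|_{H^1}\leq C(\|b\|_{L^2(\Omega)}+|\Omega|^{1/2})$ uniformly for small $\eps$. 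One then extracts a weak $H^1$ limit $\sigma$, which is a nonnegative solution of~\eqref{invariant_measure_droniou} with $\fint_\Omega\sigma=1$, in particular nonzero. Finally, De Giorgi--Nash--Moser regularity ($b\in(L^\infty(\Omega))^d$), the strong maximum principle / interior Harnack inequality on the connected set $\Omega$, and H\"older continuity up to the boundary of the $\mathcal{C}^1$ domain $\Omega$ upgrade $\sigma\geq 0$, $\sigma\not\equiv0$ to $\sigma>0$ on $\overline\Omega$, whence $\inf_\Omega\sigma\geq c>0$ by compactness of $\overline\Omega$. We refer to~\cite{droniou2009noncoercive} for the details of this last step, which we regard as the genuine difficulty.
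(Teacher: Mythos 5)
Your Fredholm skeleton is exactly the route the paper intends: the text offers no proof of this lemma beyond attributing all assertions to the Fredholm alternative and the positivity to the maximum principle, and your decomposition $T=A+K$, the identification of $\ker T^\star$ with the constants, the closed-range/annihilator description of the two solvability conditions, and the open-mapping argument on $H^1_{\int=0}(\Omega)$ are the standard way to make that attribution precise. Items ii) and iii) are handled correctly.

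Two points in your treatment of the positivity in i), however, do not hold as written. First, the regularized bilinear form $\displaystyle (v,\varphi)\mapsto\int_\Omega(\nabla v+bv)\cdot\nabla\varphi+\eps\int_\Omega v\varphi$ is \emph{not} coercive for small $\eps$: the cross term satisfies only $\left|\int_\Omega v\,b\cdot\nabla v\right|\le \tfrac12\|\nabla v\|_{L^2(\Omega)}^2+\tfrac12\|b\|_{L^\infty(\Omega)}^2\|v\|_{L^2(\Omega)}^2$, so Lax--Milgram applies only when $\eps\gtrsim\|b\|_{L^\infty(\Omega)}^2$, a regime in which the limit $\eps\to0$ with uniform bounds is not available. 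Well-posedness of the regularized problem for \emph{small} $\eps$ is exactly what Lemma~\ref{lem:these} of the appendix establishes, but its proof tests against $\sigma_1 v$ and would be circular here; one needs either another Fredholm/inf-sup argument for the regularized operator, or a different construction of the positive generator of $\ker T$ (Krein--Rutman applied to the resolvent, or a sign argument on $\sigma^{\pm}$ exploiting $\dim\ker T=1$). The same missing coercivity undermines the nonnegativity test with $\sigma_\eps^-$. Second, your conclusion $\inf_\Omega\sigma\ge c>0$ claims more than item i) asserts ($\sigma>0$ a.e. only) and more than is available under the standing hypothesis~\eqref{hyp_H0_fin} that $\Omega$ is merely $\mathcal{C}^1$: the uniform lower bound up to the boundary is precisely the content of Lemma~\ref{theorem_perthame} (after Perthame), which requires the stronger $\mathcal{C}^2$ regularity of $\partial\Omega$. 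Since you defer this step to the reference in any case, the fix is to claim only $\sigma>0$ a.e. here and leave the quantitative bound to that later lemma.
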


\noindent
It is easily seen that all assertions are consequences of the Fredholm alternative. The positivity stated in (i) follows from the maximum principle. A bound from below on~$\sigma$ may be obtained using stronger assumptions. It is the purpose of the following lemma. 

\begin{lemma}[after Theorem~1 of~\cite{perthame1990perturbed}]
\label{theorem_perthame}

We assume~\eqref{hyp_H0_debut} and that the domain $\Omega$ is of class $\mathcal{C}^2$. Then there exists a unique solution $\sigma\in W^{1,p}(\Omega) \cap \mathcal{C}^0(\overline{\Omega})$, for all $1\leq p<+\infty$, to~\eqref{invariant_measure_droniou} that satisfies $\dis \max_{\overline{\Omega}} \sigma =1$. In addition, there exists $c>0$ so that
$$
\sigma\geq c\quad \text{in $\Omega$}.
$$
This solution $\sigma$ satisfies the conditions~\textnormal{(C1)}, \textnormal{(C2)} and~\textnormal{(C3)}.
\end{lemma}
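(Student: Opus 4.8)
The plan is to reduce Lemma~\ref{theorem_perthame} to the existence result of Lemma~\ref{theorem_droniou} plus elliptic regularity theory, and to obtain the crucial lower bound $\sigma \geq c > 0$ via the strong maximum principle.

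\medskip

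\noindent\textbf{Step 1: Existence, uniqueness, and normalization.} First I would invoke Lemma~\ref{theorem_droniou}~(i), which applies since a $\mathcal{C}^2$ domain is in particular of class $\mathcal{C}^1$ and connected, to obtain a solution $\sigma \in H^1(\Omega)$ of~\eqref{invariant_measure_droniou}, unique up to a multiplicative constant and, up to sign, positive a.e. Once the better regularity $\sigma \in \mathcal{C}^0(\overline\Omega)$ is established (Step~2), the normalization $\max_{\overline\Omega}\sigma = 1$ is a legitimate rescaling that fixes the constant uniquely, so uniqueness follows immediately.

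\medskip

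\noindent\textbf{Step 2: Regularity $\sigma \in W^{1,p}(\Omega) \cap \mathcal{C}^0(\overline\Omega)$ for all $p < \infty$.} The equation $-\operatorname{div}(\nabla\sigma + b\sigma) = 0$ with $b \in (L^\infty(\Omega))^d$ and homogeneous co-normal (Neumann-type) boundary condition on a $\mathcal{C}^2$ domain is amenable to a bootstrap argument. Starting from $\sigma \in H^1(\Omega) \hookrightarrow L^{2^*}$, one rewrites the equation as $-\Delta\sigma = \operatorname{div}(b\sigma)$ with right-hand side in a negative Sobolev space, applies $W^{1,p}$ elliptic estimates (Agmon--Douglis--Nirenberg type, valid up to the boundary for $\mathcal{C}^2$ domains), and iterates, gaining integrability at each step via Sobolev embedding until one reaches $W^{1,p}$ for every finite $p$; then $W^{1,p}$ with $p > d$ embeds into $\mathcal{C}^{0,\alpha}(\overline\Omega)$. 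This is exactly the content of~\cite{perthame1990perturbed}, so I would cite it rather than reprove it, perhaps sketching the bootstrap in a sentence.

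\medskip

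\noindent\textbf{Step 3: The lower bound $\sigma \geq c > 0$.} By Step~2, $\sigma$ is continuous on the compact set $\overline\Omega$, hence attains its minimum; it suffices to show this minimum is strictly positive. Since $\sigma > 0$ a.e. and $\sigma$ is continuous, $\sigma \geq 0$ everywhere. If $\sigma(x_0) = 0$ for some $x_0 \in \overline\Omega$, then $x_0$ is an interior or boundary minimum of a nonnegative solution of the uniformly elliptic equation $-\operatorname{div}(\nabla\sigma + b\sigma) = 0$; the strong maximum principle (for an interior point) forces $\sigma \equiv 0$, contradicting the normalization $\max_{\overline\Omega}\sigma = 1$, while Hopf's lemma rules out a zero at the boundary (the co-normal derivative would have to be strictly negative there, contradicting the boundary condition $(\nabla\sigma + b\sigma)\cdot n = 0$). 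Hence $c := \min_{\overline\Omega}\sigma > 0$. I expect this to be the step requiring the most care, since the maximum-principle machinery must be applied to the equation in the form with the $b\sigma$ term inside the divergence; one may first put it in non-divergence-friendly form or invoke a version of the strong maximum principle valid for equations of the type $-\operatorname{div}(\nabla\sigma) - \operatorname{div}(b\sigma) = 0$ with merely bounded $b$, which is standard (see e.g.~\cite{gilbarg2001elliptic}).

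\medskip

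\noindent\textbf{Step 4: Verification of (C1)--(C3).} Property~(C1) is~\eqref{invariant_measure_droniou} itself. Property~(C2), $\inf_\Omega \sigma > 0$, is Step~3. For~(C3), given $u \in H^1_0(\Omega)$, the product $\sigma u$ lies in $H^1_0(\Omega)$ because $\sigma \in W^{1,p}(\Omega) \cap L^\infty(\Omega)$ with $p > d$: one has $\nabla(\sigma u) = \sigma \nabla u + u \nabla\sigma$, and $\|\sigma\nabla u\|_{L^2} \leq \|\sigma\|_{L^\infty}\|\nabla u\|_{L^2}$ while $\|u\nabla\sigma\|_{L^2} \leq \|\nabla\sigma\|_{L^p}\|u\|_{L^{2p/(p-2)}} \leq C\|\nabla\sigma\|_{L^p}\|u\|_{H^1}$ by the Sobolev embedding $H^1_0(\Omega) \hookrightarrow L^{2p/(p-2)}(\Omega)$ (which holds for $p$ large since $2p/(p-2) \to 2$); the vanishing trace of $\sigma u$ comes from that of $u$. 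This yields $\|\sigma u\|_{H^1} \leq C_\sigma \|u\|_{H^1}$, completing the proof.
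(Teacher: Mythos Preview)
The paper does not supply its own proof of this lemma: it is stated as a direct quotation of Theorem~1 of~\cite{perthame1990perturbed}, and the text immediately moves on. So there is nothing to compare your argument against except that reference.

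That said, your outline is a sound reconstruction of how such a result is established. Step~1 (reduce to Lemma~\ref{theorem_droniou} and normalize once continuity is known) and Step~2 (bootstrap $-\Delta\sigma=\operatorname{div}(b\sigma)$ with Neumann data through the $W^{1,p}$ scale) are exactly the standard route, and citing~\cite{perthame1990perturbed} for the details is what the paper itself does. Step~4 is correct; your justification of the embedding $H^1_0\hookrightarrow L^{2p/(p-2)}$ should simply say ``take $p>d$'' rather than appeal to the limit $2p/(p-2)\to 2$, which is not the relevant point.

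The only place requiring genuine care is Step~3. The interior part is fine: for divergence-form operators with merely bounded lower-order coefficients, the strong maximum principle follows from the Harnack inequality (e.g.\ \cite[Theorems~8.18--8.20]{gilbarg2001elliptic}), so a nonnegative $\sigma$ vanishing at an interior point is identically zero. For the boundary part, the classical Hopf lemma is usually stated for non-divergence equations with continuous coefficients, and here $b$ is only $L^\infty$; you should either invoke a divergence-form boundary-point lemma valid under these hypotheses, or argue via the Harnack inequality up to the boundary on a $\mathcal{C}^2$ domain. The contradiction you describe (Hopf gives $\partial_n\sigma<0$, while the boundary condition gives $\partial_n\sigma=-(b\cdot n)\sigma=0$ at a zero of $\sigma$) is the right mechanism once the appropriate version of Hopf is in hand.
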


We immediately remark that the solution, the existence and uniqueness of which is established in Lemma~\ref{theorem_perthame}, co\"\i ncides with one of the positive solutions dealt with in Lemma~\ref{theorem_droniou}. We note also that a renormalization of that solution can be performed to comply with the constraint~\eqref{invariant_measure_droniou-int-1}. 

\medskip

Before we are in position to state our main proposition regarding the measure defined in~\eqref{invariant_measure_droniou}--\eqref{invariant_measure_droniou-int-1}--\eqref{invariant_measure_droniou-pos}, we need to recall a technical lemma for the Neumann problem, and next to strengthen the assumptions~\eqref{hyp_H0_debut}--\eqref{hyp_H0_fin}. The technical lemma, which will be useful in our proof of Proposition~\ref{prop:sigma1} below, is the following. 

\begin{lemma}[Chapter~I, Theorem~1.10 of~\cite{girault1986finite}]
\label{theorem_girault}

Let $\Omega$ be an open, bounded and connected domain of $\R^d$ with a $\mathcal{C}^{1,1}$ boundary. Let $1 < p < \infty$ and let $u$ be a solution to
$$
\left\{
\begin{aligned}
&-\Delta u =f \quad \text{in $\Omega$}, 
\\
&\nabla u\cdot n = g \quad\text{on $\partial\Omega$},
\end{aligned}
\right.
$$
where $f\in L^p(\Omega)$ and $g\in W^{1-1/p,p}(\partial\Omega)$ satisfy the relation $\displaystyle \int_\Omega f+ \int_{\partial\Omega} g = 0$. Then $u\in W^{2,p}(\Omega)$ and there exists a constant $C$, depending on $p$ and $\Omega$ but independent from $f$, $g$ and $u$, such that 
$$
\left\| u - \fint_\Omega u\right\|_{W^{2,p}(\Omega)} \leq C \Big( \|f\|_{L^p(\Omega)} + \|g\|_{W^{1-1/p,p}(\partial\Omega)} \Big).
$$
\end{lemma}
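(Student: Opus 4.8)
This is the classical global $W^{2,p}$ a~priori estimate for the Neumann problem for the Laplacian, and I would establish it by the standard localization-and-flattening argument of Calder\'on--Zygmund / Agmon--Douglis--Nirenberg type. Since the problem determines $u$ only up to an additive constant, it is harmless to work with $u-\fint_\Omega u$ throughout, so assume $\fint_\Omega u=0$. The compatibility relation $\int_\Omega f+\int_{\partial\Omega}g=0$ is precisely what makes the weak formulation $\int_\Omega \nabla u\cdot\nabla\varphi=\int_\Omega f\varphi+\int_{\partial\Omega}g\varphi$ consistent on $H^1_{\int=0}(\Omega)$, and Lax--Milgram together with the Poincar\'e--Wirtinger inequality gives a preliminary bound $\|u\|_{H^1(\Omega)}\le C\big(\|f\|_{L^p(\Omega)}+\|g\|_{W^{1-1/p,p}(\partial\Omega)}\big)$ (at least when $p$ is large enough for the embedding $L^p(\Omega)\hookrightarrow(H^1(\Omega))'$; small $p$ is recovered a~posteriori). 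We also record that, when $f=0$ and $g=0$, the only $H^1$ solution with zero mean is $u\equiv0$; this uniqueness is used twice below.

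Next come the two local estimates. \emph{Interior:} for a ball $B_{2r}$ compactly contained in $\Omega$, the Calder\'on--Zygmund theory for the second derivatives of the Newtonian potential (a singular integral bounded on $L^p$ for $1<p<\infty$) gives $\|u\|_{W^{2,p}(B_r)}\le C\big(\|f\|_{L^p(B_{2r})}+\|u\|_{W^{1,p}(B_{2r})}\big)$. \emph{Boundary:} near a boundary point, flatten $\partial\Omega$ with a local $\mathcal{C}^{1,1}$ diffeomorphism; the transported function $v$ then solves a conormal problem $-\mathrm{div}(A\nabla v)=\tilde f$ in a half-ball, $(A\nabla v)\cdot n=\tilde g$ on the flat part, where $A$ is symmetric, uniformly elliptic, and \emph{Lipschitz} --- this is exactly where the $\mathcal{C}^{1,1}$ (rather than merely $\mathcal{C}^1$) regularity of $\partial\Omega$ enters --- with $\tilde f\in L^p$ and $\tilde g\in W^{1-1/p,p}$. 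Freezing $A$ at the center reduces the principal part to the constant-coefficient Neumann problem in a half-space, for which the $W^{2,p}$ bound follows by even reflection across the hyperplane (after subtracting a $W^{2,p}$ extension of $\tilde g$ the conormal condition becomes homogeneous and the reflection solves a Poisson equation on the whole ball) combined with the interior estimate; the remainder $\mathrm{div}\big((A-A(0))\nabla v\big)$ is absorbed into the left-hand side by shrinking the half-ball, using continuity of $A$, and the boundary datum is controlled through a $W^{1-1/p,p}$ trace/extension estimate. A finite partition of unity subordinate to a cover of $\overline\Omega$ then yields the global estimate $\|u\|_{W^{2,p}(\Omega)}\le C\big(\|f\|_{L^p(\Omega)}+\|g\|_{W^{1-1/p,p}(\partial\Omega)}+\|u\|_{W^{1,p}(\Omega)}\big)$.

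Finally, the lower-order term $\|u\|_{W^{1,p}(\Omega)}$ must be removed, which I would do by a compactness (Peetre--Tartar, via Ehrling) argument. If the asserted estimate were false, there would be solutions $u_n$ with $\fint_\Omega u_n=0$ and data $(f_n,g_n)$ such that $\|u_n\|_{W^{2,p}(\Omega)}=1$ while $\|f_n\|_{L^p(\Omega)}+\|g_n\|_{W^{1-1/p,p}(\partial\Omega)}\to0$. By the compact embedding $W^{2,p}(\Omega)\hookrightarrow\hookrightarrow W^{1,p}(\Omega)$, a subsequence converges strongly in $W^{1,p}$ to some $u_\infty$ with $\fint_\Omega u_\infty=0$ which, passing to the limit in the weak formulation, solves the homogeneous Neumann problem and is therefore $0$ by the uniqueness recorded above; but then the estimate of the previous paragraph forces $\|u_n\|_{W^{2,p}(\Omega)}\to0$, a contradiction. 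The same uniqueness yields the claimed linearity and continuity of $(f,g)\mapsto u-\fint_\Omega u$.

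The main obstacle is the boundary estimate: proving the $L^p$ bound ($p\ne2$) for the model half-space Neumann problem --- the relevant Calder\'on--Zygmund / singular-integral theory for the Neumann kernel --- and pushing the freezing-of-coefficients perturbation through with only Lipschitz leading coefficients, which is exactly what a $\mathcal{C}^{1,1}$ boundary provides. The interior estimate, the partition of unity, the fractional-Sobolev trace bounds, and the compactness step are all routine. In the present paper one simply invokes~\cite{girault1986finite}; the sketch above explains why the hypotheses stated there ($1<p<\infty$, a $\mathcal{C}^{1,1}$ boundary, and the compatibility condition) are the natural ones.
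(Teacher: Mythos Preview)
Your proposal is a correct and well-organized sketch of the classical Calder\'on--Zygmund / Agmon--Douglis--Nirenberg route to the $W^{2,p}$ Neumann estimate. However, note that the paper does not supply its own proof of this lemma at all: the statement is quoted verbatim from \cite{girault1986finite} (Chapter~I, Theorem~1.10) and used as a black box in the subsequent arguments (Propositions~\ref{prop:sigma1} and~\ref{prop_u_f_A_id}, and later Lemma~\ref{lem:8311}). You acknowledge this yourself in your final sentence. So there is nothing to compare your argument against in the paper; what you have written is useful expository background explaining why the hypotheses ($1<p<\infty$, $\mathcal{C}^{1,1}$ boundary, compatibility condition) are the natural ones, but it is not something the authors undertook.

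One minor quibble: your preliminary $H^1$ bound via Lax--Milgram is stated for $p$ large enough that $L^p(\Omega)\hookrightarrow (H^1(\Omega))'$; for small $p$ you defer to an a~posteriori argument that you do not spell out. This is harmless for a sketch, but in a full proof one would either bootstrap or, more directly, run the compactness argument of your last paragraph without passing through $H^1$ first (the a~priori estimate with the lower-order term already suffices to set up the contradiction).
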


\noindent In the sequel of this Section~\ref{sec:mathematical-setting}, we assume that
\begin{equation}
\left\{
\begin{aligned}
&\text{the domain $\Omega$ is connected, and of class $\mathcal{C}^2$};
\\
&\text{$b$ is Lipschitz-continuous on $\overline{\Omega}$}. 
\end{aligned}
\right.
\label{hyp_H_11}
\end{equation}
We have the following result:

\begin{proposition}
\label{prop:sigma1}

Under assumptions~\eqref{hyp_H0_debut}--\eqref{hyp_H_11}, there exists a unique solution $\sigma\in H^1(\Omega)$ to~\eqref{invariant_measure_droniou} with the normalization~\eqref{invariant_measure_droniou-int-1}. In addition, it satisfies~\eqref{invariant_measure_droniou-pos}. Furthermore, for all $1 < p < +\infty$, we have $\sigma\in W^{2,p}(\Omega)\cap\mathcal{C}^1(\overline{\Omega})$ and the estimate
\begin{equation}
\label{eq:estimateW12}
\|\sigma\|_{W^{2,p}(\Omega)}\leq C\,(1+\,\|\sigma\|_{W^{1,p}(\Omega)}),
\end{equation}
where $C$ is a constant depending on $p$, $\Omega$ and 
$$
\|b\|_{\text{Lip}(\overline{\Omega})} = \sup_{x \in \overline{\Omega}} |b(x)| + \sup_{x\neq y \in \overline{\Omega}} \frac{|b(x)-b(y)|}{|x-y|}.
$$
\end{proposition}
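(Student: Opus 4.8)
The plan is to get existence, uniqueness and positivity for free from Lemmas~\ref{theorem_droniou} and~\ref{theorem_perthame} by a renormalization, and then to lift the regularity to $W^{2,p}$ by reading~\eqref{invariant_measure_droniou} as an inhomogeneous Neumann problem for the Laplacian and applying Lemma~\ref{theorem_girault}.

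\emph{Step 1 (existence, uniqueness, positivity).} Under~\eqref{hyp_H0_debut} and~\eqref{hyp_H_11}, $\Omega$ is connected and of class $\mathcal{C}^1$, so Lemma~\ref{theorem_droniou}(i) provides a solution $\sigma_0\in H^1(\Omega)$ of~\eqref{invariant_measure_droniou}, unique up to a multiplicative constant, which we may take with $\sigma_0>0$ a.e.\ on $\Omega$. Then $\int_\Omega\sigma_0>0$; since~\eqref{invariant_measure_droniou} is linear and homogeneous, $\sigma:=\left(\fint_\Omega\sigma_0\right)^{-1}\sigma_0$ solves~\eqref{invariant_measure_droniou} and satisfies~\eqref{invariant_measure_droniou-int-1}, and it is the only such solution (any other is a positive multiple of $\sigma_0$ with the same average). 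As $\Omega$ is moreover of class $\mathcal{C}^2$, Lemma~\ref{theorem_perthame} gives a solution $\widetilde\sigma$ of~\eqref{invariant_measure_droniou} with $\max_{\overline{\Omega}}\widetilde\sigma=1$, lying in $W^{1,p}(\Omega)\cap\mathcal{C}^0(\overline{\Omega})$ for all $1\le p<\infty$ and bounded below by some $c>0$. By the uniqueness in Lemma~\ref{theorem_droniou}, $\widetilde\sigma$ is a positive multiple of $\sigma_0$, hence $\sigma=\lambda\widetilde\sigma$ with $\lambda=\left(\fint_\Omega\widetilde\sigma\right)^{-1}\in[1,c^{-1}]$ (because $c\le\widetilde\sigma\le1$); thus $\sigma\in W^{1,p}(\Omega)\cap\mathcal{C}^0(\overline{\Omega})$ for all $p$ and $\sigma\ge\lambda c\ge c>0$ on $\Omega$, which is~\eqref{invariant_measure_droniou-pos}.

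\emph{Step 2 (regularity upgrade and estimate).} Since $b$ is Lipschitz on $\overline{\Omega}$, Rademacher's theorem gives $b\in(W^{1,\infty}(\Omega))^d$ with $\text{div}\,b\in L^\infty(\Omega)$; as $\sigma\in W^{1,p}(\Omega)\cap L^\infty(\Omega)$, the Leibniz rule yields $b\sigma\in(W^{1,p}(\Omega))^d$ with $\text{div}(b\sigma)=(\text{div}\,b)\,\sigma+b\cdot\nabla\sigma\in L^p(\Omega)$ and $\|\text{div}(b\sigma)\|_{L^p(\Omega)}\le C\,\|b\|_{\text{Lip}(\overline{\Omega})}\,\|\sigma\|_{W^{1,p}(\Omega)}$. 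Integrating by parts in the weak formulation of~\eqref{invariant_measure_droniou}, $\sigma$ is a weak solution of $-\Delta\sigma=f:=\text{div}(b\sigma)$ in $\Omega$ with $\nabla\sigma\cdot n=g:=-(b\cdot n)\,\sigma$ on $\partial\Omega$. Because $\Omega$ is of class $\mathcal{C}^2$, the outer normal $n$ is Lipschitz on $\partial\Omega$, so $b\cdot n$ is Lipschitz on $\partial\Omega$; the trace of $\sigma$ lies in $W^{1-1/p,p}(\partial\Omega)$, and multiplication by a Lipschitz function preserves $W^{1-1/p,p}(\partial\Omega)$ (since $0<1-1/p<1$), hence $g\in W^{1-1/p,p}(\partial\Omega)$ with $\|g\|_{W^{1-1/p,p}(\partial\Omega)}\le C\,\|b\|_{\text{Lip}(\overline{\Omega})}\,\|\sigma\|_{W^{1,p}(\Omega)}$. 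Taking $\varphi\equiv1$ in the resulting weak formulation (equivalently, by the divergence theorem) gives the compatibility $\int_\Omega f+\int_{\partial\Omega}g=0$, so Lemma~\ref{theorem_girault} applies ($\mathcal{C}^2\Rightarrow\mathcal{C}^{1,1}$) and yields $\sigma\in W^{2,p}(\Omega)$ with $\|\sigma-\fint_\Omega\sigma\|_{W^{2,p}(\Omega)}\le C\big(\|f\|_{L^p(\Omega)}+\|g\|_{W^{1-1/p,p}(\partial\Omega)}\big)\le C\,\|b\|_{\text{Lip}(\overline{\Omega})}\,\|\sigma\|_{W^{1,p}(\Omega)}$. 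Since $\fint_\Omega\sigma=1$, the triangle inequality yields~\eqref{eq:estimateW12} with a constant depending only on $p$, $\Omega$ and $\|b\|_{\text{Lip}(\overline{\Omega})}$; choosing $p>d$ and using $W^{2,p}(\Omega)\hookrightarrow\mathcal{C}^{1,1-d/p}(\overline{\Omega})$ gives $\sigma\in\mathcal{C}^1(\overline{\Omega})$.

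\emph{Main obstacle.} The proof is essentially a bootstrap on top of the cited lemmas; the only genuinely technical points are the two product-rule statements --- that $\text{div}(b\sigma)\in L^p(\Omega)$, which rests on $b\in W^{1,\infty}$ and the Leibniz rule for $W^{1,\infty}\cdot(W^{1,p}\cap L^\infty)$, and that $(b\cdot n)\,\sigma\in W^{1-1/p,p}(\partial\Omega)$, where the $\mathcal{C}^2$ regularity of $\Omega$ is used so that $n$, hence $b\cdot n$, is Lipschitz on $\partial\Omega$, together with the fact that Lipschitz functions are pointwise multipliers of $W^{s,p}$ for $0\le s\le1$. Everything else is bookkeeping of constants: one checks at each step that they depend only on $p$, $\Omega$ and $\|b\|_{\text{Lip}(\overline{\Omega})}$, the residual $1$ in~\eqref{eq:estimateW12} coming from $\|\fint_\Omega\sigma\|_{W^{2,p}(\Omega)}=|\Omega|^{1/p}$.
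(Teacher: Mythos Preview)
Your proof is correct and follows essentially the same route as the paper: existence, uniqueness and the lower bound come from Lemmas~\ref{theorem_droniou} and~\ref{theorem_perthame} (with renormalization), and the $W^{2,p}$ regularity and estimate~\eqref{eq:estimateW12} are obtained by recasting~\eqref{invariant_measure_droniou} as the Neumann Laplacian problem $-\Delta\sigma=\text{div}(b\sigma)$, $\nabla\sigma\cdot n=-(b\cdot n)\sigma$, and applying Lemma~\ref{theorem_girault} after bounding the right-hand sides via the Leibniz rule and the trace theorem. Your presentation is slightly more explicit (linking the two lemmas via the multiplicative constant, and checking the compatibility condition for Lemma~\ref{theorem_girault}), but the substance is identical.
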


\begin{proof}
Lemma~\ref{theorem_perthame} yields the existence and uniqueness of $\sigma \in W^{1,p}(\Omega) \cap \mathcal{C}^0(\overline{\Omega})$, $1\leq p<+\infty$, solution to~\eqref{invariant_measure_droniou}--\eqref{invariant_measure_droniou-int-1}, and states that~\eqref{invariant_measure_droniou-pos} holds. The point here is to prove that $\sigma\in W^{2,p}(\Omega)$ for all $1 < p < +\infty$, and that the estimate~\eqref{eq:estimateW12} holds true. Since 
$$
\left\{
\begin{aligned}
&-\Delta \sigma =\text{div }( b\sigma) \ \ \text{in $\Omega$}, \qquad \fint_\Omega \sigma =1, 
\\
&\nabla \sigma\cdot n = -(b\cdot n)\sigma \ \ \text{on $\partial\Omega$},
\end{aligned}
\right.
$$
we may apply Lemma~\ref{theorem_girault}. Using in particular that $b\in W^{1,\infty}(\Omega)$, we obtain that
$$
\|\text{div }(b\sigma)\|_{L^p(\Omega)} 
\leq 
\|b\|_{W^{1,\infty}(\Omega)} \|\sigma\|_{L^p(\Omega)} + \|b\|_{L^\infty(\Omega)}\|\nabla \sigma\|_{L^p(\Omega)}
\leq 
C \, \|\sigma\|_{W^{1,p}(\Omega)}
$$
using the Leibniz formula and the H\"older inequality, while
$$
\| (b\cdot n) \sigma\|_{W^{1-1/p,p}(\partial\Omega)} \leq C \|\sigma\|_{W^{1,p}(\Omega)} 
$$
is a consequence of the Sobolev trace theorems and the Lipschitz regularity of $b$ on the closed domain $\overline{\Omega}$. Lemma~\ref{theorem_girault} therefore implies $\sigma\in W^{2,p}(\Omega)$ and~\eqref{eq:estimateW12}. Since $p$ can be taken arbitrary large, we also have $\sigma \in \mathcal{C}^1(\overline{\Omega})$.
\end{proof}

For the numerical analysis of the approach performed in Section~\ref{sec:discretization}, we need the following extension of Proposition~\ref{prop:sigma1}, making precise the continuity of the solutions to the advection-diffusion equation and its adjoint equation with respect to their respective right-hand sides. 

\begin{proposition}
\label{prop_u_f_A_id}

Let $p$ be such that $1< p<+\infty$ if $d=2$ and $2d/(d+2)\leq p<+\infty$ otherwise. Assuming~\eqref{hyp_H0_debut}--\eqref{hyp_H_11} and letting $\sigma$ be the unique solution to~\eqref{invariant_measure_droniou}--\eqref{invariant_measure_droniou-int-1}, we have the following results:

\begin{enumerate}[i)]

\item for all $f\in L^p(\Omega)$ such that $\displaystyle \int_\Omega f=0$, there exists a unique $v\in H^1(\Omega)$ solution to
  \begin{equation}
    \label{eq:pourquoi_moi4}
\left\{
\begin{aligned}
&-\textnormal{div }(\nabla v + bv)=f \ \ \text{in $\Omega$}, \qquad \fint_\Omega v =1, 
\\
&(\nabla v + b v)\cdot n = 0 \ \ \text{on $\partial\Omega$}.
\end{aligned}
\right.
\end{equation}
In addition, $v \in W^{2,p}(\Omega)$ and we have the estimate
\begin{equation}
\| v-\sigma \|_{W^{2,p}(\Omega)}\leq C\|f\|_{L^p(\Omega)},
\label{estimate_v_g_A_id}
\end{equation}
where $C$ is a constant depending on $p$, $\Omega$ and $\displaystyle \|b\|_{\text{Lip}(\overline{\Omega})}$;

\item for all $f\in L^p(\Omega)$ such that $\displaystyle \int_\Omega f \, \sigma =0$, there exists a unique $u\in H^1(\Omega)$ solution to
$$
\left\{
\begin{aligned}
&-\Delta u + b\cdot \nabla u =f \ \ \text{in $\Omega$}, \qquad \fint_\Omega u =1,
\\
& \nabla u \cdot n = 0 \ \ \text{on $\partial\Omega$}.
\end{aligned}
\right.
$$
It satisfies $u \in W^{2,p}(\Omega)$ and we have the estimate
\begin{equation}
\| u-1 \|_{W^{2,p}(\Omega)}\leq C\|f\|_{L^p(\Omega)},
\label{estimate_u_f_A_id}
\end{equation}
where $C$ is a constant depending on $p$, $\Omega$ and $\|b\|_{L^\infty(\Omega)}$.
\end{enumerate}
\end{proposition}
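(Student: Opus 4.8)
\emph{Strategy.} The plan is to split each item into two ingredients: the \emph{existence, uniqueness and $H^1$-bound}, which follow directly from the Fredholm-type results recorded in Lemma~\ref{theorem_droniou}, and the \emph{$W^{2,p}$-regularity with the quantitative estimate}, which I would obtain by a finite elliptic-regularity bootstrap built on Lemma~\ref{theorem_girault}. A preliminary remark makes the Fredholm part available: for the stated range of $p$ one has a continuous embedding $L^p(\Omega)\hookrightarrow(H^1(\Omega))'$, since $H^1(\Omega)\hookrightarrow L^{p'}(\Omega)$ with $1/p+1/p'=1$ (Sobolev: $p'\le 2d/(d-2)$ if $d\ge3$, any $p'<\infty$ if $d=2$); consequently $f\in L^p(\Omega)$ is an admissible right-hand side in Lemma~\ref{theorem_droniou}~ii)--iii), and the pairings $\langle f,1\rangle$ and $\langle f,\sigma\rangle$ are simply $\int_\Omega f$ and $\int_\Omega f\sigma$ (the latter being well defined because $\sigma\in\mathcal{C}^0(\overline\Omega)$ by Proposition~\ref{prop:sigma1}).

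\emph{Item i).} Since $\int_\Omega f=0$, Lemma~\ref{theorem_droniou}~ii) (with $g=f$) provides a solution $v_0\in H^1_{\int=0}(\Omega)$ of the Neumann problem in~\eqref{eq:pourquoi_moi4} without its normalization, with solution set $v_0+\mathbb{R}\sigma$ and $\|v_0\|_{H^1(\Omega)}\le C\|f\|_{L^p(\Omega)}$. Using $\fint_\Omega\sigma=1$ and $\int_\Omega v_0=0$, one gets $\fint_\Omega(v_0+t\sigma)=t$, so $v:=v_0+\sigma$ is the unique element of that line with $\fint_\Omega v=1$; thus $v-\sigma=v_0$ and it remains to bootstrap the regularity of $v_0$. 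Rewriting the equation as $-\Delta v_0=f+\mathrm{div}(bv_0)$ in $\Omega$ with $\nabla v_0\cdot n=-(b\cdot n)\,v_0$ on $\partial\Omega$, I would apply Lemma~\ref{theorem_girault} iteratively, starting from $v_0\in H^1=W^{1,2}(\Omega)$: using $b\in W^{1,\infty}(\Omega)$ (so that $\mathrm{div}(bv_0)=(\mathrm{div}\,b)\,v_0+b\cdot\nabla v_0$ has the integrability of $\nabla v_0$), the Sobolev embeddings $W^{2,r}(\Omega)\hookrightarrow W^{1,q}(\Omega)$ (with $1/q=1/r-1/d$ when $r<d$, and any $q<\infty$ otherwise), and the trace theorem $W^{1,r}(\Omega)\hookrightarrow W^{1-1/r,r}(\partial\Omega)$ (multiplication by the boundary coefficient $b\cdot n$, which is Lipschitz on $\partial\Omega$ as $\Omega$ is $\mathcal{C}^2$, being bounded on that space), each application of Lemma~\ref{theorem_girault} raises the admissible Lebesgue exponent by a fixed amount, so that after finitely many steps --- a number depending only on $d$ and $p$ --- one reaches $v_0\in W^{2,p}(\Omega)$ together with $\|v-\sigma\|_{W^{2,p}(\Omega)}=\|v_0\|_{W^{2,p}(\Omega)}\le C(\|f\|_{L^p(\Omega)}+\|v_0\|_{H^1(\Omega)})\le C\|f\|_{L^p(\Omega)}$, the accumulated constant $C$ depending on $p$, $\Omega$ and $\|b\|_{\mathrm{Lip}(\overline\Omega)}$. (When $p\le2$ a single application already suffices, since $H^1(\Omega)\hookrightarrow W^{1,p}(\Omega)$ on the bounded domain.)

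\emph{Item ii).} The argument is parallel, now invoking Lemma~\ref{theorem_droniou}~iii): since $\int_\Omega f\sigma=0$, there is $u_0\in H^1_{\int=0}(\Omega)$ solving $-\Delta u_0+b\cdot\nabla u_0=f$ with $\nabla u_0\cdot n=0$, the un-normalized problem having solution set $u_0+\mathbb{R}$ and $\|u_0\|_{H^1(\Omega)}\le C\|f\|_{L^p(\Omega)}$; then $u:=u_0+1$ is the unique solution with $\fint_\Omega u=1$ and $u-1=u_0$. Writing $-\Delta u_0=f-b\cdot\nabla u_0$ with the \emph{homogeneous} Neumann datum $\nabla u_0\cdot n=0$, the same bootstrap through Lemma~\ref{theorem_girault} and the Sobolev embeddings yields $u_0\in W^{2,p}(\Omega)$ and $\|u-1\|_{W^{2,p}(\Omega)}\le C\|f\|_{L^p(\Omega)}$; here no $\mathrm{div}\,b$ term and no boundary coefficient enter, so $C$ depends only on $p$, $\Omega$ and $\|b\|_{L^\infty(\Omega)}$.

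\emph{Main obstacle.} The Fredholm bookkeeping and the embeddings are routine; the delicate point is the bootstrap, where one must verify that the exponents produced by $W^{2,r}(\Omega)\hookrightarrow W^{1,q}(\Omega)$ strictly increase and reach $p$ after a number of steps bounded in terms of $d$ and $p$ only, and that at each stage the forcing term and the Neumann datum are regular enough to meet the hypotheses of Lemma~\ref{theorem_girault} (in particular that multiplication by the Lipschitz function $b\cdot n$ is bounded on the relevant $W^{1-1/r,r}(\partial\Omega)$), so that the finitely many intermediate constants can be collapsed into a single constant with the dependence claimed in~\eqref{estimate_v_g_A_id} and~\eqref{estimate_u_f_A_id}.
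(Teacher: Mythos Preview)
Your proposal is correct and follows essentially the same approach as the paper: reduce to the zero-mean function $w=v-\sigma$ (resp.\ $u_0=u-1$), invoke Lemma~\ref{theorem_droniou} for existence and the $H^1$ bound, rewrite as a Neumann problem for the Laplacian, and bootstrap via Lemma~\ref{theorem_girault} and Sobolev embeddings. The paper organizes the bootstrap by explicitly separating the cases $p\le 2$ (one application suffices) and $p>2$ (iterating through $H^2\hookrightarrow W^{1,q^\star}$, then $W^{2,q^\star}\hookrightarrow W^{1,q^{\star\star}}$, etc.), which is exactly the mechanism you sketch.
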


\begin{remark}
When $d \geq 3$, the range $1 < p < 2d/(d+2)$ is not covered by the above proposition, and we will not need this case in the sequel. We remark that the existence of a solution to~\eqref{eq:pourquoi_moi4} for such $p$ could be shown by regularization. Let $f\in L^p(\Omega)$ of mean zero. Consider a sequence $f_n \in L^q(\Omega)$ with $q \geq 2d/(d+2)$, of mean zero and such that $\dis \lim_{n \to \infty} \| f_n - f \|_{L^p(\Omega)} = 0$. Problem~\eqref{eq:pourquoi_moi4} is then well-posed for such $f_n$. We are then left with passing to the limit $n \to \infty$ in~\eqref{eq:pourquoi_moi4}, which can be done using Lemma~\ref{theorem_girault}. The other assertions of the above proposition likewise hold.
\end{remark}

\begin{proof}
Introducing $w=v-\sigma$, the point for proving assertion (i) is to consider
\begin{equation}
\left\{
\begin{aligned}
&-\textnormal{div }(\nabla w + bw)=f \ \ \text{in $\Omega$}, \qquad \fint_\Omega w=0, 
\\
&(\nabla w + b w)\cdot n = 0 \ \ \text{on $\partial\Omega$}.
\end{aligned}
\right.
\label{pb_adjoint_neumann_homog_g}
\end{equation}
The right hand side $f$ belongs to $L^p(\Omega)$ for exponents $p$ that have been chosen so that, by the Sobolev embeddings, $f\in (H^1(\Omega))'$. Lemma~\ref{theorem_droniou} therefore shows the existence and uniqueness of $w\in H^1(\Omega)$, and the fact that
\begin{equation}
\label{ineq_27_pre}
\|w\|_{H^1(\Omega)} \leq C \| f \|_{(H^1(\Omega))'} \leq C \| f \|_{L^p(\Omega)}. 
\end{equation}
We next rewrite~\eqref{pb_adjoint_neumann_homog_g} as 
$$
\left\{
\begin{aligned}
&-\Delta w =\text{div }( bw) + f \ \ \text{in $\Omega$}, \qquad \fint_\Omega w =0, 
\\ 
&\nabla w\cdot n = -(b\cdot n)w \ \ \text{on $\partial\Omega$}.
\end{aligned}
\right.
$$
To apply Lemma~\ref{theorem_girault}, we distinguish two cases, whether $1 < p \leq 2$ or $p>2$. 

\medskip

Suppose first that $1 < p \leq 2$. H\"older inequalities and~\eqref{ineq_27_pre} show that 
\begin{multline*}
\|\text{div }(bw) + f\|_{L^p(\Omega)}
\leq 
\|b\|_{W^{1,\infty}(\Omega)} \|w\|_{W^{1,p}(\Omega)} + \|f\|_{L^p(\Omega)}
\\
\leq
C \|w\|_{H^1(\Omega)} + \|f\|_{L^p(\Omega)}
\leq
C \|f\|_{L^p(\Omega)}.
\end{multline*}
In addition, using again~\eqref{ineq_27_pre} and the fact that $b$ is Lipschitz regular on the closed domain $\overline{\Omega}$, we get
$$
\|(b\cdot n)w\|_{W^{1-1/p,p}(\partial\Omega)}
\leq 
C \, \|w\|_{W^{1,p}(\Omega)}
\leq
C \, \|w\|_{H^1(\Omega)}
\leq
C \|f\|_{L^p(\Omega)}. 
$$
Lemma~\ref{theorem_girault} therefore implies that $w \in W^{2,p}(\Omega)$ with
$$
\|w \|_{W^{2,p}(\Omega)} 
\leq 
C\left(\|\text{div }(bw) + f\|_{L^p(\Omega)} + \|(b\cdot n)w\|_{W^{1-1/p,p}(\partial\Omega)}\right)
\leq 
C \|f\|_{L^p(\Omega)}.
$$
This readily yields~\eqref{estimate_v_g_A_id}, in the case when $1 < p \leq 2$.

\medskip

We now turn to the case $p>2$. H\"older inequalities show that, for any $2 \leq q \leq p$, we have 
\begin{equation}
\|\text{div }(bw) + f\|_{L^q(\Omega)}
\leq 
\|b\|_{W^{1,\infty}(\Omega)} \|w\|_{W^{1,q}(\Omega)} + \|f\|_{L^q(\Omega)}.
\label{ineq_27}
\end{equation}
In addition, because $b$ is Lipschitz regular on the closed domain $\overline{\Omega}$, we get
\begin{equation}
\|(b\cdot n)w\|_{W^{1-1/q,q}(\partial\Omega)}
\leq 
C \, \|w\|_{W^{1,q}(\Omega)}
\label{ineq_28}
\end{equation}
for any $2 \leq q \leq p$.

Setting $q=2$ in~\eqref{ineq_27} and~\eqref{ineq_28} and using that $w \in H^1(\Omega)$, we are in position to use Lemma~\ref{theorem_girault}, which implies that $w\in H^2(\Omega)$ with
\begin{align}
\|w \|_{H^2(\Omega)} & \leq C\left(\|\text{div }(bw) + f\|_{L^2(\Omega)} + \|(b\cdot n)w\|_{W^{1-1/2,2}(\partial\Omega)}\right)
\nonumber
\\
&\leq C ( \| w \|_{H^1(\Omega)} + \|f\|_{L^2(\Omega)} )
\nonumber
\\
&\leq C \|f\|_{L^p(\Omega)}.
\label{eq:estimee11}
\end{align}
Using the Sobolev embeddings, we deduce that $w \in W^{1,q}(\Omega)$ for any $2 \leq q < \infty$ if $d=2$, and $w \in W^{1,q^\star}(\Omega)$ for $q^\star = 2d/(d-2)$ otherwise.

\medskip

If $d=2$, we deduce from~\eqref{eq:estimee11} that
\begin{equation}
\label{eq:estimee11_bis}
\| w \|_{W^{1,p}(\Omega)} \leq C_p \|w \|_{H^2(\Omega)} \leq C \|f\|_{L^p(\Omega)}.
\end{equation}
We set $q=p$ in~\eqref{ineq_27} and~\eqref{ineq_28} and use Lemma~\ref{theorem_girault}, which implies that $w\in W^{2,p}(\Omega)$ with
\begin{align*}
\|w \|_{W^{2,p}(\Omega)} & \leq C\left(\|\text{div }(bw) + f\|_{L^p(\Omega)} + \|(b\cdot n)w\|_{W^{1-1/p,p}(\partial\Omega)}\right)
\\
&\leq C ( \| w \|_{W^{1,p}(\Omega)} + \|f\|_{L^p(\Omega)} )
\\
&\leq C \|f\|_{L^p(\Omega)}
\end{align*}
where we have used~\eqref{eq:estimee11_bis}. We have thus proved~\eqref{estimate_v_g_A_id}, in the case when $p>2$ and $d=2$.

\medskip

If $d>2$, we deduce from~\eqref{eq:estimee11} that
\begin{equation}
\label{eq:estimee11_qua}
\| w \|_{W^{1,q^\star}(\Omega)} \leq C \|w \|_{H^2(\Omega)} \leq C \|f\|_{L^p(\Omega)}.
\end{equation}
If $q^\star \geq p$, we proceed as above and readily obtain~\eqref{estimate_v_g_A_id}. If $q^\star < p$, we set $q=q^\star$ in~\eqref{ineq_27} and~\eqref{ineq_28} and use Lemma~\ref{theorem_girault}, which implies that $w\in W^{2,q^\star}(\Omega)$ with
\begin{align*}
\|w \|_{W^{2,q^\star}(\Omega)} & \leq C\left(\|\text{div }(bw) + f\|_{L^{q^\star}(\Omega)} + \|(b\cdot n)w\|_{W^{1-1/q^\star,q^\star}(\partial\Omega)}\right)
\\
&\leq C ( \| w \|_{W^{1,q^\star}(\Omega)} + \|f\|_{L^{q^\star}(\Omega)} )
\\
&\leq C \|f\|_{L^p(\Omega)}
\end{align*}
where we have used~\eqref{eq:estimee11_qua}. Using again the Sobolev embeddings, we deduce that $w \in W^{1,q^{\star\star}}(\Omega)$ for $1/q^{\star\star} = 1/q^\star - 1/d = 1/2-2/d$ if $d>4$, and for any $q^{\star\star} \geq 2$ otherwise. Iterating the argument a sufficient number of times, we prove~\eqref{estimate_v_g_A_id} in the case $p>2$ and $d>2$. This concludes the proof of assertion (i).

\medskip

The proof of assertion (ii) proceeds similarly.
\end{proof}

\subsubsection{Second choice of invariant measure}
\label{sssec:invariant2}

In the case where $\text{div } b = 0$, Problem~\eqref{pb_adv_diff_non_coercive_without_bc} is coercive, and the introduction of an equivalent problem using the invariant measure seems unnecessary (our numerical results will however show that using~$\sigma_1$ solution to~\eqref{invariant_measure_droniou}--\eqref{invariant_measure_droniou-int-1} indeed shows itself useful). Put differently, one intuitive choice of invariant measure is then~$\sigma=1$. Since~$\sigma =1$ is not necessary solution to~\eqref{invariant_measure_droniou} in that case, we consider another choice of invariant measure. 

The invariant measure $\sigma$ that we now aim to use (and which we will denote by~$\sigma_2$ in the sequel of this article) is a solution to
\begin{equation}
\left\{
\begin{aligned}
&-\text{div }(\nabla \sigma + b \sigma) = 0 \quad \text{in $\Omega$},
\\
&(\nabla \sigma + b \sigma) \cdot n = b\cdot n - \fint_{\partial\Omega} b\cdot n \quad\text{on $\partial\Omega$},
\\
&\inf_\Omega \sigma >0.
\end{aligned}
\right.
\label{sigma_2positive}
\end{equation}
Two remarks are in order. First, we note that $\sigma$ needs not satisfy $\dis \fint_\Omega \sigma =1$ since, in essence, this normalization constraint does not affect~\eqref{eq:adv-diff-modifiee} nor \emph{a fortiori} the original problem. Second, it is evident that $\sigma$ solution to~\eqref{sigma_2positive} is constant if $\textnormal{div }\,b=0$, a property that has precisely motivated the consideration of this alternate invariant measure. 

Because of the constraint of positivity, we are unable to directly prove the existence of~$\sigma$ solution to~\eqref{sigma_2positive}. We therefore circumvent this theoretical difficulty by temporarily considering the same problem, but without the sign constraint and with the specific normalization~$\dis \fint_\Omega \sigma =1$ (see~\eqref{sig_helmholtz_hodge} below). In a second stage, we will modify the function (adding some term involving $\sigma_1$, see Corollary~\ref{prop_sigma_2} below) in order to obtain positivity (possibly at the price of losing the normalization). We already notice that, when discretizing the problems and solving them numerically, we will proceed similarly. Since the practical implementation of~\eqref{sigma_2positive}, involving a sign constraint, would be delicate, we will first approximate numerically the solution to~\eqref{sig_helmholtz_hodge} below and next combine it with the numerical approximation of the solution to~\eqref{invariant_measure_droniou}--\eqref{invariant_measure_droniou-int-1}--\eqref{invariant_measure_droniou-pos} to obtain an approximation to the solution to~\eqref{sigma_2positive}. This will be made precise in Section~\ref{sec:discretization}.

\medskip

Let us consider the problem
\begin{equation}
\left\{
\begin{aligned}
&-\text{div }(\nabla \sigma_2^0 + b \sigma_2^0) = 0 \ \ \text{in $\Omega$}, \qquad \fint_\Omega \sigma_2^0 =1, 
\\
&(\nabla \sigma_2^0 + b \sigma_2^0) \cdot n = b\cdot n -\fint_{\partial\Omega} b\cdot n \ \ \text{on $\partial\Omega$}.
\end{aligned}
\right.
\label{sig_helmholtz_hodge}
\end{equation}
We have the following proposition, the proof of which is similar to that of Proposition~\ref{prop:sigma1} and which we therefore skip (note that the well-posedness of~\eqref{sig_helmholtz_hodge} in $H^1(\Omega)$ is a direct consequence of Lemma~\ref{theorem_droniou}(ii)):

\begin{proposition}
\label{prop_sigma_two_A_id}

Under assumptions~\eqref{hyp_H0_debut}--\eqref{hyp_H_11}, there exists a unique $\sigma_2^0\in H^1(\Omega)$ solution to~\eqref{sig_helmholtz_hodge}. For any $1 < p < +\infty$, this solution satisfies $\sigma_2^0 \in W^{2,p}(\Omega) \cap \mathcal{C}^1(\overline{\Omega})$ and we have the following estimate:
$$
\|\sigma_2^0\|_{W^{2,p}(\Omega)}\leq C\left(1+\,\|\sigma_2^0\|_{W^{1,p}(\Omega)}+ \left\|b\cdot n -\fint_{\partial\Omega} b\cdot n\right\|_{W^{1-1/p,p}(\partial\Omega)}\right),
$$
where $C$ is a constant depending on $p$, $\Omega$ and $\|b\|_{\text{Lip}(\overline{\Omega})}$. In addition, the solution to~\eqref{sig_helmholtz_hodge} satisfies conditions~\textnormal{(C1)} and~\textnormal{(C3)}.
\end{proposition}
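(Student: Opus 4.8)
The plan is to mimic the proof of Proposition~\ref{prop:sigma1} nearly verbatim, the only novelty being the nonhomogeneous Neumann data $g := b\cdot n - \fint_{\partial\Omega} b\cdot n$, which is compatible since $\int_{\partial\Omega} g = 0$ by construction. First I would invoke Lemma~\ref{theorem_droniou}(ii) with $\langle \varphi, g\rangle_{(H^1)',H^1} := \int_{\partial\Omega} g\,\varphi$ (a bounded linear functional on $H^1(\Omega)$ by the trace theorem, annihilating constants): this furnishes a unique $H^1(\Omega)$ solution to the weak form of~\eqref{sig_helmholtz_hodge} once the mean-value normalization $\fint_\Omega \sigma_2^0 = 1$ is imposed, together with the \emph{a priori} bound $\|\sigma_2^0 - \fint_\Omega \sigma_2^0\|_{H^1(\Omega)} \le C\|g\|_{(H^1)'} \le C\|g\|_{W^{1-1/2,2}(\partial\Omega)}$, hence $\|\sigma_2^0\|_{H^1(\Omega)} \le C(1 + \|g\|_{W^{1/2,2}(\partial\Omega)})$.

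Next I would rewrite~\eqref{sig_helmholtz_hodge} as a pure Neumann Laplace problem,
\begin{equation*}
-\Delta \sigma_2^0 = \operatorname{div}(b\,\sigma_2^0) \ \text{ in } \Omega, \qquad \nabla\sigma_2^0\cdot n = g - (b\cdot n)\,\sigma_2^0 \ \text{ on } \partial\Omega,
\end{equation*}
and check the compatibility condition $\int_\Omega \operatorname{div}(b\sigma_2^0) + \int_{\partial\Omega}(g - (b\cdot n)\sigma_2^0) = \int_{\partial\Omega} g = 0$, which holds exactly as in Proposition~\ref{prop:sigma1}. Then Lemma~\ref{theorem_girault} applies (the $\mathcal C^2$ assumption on $\Omega$ gives the required $\mathcal C^{1,1}$ boundary): using $b\in W^{1,\infty}$, the Leibniz rule and Hölder give $\|\operatorname{div}(b\sigma_2^0)\|_{L^p(\Omega)} \le C\|\sigma_2^0\|_{W^{1,p}(\Omega)}$, while the trace theorems and the Lipschitz regularity of $b$ on $\overline\Omega$ give $\|g - (b\cdot n)\sigma_2^0\|_{W^{1-1/p,p}(\partial\Omega)} \le C(\|\sigma_2^0\|_{W^{1,p}(\Omega)} + \|g\|_{W^{1-1/p,p}(\partial\Omega)})$. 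Lemma~\ref{theorem_girault} then yields $\sigma_2^0 \in W^{2,p}(\Omega)$ and the stated estimate; since $p$ is arbitrary, Sobolev embedding gives $\sigma_2^0 \in \mathcal C^1(\overline\Omega)$. Finally, (C1) holds because $\sigma_2^0$ solves~\eqref{pb_adv_diff_non_coercive_without_bc-adjoint} by construction, and (C3) follows from $\sigma_2^0 \in W^{1,\infty}(\Omega)$ (itself a consequence of $\sigma_2^0 \in \mathcal C^1(\overline\Omega)$) combined with the product rule $\nabla(\sigma_2^0 u) = u\nabla\sigma_2^0 + \sigma_2^0\nabla u$, giving $\|\sigma_2^0 u\|_{H^1(\Omega)} \le C_{\sigma}\|u\|_{H^1(\Omega)}$ for $u\in H^1_0(\Omega)$, with $\sigma_2^0 u \in H^1_0(\Omega)$ since multiplication by a Lipschitz function preserves the vanishing trace.

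The only genuine point of attention — the "main obstacle," though it is minor here — is bookkeeping the boundary term: one must carry $\|g\|_{W^{1-1/p,p}(\partial\Omega)}$ through every estimate, since unlike in Proposition~\ref{prop:sigma1} it does not vanish, and verify that $g \in W^{1-1/p,p}(\partial\Omega)$ for all finite $p$, which follows from $b$ being Lipschitz on $\overline\Omega$ and $\partial\Omega$ being $\mathcal C^2$ (so $n \in \mathcal C^1(\partial\Omega)$ and $b\cdot n$ is Lipschitz on $\partial\Omega$, hence in every $W^{1-1/p,p}(\partial\Omega)$). Note that (C2), i.e. $\inf_\Omega \sigma_2^0 > 0$, is deliberately \emph{not} claimed for $\sigma_2^0$ — this is exactly why the positive measure $\sigma_2$ of~\eqref{sigma_2positive} is later recovered by a correction using $\sigma_1$ (Corollary~\ref{prop_sigma_2}) — so the proof need not, and should not, address positivity. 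Everything else is a routine transcription of the argument already given for Proposition~\ref{prop:sigma1}, which is precisely why the authors state they skip it.
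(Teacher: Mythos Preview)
Your approach is correct and follows exactly the route the paper intends: Lemma~\ref{theorem_droniou}(ii) for $H^1$ well-posedness, then Lemma~\ref{theorem_girault} applied to the reformulated Neumann--Laplace problem for $W^{2,p}$ regularity, with the inhomogeneous boundary datum $g$ carried along throughout. The one step you leave implicit is that, unlike in Proposition~\ref{prop:sigma1} where Lemma~\ref{theorem_perthame} directly supplies $\sigma_1 \in W^{1,p}$ for all $p$, here you only start from $\sigma_2^0 \in H^1$, so for $p>2$ a short bootstrap (as in the proof of Proposition~\ref{prop_u_f_A_id}: $H^1 \Rightarrow H^2 \Rightarrow W^{1,q^\star} \Rightarrow W^{2,q^\star} \Rightarrow \cdots$) is needed to first reach $\sigma_2^0 \in W^{1,p}$ before Lemma~\ref{theorem_girault} can be invoked at that exponent; this is routine and does not affect the structure of your argument.
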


Likewise, the following proposition holds, with a proof that mimics that of Proposition~\ref{prop_u_f_A_id}:

\begin{proposition}
\label{prop_u_f_sigma_two_A_id}

Let $1< p<+\infty$ if $d=2$ and $2d/(d+2)\leq p<+\infty$ otherwise. Assume~\eqref{hyp_H0_debut}--\eqref{hyp_H_11}. For all~$f\in L^p(\Omega)$ such that $\dis \int_\Omega f = 0$, there exists a unique $v\in H^1(\Omega)$ solution to
$$
\left\{
\begin{aligned}
&-\textnormal{div }(\nabla v + bv)=f \ \ \text{in $\Omega$}, \qquad \fint_\Omega v =1, 
\\
&(\nabla v + b v)\cdot n = b\cdot n - \fint_{\partial\Omega}b\cdot n \ \ \text{on $\partial\Omega$}.
\end{aligned}
\right.
$$
This solution belongs to $W^{2,p}(\Omega)$ and satisfies
\begin{equation}
\label{eq:estimee-u-sigma}
\| v-\sigma_2^0 \|_{W^{2,p}(\Omega)}\leq C\|f\|_{L^p(\Omega)},
\end{equation}
where $\sigma_2^0$ is the solution to~\eqref{sig_helmholtz_hodge} and $C$ is a constant depending on $p$, $\Omega$ and $\|b\|_{\text{Lip}(\overline{\Omega})}$.
\end{proposition}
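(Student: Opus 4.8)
The plan is to mimic closely the structure of the proof of Proposition~\ref{prop_u_f_A_id}(i), the only difference being the inhomogeneous Neumann datum $g := b\cdot n - \fint_{\partial\Omega} b\cdot n$ in place of the homogeneous one. First I would introduce $w = v - \sigma_2^0$, where $\sigma_2^0$ is the solution to~\eqref{sig_helmholtz_hodge} provided by Proposition~\ref{prop_sigma_two_A_id}. By linearity, $w$ solves
\begin{equation*}
\left\{
\begin{aligned}
&-\textnormal{div }(\nabla w + bw)=f \ \ \text{in $\Omega$}, \qquad \fint_\Omega w=0,
\\
&(\nabla w + b w)\cdot n = 0 \ \ \text{on $\partial\Omega$},
\end{aligned}
\right.
\end{equation*}
that is, the homogeneous-Neumann adjoint problem~\eqref{pb_adjoint_neumann_homog_g}. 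Hence the estimate $\| w \|_{W^{2,p}(\Omega)} \leq C \|f\|_{L^p(\Omega)}$ is exactly what was established in the proof of Proposition~\ref{prop_u_f_A_id}(i), and $v = \sigma_2^0 + w \in W^{2,p}(\Omega)$ with $\| v - \sigma_2^0 \|_{W^{2,p}(\Omega)} = \| w \|_{W^{2,p}(\Omega)} \leq C \|f\|_{L^p(\Omega)}$, which is~\eqref{eq:estimee-u-sigma}.

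For existence and uniqueness, I would check the compatibility condition for~\eqref{pb_adjoint_neumann_homog_g} in the weak sense: testing against $\varphi \equiv 1$ in Lemma~\ref{theorem_droniou}(ii) with $g$ the functional $\varphi \mapsto \int_\Omega f \varphi$ requires $\int_\Omega f = 0$, which is precisely the standing hypothesis. Lemma~\ref{theorem_droniou}(ii) then gives a unique $w \in H^1_{\int = 0}(\Omega)$ with $\|w\|_{H^1(\Omega)} \leq C \|f\|_{(H^1(\Omega))'} \leq C \|f\|_{L^p(\Omega)}$, the last inequality by the Sobolev embedding for the admissible range of $p$. Uniqueness of $v$ follows from the uniqueness of $w$ together with the constraint $\fint_\Omega v = 1$ (which forces $\fint_\Omega w = 0$, pinning down the additive constant that Lemma~\ref{theorem_droniou}(ii) leaves free), since the kernel of the adjoint operator with the homogeneous Neumann condition is spanned by $\sigma_1$, which is not of mean zero—wait, more simply, the solution set of the homogeneous-Neumann adjoint problem is $w + \R\sigma_1$ by Lemma~\ref{theorem_droniou}(ii), and imposing $\fint_\Omega w = 0$ selects a unique representative because $\fint_\Omega \sigma_1 \neq 0$.

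The $W^{2,p}$ bootstrap itself then needs no new idea: rewrite $w$ as the solution of a pure Neumann Laplacian, $-\Delta w = \textnormal{div}(bw) + f$ in $\Omega$, $\nabla w \cdot n = -(b\cdot n) w$ on $\partial\Omega$, verify the compatibility $\int_\Omega (\textnormal{div}(bw)+f) + \int_{\partial\Omega} (-(b\cdot n)w) = \int_\Omega f = 0$ (integrating $\textnormal{div}(bw)$ by parts), and apply Lemma~\ref{theorem_girault}, distinguishing the cases $1 < p \leq 2$ and $p > 2$ exactly as in the proof of Proposition~\ref{prop_u_f_A_id}(i), using $b \in W^{1,\infty}(\Omega)$, the Leibniz rule, Hölder's inequality, and the Sobolev trace theorem, with the iteration on the Sobolev exponent when $d > 2$. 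I do not anticipate a genuine obstacle here: the new boundary term $g$ is absorbed entirely into $\sigma_2^0$ at the outset via Proposition~\ref{prop_sigma_two_A_id}, and everything downstream is the homogeneous-Neumann analysis already carried out. The only point requiring a line of care is that $\sigma_2^0$ satisfies the normalization $\fint_\Omega \sigma_2^0 = 1$ while the target $v$ also satisfies $\fint_\Omega v = 1$, so that $w$ is indeed of mean zero and the choice of additive constant is consistent throughout — this is the bookkeeping that makes the difference $v - \sigma_2^0$ land exactly in the space where the quantitative estimate was proved.
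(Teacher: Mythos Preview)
Your proposal is correct and follows essentially the same route as the paper, which explicitly states that the proof ``mimics that of Proposition~\ref{prop_u_f_A_id}'': subtract off $\sigma_2^0$ so that the boundary datum becomes homogeneous, reduce to the problem~\eqref{pb_adjoint_neumann_homog_g} already treated, and invoke the $W^{2,p}$ bootstrap via Lemma~\ref{theorem_girault}. The bookkeeping you highlight (matching normalizations so that $w$ has mean zero, and uniqueness via Lemma~\ref{theorem_droniou}(ii)) is exactly the content the paper leaves implicit.
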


Of course, all what matters in the above estimation~\eqref{eq:estimee-u-sigma} is that $\displaystyle \fint_\Omega v = \fint_\Omega \sigma_2^0$ and not the actual value of that integral.

\medskip

We now eventually obtain a solution to~\eqref{sigma_2positive}, modifying $\sigma_2^0$ in a suitable manner. This is the purpose of our next result. 

\begin{corollary}
\label{prop_sigma_2}

Let $\sigma_2^0$ (resp. $\sigma_1$) be the solution to~\eqref{sig_helmholtz_hodge} (resp. to~\eqref{invariant_measure_droniou}--\eqref{invariant_measure_droniou-int-1}). Under assumptions~\eqref{hyp_H0_debut}--\eqref{hyp_H_11}, the set of solutions to~\eqref{sigma_2positive} reads as
$$
\left\{\sigma_2^0+\kappa \, \sigma_1, \quad \kappa\in\R \text{ such that } \inf_\Omega \ (\sigma_2^0+\kappa \, \sigma_1)>0\right\}.
$$
\end{corollary}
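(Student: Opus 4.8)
The plan is to show the two inclusions. For the easy direction, take any $\sigma_2^0 + \kappa\,\sigma_1$ with $\kappa\in\R$ such that $\inf_\Omega(\sigma_2^0+\kappa\,\sigma_1)>0$, and check it solves~\eqref{sigma_2positive}. By linearity of the divergence operator, $-\textnormal{div}(\nabla(\sigma_2^0+\kappa\sigma_1) + b(\sigma_2^0+\kappa\sigma_1)) = 0$ in $\Omega$ follows from the interior equations in~\eqref{sig_helmholtz_hodge} and~\eqref{invariant_measure_droniou}. Likewise the Neumann trace adds up: $(\nabla\sigma_2^0+b\sigma_2^0)\cdot n = b\cdot n - \fint_{\partial\Omega} b\cdot n$ from~\eqref{sig_helmholtz_hodge}, while $(\nabla\sigma_1 + b\sigma_1)\cdot n = 0$ from~\eqref{invariant_measure_droniou}, so the sum has the boundary data required in~\eqref{sigma_2positive}. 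The positivity condition is exactly the constraint imposed on $\kappa$, so $\sigma_2^0+\kappa\sigma_1$ indeed lies in the solution set of~\eqref{sigma_2positive}.

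For the reverse inclusion, let $\sigma$ be any solution to~\eqref{sigma_2positive} and set $w = \sigma - \sigma_2^0$. Subtracting the interior equations and the Neumann boundary conditions of~\eqref{sigma_2positive} and~\eqref{sig_helmholtz_hodge}, we get $-\textnormal{div}(\nabla w + bw)=0$ in $\Omega$ with $(\nabla w + bw)\cdot n = 0$ on $\partial\Omega$, i.e. $w$ is an $H^1(\Omega)$ solution of the homogeneous adjoint problem~\eqref{invariant_measure_droniou} (without normalization). By Lemma~\ref{theorem_droniou}(i), the space of such solutions is one-dimensional and spanned by $\sigma_1$ (the connectedness of $\Omega$ assumed in~\eqref{hyp_H_11} is what gives uniqueness up to a constant here, and $\sigma_1\ne 0$ since $\fint_\Omega\sigma_1=1$). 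Hence $w = \kappa\,\sigma_1$ for some $\kappa\in\R$, i.e. $\sigma = \sigma_2^0 + \kappa\,\sigma_1$. Finally, the constraint $\inf_\Omega\sigma>0$ from~\eqref{sigma_2positive} translates into $\inf_\Omega(\sigma_2^0+\kappa\,\sigma_1)>0$, so $\sigma$ belongs to the displayed set.

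There is essentially no obstacle here: the argument is a routine superposition/uniqueness argument for a linear elliptic problem with a one-dimensional kernel. The only point requiring a modicum of care is the regularity bookkeeping — to subtract the weak formulations and invoke Lemma~\ref{theorem_droniou}(i), one should note that $\sigma_2^0\in H^1(\Omega)$ (Proposition~\ref{prop_sigma_two_A_id}) and any solution $\sigma$ to~\eqref{sigma_2positive} is sought in $H^1(\Omega)$, so that $w\in H^1(\Omega)$ and Lemma~\ref{theorem_droniou}(i) applies verbatim. One may also remark in passing that the set of admissible $\kappa$ is a nonempty open interval of $\R$ (nonempty because $\sigma_2^0+\kappa\sigma_1 \ge \inf_\Omega\sigma_2^0 + \kappa\,\inf_\Omega\sigma_1$ and $\inf_\Omega\sigma_1>0$ by~\eqref{invariant_measure_droniou-pos}, so large $\kappa$ works), which confirms that~\eqref{sigma_2positive} does admit solutions.
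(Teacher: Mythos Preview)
Your proof is correct and follows essentially the same approach as the paper: subtract $\sigma_2^0$ from an arbitrary solution to obtain an element of the one-dimensional kernel described in Lemma~\ref{theorem_droniou}(i), hence a multiple of $\sigma_1$, and check the converse by linearity. The paper's argument is more terse (it calls the proof ``immediate'' and the converse ``straightforward''), but the substance is identical; your additional remark on nonemptiness of the admissible set of $\kappa$ is a useful complement not spelled out in the paper.
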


The proof of Corollary~\ref{prop_sigma_2} is immediate. Let $\sigma$ be a solution to~\eqref{sigma_2positive}. Then $\sigma -\sigma_2^0$ is a solution to~\eqref{invariant_measure_droniou}, and we are then in position to use Lemma~\ref{theorem_droniou}, noticing that the necessary value of $\kappa$ is $\dis \kappa=\fint_\Omega\sigma -1$. The converse inclusion is straightforward. 

\medskip

We finally define $\sigma_2$ solution to~\eqref{sigma_2positive} as 
\begin{equation}
\label{eq:def_sigma2_enfin}
\sigma_2 = \sigma_2^0+\kappa^\star \, \sigma_1,
\end{equation}
where 
$$
\kappa^\star = 1+ \inf \left\{ \kappa\in\R \text{ such that } \inf_\Omega \, (\sigma_2^0+\kappa \, \sigma_1)>0 \right\}.
$$ 
Of course this is an arbitrary choice. In practice, some suitable $\kappa$ (and thus $\sigma_2$) will be used. The numerical analysis will account for this. 
 
\section{Discretization and numerical analysis}
\label{sec:discretization}

Practically, we implement a finite element Galerkin approximation $u_H$ of the solution $u$ to the coercive equivalent modified problem~\eqref{eq_modified_problem}. Since, in most of the cases, the invariant measure $\sigma$ is not known analytically, we first seek a Galerkin approximation of~$\sigma$ (we will describe later in this article how this approximation is obtained, for each of the two cases $\sigma \equiv \sigma_1$ and $\sigma \equiv \sigma_2$). We denote by $H$ and $h$ the mesh sizes for these two approximations, respectively. We have in mind that $H \gg h$, in order to be as efficient as possible. This is made possible by the uniform well-posedness of the discrete problem in $u_H$ (see Proposition~\ref{prop_petrov_galerkin_sigma_h} below). In practice, we will observe that we may indeed choose~$H$ one order of magnitude larger, say, than~$h$.

\medskip

We begin with making precise the approximation $u_H$, for a given approximation $\sigma_h$ of $\sigma$. The discrete variational formulation reads as:
\begin{equation}
\text{Find $u_H \in U_H$ such that, for all $v_H\in U_H$, \quad $a_{\text{ss}}(\sigma_h;u_H,v_H) = F(\sigma_h v_H)$},
\label{varf_sigmah_symmetric}
\end{equation}
where $F$ is defined by~\eqref{def_F} and $a_{\text{ss}}$ is defined by~\eqref{eq:faible-uH} below.

We assume throughout this section that the discretization space $U_H$ in~\eqref{varf_sigmah_symmetric} is a subspace of $H^1_0(\Omega)$. In our actual implementation, the above formulation will be possibly slightly modified to account for a stabilization performed when computing $\sigma_h$. This will be made precise in the next section, in formulae~\eqref{eq:Bbarre}--\eqref{eq:formulation-modifiee-barre}. As will be mentioned there, this potential modification does not modify the numerical analysis we perform in the present section. 

In the left hand side of~\eqref{varf_sigmah_symmetric}, we have denoted
\begin{align}
a_{\text{ss}}(\sigma_h;u_H,v_H) &= \int_\Omega \sigma_h\nabla u_H\cdot\nabla v_H + B_h\cdot\frac{(\nabla u_H) v_H -(\nabla v_H)u_H }{2},
\label{eq:faible-uH}
\\
B_h &=\nabla \sigma_h +\sigma_h\,b.
\label{eq:Bh}
\end{align}
The classical skew-symmetric formulation of the advection part is adopted in order to ensure that $\displaystyle a_{\text{ss}}(\sigma_h;u_H,u_H)= \int_\Omega \sigma_h\,|\nabla u_H|^2$ and thus that the problem is coercive at the discrete level whenever $\sigma_h$ is positive and bounded away from zero. A simple application of standard arguments therefore shows the following well-posedness of the discretization. The point is, this well-posedness is uniform in the mesh size $H$, a property that is of major practical interest.

\begin{proposition}
\label{prop_petrov_galerkin_sigma_h}
Assume~\eqref{hyp_H0_debut}. Consider $\sigma_h$ an approximation of $\sigma\in H^1(\Omega)$ such that $\displaystyle \inf_\Omega \sigma_h >0$. Then $a_{\text{ss}}(\sigma_h;\cdot,\cdot)$ is coercive in $H^1_0(\Omega)$ and~\eqref{varf_sigmah_symmetric} is well-posed, uniformly in $H$.
\end{proposition}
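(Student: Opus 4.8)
The plan is to verify the two hypotheses of the Lax--Milgram theorem for the bilinear form $a_{\text{ss}}(\sigma_h;\cdot,\cdot)$ on the Hilbert space $H^1_0(\Omega)$, together with continuity of the right-hand side, and then to invoke C\'ea-type uniformity to get well-posedness on any conforming subspace $U_H \subset H^1_0(\Omega)$ with constants independent of $H$. Concretely, set $\underline{\sigma}_h = \inf_\Omega \sigma_h > 0$ and $\overline{\sigma}_h = \|\sigma_h\|_{L^\infty(\Omega)}$, and note that since $\sigma_h \in H^1(\Omega)$ is a finite element function on a fixed mesh of size $h$, we have $B_h = \nabla \sigma_h + \sigma_h b \in L^\infty(\Omega)^d$, with $\|B_h\|_{L^\infty(\Omega)}$ some finite quantity; call it $M_h$.

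First I would establish coercivity. Taking $v_H = u_H$ in~\eqref{eq:faible-uH}, the skew-symmetric advection term vanishes identically, since $B_h \cdot \big((\nabla u_H) u_H - (\nabla u_H) u_H\big)/2 = 0$, leaving
\[
a_{\text{ss}}(\sigma_h;u_H,u_H) = \int_\Omega \sigma_h \, |\nabla u_H|^2 \geq \underline{\sigma}_h \, \|\nabla u_H\|_{L^2(\Omega)}^2 \geq \underline{\sigma}_h \, C_P \, \|u_H\|_{H^1(\Omega)}^2,
\]
where $C_P > 0$ comes from the Poincar\'e inequality on the bounded domain $\Omega$ (valid on all of $H^1_0(\Omega)$, hence on $U_H$). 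This gives coercivity with a constant $\underline{\sigma}_h C_P$ that depends on $\sigma_h$ and $\Omega$ but \emph{not} on $H$, which is the crux of the claimed uniformity.

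Next I would check continuity of $a_{\text{ss}}(\sigma_h;\cdot,\cdot)$: for $u_H, v_H \in H^1_0(\Omega)$, the Cauchy--Schwarz and H\"older inequalities give
\[
|a_{\text{ss}}(\sigma_h;u_H,v_H)| \leq \overline{\sigma}_h \, \|\nabla u_H\|_{L^2} \|\nabla v_H\|_{L^2} + M_h \big( \|\nabla u_H\|_{L^2}\|v_H\|_{L^2} + \|\nabla v_H\|_{L^2}\|u_H\|_{L^2} \big) \leq \big( \overline{\sigma}_h + 2 M_h \big) \|u_H\|_{H^1}\|v_H\|_{H^1},
\]
and similarly the linear form $v_H \mapsto F(\sigma_h v_H) = \langle f, \sigma_h v_H\rangle$ is bounded by $\|f\|_{H^{-1}(\Omega)} \, \|\sigma_h v_H\|_{H^1(\Omega)} \leq \|f\|_{H^{-1}(\Omega)} \, \widetilde{C}_h \, \|v_H\|_{H^1(\Omega)}$, where $\widetilde{C}_h$ is the continuity constant of multiplication by the $H^1 \cap L^\infty$ function $\sigma_h$ (finite because $\sigma_h$ and $\nabla\sigma_h$ are bounded on $\Omega$, by the same Leibniz estimate used for condition~(C3)). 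Lax--Milgram then yields, for each conforming $U_H \subset H^1_0(\Omega)$, a unique $u_H \in U_H$ solving~\eqref{varf_sigmah_symmetric}, with $\|u_H\|_{H^1(\Omega)} \leq (\underline{\sigma}_h C_P)^{-1} \widetilde{C}_h \|f\|_{H^{-1}(\Omega)}$; since none of $\underline{\sigma}_h$, $C_P$, $\widetilde{C}_h$ depends on the choice of $U_H$ (in particular not on its mesh size $H$), the well-posedness is uniform in $H$.

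There is no serious obstacle here; the statement is a routine consequence of Lax--Milgram once one observes that the skew-symmetric formulation kills the advection contribution on the diagonal. The only mild point worth being careful about is ensuring that the various $h$-dependent constants ($\overline{\sigma}_h$, $M_h$, $\widetilde{C}_h$) are genuinely finite --- which holds because $\sigma_h$ lives in a fixed finite element space and is thus bounded with bounded gradient --- and, crucially, that they play no role in the \emph{lower} bound, so that the inf-sup constant is controlled solely by $\inf_\Omega \sigma_h$ and the Poincar\'e constant of $\Omega$.
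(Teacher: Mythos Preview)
Your proof is correct and follows exactly the approach the paper has in mind: the paper does not even write out a formal proof, but simply notes (just before the proposition) that the skew-symmetric formulation gives $a_{\text{ss}}(\sigma_h;u_H,u_H)=\int_\Omega \sigma_h\,|\nabla u_H|^2$, from which coercivity and hence Lax--Milgram well-posedness follow by ``standard arguments''. Your write-up is a faithful and correct elaboration of precisely those standard arguments, with the same key point that the coercivity constant $\underline{\sigma}_h\,C_P$ is independent of the subspace $U_H$.
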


We now proceed with the numerical analysis of~\eqref{varf_sigmah_symmetric}. 

\subsection{Numerical analysis in the case when the invariant measure is analytically known}

To begin with, we temporarily assume that we know $\sigma$ analytically, meaning we replace $B_h$ given by~\eqref{eq:Bh} by $B=\nabla\sigma+\sigma\,b$ in the second term of~\eqref{eq:faible-uH} (and we likewise replace $\sigma_h$ by $\sigma$ in the first term of~\eqref{eq:faible-uH}). Otherwise stated, we replace $\sigma_h$ by~$\sigma$ in~\eqref{varf_sigmah_symmetric}.

\begin{proposition}
\label{prop_convergence_sigma_exact}
Assume~\eqref{hyp_H0_debut}--\eqref{hyp_H_11} and that $\sigma_h \equiv \sigma$ in~\eqref{varf_sigmah_symmetric}. Let $u$ be the solution to~\eqref{eq_modified_problem} and $u_H$ be the solution to~\eqref{varf_sigmah_symmetric}. Then, for any $p>d$, we have the estimate
\begin{equation}
\|u-u_H\|_{H^1(\Omega)} \leq C \, \left[ \frac{\|\sigma\|_{L^\infty(\Omega)} + \|\nabla\sigma + \sigma\,b\|_{L^p(\Omega)}}{\inf_\Omega \sigma} \right] \inf_{v_H\in U_H} \|u-v_H\|_{H^1(\Omega)},
\label{eq:u-uH}
\end{equation}
with a constant $C$ that only depends on $\Omega$ and $p$.
\end{proposition}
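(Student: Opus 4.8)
The statement is a Céa-type estimate for the coercive modified problem~\eqref{eq_modified_problem}, so the plan is to run the standard Céa argument but track carefully how the coercivity and continuity constants depend on $\sigma$. The key point is that $U_H \subset H^1_0(\Omega)$, so the discretization is conforming, and the bilinear form $a_{\text{ss}}(\sigma;\cdot,\cdot)$ appearing in~\eqref{varf_sigmah_symmetric} agrees with the exact bilinear form associated to~\eqref{eq_modified_problem} up to the skew-symmetrization of the advective part: since $B=\nabla\sigma+\sigma b$ is divergence-free with $B\cdot n=0$ on $\partial\Omega$ (by~\eqref{invariant_measure_droniou}, or by whichever boundary condition $\sigma$ satisfies together with the Dirichlet condition on the test functions), we have $\int_\Omega B\cdot(\nabla u)\,v = -\int_\Omega B\cdot(\nabla v)\,u$ for $u,v\in H^1_0(\Omega)$, hence $a_{\text{ss}}(\sigma;u,v)$ coincides with the weak form of~\eqref{eq_modified_problem} on $H^1_0(\Omega)\times H^1_0(\Omega)$. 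In particular Galerkin orthogonality holds: $a_{\text{ss}}(\sigma;u-u_H,v_H)=0$ for all $v_H\in U_H$.

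First I would record coercivity: $a_{\text{ss}}(\sigma;w,w)=\int_\Omega \sigma|\nabla w|^2 \geq (\inf_\Omega\sigma)\,\|\nabla w\|_{L^2}^2 \geq C_\Omega (\inf_\Omega\sigma)\,\|w\|_{H^1}^2$ by Poincaré, using $\inf_\Omega\sigma>0$ from~\eqref{invariant_measure_droniou-pos} (guaranteed by Proposition~\ref{prop:sigma1}). Next, continuity: for $w,v\in H^1_0(\Omega)$,
\[
|a_{\text{ss}}(\sigma;w,v)| \leq \|\sigma\|_{L^\infty}\|\nabla w\|_{L^2}\|\nabla v\|_{L^2} + \|B\|_{L^p}\,\bigl(\|\nabla w\|_{L^2}\|v\|_{L^{p'}} + \|w\|_{L^{p'}}\|\nabla v\|_{L^2}\bigr),
\]
where $B=\nabla\sigma+\sigma b$ and $1/p+1/2+1/p'=1$. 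The restriction $p>d$ is exactly what makes $p'<\frac{2d}{d-2}$ (and $p'<\infty$ when $d=2$), so the Sobolev embedding $H^1_0(\Omega)\hookrightarrow L^{p'}(\Omega)$ holds with a constant depending only on $\Omega$ and $p$; thus $|a_{\text{ss}}(\sigma;w,v)|\leq C_\Omega\,(\|\sigma\|_{L^\infty}+\|B\|_{L^p})\,\|w\|_{H^1}\|v\|_{H^1}$. Note that $\sigma\in W^{2,p}(\Omega)\cap\mathcal C^1(\overline\Omega)$ from Proposition~\ref{prop:sigma1} makes both $\|\sigma\|_{L^\infty}$ and $\|B\|_{L^p}$ finite, so the right-hand side of~\eqref{eq:u-uH} is meaningful.

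With these two bounds in hand, the conclusion is the textbook Céa estimate: for any $v_H\in U_H$, write $u-u_H = (u-v_H) + (v_H-u_H)$, apply coercivity to $w:=u_H-v_H\in U_H$, use Galerkin orthogonality $a_{\text{ss}}(\sigma;u-u_H,w)=0$ to get $a_{\text{ss}}(\sigma;w,w)=a_{\text{ss}}(\sigma;v_H-u,w)$, then apply continuity, divide by $\|w\|_{H^1}$, and use the triangle inequality once more. This yields $\|u-u_H\|_{H^1}\leq \bigl(1+\frac{C_{\text{cont}}}{C_{\text{coer}}}\bigr)\|u-v_H\|_{H^1}$ with $C_{\text{cont}}/C_{\text{coer}} = C_\Omega\,(\|\sigma\|_{L^\infty}+\|B\|_{L^p})/\inf_\Omega\sigma$, and taking the infimum over $v_H\in U_H$ gives~\eqref{eq:u-uH}. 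The only subtle point, and the one I would be careful about, is the reduction to the exact weak form: one must verify that the skew-symmetric term integrates by parts cleanly on $H^1_0(\Omega)$, i.e.\ that the boundary term and the divergence term both vanish — the former from the Dirichlet condition on $v_H$ and on $u$, the latter from $\text{div}\,B=0$; this is where property~(C1) and the divergence-free structure of $B$ are genuinely used. Everything else is routine.
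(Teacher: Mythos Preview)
Your proposal is correct and follows essentially the same approach as the paper: identify $a_{\text{ss}}(\sigma;\cdot,\cdot)$ with the exact weak form of~\eqref{eq_modified_problem} via $\mathrm{div}\,B=0$ (so that Galerkin orthogonality holds), establish coercivity from $\inf_\Omega\sigma>0$, obtain continuity via H\"older and the Sobolev embedding $H^1_0(\Omega)\hookrightarrow L^{p'}(\Omega)$ (which is exactly where $p>d$ enters), and conclude by C\'ea. The only cosmetic difference is that the paper writes the continuity bound using a single advection term $\|B\|_{L^p}\|\nabla u\|_{L^2}\|v\|_{L^q}$ (having already used the identity $\int_\Omega B\cdot(\nabla u)v = -\int_\Omega B\cdot(\nabla v)u$), whereas you keep both pieces of the skew-symmetric term; this changes nothing in the final estimate.
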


Note that, in view of Lemma~\ref{theorem_perthame}, the assumptions~\eqref{hyp_H0_debut}--\eqref{hyp_H_11} imply that $\sigma_1$ satisfies the conditions~\textnormal{(C1)}, \textnormal{(C2)} and~\textnormal{(C3)}. Likewise, in view of~\eqref{eq:def_sigma2_enfin}, Lemma~\ref{theorem_perthame} and Proposition~\ref{prop_sigma_two_A_id}, $\sigma_2$ satisfies the conditions~\textnormal{(C1)}, \textnormal{(C2)} and~\textnormal{(C3)}. In particular, for both choices $\sigma \equiv \sigma_1$ and $\sigma \equiv \sigma_2$, we have $\dis \inf_\Omega \sigma > 0$. 

\begin{proof}
As $\text{div} \, B = 0$, we note that the problem
$$
\text{Find $u\in H^1_0(\Omega)$ such that, for all $v\in H^1_0(\Omega)$, \quad $a_{\text{ss}}(\sigma;u,v) = F(\sigma \, v)$}
$$
is a variational formulation of the modified problem~\eqref{eq_modified_problem}. Since $\sigma_h \equiv \sigma$, Problem~\eqref{varf_sigmah_symmetric} is the Galerkin approximation of~\eqref{eq_modified_problem} in $U_H$. We note that the bilinear form $a_{\text{ss}}(\sigma;\cdot,\cdot)$ is coercive, while, for all $u$ and $v$ in $H^1_0(\Omega)$, we have
\begin{align}
a_{\text{ss}}(\sigma;u,v) &\leq \|\sigma\|_{L^\infty(\Omega)}\|\nabla u\|_{L^2(\Omega)}\|\nabla v\|_{L^2(\Omega)} 
+ \|\nabla\sigma+\sigma\,b\|_{L^p(\Omega)}\|\nabla u\|_{L^2(\Omega)}\,\|v\|_{L^q(\Omega)}
\nonumber
\\
&\leq \left(\|\sigma\|_{L^\infty(\Omega)} + C_{p,\Omega} \, \|\nabla\sigma+\sigma\,b\|_{L^p(\Omega)}\right)\|u\|_{H^1(\Omega)}\|v\|_{H^1(\Omega)},
\label{eq:const_p_omega}
\end{align}
where $1/p+1/q=1/2$ and, for the Sobolev embedding to hold, $q<2d/(d-2)$ which amounts to $p>d$. Classical results of numerical analysis of coercive problems then allow to conclude, using the C\'ea lemma.
\end{proof}

The following corollary makes precise how the $L^p(\Omega)$ norm in the right hand side of~\eqref{eq:u-uH} may be bounded from above by the $H^1(\Omega)$ norm of $\sigma$, because of the particular properties of $\sigma$. When the discretized approximation~$\sigma_h$ is reinstated in place of $\sigma$, this part of the argument will become substantially more difficult. We will return to this later.
 
\begin{corollary}
In addition to the assumptions of Proposition~\ref{prop_convergence_sigma_exact}, we assume that the ambient dimension is $d=2$ or $3$. Then, we have
$$
\|u-u_H\|_{H^1(\Omega)}
\leq C \left( \frac{\|\sigma\|_{L^\infty(\Omega)} + \|\sigma\|_{H^1(\Omega)} + C_\sigma}{\inf_\Omega \sigma} \right) \inf_{v_H\in U_H} \|u-v_H\|_{H^1(\Omega)},
$$
where $C$ is a constant independent of $H$ and 
$$
C_\sigma = 
\left\{
\begin{aligned} 
&0 \quad\text{if $\sigma \equiv \sigma_1$},
\\
&\left\|b\cdot n - \fint_{\partial\Omega}b\cdot n\right\|_{H^{1/2}(\partial\Omega)}\quad\text{if $\sigma \equiv \sigma_2$}.
\end{aligned}
\right.
$$
\end{corollary}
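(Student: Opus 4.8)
The plan is to start from the estimate~\eqref{eq:u-uH} of Proposition~\ref{prop_convergence_sigma_exact} and to bound the term $\|\nabla\sigma + \sigma\,b\|_{L^p(\Omega)}$ in its right-hand side by $C\,(\|\sigma\|_{H^1(\Omega)} + C_\sigma)$ for a conveniently fixed exponent $p>d$. Since here $d\in\{2,3\}$, we may take $p=4$ (in fact any $p$ in the interval $(d,\,2d/(d-2)]$, which is $(2,\infty)$ when $d=2$ and $(3,6]$ when $d=3$). First I would write
$$
\|\nabla\sigma + \sigma\,b\|_{L^p(\Omega)} \leq \|\nabla\sigma\|_{L^p(\Omega)} + \|b\|_{L^\infty(\Omega)}\,\|\sigma\|_{L^p(\Omega)},
$$
and, using the Sobolev embeddings $H^1(\Omega)\hookrightarrow L^p(\Omega)$ (applied to $\sigma$) and $H^2(\Omega)\hookrightarrow W^{1,p}(\Omega)$ (applied to $\sigma$, i.e. to each component of $\nabla\sigma$ via $H^1\hookrightarrow L^p$), both valid for this range of $p$ when $d\leq 3$, I would obtain
$$
\|\nabla\sigma + \sigma\,b\|_{L^p(\Omega)} \leq C\,\big(\|\sigma\|_{H^2(\Omega)} + \|\sigma\|_{H^1(\Omega)}\big) \leq C\,\|\sigma\|_{H^2(\Omega)}.
$$
It then remains to establish the $H^2$ bound $\|\sigma\|_{H^2(\Omega)} \leq C\,(\|\sigma\|_{H^1(\Omega)} + C_\sigma)$.

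To that end I would repeat the argument used in the proof of Proposition~\ref{prop:sigma1}, now with the exponent $p=2$. In both cases $\sigma\equiv\sigma_1$ and $\sigma\equiv\sigma_2$, the function $\sigma$ solves
$$
-\Delta\sigma = \textnormal{div}(b\sigma) \ \text{in $\Omega$}, \qquad \nabla\sigma\cdot n = \psi - (b\cdot n)\sigma \ \text{on $\partial\Omega$},
$$
where $\psi = 0$ if $\sigma\equiv\sigma_1$ and $\psi = b\cdot n - \fint_{\partial\Omega}b\cdot n$ if $\sigma\equiv\sigma_2$; in particular $\psi$ has zero mean on $\partial\Omega$ in both cases, so that the compatibility relation $\int_\Omega \textnormal{div}(b\sigma) + \int_{\partial\Omega}\big(\psi - (b\cdot n)\sigma\big) = \int_{\partial\Omega}\psi = 0$ required by Lemma~\ref{theorem_girault} holds. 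Since $\Omega$ is connected and of class $\mathcal{C}^2$ (hence $\mathcal{C}^{1,1}$) under assumption~\eqref{hyp_H_11}, Lemma~\ref{theorem_girault} with $p=2$ yields
$$
\left\|\sigma - \fint_\Omega\sigma\right\|_{H^2(\Omega)} \leq C\left(\|\textnormal{div}(b\sigma)\|_{L^2(\Omega)} + \| \psi - (b\cdot n)\sigma \|_{H^{1/2}(\partial\Omega)}\right).
$$
Using that $b$ is Lipschitz on $\overline{\Omega}$, the Leibniz rule and Hölder's inequality give $\|\textnormal{div}(b\sigma)\|_{L^2(\Omega)}\leq C\,\|\sigma\|_{H^1(\Omega)}$; the trace theorem together with a multiplier estimate in $H^{1/2}(\partial\Omega)$ (using that $b\cdot n$ is Lipschitz on $\partial\Omega$, the unit normal $n$ being $\mathcal{C}^1$) give $\|(b\cdot n)\sigma\|_{H^{1/2}(\partial\Omega)}\leq C\,\|\sigma\|_{H^1(\Omega)}$, while $\|\psi\|_{H^{1/2}(\partial\Omega)} = C_\sigma$ by definition of $C_\sigma$. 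Finally $\left|\fint_\Omega\sigma\right| \leq |\Omega|^{-1/2}\|\sigma\|_{L^2(\Omega)}\leq C\,\|\sigma\|_{H^1(\Omega)}$ by the Cauchy--Schwarz inequality, so altogether $\|\sigma\|_{H^2(\Omega)}\leq C\,(\|\sigma\|_{H^1(\Omega)}+C_\sigma)$. Substituting this bound into~\eqref{eq:u-uH} gives the claimed estimate, with a constant $C$ independent of $H$.

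The only step that is not entirely routine is the boundary bound $\|(b\cdot n)\sigma\|_{H^{1/2}(\partial\Omega)}\leq C\,\|\sigma\|_{H^1(\Omega)}$: it rests on the continuity of the trace operator $H^1(\Omega)\to H^{1/2}(\partial\Omega)$ and on the boundedness of multiplication by a Lipschitz function on $H^{1/2}(\partial\Omega)$, both available under assumption~\eqref{hyp_H_11}. Everything else is a direct combination of the Sobolev embeddings, where the restriction $d\leq 3$ is precisely what makes the embeddings of $H^1$ into $L^p$ and of $H^2$ into $W^{1,p}$ hold for an admissible $p>d$, together with the elliptic regularity already exploited in the proof of Proposition~\ref{prop:sigma1}.
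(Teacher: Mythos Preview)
Your proof is correct, but it takes a different route from the paper's. The paper works directly with the vector field $B=\nabla\sigma+\sigma\,b$ and invokes the div--curl regularity estimate of \cite[Corollary~3.7]{girault1986finite}:
\[
\|B\|_{H^1(\Omega)} \leq C\big(\|B\|_{L^2(\Omega)}+\|\textnormal{div}\,B\|_{L^2(\Omega)}+\|\textnormal{curl}\,B\|_{L^2(\Omega)}+\|B\cdot n\|_{H^{1/2}(\partial\Omega)}\big),
\]
then observes that $\textnormal{div}\,B=0$ by definition of $\sigma$, that $\textnormal{curl}\,B=\textnormal{curl}(\sigma b)$ is controlled by $\|\sigma\|_{H^1}$, and that $B\cdot n$ on $\partial\Omega$ is exactly the boundary datum encoding $C_\sigma$. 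Sobolev embedding of $H^1$ into $L^p$ for $d<p\le 2d/(d-2)$ finishes the argument. Your approach instead splits $B$ into $\nabla\sigma$ and $\sigma b$, which forces you to control $\|\nabla\sigma\|_{L^p}$ and hence to pass through an explicit $H^2$ bound on $\sigma$, obtained by recasting the invariant-measure equation as a Neumann Laplacian and applying Lemma~\ref{theorem_girault}. This is essentially a reprise of the proof of Proposition~\ref{prop:sigma1} with $p=2$, so it is natural and self-contained; the paper's route is shorter and exploits the structural property $\textnormal{div}\,B=0$ directly, without isolating the $H^2$ regularity of $\sigma$.
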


\begin{proof}
Using~\cite[Corollary 3.7]{girault1986finite}, we know that $B=\nabla\sigma+\sigma\,b$ satisfies
$$
\|B\|_{H^1(\Omega)} \leq C \left( \|B\|_{L^2(\Omega)} + \|\text{div} \, B \|_{L^2(\Omega)} + \|\text{curl}\,B\|_{L^2(\Omega)} + \|B\cdot n\|_{H^{1/2}(\partial\Omega)}\right).
$$
We notice that, on the one hand, $\text{div}\,B=0$, by definition of $\sigma$, while, on the other hand, $\text{curl}\,B=\text{curl}\,(\sigma\,b)$, thus, given the Lipschitz regularity of~$b$, $\|\text{curl}\,B\|_{L^2(\Omega)} \leq C\,\|\sigma\|_{H^1(\Omega)}$. We therefore obtain 
$$
\|B\|_{H^1(\Omega)} \leq C\left(\|\sigma\|_{H^1(\Omega)} +\|B\cdot n\|_{H^{1/2}(\partial\Omega)}\right). 
$$
Finally, because $d\leq 3$, we may find $p$ such that $d<p\leq 2d/(d-2)$, thus $\|B\|_{L^p(\Omega)}\leq C \|B\|_{H^1(\Omega)}$,
which proves the result.
\end{proof}

\subsection{Numerical analysis in the case when the invariant measure is numerically approximated}
\label{sec:ananu}

We now return to the case when the invariant measure is only approximated numerically. For simplicity, we restrict our attention to the case when the approximation space for $\sigma$ is a $\mathbb{P}^1$ finite element space associated to a polyhedral mesh of $\Omega$. In this case, $\Omega$ is thus a polygon, and it cannot be of class $\mathcal{C}^1$ or $\mathcal{C}^2$, as assumed previously in~\eqref{hyp_H0_fin} or~\eqref{hyp_H_11}. 

In the sequel of this Section~\ref{sec:ananu}, we assume, in addition to~\eqref{hyp_H0_debut}, that
\begin{equation}
\left\{
\begin{aligned}
&\text{the domain $\Omega$ is connected, convex and polyhedral};
\\
&\text{$b$ is Lipschitz-continuous on $\overline{\Omega}$};
\\
&\text{the ambient dimension satisfies $2 \leq d \leq 3$}.
\end{aligned}
\right.
\label{hyp_H_11_num}
\end{equation}
We also assume that
\begin{equation}
\text{The conclusions of Propositions~\ref{prop:sigma1}, \ref{prop_u_f_A_id}, \ref{prop_sigma_two_A_id} and~\ref{prop_u_f_sigma_two_A_id} hold.}
\label{hyp_H_22_num}
\end{equation}

\medskip

A few remarks are in order. 

First, we point out that~\eqref{hyp_H_22_num} is not a consequence of Section~\ref{sec:mathematical-setting}, as we now do not assume~\eqref{hyp_H_11}.

Second, \eqref{hyp_H_22_num} obviously holds in the case $b=0$. In that case, there exists a unique solution to~\eqref{invariant_measure_droniou}--\eqref{invariant_measure_droniou-int-1} (resp. to~\eqref{sig_helmholtz_hodge}) which is $\sigma_1 = 1$ (resp. $\sigma_2^0 = 1$).

Third, when $\|b\|_{\text{Lip}(\overline{\Omega})}$ is sufficiently small, then~\eqref{hyp_H_22_num} again holds. For the sake of brevity, we only sketch the proof for Proposition~\ref{prop:sigma1}. To construct the invariant measure, consider the following iterations: set $\sigma^0 = 1$, and define $\sigma^{m+1}$ as the unique solution in $H^1(\Omega)$ to the problem
$$
-\Delta \sigma^{m+1} = \textnormal{div }(b \sigma^m) \ \text{in $\Omega$}, 
\qquad
\nabla \sigma^{m+1} \cdot n = - b \sigma^m \cdot n \ \text{on $\partial\Omega$},
\qquad 
\fint_\Omega \sigma^{m+1} =1.
$$
It is easy to see that
$$
\| \nabla( \sigma^{m+1}- \sigma^m) \|_{L^2(\Omega)} \leq \| b \|_{L^\infty(\Omega)} \| \sigma^m- \sigma^{m-1} \|_{L^2(\Omega)}.
$$
Since the mean of $\sigma^m- \sigma^{m-1}$ vanishes, we can use the Poincar\'e-Wirtinger (PW) inequality, from which we deduce that
$$
\| \nabla( \sigma^{m+1}- \sigma^m) \|_{L^2(\Omega)} \leq C_{\rm PW} \| b \|_{L^\infty(\Omega)} \| \nabla (\sigma^m- \sigma^{m-1}) \|_{L^2(\Omega)}.
$$
Assume that $b$ is such that $C_{\rm PW} \| b \|_{L^\infty(\Omega)} < 1$. Then $\sigma^m$ converges to some $\sigma^\star$ in $H^1(\Omega)$, which is a solution to~\eqref{invariant_measure_droniou}--\eqref{invariant_measure_droniou-int-1}. The uniqueness of such a solution is easily obtained, again as a consequence of the Poincar\'e-Wirtinger inequality and of the fact that $C_{\rm PW} \| b \|_{L^\infty(\Omega)} < 1$. Furthermore, for any $v \in H^1(\Omega)$, we have
$$
\int_\Omega \nabla (\sigma^\star-1) \cdot \nabla v = - \int_\Omega (\sigma^\star-1) \, b \cdot \nabla v - \int_\Omega b \cdot \nabla v.
$$
Choosing $v = \sigma^\star-1$, we get
$$
\| \nabla( \sigma^\star - 1) \|_{L^2(\Omega)} \leq C_{\rm PW} \| b \|_{L^\infty(\Omega)} \| \nabla (\sigma^\star - 1 ) \|_{L^2(\Omega)} + \| b \|_{L^\infty(\Omega)},
$$
and thus $\dis \| \sigma^\star - 1 \|_{H^1(\Omega)} \leq \sqrt{1+C_{\rm PW}^2} \ \frac{\| b \|_{L^\infty(\Omega)}}{1 - C_{\rm PW} \| b \|_{L^\infty(\Omega)}}$.

We now prove that $\sigma^\star \in H^2(\Omega)$. Considering the Neumann problem
$$
-\Delta (\sigma^\star-1) = \textnormal{div }(b \sigma^\star) \quad\text{in $\Omega$}, \qquad \nabla (\sigma^\star-1) \cdot n = - b \sigma^\star \cdot n \quad\text{on $\partial\Omega$},
$$
we observe, as in the proof of Proposition~\ref{prop:sigma1} and using the regularity of $b$, that
$$
\|\textnormal{div }(b\sigma^\star)\|_{L^2(\Omega)} \leq C \, \| b \|_{W^{1,\infty}(\Omega)} \, \|\sigma^\star\|_{H^1(\Omega)}
$$
while
$$
\| (b\cdot n) \sigma^\star\|_{H^{1/2}(\partial\Omega)} \leq C \|b\|_{\text{Lip}(\overline{\Omega})} \, \|\sigma^\star\|_{H^1(\Omega)},
$$
where $C$ is independent of $b$. Thanks to the assumption~\eqref{hyp_H_11_num} on $\Omega$, we are in position to use~\cite[Theorem 3.12]{EG}, which implies that $\sigma^\star-1 \in H^2(\Omega)$ and
\begin{eqnarray*}
\| \sigma^\star-1 \|_{H^2(\Omega)} 
&\leq& 
C \left( \|\textnormal{div }(b\sigma^\star)\|_{L^2(\Omega)} + \| (b\cdot n) \sigma^\star\|_{H^{1/2}(\partial\Omega)} \right) 
\\
&\leq & C \left( \| b \|_{W^{1,\infty}(\Omega)} + \|b\|_{\text{Lip}(\overline{\Omega})} \right) \|\sigma^\star\|_{H^1(\Omega)}
\\
&\leq & C \|b\|_{\text{Lip}(\overline{\Omega})} \left( 1 + \frac{\| b \|_{L^\infty(\Omega)}}{1 - C_{\rm PW} \| b \|_{L^\infty(\Omega)}} \right).
\end{eqnarray*}
Similar estimates in $W^{2,p}(\Omega)$ can be shown using~\cite[Remark 3.13(ii)]{EG}. 

To show that Proposition~\ref{prop:sigma1} holds, we are now left with showing~\eqref{invariant_measure_droniou-pos}. Thanks to the Sobolev injections when $2 \leq d \leq 3$, we have $\| \sigma^\star-1 \|_{C^0(\Omega)} \leq C \| \sigma^\star-1 \|_{H^2(\Omega)}$. Thus, when $b$ is sufficiently small, then $\| \sigma^\star-1 \|_{C^0(\Omega)}$ is small as well and~\eqref{invariant_measure_droniou-pos} holds. We can thus conclude that Proposition~\ref{prop:sigma1} holds.

\subsubsection{Preliminary estimate}

In what follows, we proceed under the assumptions~\eqref{hyp_H0_debut}--\eqref{hyp_H_11_num}--\eqref{hyp_H_22_num}. The analogous result to that of Proposition~\ref{prop_convergence_sigma_exact} is stated in the following. 

\begin{proposition}
\label{prop_convergence_sigma_h}

Assume~\eqref{hyp_H0_debut}--\eqref{hyp_H_11_num}--\eqref{hyp_H_22_num}. Consider $\sigma_h$ an approximation of $\sigma\in H^1(\Omega)$ such that $\displaystyle \inf_\Omega \sigma_h >0$. Let $u$ be the solution to~\eqref{pb_adv_diff_non_coercive_without_bc}--\eqref{boundary_cond_dirichlet_homog} (or equivalently~\eqref{eq_modified_problem}) and $u_H$ be the solution to~\eqref{varf_sigmah_symmetric}, for some $f\in L^2(\Omega)$. For any $p>d$, we have the estimate 
\begin{align}
\|u-u_H\|_{H^1(\Omega)} &\leq \frac{C}{\inf_\Omega \sigma_h} \|f\|_{L^2(\Omega)}\|\sigma-\sigma_h\|_{L^p(\Omega)} 
\nonumber
\\
& + C\, \inf_{v_H\in U_H} \Bigg[ \left( 1 +\frac{\vvvert \sigma \vvvert_p}{\inf_\Omega \sigma_h}\right)\|u-v_H\|_{H^1(\Omega)} + \frac{\vvvert \sigma-\sigma_h \vvvert_p}{\inf_\Omega \sigma_h}\|v_H\|_{H^1(\Omega)}\Bigg],
\label{ineq_12}
\end{align}
where $C$ only depends on $p$ and $\Omega$, and where we have used the notation
$$
\vvvert \sigma \vvvert_p = \|\sigma\|_{L^\infty(\Omega)}+\|\nabla \sigma + b \sigma\|_{L^p(\Omega)}.
$$
\end{proposition}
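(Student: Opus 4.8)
The plan is to compare $u_H$, the solution to the discrete problem~\eqref{varf_sigmah_symmetric} with the approximate measure $\sigma_h$, against the Galerkin approximation $\widetilde u_H$ of the modified problem~\eqref{eq_modified_problem} that uses the \emph{exact} $\sigma$ (i.e.\ the solution analyzed in Proposition~\ref{prop_convergence_sigma_exact}). We write $u-u_H = (u-\widetilde u_H) + (\widetilde u_H - u_H)$. The first piece $u-\widetilde u_H$ is controlled directly by Proposition~\ref{prop_convergence_sigma_exact}, which yields $\|u-\widetilde u_H\|_{H^1(\Omega)} \leq C \frac{\vvvert\sigma\vvvert_p}{\inf_\Omega\sigma}\inf_{v_H\in U_H}\|u-v_H\|_{H^1(\Omega)}$; however, since the statement's denominator is $\inf_\Omega\sigma_h$ rather than $\inf_\Omega\sigma$, I will in fact re-run the C\'ea-type argument directly for $a_{\text{ss}}(\sigma_h;\cdot,\cdot)$, using coercivity $a_{\text{ss}}(\sigma_h;v_H,v_H)=\int_\Omega\sigma_h|\nabla v_H|^2\geq(\inf_\Omega\sigma_h)\|v_H\|^2_{H^1_0}$ (Proposition~\ref{prop_petrov_galerkin_sigma_h}) as the single source of the $\inf_\Omega\sigma_h$ in the bound.

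The core estimate is for $e_H := \widetilde u_H - u_H \in U_H$, or equivalently a direct Strang-type argument. Using discrete coercivity,
\begin{equation*}
(\inf_\Omega\sigma_h)\,\|u_H-v_H\|_{H^1(\Omega)}^2 \leq a_{\text{ss}}(\sigma_h;u_H-v_H,u_H-v_H)
\end{equation*}
for any $v_H\in U_H$; then I expand $a_{\text{ss}}(\sigma_h;u_H-v_H,\cdot)=F(\sigma_h(\cdot)) - a_{\text{ss}}(\sigma_h;v_H,\cdot)$ and insert $\pm F(\sigma(\cdot))$, $\pm a_{\text{ss}}(\sigma;v_H,\cdot)$, using that $u$ solves the continuous problem $a_{\text{ss}}(\sigma;u,w)=F(\sigma w)$ for all $w\in H^1_0(\Omega)$ (the variational form of~\eqref{eq_modified_problem} noted in the proof of Proposition~\ref{prop_convergence_sigma_exact}). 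This produces three consistency-error contributions, each a bilinear form acting on $u_H-v_H$: (a) $a_{\text{ss}}(\sigma;u-v_H,u_H-v_H)$, bounded via~\eqref{eq:const_p_omega} by $C\vvvert\sigma\vvvert_p\|u-v_H\|_{H^1}\|u_H-v_H\|_{H^1}$; (b) $(a_{\text{ss}}(\sigma;\cdot,\cdot)-a_{\text{ss}}(\sigma_h;\cdot,\cdot))(v_H,u_H-v_H)$, which involves the differences $\sigma-\sigma_h$ in the symmetric term and $B-B_h=\nabla(\sigma-\sigma_h)+(\sigma-\sigma_h)b$ in the skew term; and (c) the right-hand side mismatch $F((\sigma-\sigma_h)(u_H-v_H)) = \int_\Omega f(\sigma-\sigma_h)(u_H-v_H)$. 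Contribution (c) is bounded by $\|f\|_{L^2}\|\sigma-\sigma_h\|_{L^p}\|u_H-v_H\|_{L^q}\leq C\|f\|_{L^2}\|\sigma-\sigma_h\|_{L^p}\|u_H-v_H\|_{H^1}$ via H\"older and Sobolev (with $1/p+1/q=1/2$, $p>d$), giving the first term of~\eqref{ineq_12}. Dividing through by $\|u_H-v_H\|_{H^1}$ and then by $\inf_\Omega\sigma_h$, and finally bounding $\|u-u_H\|_{H^1}\leq \|u-v_H\|_{H^1}+\|u_H-v_H\|_{H^1}$ and taking the infimum over $v_H$, assembles~\eqref{ineq_12}.

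The delicate point is contribution (b). Naively one would want to bound $\int_\Omega(\sigma-\sigma_h)\nabla v_H\cdot\nabla(u_H-v_H)$ and the skew terms using $\vvvert\sigma-\sigma_h\vvvert_p$, but $\nabla v_H$ and $\nabla(u_H-v_H)$ are both only $L^2$, so H\"older gives $\|\sigma-\sigma_h\|_{L^\infty}$-type control rather than an $L^p$-norm — unless one exploits that for the skew term the structure $B-B_h=\nabla(\sigma-\sigma_h)+(\sigma-\sigma_h)b$ pairs against $\frac{(\nabla v_H)(u_H-v_H)-(\nabla(u_H-v_H))v_H}{2}$, where one factor is a \emph{function} (not a gradient), allowing $\|\nabla(\sigma-\sigma_h)\|_{L^p}\cdot\|v_H\|_{L^q}\cdot\|\nabla(u_H-v_H)\|_{L^2}$-style estimates. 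For the symmetric term $\int_\Omega(\sigma-\sigma_h)\nabla v_H\cdot\nabla(u_H-v_H)$, the right move is to note $\|(\sigma-\sigma_h)\nabla v_H\cdot\nabla(u_H-v_H)\|$ can only be controlled in $L^\infty$; but $L^\infty(\Omega)$ is part of the triple-norm-like quantity $\vvvert\sigma-\sigma_h\vvvert_p = \|\sigma-\sigma_h\|_{L^\infty}+\|\nabla(\sigma-\sigma_h)+b(\sigma-\sigma_h)\|_{L^p}$, so in fact $\|\sigma-\sigma_h\|_{L^\infty}\leq\vvvert\sigma-\sigma_h\vvvert_p$ and the bound $\vvvert\sigma-\sigma_h\vvvert_p\|v_H\|_{H^1}\|u_H-v_H\|_{H^1}$ follows cleanly. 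Collecting (a), (b), (c), dividing by $\inf_\Omega\sigma_h$ and by $\|u_H-v_H\|_{H^1}$, and using the triangle inequality gives exactly~\eqref{ineq_12}. I expect the bookkeeping in (b) — correctly matching each term of the skew-symmetric form against the $L^p$ versus $L^\infty$ part of $\vvvert\sigma-\sigma_h\vvvert_p$ so that the $\|v_H\|_{H^1}$ (not $\|u\|_{H^1}$) appears as the multiplier — to be the main technical obstacle, together with keeping track of which Sobolev embedding constant is absorbed into $C$.
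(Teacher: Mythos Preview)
Your proposal is correct and follows essentially the same approach as the paper: the paper invokes the first Strang Lemma (\cite[Lemma 2.27]{EG}) as a black box to obtain~\eqref{strang_lemma}, and then bounds the right-hand side mismatch $\int_\Omega f(\sigma-\sigma_h)w_H$ by H\"older--Sobolev (your contribution~(c), their~\eqref{ineq_10}) and the consistency error $a_{\text{ss}}(\sigma-\sigma_h;v_H,w_H)$ by exactly the $\vvvert\sigma-\sigma_h\vvvert_p$ argument you describe in~(b) (their~\eqref{ineq_11}). Your derivation simply unpacks the Strang lemma by hand via the coercivity inequality $a_{\text{ss}}(\sigma_h;u_H-v_H,u_H-v_H)\geq C_\Omega(\inf_\Omega\sigma_h)\|u_H-v_H\|_{H^1}^2$ and the decomposition into~(a),~(b),~(c); the substance is identical.
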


\begin{proof}
We note that $u \in H^1_0(\Omega)$ satisfies
$$
\forall v \in H^1_0(\Omega), \quad a_{\text{ss}}(\sigma;u,v) = F(\sigma v),
$$
while $u_H \in U_H$ satisfies
$$
\forall v_H\in U_H, \quad a_{\text{ss}}(\sigma_h;u_H,v_H) = F(\sigma_h v_H).
$$
Applying the first Strang Lemma (see~\cite[Lemma 2.27]{EG}), we have
\begin{align}
\|u-u_H\|_{H^1(\Omega)} 
&\leq 
\frac{1}{C_\Omega \inf_\Omega\sigma_h} \sup_{w_H\in U_H} \frac{\left|\int_\Omega f(\sigma-\sigma_h)w_H\right|}{\|w_H\|_{H^1(\Omega)}} 
\nonumber
\\
& + \inf_{v_H\in U_H} \Bigg[ \left(1+ \frac{\|\sigma\|_{L^\infty(\Omega)} + C_{p,\Omega} \, \|B\|_{L^p(\Omega)}}{C_\Omega \inf_\Omega \sigma_h}\right) \|u-v_H\|_{H^1(\Omega)} 
\nonumber
\\
&+ \frac{1}{C_\Omega\inf_\Omega \sigma_h} \sup_{w_H \in U_H} \frac{\left|a_{\text{ss}}(\sigma-\sigma_h;v_H,w_H)\right|}{\|w_H\|_{H^1(\Omega)}}\Bigg],
\label{strang_lemma}
\end{align}
where $B=\nabla \sigma + b \sigma$, $C_\Omega$ is the Poincar\'e constant of $\Omega$ (so that $C_\Omega\inf_\Omega\sigma_h$ is a coercivity constant of $a_{\text{ss}}(\sigma_h;\cdot,\cdot)$ on $U_H$) and $C_{p,\Omega}$ is the constant introduced in~\eqref{eq:const_p_omega}. When we take~$\sigma_h \equiv \sigma$, this estimation of course agrees with the estimation we have already established, independently, for Proposition~\ref{prop_convergence_sigma_exact}. 

For the first term of the right-hand side of~\eqref{strang_lemma}, we notice that, for all $w_H\in U_H$ and $p>d$ (thus $1/q=1/2-1/p<1/2-1/d$), we have 
\begin{align}
\left|\int_\Omega f(\sigma-\sigma_h)w_H\right|
&\leq \|f\|_{L^2(\Omega)}\|\sigma-\sigma_h\|_{L^p(\Omega)}\|w_H\|_{L^q(\Omega)}
\nonumber
\\
&\leq C_{p,\Omega} \|f\|_{L^2(\Omega)}\|\sigma-\sigma_h\|_{L^p(\Omega)}\|w_H\|_{H^1(\Omega)}.
\label{ineq_10}
\end{align}
The rightmost term of~\eqref{strang_lemma} is estimated similarly:
\begin{multline}
\left|a_{\text{ss}}(\sigma-\sigma_h;v_H,w_H)\right|
\\
\leq \left(\|\sigma-\sigma_h\|_{L^\infty(\Omega)} + C_{p,\Omega} \, \|B-B_h\|_{L^p(\Omega)}\right)\,\|v_H\|_{H^1(\Omega)}\,\|w_H\|_{H^1(\Omega)},
\label{ineq_11}
\end{multline}
where $B_h=\nabla \sigma_h + b \sigma_h$. Combining~\eqref{strang_lemma},~\eqref{ineq_10} and~\eqref{ineq_11} gives the desired estimate.
\end{proof}


\subsubsection{Estimation of $\sigma - \sigma_h$}

The estimation of $\nabla (\sigma-\sigma_h)$ in $L^p(\Omega)$, for some $p>d$, is the crucial ingredient we now need to proceed with the estimation of the right-hand side of~\eqref{ineq_12}. This estimation is the main purpose of Proposition~\ref{prop_brenner_scott_estimate} below. We emphasize that the result is not immediate and its proof instructive. Before stating this result, we first detail how $\sigma_h$ is defined and provide in Proposition~\ref{prop_brenner_scott_estimate_debut} below a classical error estimate on $\sigma-\sigma_h$ in $H^1(\Omega)$. 

\bigskip

We introduce the bilinear form
\begin{equation}
\label{eq:def_astar}
\abil(u,v) = \int_\Omega (\nabla u + bu) \cdot \nabla v,
\end{equation}
which is formally the adjoint of the bilinear form $a$ defined by~\eqref{eq:def_bil_a}, in the sense that $\abil(u,v) = a(v,u)$. We note that the invariant measure $\sigma_1$ solution to~\eqref{invariant_measure_droniou}--\eqref{invariant_measure_droniou-int-1}--\eqref{invariant_measure_droniou-pos} satisfies
$$
\forall v \in H^1(\Omega), \qquad \abil(\sigma_1,v) = 0
$$
while the invariant measure $\sigma_2^0$ solution to~\eqref{sig_helmholtz_hodge} satisfies
$$
\forall v \in H^1(\Omega), \qquad \abil(\sigma_2^0,v) = \int_{\partial \Omega} g \, v
$$
with $\dis g = b\cdot n -\fint_{\partial\Omega} b\cdot n$ on $\partial \Omega$.

\begin{proposition}
\label{prop_brenner_scott_estimate_debut}
We assume that~\eqref{hyp_H0_debut}--\eqref{hyp_H_11_num}--\eqref{hyp_H_22_num} hold. Let $\Sigma_h$ be the $\mathbb{P}^1$ approximation space associated to a regular quasi-uniform polyhedral mesh of $\Omega$ and
$$
V_h=\left\{u \in \Sigma_h, \quad \fint_\Omega u = 1 \right\}.
$$
Let $\sigma$ denote either the solution to~\eqref{invariant_measure_droniou}--\eqref{invariant_measure_droniou-int-1}--\eqref{invariant_measure_droniou-pos} (in which case we set $g=0$) or the solution to~\eqref{sig_helmholtz_hodge} (in which case we set $\dis g = b\cdot n -\fint_{\partial\Omega} b\cdot n$ on $\partial \Omega$). 

For $h$ sufficiently small, there exists a unique $\sigma_h \in V_h$ (which is the Galerkin approximation of~$\sigma$) solution to
\begin{equation}
\label{eq:sigma_h_bien_pose}
\forall v_h \in \Sigma_h, \qquad \abil(\sigma_h,v_h) = \int_{\partial \Omega} g \, v_h.
\end{equation}
Furthermore, we have, for $h$ sufficiently small,
\begin{equation}
\label{eq:error_sigma_h}
\| \sigma - \sigma_h \|_{H^1(\Omega)} \leq C h \|\sigma\|_{H^2(\Omega)}
\end{equation}
where $C$ is independent of $h$. 
\end{proposition}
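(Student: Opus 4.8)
The plan is to read \eqref{eq:sigma_h_bien_pose} as a conforming Galerkin approximation of a \emph{well-posed but non-coercive} problem that satisfies a Gårding inequality, and then to run the classical Schatz-type perturbation argument: the continuous problem being well-posed and the adjoint problem being $H^2$-regular forces the discrete problem to be well-posed and quasi-optimal for $h$ small, after which the estimate follows from standard interpolation. First I would record the structural facts. Since $\abil(u,v)=a(v,u)$ depends on $v$ only through $\nabla v$, and since $\int_{\partial\Omega} g = 0$ in both cases (either $g=0$ or $g=b\cdot n-\fint_{\partial\Omega}b\cdot n$), both sides of \eqref{eq:sigma_h_bien_pose} and of its continuous counterpart are invariant under adding a constant to the test function; hence the natural trial space is $H^1_{\int=0}(\Omega)$ once the mean is pinned (which is exactly what $V_h$ and $\fint_\Omega\sigma=1$ do), the natural test space is $H^1(\Omega)/\R$, and testing \eqref{eq:sigma_h_bien_pose} against the constant function is a void equation — so the discrete system is square after quotienting by constants. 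Subtracting \eqref{eq:sigma_h_bien_pose} from the continuous equation gives the Galerkin orthogonality $\abil(\sigma-\sigma_h,v_h)=0$ for all $v_h\in\Sigma_h$, with $\sigma-\sigma_h$ of zero mean. Finally, Lemma~\ref{theorem_droniou}(ii) (used here in the polyhedral setting, which is legitimate thanks to~\eqref{hyp_H_22_num}) says precisely that $u\mapsto \abil(u,\cdot)$ is an isomorphism from $H^1_{\int=0}(\Omega)$ onto $(H^1(\Omega)/\R)'$, i.e.\ $\abil$ satisfies a continuous inf--sup condition; and a one-line computation, $\abil(u,u)=\|\nabla u\|_{L^2(\Omega)}^2+\int_\Omega u\,b\cdot\nabla u\geq \tfrac12\|\nabla u\|_{L^2(\Omega)}^2-C\|u\|_{L^2(\Omega)}^2$, together with Poincaré--Wirtinger, gives a Gårding inequality on $H^1_{\int=0}(\Omega)$.

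The heart of the argument, and the step I expect to be the main obstacle, is the $L^2$-duality estimate underpinning the Schatz trick: for every $\psi\in L^2(\Omega)$ with $\int_\Omega\psi=0$, the solution $w\in H^1_{\int=0}(\Omega)$ of the adjoint problem ``$\abil(v,w)=\int_\Omega \psi\,v$ for all $v\in H^1_{\int=0}(\Omega)$'' — namely $-\Delta w+b\cdot\nabla w=\psi$ (up to an additive constant on the right-hand side) with homogeneous Neumann boundary condition — must satisfy $w\in H^2(\Omega)$ and $\|w\|_{H^2(\Omega)}\leq C\|\psi\|_{L^2(\Omega)}$. This is not free on a polyhedron: it is the convex-domain analogue of Proposition~\ref{prop_u_f_A_id}(ii) (resp. Proposition~\ref{prop_u_f_sigma_two_A_id}), assumed to hold through~\eqref{hyp_H_22_num}, and it is where the convexity of $\Omega$ and the Lipschitz regularity of $b$ are genuinely used (the sketch following~\eqref{hyp_H_22_num} indicates how \cite[Theorem~3.12]{EG} replaces Lemma~\ref{theorem_girault} in this setting). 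Everything downstream is bookkeeping.

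Granting this, I would establish the discrete inf--sup condition for $h$ small. Take $u_h\in\Sigma_h$ with $\int_\Omega u_h=0$ and set $S(u_h)=\sup_{v_h\in\Sigma_h}\abil(u_h,v_h)/\|v_h\|_{H^1(\Omega)}$. Apply the duality estimate with $\psi=u_h$ to get $w\in H^2(\Omega)$, $\|w\|_{H^2(\Omega)}\leq C\|u_h\|_{L^2(\Omega)}$, with $\abil(u_h,w)=\|u_h\|_{L^2(\Omega)}^2$. Splitting $\abil(u_h,w)=\abil(u_h,w-I_h w)+\abil(u_h,I_h w)$ for a quasi-interpolant $I_h$, bounding the first term by continuity of $\abil$ and $\|w-I_h w\|_{H^1(\Omega)}\leq Ch\|w\|_{H^2(\Omega)}$, and the second by $\|I_h w\|_{H^1(\Omega)}\,S(u_h)\leq C\|u_h\|_{L^2(\Omega)}\,S(u_h)$, yields $\|u_h\|_{L^2(\Omega)}\leq Ch\,\|u_h\|_{H^1(\Omega)}+C\,S(u_h)$. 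Inserting this into the Gårding inequality, $\tfrac{c}{2}\|u_h\|_{H^1(\Omega)}^2\leq \abil(u_h,u_h)+C\|u_h\|_{L^2(\Omega)}^2\leq \|u_h\|_{H^1(\Omega)}S(u_h)+C'\big(h\,\|u_h\|_{H^1(\Omega)}+S(u_h)\big)^2$, and choosing $h\leq h_0$ so that the $h^2\|u_h\|_{H^1(\Omega)}^2$ contribution is absorbed into the left-hand side, a quadratic inequality in $\|u_h\|_{H^1(\Omega)}$ gives $\|u_h\|_{H^1(\Omega)}\leq \widetilde\gamma^{-1}S(u_h)$ with $\widetilde\gamma>0$ independent of $h$. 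Since the trial space $\{u_h\in\Sigma_h:\int_\Omega u_h=0\}$ and the effective test space $\Sigma_h/\R$ have equal finite dimension, this discrete inf--sup condition delivers existence and uniqueness of $\sigma_h\in V_h$ solving \eqref{eq:sigma_h_bien_pose}.

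It remains to conclude. The discrete inf--sup condition, the continuity of $\abil$, and the Galerkin orthogonality give, by the Babuška quasi-optimality estimate, $\|\sigma-\sigma_h\|_{H^1(\Omega)}\leq C\,\inf_{v_h\in V_h}\|\sigma-v_h\|_{H^1(\Omega)}$ with $C$ independent of $h$. Choosing $v_h=I_h\sigma+\big(1-\fint_\Omega I_h\sigma\big)\in V_h$ — a harmless mean-restoring adjustment costing $O\big(h\,\|\sigma\|_{H^2(\Omega)}\big)$ — and using $\|\sigma-I_h\sigma\|_{H^1(\Omega)}\leq Ch\,\|\sigma\|_{H^2(\Omega)}$ together with $\sigma\in H^2(\Omega)$ (Proposition~\ref{prop:sigma1} when $g=0$, Proposition~\ref{prop_sigma_two_A_id} otherwise, both available under~\eqref{hyp_H_22_num}), we obtain \eqref{eq:error_sigma_h}. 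This whole scheme is exactly the adaptation to the present non-coercive adjoint equation of the arguments of~\cite{BS} that will be pushed further, in $W^{1,p}$, in Proposition~\ref{prop_brenner_scott_estimate}.
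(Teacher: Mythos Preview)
Your proof is correct, and its skeleton --- establish a uniform discrete inf--sup condition for $\abil$ on $\Sigma_h\cap H^1_{\int=0}(\Omega)$, deduce well-posedness from equal finite dimensions, then run a C\'ea/Babu\v{s}ka argument with a mean-corrected interpolant --- is exactly the one the paper follows in Theorem~\ref{th:main_debut}. The difference lies in \emph{how} the discrete inf--sup is obtained. The paper writes $\abil=a_{\text{coer}}+a_{\text{comp}}$ with $a_{\text{coer}}$ coercive on $H^1_{\int=0}(\Omega)$ and $a_{\text{comp}}(u,v)=\int_\Omega bu\cdot\nabla v$ represented by a compact operator, and then invokes the abstract ``coercive $+$ compact'' result of~\cite[Theorem~4.2.9]{livre-sauter} to conclude~\eqref{eq:carnon3} directly; it additionally constructs $u_h$ through the fixed-point iterations~\eqref{eq:carnon2}, though once~\eqref{eq:carnon3} is in hand this is not strictly needed. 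You instead give an explicit Schatz-type argument: G\aa rding plus $H^2$ regularity of the adjoint Neumann problem (from Proposition~\ref{prop_u_f_A_id}(ii), available through~\eqref{hyp_H_22_num}) yield $\|u_h\|_{L^2}\leq Ch\|u_h\|_{H^1}+C\,S(u_h)$, which, fed back into G\aa rding, produces the discrete inf--sup. One minor point worth being explicit about: the adjoint problem with right-hand side $\psi=u_h$ is solvable only after adjusting by a constant (the compatibility condition is $\int\psi\,\sigma_1=0$, not $\int\psi=0$); you acknowledge this parenthetically, and the adjustment is harmless since $u_h$ has zero mean and constants contribute nothing in the pairing with $u_h$.

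What each approach buys: the paper's route is shorter and makes no explicit use of adjoint $H^2$ regularity at this stage (compactness of the embedding $H^1\hookrightarrow L^2$ suffices), whereas your route is self-contained and makes transparent exactly where convexity of $\Omega$ and~\eqref{hyp_H_22_num} enter --- information that the paper defers to the $W^{1,p}$ analysis of Proposition~\ref{prop_brenner_scott_estimate}.
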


The proof of Proposition~\ref{prop_brenner_scott_estimate_debut} is postponed until Appendix~\ref{sec:appendix_H1}. We now turn to the estimation of $\nabla (\sigma-\sigma_h)$ in $L^p(\Omega)$. 

\begin{proposition}
\label{prop_brenner_scott_estimate}
Under the assumptions of Proposition~\ref{prop_brenner_scott_estimate_debut}, for all~$2<p< +\infty$, the estimate
\begin{equation}
\label{eq:estim_BS}
\| \sigma - \sigma_h \|_{W^{1,p}(\Omega)} \leq C h \|\sigma\|_{W^{2,p}(\Omega)}
\end{equation}
holds for $h$ sufficiently small, where $C$ is independent of $h$.
\end{proposition}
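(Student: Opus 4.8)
The plan is to adapt the duality/weighted-norm argument of Brenner and Scott for $W^{1,p}$ estimates of finite element approximations of elliptic problems to the present adjoint operator $\abil(u,v) = \int_\Omega (\nabla u + bu)\cdot\nabla v$, which differs from the pure Laplacian by the lower-order (but non-symmetric) term $\int_\Omega b u \cdot \nabla v$. Since this is exactly the content announced for Appendix~\ref{sec:appendix_BS}, I expect the proof to be a careful but essentially classical weighted-Sobolev argument, so here I only sketch the skeleton.

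\textbf{Step 1: reduce to a $W^{1,p}$ estimate for the Ritz-type projection.} Writing $e = \sigma - \sigma_h$, one has the Galerkin orthogonality $\abil(e, v_h) = 0$ for all $v_h \in \Sigma_h$ (both $\sigma$ and $\sigma_h$ satisfy the same variational identity against test functions in $\Sigma_h$, by~\eqref{eq:sigma_h_bien_pose} and the identities displayed before Proposition~\ref{prop_brenner_scott_estimate_debut}). Thus $\sigma_h$ is the $\abil$-projection of $\sigma$ onto $\Sigma_h$ (up to the mean-value normalization, which is harmless since $\abil$ annihilates constants on the right slot only — one handles the normalization by working in $H^1_{\int = 0}$ or by noting that $\fint_\Omega e = 0$). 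The goal~\eqref{eq:estim_BS} then follows from a quasi-optimality statement $\|e\|_{W^{1,p}(\Omega)} \le C \inf_{v_h \in \Sigma_h}\|\sigma - v_h\|_{W^{1,p}(\Omega)}$ combined with the standard $\mathbb{P}^1$ interpolation estimate $\inf_{v_h}\|\sigma - v_h\|_{W^{1,p}} \le C h \|\sigma\|_{W^{2,p}}$, which is available since $\sigma \in W^{2,p}(\Omega)$ for all finite $p$ by Proposition~\ref{prop:sigma1} (resp.\ Proposition~\ref{prop_sigma_two_A_id}), and since the mesh is regular and quasi-uniform.

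\textbf{Step 2: the weighted-norm technique.} To prove the $W^{1,p}$ quasi-optimality, I would follow Brenner--Scott: introduce a smooth weight $\phi_\mu(x) = (|x - x_0|^2 + \mu^2 h^2)^{-1}$ (or its appropriate power) concentrated near the point $x_0$ where $|\nabla e(x_0)|$ is (essentially) maximal, and estimate the weighted $L^2$ norms $\|\phi_\mu^{1/2}\nabla e\|_{L^2}$ and $\|\phi_\mu^{1/2} e\|_{L^2}$. The key ingredients are: (i) an inverse inequality converting the pointwise-type $W^{1,p}$ bound into a weighted $L^2$ bound, (ii) a duality argument solving an auxiliary problem with right-hand side involving the weight and using the $H^2$-regularity of the \emph{adjoint} Neumann problem — here this is precisely where Proposition~\ref{prop_u_f_A_id} (and its $\sigma_2^0$ analogue Proposition~\ref{prop_u_f_sigma_two_A_id}), which give $W^{2,p}$-regularity with estimates for the advection-diffusion and adjoint equations with Neumann data, enter and replace the classical elliptic-regularity input, and (iii) control of the commutator terms generated by the advection term $bu\cdot\nabla v$ when it is hit by the weight, using $b \in W^{1,\infty}(\overline\Omega)$ and absorbing the extra factor of $h$ that appears.

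\textbf{Main obstacle.} The hard part will be Step 2(iii): the non-symmetric lower-order term $\int_\Omega b e \cdot \nabla v_h$ does not vanish under Galerkin orthogonality combined naively with the weight, and one must show the additional terms it produces in the weighted estimates are of lower order, i.e.\ can be absorbed on the left-hand side for $h$ small. This requires the Lipschitz regularity of $b$ (to move derivatives on and off $b$ cleanly) and a careful bookkeeping of powers of $h$ and of $\mu$, together with the duality regularity estimates of Propositions~\ref{prop_u_f_A_id} and~\ref{prop_u_f_sigma_two_A_id} applied to the auxiliary problems; the restriction $2 \le d \le 3$ in~\eqref{hyp_H_11_num} is what makes the relevant Sobolev embeddings and the $W^{2,p}$-regularity range compatible. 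One also uses the convexity of $\Omega$ (part of~\eqref{hyp_H_11_num}) so that the polyhedral domain still enjoys $H^2$/$W^{2,p}$ elliptic regularity. Since all of this is carried out in detail in Appendix~\ref{sec:appendix_BS}, the statement of Proposition~\ref{prop_brenner_scott_estimate} follows once that appendix is in place.
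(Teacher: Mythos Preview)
Your outline captures the basic shape—Galerkin orthogonality plus a weighted-norm duality argument—but it misses the central obstacle that the paper explicitly identifies and works hard to circumvent. The Brenner--Scott machinery requires solving a family of auxiliary problems (local Green functions, duality problems with weighted right-hand sides) for the \emph{same} operator but with \emph{arbitrary} data. For the pure Neumann problem governed by $\abil$, such problems are well-posed only under a compatibility condition on the right-hand side (Lemma~\ref{theorem_droniou}): $\int_\Omega f=0$ for the adjoint direction, $\int_\Omega f\,\sigma_1=0$ for the primal direction. The auxiliary data that arise in the weighted argument do \emph{not} satisfy these constraints. For instance the Green-function source $-\nu\cdot\nabla\delta^z$ leads, after integration by parts, to the requirement $\int_\Omega \delta^z\,\nu\cdot\nabla\sigma_1=0$, which fails since $\sigma_1$ is not constant; similarly the weighted duality source $\weight_z^{d+\lambda-2}e$ has no reason to have zero mean. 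Hence Propositions~\ref{prop_u_f_A_id} and~\ref{prop_u_f_sigma_two_A_id} cannot be invoked as you propose, and your Step~2(ii) breaks down. The difficulty you flag in Step~2(iii) (the first-order term $bu\cdot\nabla v$) is real but secondary; the compatibility issue is the blocking point.

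The paper's actual proof sidesteps this by introducing a zero-order regularization $\abil_\etaa=\abil+\etaa\int_\Omega uv$ for a small fixed $\etaa>0$. The regularized Neumann problem is well-posed for \emph{any} $L^2$ right-hand side (Lemma~\ref{lem:these}, Lemma~\ref{lem:these_adjoint}), so the full weighted-norm machinery of Brenner--Scott goes through and yields the $W^{1,p}$ stability of Proposition~\ref{prop:BS_mu}. The passage back to the original problem is \emph{not} by letting $\etaa\to 0$ but by a fixed-point iteration: one defines $\sigma^{m+1}_\etaa$ by $\abil_\etaa(\sigma^{m+1}_\etaa,\cdot)=\etaa\int_\Omega\sigma^m_\etaa(\cdot)+\text{boundary data}$, and likewise at the discrete level; Proposition~\ref{prop:BS_mu} controls each iterate, the iterations contract for $\etaa$ small (using the discrete inf-sup~\eqref{eq:carnon3} and Propositions~\ref{prop_u_f_A_id}--\ref{prop_u_f_sigma_two_A_id}), and the $W^{1,p}$ estimate is transmitted to $\sigma-\sigma_h$ in the limit $m\to\infty$. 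This regularization-plus-iteration device is the genuinely new ingredient, and your sketch omits it entirely.
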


Before giving the actual proof of Proposition~\ref{prop_brenner_scott_estimate}, we first discuss this result and describe various strategies to prove it.

We emphasize that, to the best of our knowledge, this result is not present in the literature. There exist many contributions establishing $W^{1,p}$ estimates between the solution of a linear PDE and its finite element approximation, for problems posed with homogeneous Dirichlet boundary conditions, or problems posed with Neumann boundary conditions and including a zero-order term. In~\cite{scott1976optimal}, the author considers the Neumann problem
\begin{equation}
\label{eq:pourquoi_moi3}
-\Delta v + v = f \ \ \text{in $\Omega$}, \qquad \nabla v \cdot n=0 \ \ \text{on $\partial\Omega$},
\end{equation}
while the Dirichlet problem
$$
-\Delta v = f \ \ \text{in $\Omega$}, \qquad v=0 \ \ \text{on $\partial\Omega$},
$$
is studied in~\cite{natterer,nitsche1977convergence,rannacher-scott}. A more general PDE (including an advection term and a zero-order term, but again with homogeneous Dirichlet boundary conditions) is considered in~\cite[Chap. 8]{BS}. All these problems are well-posed (under appropriate assumptions) for {\em any} sufficiently regular right-hand side. We note that the proofs contained in the contributions we have cited consider the problem of interest (e.g.~\eqref{eq:pourquoi_moi3} in~\cite{scott1976optimal}) for several right-hand sides, and not only the right-hand side $f$ originally considered. In contrast, Problem~\eqref{eq:pourquoi_moi} is well-posed only for right-hand sides satisfying some compatibility conditions (see Lemma~\ref{theorem_droniou}). This is one of the reasons why the proof of Proposition~\ref{prop_brenner_scott_estimate} is not immediate. 

Another contribution we wish to cite is~\cite[Theorem A.2 p. 101]{girault1986finite}. Taking some sufficiently regular functions $f$ and $g$ such that the compatibility condition $\dis \int_\Omega f + \int_{\partial\Omega} g = 0$ holds, the authors consider the Neumann problem
\begin{equation}
\label{eq:pure_neumann}
-\Delta v = f \ \ \text{in $\Omega$}, \qquad \nabla v \cdot n = g \ \ \text{on $\partial\Omega$}
\end{equation}
and state a $W^{1,p}$ estimate between $v$ and its finite element approximation $v_h$ (chosen such that $\dis \int_\Omega v_h = \int_\Omega v$): there exists $C$ independent of $h$ such that
\begin{equation}
\label{eq:b_small4}
\| v_h-v \|_{W^{1,p}(\Omega)} \leq C h\| v \|_{W^{2,p}(\Omega)}.
\end{equation}

\medskip

There are (at least) two ways to prove Proposition~\ref{prop_brenner_scott_estimate}. A first possibility is to assume that $\| b \|_{\text{Lip}(\overline{\Omega})}$ is small enough. Under this assumption (which is restrictive since we precisely aim in this article at considering non-coercive problems~\eqref{pb_adv_diff_non_coercive_without_bc} where $b$ is not small) and using~\eqref{eq:b_small4}, the proof of~\eqref{eq:estim_BS} is short. For the sake of brevity, we only consider the invariant measure $\sigma_1$ solution to~\eqref{invariant_measure_droniou}--\eqref{invariant_measure_droniou-int-1}--\eqref{invariant_measure_droniou-pos}. We introduce the sequence $\sigma^m \in H^1(\Omega)$ defined by
\begin{equation}
\label{eq:b_small1}
-\Delta \sigma^{m+1} = \textnormal{div }(b \sigma^m) \ \text{in $\Omega$}, 
\quad
\nabla \sigma^{m+1} \cdot n = - b \sigma^m \cdot n \ \text{on $\partial\Omega$},
\quad 
\fint_\Omega \sigma^{m+1} =1,
\end{equation}
with $\sigma^0 = 1$. Since $b$ is small enough, it turns out that $\sigma^m$ converges to $\sigma_1$, solution to~\eqref{invariant_measure_droniou}--\eqref{invariant_measure_droniou-int-1}. In addition, the above problem is of the type~\eqref{eq:pure_neumann}, so we will be in position to use~\eqref{eq:b_small4}.

Consider the sequence $\sigma^m_h \in \Sigma_h$ defined by
\begin{equation}
\label{eq:b_small2}
\forall v \in \Sigma_h, \qquad \int_\Omega \nabla \sigma^{m+1}_h \cdot \nabla v = - \int_\Omega \sigma^m_h b \cdot \nabla v, \qquad \fint_\Omega \sigma^{m+1}_h =1,
\end{equation}
with $\sigma^0_h = 1$, which converges to $\sigma_{1,h}$, solution to~\eqref{eq:sigma_h_bien_pose} with $g=0$. Using the result~\eqref{eq:b_small4} given in~\cite[Theorem A.2 p. 101]{girault1986finite}, one can eventually show that
\begin{equation}
\label{eq:b_small3}
\| \sigma^{m+1}_h-\sigma^{m+1} \|_{W^{1,p}(\Omega)}
\leq
C \| b \|_{\text{Lip}(\overline{\Omega})} \left( \| \sigma^m_h-\sigma^m \|_{W^{1,p}(\Omega)} + h \| \sigma^m \|_{W^{1,p}(\Omega)} \right)
\end{equation}
for some $C$ independent of $h$ and $b$. Note that the right-hand sides of~\eqref{eq:b_small1} and ~\eqref{eq:b_small2} are different, so the intermediate problem
$$
-\Delta \overline{\sigma}^{m+1} = \textnormal{div }(b \sigma^m_h) \ \text{in $\Omega$}, 
\quad
\nabla \overline{\sigma}^{m+1} \cdot n = - b \sigma^m_h \cdot n \ \text{on $\partial\Omega$},
\quad 
\fint_\Omega \overline{\sigma}^{m+1} =1,
$$
has to be introduced to prove~\eqref{eq:b_small3}. Passing to the limit $m \to \infty$ in~\eqref{eq:b_small3}, and using again that $\| b \|_{\text{Lip}(\overline{\Omega})}$ is sufficiently small, we obtain~\eqref{eq:estim_BS}.

\bigskip

A second possibility, which is the one we follow here, is based on considering the following problem, that we write in a compact form as $L_\etaa \, \sigma^{\etaa,f} = f$:
\begin{equation}
\left\{
\begin{aligned}
&-\textnormal{div }(\nabla \sigma^{\etaa,f} + b \sigma^{\etaa,f}) + \etaa \sigma^{\etaa,f} = f \quad\text{in $\Omega$}, 
\\
&(\nabla \sigma^{\etaa,f} + b \sigma^{\etaa,f})\cdot n=0 \quad\text{on $\partial\Omega$},
\end{aligned}
\right.
\label{eq:b_small6}
\end{equation}
for any $0 < \etaa \leq 1$. Problem~\eqref{eq:b_small6} is well-posed for any sufficiently regular function $f$ (in particular, there is no compatibility condition on $f$). Let $\sigma^{\etaa,f}_h \in \Sigma_h$ be the P1 finite element approximation of $\sigma^{\etaa,f}$. It is then possible to adapt the proof of~\cite[Chap. 8]{BS} to this case, and show that there exists a constant $C_\etaa$ independent of $h$ and $f$ (but a priori depending on $\etaa$) such that
\begin{equation}
\label{eq:b_small5}
\| \sigma^{\etaa,f}_h - \sigma^{\etaa,f} \|_{W^{1,p}(\Omega)} \leq C_\etaa \, h \, \| \sigma^{\etaa,f} \|_{W^{2,p}(\Omega)}.
\end{equation}
We now sketch the proof of~\eqref{eq:estim_BS}, in the case of the invariant measure $\sigma_1$ solution to~\eqref{invariant_measure_droniou}--\eqref{invariant_measure_droniou-int-1}--\eqref{invariant_measure_droniou-pos}. The proof is based on the introduction of iterations of the type~\eqref{eq:b_small1}, with the operator $L_\etaa$ of~\eqref{eq:b_small6} instead of the Laplacian operator:
$$
L_\eta \, \sigma^{m+1}_\etaa = \etaa \sigma^m_\etaa
$$
for some $\etaa$ sufficiently small. We refer to~\eqref{eq:maison3} below for details. The proof then proceeds as in the case $b$ small above, the pivotal estimate~\eqref{eq:b_small4} being replaced by~\eqref{eq:b_small5}. We emphasize that we take $\etaa$ sufficiently small, but we do not need to take the limit $\etaa \to 0$. 

\bigskip

We now proceed in details. For any $0 < \etaa \leq 1$, we introduce the bilinear form
\begin{equation}
\label{eq:def_astar_mu}
\abil_\etaa(u,v) = \int_\Omega (\nabla u + bu) \cdot \nabla v + \etaa \int_\Omega u \, v.
\end{equation}
We have the following result:

\begin{proposition}
\label{prop:BS_mu}
We assume that~\eqref{hyp_H0_debut}--\eqref{hyp_H_11_num}--\eqref{hyp_H_22_num} hold. Let $u \in H^1(\Omega)$ and $u_h \in \Sigma_h$ such that
\begin{equation}
\label{eq:galerkin_orth}
\forall v \in \Sigma_h, \quad \abil_\etaa(u-u_h,v) = 0.
\end{equation}
Let $2 \leq p < \infty$ and assume that $u \in W^{1,p}(\Omega)$. Then, there exists $C_\etaa$ and $h_0(\etaa)$, that both depend on $\etaa$, such that, for any $0 < h < h_0(\etaa)$, we have
\begin{equation}
\label{eq:main1}
\| \nabla u_h \|_{L^p(\Omega)} \leq C_\etaa \, \| \nabla u \|_{L^p(\Omega)}.
\end{equation}
Assume furthermore that $u \in W^{2,p}(\Omega)$. Then
\begin{equation}
\label{eq:main2}
\|\nabla (u-u_h)\|_{L^p(\Omega)} \leq C_\etaa \, h \, \|u\|_{W^{2,p}(\Omega)}.
\end{equation}
\end{proposition}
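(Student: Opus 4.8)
The plan is to adapt the classical $W^{1,p}$-estimate machinery of~\cite[Chap.~8]{BS} --- originally developed for a Dirichlet problem with an advection term --- to the Neumann-type operator $L_\etaa$ of~\eqref{eq:b_small6}, keeping careful track of the fact that $\etaa>0$ renders the problem unconditionally well-posed (no compatibility condition on the right-hand side). I would begin by recording the standard duality/$H^1$ ingredients: since $\abil_\etaa$ is coercive on $H^1(\Omega)$ (with coercivity constant $\min(1,\etaa)$), the Galerkin problem~\eqref{eq:galerkin_orth} is well-posed on $\Sigma_h$ for all $h$, and one has the $H^1$ error estimate $\|u-u_h\|_{H^1(\Omega)} \leq C_\etaa\, h\, \|u\|_{H^2(\Omega)}$ together with the $L^2$ Aubin--Nitsche improvement $\|u-u_h\|_{L^2(\Omega)} \leq C_\etaa\, h^2\, \|u\|_{H^2(\Omega)}$, the latter using the elliptic regularity of $L_\etaa$ on the convex polyhedral domain $\Omega$ (available thanks to~\eqref{hyp_H_11_num} and a result in the spirit of~\cite[Theorem 3.12]{EG} applied as in Proposition~\ref{prop:sigma1}).

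Next I would carry out the weighted-norm / dyadic-decomposition argument that is the heart of~\cite[Chap.~8]{BS}. Introduce the weight $\weight(x) = \big(|x-x_0|^2 + (\kappa h)^2\big)^{1/2}$ centered at a point $x_0$ where $|\nabla u_h|$ is (nearly) maximal, and estimate $\|\nabla u_h\|_{L^p}$ by $\|\weight^{-\alpha}\|_{L^p}$ times a weighted $L^\infty$-type bound on $\weight^{\alpha}\nabla u_h$, for a suitable exponent $\alpha$ depending on $p$ and $d$. The weighted bound is obtained from a local estimate on each dyadic annulus $A_j = \{2^j \kappa h \leq |x-x_0| \leq 2^{j+1}\kappa h\}$: on $A_j$ one uses an inverse inequality, the approximation properties of $\Sigma_h$, the Galerkin orthogonality~\eqref{eq:galerkin_orth}, and a carefully chosen test function (the weight times a cutoff times $u_h$, truncated to $\Sigma_h$) to bootstrap control of $\weight^{\alpha}\nabla u_h$ on $A_j$ from its values on neighboring annuli plus the global $H^1$ and $L^2$ bounds recalled above. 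Summing over $j$ and absorbing the self-referential terms (valid once $h < h_0(\etaa)$ so that the constant in the absorption is $<1$) yields~\eqref{eq:main1}. The presence of the lower-order term $\etaa\int u v$ and of the convection term $b\cdot\nabla$ only contributes extra terms that are lower order in $h$ on each annulus and are harmless, though they are the source of the $\etaa$-dependence of $C_\etaa$ and $h_0(\etaa)$; this is acceptable since Proposition~\ref{prop:BS_mu} only claims $\etaa$-dependent constants.

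Finally, to obtain~\eqref{eq:main2} from~\eqref{eq:main1}, I would use the standard device: let $I_h u \in \Sigma_h$ be the Lagrange (or Scott--Zhang, to handle only $W^{2,p}$ regularity) interpolant of $u$, and write $u - u_h = (u - I_h u) + (I_h u - u_h)$. The first term is controlled by $Ch\|u\|_{W^{2,p}(\Omega)}$ by classical interpolation estimates. For the second term, note that $I_h u - u_h \in \Sigma_h$ and, by~\eqref{eq:galerkin_orth}, $u_h$ is the Galerkin approximation of $u$, hence also of $I_h u - (I_h u - u)$; applying~\eqref{eq:main1} (which bounds the $W^{1,p}$-norm of a discrete Galerkin solution by that of its continuous datum) to the function $I_h u - u$ in place of $u$ --- more precisely, observing that $I_h u - u_h$ is the Galerkin approximation of the Ritz-type correction of $u - I_h u$ --- gives $\|\nabla(I_h u - u_h)\|_{L^p(\Omega)} \leq C_\etaa \|\nabla(u - I_h u)\|_{L^p(\Omega)} \leq C_\etaa h\|u\|_{W^{2,p}(\Omega)}$. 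The $L^p$-norm of $u-u_h$ itself (to complete the $W^{1,p}$-norm) is bounded by the $L^2$ estimate when $p \leq$ some threshold and otherwise by combining the gradient estimate with a Poincaré-type argument, or directly from the $L^\infty$-weighted bound.

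I expect the main obstacle to be the dyadic weighted estimate on the annuli in the presence of the Neumann boundary: unlike the Dirichlet case of~\cite[Chap.~8]{BS}, the cutoff/weight test functions do not vanish on $\partial\Omega$, so the boundary integrals arising from integration by parts against $L_\etaa$ must be controlled, and one must verify that the convexity and polyhedral structure assumed in~\eqref{hyp_H_11_num} suffice to run the local energy estimates near boundary vertices and edges. A secondary technical point is ensuring the interpolation/truncation operator used inside the dyadic argument maps into $\Sigma_h$ while respecting the (affine) constraint structure --- though since~\eqref{eq:galerkin_orth} is posed with test functions in all of $\Sigma_h$ (not the mean-zero subspace), this is less delicate here than it is for~\eqref{eq:sigma_h_bien_pose}. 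The bulk of the write-up will therefore live in Appendix~\ref{sec:appendix_BS}, and here I would only state the proposition and defer.
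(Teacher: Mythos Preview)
Your proposal contains a genuine error at the outset: the bilinear form $\abil_\etaa$ is \emph{not} coercive on $H^1(\Omega)$. Computing
\[
\abil_\etaa(u,u) = \|\nabla u\|_{L^2}^2 + \etaa\|u\|_{L^2}^2 - \tfrac12\int_\Omega (\operatorname{div} b)\,u^2 + \tfrac12\int_{\partial\Omega} (b\cdot n)\,u^2
\]
shows that the last two terms have no sign in general and can overwhelm $\etaa\|u\|_{L^2}^2$ when $b$ is large --- precisely the regime of interest. The paper therefore establishes well-posedness of both the continuous and discrete problems not by coercivity but via an inf-sup condition, obtained by testing against $\sigma_1 v$ (respectively $\sigma_1^{-1}v$) where $\sigma_1$ is the invariant measure of~\eqref{invariant_measure_droniou}--\eqref{invariant_measure_droniou-int-1}; see Lemmas~\ref{lem:these}, \ref{lem:these_adjoint} and Theorems~\ref{th:inv_measure_mu_h}, \ref{th:adjoint_mu_h}. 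Without this, the ``standard'' $H^1$ and Aubin--Nitsche ingredients you invoke are not available as stated, and the local energy estimates on annuli (which typically rely on coercivity to absorb cross terms) would have to be reworked from scratch.

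Beyond this gap, your strategy for~\eqref{eq:main1} diverges from the paper's. The paper does not use a dyadic decomposition with local energy estimates on annuli. Following~\cite[Chap.~8]{BS} more literally, it introduces regularized delta functions $\delta^z$ and numerical Green functions $g^z$, $g^z_h$ for the \emph{adjoint} problem (see~\eqref{eq:def_gz}--\eqref{eq:def_gzh}), derives the pointwise representation~\eqref{eq:split} for $\nu\cdot\nabla u_h(z)$, and reduces~\eqref{eq:main1} to a weighted bound on $g^z-g^z_h$ encoded in the quantity $M_{h,\lambda}$ of~\eqref{eq:def_M}. The technical core is then Lemma~\ref{lem:bound_M}, proved via weighted elliptic regularity (Lemmas~\ref{lem:837}, \ref{lem:8311}) combined with weighted error bounds (Propositions~\ref{prop:831}, \ref{prop:835}). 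Your concern about boundary terms in the Neumann setting is legitimate and does arise; the paper handles it inside Lemma~\ref{lem:8311} by controlling the nonzero Neumann trace of the weighted function $\weight^{(d+\lambda)/2}v$ through Lemma~\ref{theorem_girault}. Your derivation of~\eqref{eq:main2} from~\eqref{eq:main1} via the interpolant $I_h u$ is correct and matches the paper exactly.
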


The proof of Proposition~\ref{prop:BS_mu} is postponed until Appendix~\ref{sec:appendix_BS}. It follows the arguments of~\cite[Chap. 8]{BS}. Most presumably, a similar result can be obtained when $1 < p < 2$, using duality arguments as in~\cite[Sec. 8.5]{BS}. We do not need such a result here, and therefore do not proceed in that direction.

\medskip

We are now in position to prove Proposition~\ref{prop_brenner_scott_estimate}. 

\begin{proof}[Proof of Proposition~\ref{prop_brenner_scott_estimate}]

We define the function $g$ on $\partial \Omega$ by $g \equiv 0$ in the case of the invariant measure $\sigma_1$ and $\dis g = g_2 := b \cdot n -\fint_{\partial\Omega} b\cdot n$ on $\partial \Omega$ in the case of the invariant measure $\sigma_2^0$. Let $2 < p < \infty$ and let $\etaa > 0$ be small enough in a sense made precise below. The proof falls in three steps.

\medskip

\noindent {\bf Step 1.} Consider the following iterations: set $\sigma^0_\etaa = 1$ and define $\sigma^{m+1}_\etaa$ as the unique solution to the problem
\begin{equation}
\label{eq:maison3}
\left\{
\begin{array}{c}
\text{Find $\sigma^{m+1}_\etaa \in H^1(\Omega)$ such that, for any $v \in H^1(\Omega)$,}
\\ \noalign{\vskip 3pt}
\dis \abil_\etaa(\sigma^{m+1}_\etaa,v) = \etaa \int_\Omega \sigma^m_\etaa v + \int_{\partial \Omega} g v.
\end{array}
\right.
\end{equation}
Lemma~\ref{lem:these} in Appendix~\ref{sec:appendix_BS} below ensures that the above problem is well-posed (the bilinear form $\abil_\etaa$ satisfying an inf-sup condition in $H^1(\Omega)$) and that, if $\etaa$ is sufficiently small, $\sigma^m_\etaa \in W^{2,p}(\Omega)$ for any $m$. Taking $v \equiv 1$ in~\eqref{eq:maison3}, we observe that $\dis \fint_\Omega \sigma^m_\etaa = 1$ for any $m$. Furthermore, we see that
$$
\forall v \in H^1(\Omega), \qquad \abil(\sigma^{m+1}_\etaa,v) = \etaa \int_\Omega (\sigma^m_\etaa - \sigma^{m+1}_\etaa) \, v + \int_{\partial \Omega} g v.
$$
Using Proposition~\ref{prop_u_f_A_id} in the case $g \equiv 0$ (resp. Proposition~\ref{prop_u_f_sigma_two_A_id} in the case $g = g_2$), we get
$$
\| \sigma^{m+1}_\etaa - \sigma \|_{W^{2,p}(\Omega)} \leq C \etaa \| \sigma^m_\etaa - \sigma^{m+1}_\etaa \|_{L^p(\Omega)}.
$$
Taking $\etaa$ sufficiently small, this implies that
$$
\| \sigma^{m+1}_\etaa - \sigma \|_{W^{2,p}(\Omega)} \leq C \etaa \| \sigma^m_\etaa - \sigma \|_{L^p(\Omega)},
$$
and hence that
\begin{equation}
\label{eq:maison8}
\lim_{m \to \infty} \| \sigma^m_\etaa - \sigma \|_{W^{2,p}(\Omega)} = 0.
\end{equation}

\medskip

\noindent {\bf Step 2.} We now consider the following iterations, at the discrete level: set $\sigma^0_h = 1$ and define $\sigma^{m+1}_{\etaa,h}$ as the unique solution to the problem:
\begin{equation}
\label{eq:maison4}
\left\{
\begin{array}{c}
\text{Find $\sigma^{m+1}_{\etaa,h} \in \Sigma_h$ such that, for any $v \in \Sigma_h$,}
\\ \noalign{\vskip 3pt}
\dis \abil_\etaa(\sigma^{m+1}_{\etaa,h},v) = \etaa \int_\Omega \sigma^m_{\etaa,h} v + \int_{\partial \Omega} g v.
\end{array}
\right.
\end{equation}
Theorem~\ref{th:inv_measure_mu_h} in Appendix~\ref{sec:appendix_BS} below ensures that the above problem is well-posed (the proof of Theorem~\ref{th:inv_measure_mu_h} is performed in the case $g \equiv 0$, and it carries over to the case $g=g_2$). Taking $v \equiv 1$ in~\eqref{eq:maison4}, we observe that $\dis \fint_\Omega \sigma^m_{\etaa,h} = 1$ for any $m$. Furthermore, we see that
$$
\forall v \in \Sigma_h, \qquad \abil(\sigma^{m+1}_{\etaa,h} - \sigma_h,v) = \etaa \int_\Omega (\sigma^m_{\etaa,h} - \sigma^{m+1}_{\etaa,h}) \, v.
$$
We show in Appendix~\ref{sec:appendix_H1} below (see~\eqref{eq:carnon3}) that $\abil$ satisfies an inf-sup property on functions in $\Sigma_h$ of vanishing mean, with a constant $\gamma$ independent of $h$. Since $\dis \fint_\Omega \sigma^m_{\etaa,h} = 1 = \fint_\Omega \sigma_h$ for any $m$, we get
$$
\| \sigma^{m+1}_{\etaa,h} - \sigma_h \|_{H^1(\Omega)} \leq C \etaa \| \sigma^m_{\etaa,h} - \sigma^{m+1}_{\etaa,h} \|_{L^2(\Omega)}.
$$
Taking $\etaa$ sufficiently small, this implies that
$$
\| \sigma^{m+1}_{\etaa,h} - \sigma_h \|_{H^1(\Omega)} \leq C \etaa \| \sigma^m_{\etaa,h} - \sigma_h \|_{L^2(\Omega)},
$$
and hence that $\dis \lim_{m \to \infty} \| \sigma^m_{\etaa,h} - \sigma_h \|_{H^1(\Omega)} = 0$. By equivalence of the norms in the finite dimensional space $\Sigma_h$, this implies that
\begin{equation}
\label{eq:maison9}
\lim_{m \to \infty} \| \sigma^m_{\etaa,h} - \sigma_h \|_{W^{1,p}(\Omega)} = 0.
\end{equation}

\medskip

\noindent {\bf Step 3.} We eventually introduce the following problem:
\begin{equation}
\label{eq:maison5}
\left\{
\begin{array}{c}
\text{Find $\overline{\sigma}^{m+1}_\etaa \in H^1(\Omega)$ such that, for any $v \in H^1(\Omega)$,}
\\ \noalign{\vskip 3pt}
\dis \abil_\etaa(\overline{\sigma}^{m+1}_\etaa,v) = \etaa \int_\Omega \sigma^m_{\etaa,h} v + \int_{\partial \Omega} g v.
\end{array}
\right.
\end{equation}
We observe that~\eqref{eq:maison5} is the continuous analogue of~\eqref{eq:maison4}, for the {\em same} right-hand side (in contrast, when going from~\eqref{eq:maison3} to~\eqref{eq:maison4}, we modify both the space in which we search the solution and the right-hand side of the equation). We observe that
$$
\forall v \in \Sigma_h, \qquad \abil_\etaa(\overline{\sigma}^{m+1}_\etaa-\sigma^{m+1}_{\etaa,h},v) = 0.
$$
We are thus in position to use Proposition~\ref{prop:BS_mu}, which states that, for any $h < h_0(\etaa)$, we have
\begin{equation}
\label{eq:inria1}
\| \overline{\sigma}^{m+1}_\etaa - \sigma^{m+1}_{\etaa,h} \|_{W^{1,p}(\Omega)} \leq C_\etaa \, h \, \| \overline{\sigma}^{m+1}_\etaa \|_{W^{2,p}(\Omega)}.
\end{equation}
We now estimate $\| \overline{\sigma}^{m+1}_\etaa \|_{W^{2,p}(\Omega)}$. We observe that
$$
\forall v \in H^1(\Omega), \qquad \abil(\overline{\sigma}^{m+1}_\etaa,v) = \etaa \int_\Omega (\sigma^m_{\etaa,h} - \overline{\sigma}^{m+1}_\etaa) \, v + \int_{\partial \Omega} g v.
$$
In addition, taking $v \equiv 1$ in~\eqref{eq:maison5}, we see that $\dis \fint_\Omega \overline{\sigma}^{m+1}_\etaa = \fint_\Omega \sigma^m_{\etaa,h} = 1$. Using Proposition~\ref{prop_u_f_A_id} in the case $g \equiv 0$ (resp. Proposition~\ref{prop_u_f_sigma_two_A_id} in the case $g = g_2$), we get
$$
\| \overline{\sigma}^{m+1}_\etaa - \sigma \|_{W^{2,p}(\Omega)} \leq C \etaa \| \sigma^m_{\etaa,h} - \overline{\sigma}^{m+1}_\etaa \|_{L^p(\Omega)}.
$$
Taking $\etaa$ sufficiently small, this implies that
$$
\| \overline{\sigma}^{m+1}_\etaa - \sigma \|_{W^{2,p}(\Omega)} \leq C \etaa \| \sigma^m_{\etaa,h} - \sigma \|_{L^p(\Omega)},
$$
and hence
$$
\| \overline{\sigma}^{m+1}_\etaa \|_{W^{2,p}(\Omega)} \leq C \etaa \| \sigma^m_{\etaa,h} \|_{L^p(\Omega)} + C \| \sigma \|_{W^{2,p}(\Omega)}.
$$
Inserting this estimate in~\eqref{eq:inria1}, we deduce that
\begin{equation}
\label{eq:maison6}
\| \overline{\sigma}^{m+1}_\etaa - \sigma^{m+1}_{\etaa,h} \|_{W^{1,p}(\Omega)} \leq C_\etaa \, h \, \left( \etaa \| \sigma^m_{\etaa,h} \|_{L^p(\Omega)} + \| \sigma \|_{W^{2,p}(\Omega)} \right).
\end{equation}

\medskip

We now compare $\overline{\sigma}^{m+1}_\etaa$ and $\sigma^{m+1}_\etaa$. We observe that
$$
\forall v \in H^1(\Omega), \qquad \abil_\etaa(\overline{\sigma}^{m+1}_\etaa - \sigma^{m+1}_\etaa,v) = \etaa \int_\Omega (\sigma^m_{\etaa,h} - \sigma^m_\etaa) \, v,
$$
hence
$$
\forall v \in H^1(\Omega), \qquad \abil(\overline{\sigma}^{m+1}_\etaa - \sigma^{m+1}_\etaa,v) = \etaa \int_\Omega (\sigma^m_{\etaa,h} - \sigma^m_\etaa - \overline{\sigma}^{m+1}_\etaa + \sigma^{m+1}_\etaa) \, v.
$$
Using Proposition~\ref{prop_u_f_A_id}, we get that
$$
\| \overline{\sigma}^{m+1}_\etaa - \sigma^{m+1}_\etaa \|_{W^{2,p}(\Omega)} \leq C \etaa \| \sigma^m_{\etaa,h} - \sigma^m_\etaa - \overline{\sigma}^{m+1}_\etaa + \sigma^{m+1}_\etaa \|_{L^p(\Omega)},
$$
which implies, for $\etaa$ sufficiently small, that
$$
\| \overline{\sigma}^{m+1}_\etaa - \sigma^{m+1}_\etaa \|_{W^{2,p}(\Omega)} \leq C \etaa \| \sigma^m_{\etaa,h} - \sigma^m_\etaa \|_{L^p(\Omega)},
$$
and thus
\begin{equation}
\label{eq:maison7}
\| \overline{\sigma}^{m+1}_\etaa - \sigma^{m+1}_\etaa \|_{W^{1,p}(\Omega)} \leq C \etaa \| \sigma^m_{\etaa,h} - \sigma^m_\etaa \|_{W^{1,p}(\Omega)}.
\end{equation}

\medskip

\noindent {\bf Step 4.} Collecting~\eqref{eq:maison6} and~\eqref{eq:maison7}, we get
$$
\| \sigma^{m+1}_\etaa - \sigma^{m+1}_{\etaa,h} \|_{W^{1,p}(\Omega)} \leq C_\etaa \, h \, \left( \etaa \| \sigma^m_{\etaa,h} \|_{L^p(\Omega)} + \| \sigma \|_{W^{2,p}(\Omega)} \right) + C \etaa \| \sigma^m_{\etaa,h} - \sigma^m_\etaa \|_{W^{1,p}(\Omega)}.
$$
Using~\eqref{eq:maison8} and~\eqref{eq:maison9}, we are in position to pass to the limit $m \to \infty$. We deduce that 
$$
\| \sigma - \sigma_h \|_{W^{1,p}(\Omega)} \leq C_\etaa \, h \, \left( \etaa \| \sigma_h \|_{L^p(\Omega)} + \| \sigma \|_{W^{2,p}(\Omega)} \right) + C \etaa \| \sigma_h - \sigma \|_{W^{1,p}(\Omega)},
$$
and thus, for $\etaa$ sufficiently small,
\begin{eqnarray*}
\| \sigma - \sigma_h \|_{W^{1,p}(\Omega)}
&\leq&
C_\etaa \, h \, \left( \etaa \| \sigma_h \|_{L^p(\Omega)} + \| \sigma \|_{W^{2,p}(\Omega)} \right)
\\
&\leq&
C_\etaa \, h \, \left( \| \sigma_h - \sigma \|_{L^p(\Omega)} + \| \sigma \|_{W^{2,p}(\Omega)} \right).
\end{eqnarray*}
Taking $h < h_0(\etaa)$ such that $C_\etaa \, h \leq 1/2$, we deduce~\eqref{eq:estim_BS}.
\end{proof}

\subsubsection{Main result: estimation of $u - u_H$}

Proposition~\ref{prop_brenner_scott_estimate} allows to deduce from Proposition~\ref{prop_convergence_sigma_h} the following Theorem~\ref{prop_convergence_sigma_h_p1}. To this end, we successively consider the case of our two invariant measures. For our first invariant measure~$\sigma_1$, solution to~\eqref{invariant_measure_droniou}--\eqref{invariant_measure_droniou-int-1}--\eqref{invariant_measure_droniou-pos}, we obviously consider its $\mathbb{P}^1$ finite element approximation $\sigma_{1,h}$. For our second invariant measure, the analysis is essentially similar. There is, however, an additional subtlety in the very definition of the measure and its approximation. One, basic but crucial, remark is that the solution $u$ to~\eqref{pb_adv_diff_non_coercive_without_bc}--\eqref{boundary_cond_dirichlet_homog} does not depend on the choice of~$\sigma$. More precisely, \eqref{pb_adv_diff_non_coercive_without_bc}--\eqref{boundary_cond_dirichlet_homog} is equivalent to~\eqref{eq_modified_problem} irrespectively of the choice of~$\sigma$. We use this flexibility for our numerical analysis:
\begin{itemize}
\item[(i)] we first approximate~$\sigma_1$ as above by $\sigma_{1,h}$, and assume that $\sigma_{1,h}>0$ on $\Omega$. In practice, we have always numerically observed this property for sufficiently small $h$ (this property actually often holds for $h$ not asymptotically small). In addition, from a theoretical viewpoint, this bound from below is a consequence of the assumptions~\eqref{hyp_H0_debut}--\eqref{hyp_H_11_num}--\eqref{hyp_H_22_num}, as explained in the proof of Theorem~\ref{prop_convergence_sigma_h_p1} below. 
\item[(ii)] we next approximate, again using $\mathbb{P}^1$ finite elements, $\sigma_2^0$ solution to~\eqref{sig_helmholtz_hodge} by $\sigma_{2,h}^0$. We perform both these approximations on the same regular mesh~${\mathcal T}_h$. We then define $\sigma_{2,h}=\sigma_{2,h}^0+\kappa_h \, \sigma_{1,h}$ where 
$$
\kappa_h = 1+ \inf \left\{ \overline{\kappa} \in [0,\infty) \ \text{such that} \ \sigma_{2,h}^0+ \overline{\kappa} \, \sigma_{1,h} > 0 \ \text{on $\Omega$} \right\}.
$$
\end{itemize}
Precisely since, as noticed above, $u$ does not depend on our choice of invariant measure, we correspondingly define~$\sigma_2=\sigma_2^0+\kappa_h \, \sigma_1$. The point of our analysis is then to estimate $\sigma_2-\sigma_{2,h}$. The detail is contained in the following proof. 

\begin{theorem}
\label{prop_convergence_sigma_h_p1}

Assume that the invariant measure ($\sigma_1$, as defined by~\eqref{invariant_measure_droniou}--\eqref{invariant_measure_droniou-int-1}--\eqref{invariant_measure_droniou-pos}, or $\sigma_2$ a solution to~\eqref{sigma_2positive}) is approximated as we have just described in items~(i) and (ii) above. Under the assumptions~\eqref{hyp_H0_debut}--\eqref{hyp_H_11_num}--\eqref{hyp_H_22_num}, we have, for $h$ sufficiently small,
\begin{equation}
\|u-u_H\|_{H^1(\Omega)} 
\leq C\,h\,\|f\|_{L^2(\Omega)}
+C\, \inf_{v_H\in U_H} \Bigg[ \|u-v_H\|_{H^1(\Omega)} + h \, \|v_H\|_{H^1(\Omega)}\Bigg]
\label{ineq_13}
\end{equation}
for a constant $C$ independent of~$h$.
%
%
\end{theorem}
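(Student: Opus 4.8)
The plan is to substitute the $W^{1,p}$ error estimate of Proposition~\ref{prop_brenner_scott_estimate} into the Strang-type bound~\eqref{ineq_12} of Proposition~\ref{prop_convergence_sigma_h}, after checking that every quantity appearing in~\eqref{ineq_12} other than $\|\sigma-\sigma_h\|$, $\|u-v_H\|_{H^1(\Omega)}$ and $\|v_H\|_{H^1(\Omega)}$ is bounded by a constant independent of $h$. Since $2\le d\le 3$, I fix once and for all an exponent $p$ with $d<p<\infty$; this is admissible both for Proposition~\ref{prop_convergence_sigma_h} (which requires $p>d$) and for Proposition~\ref{prop_brenner_scott_estimate} (which requires $p>2$). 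Throughout, $C$ denotes a constant that may change from line to line but never depends on $h$.

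The first task is the uniform lower bound $\inf_\Omega\sigma_h\ge c/2$. For $\sigma\equiv\sigma_1$ this is direct: Proposition~\ref{prop_brenner_scott_estimate} combined with the $W^{2,p}$ regularity of $\sigma_1$ (Proposition~\ref{prop:sigma1}) gives $\|\sigma_1-\sigma_{1,h}\|_{W^{1,p}(\Omega)}\le C h$, and since $p>d$ the embedding $W^{1,p}(\Omega)\hookrightarrow\mathcal{C}^0(\overline\Omega)$ turns this into $\|\sigma_1-\sigma_{1,h}\|_{L^\infty(\Omega)}\le C h$; together with $\inf_\Omega\sigma_1\ge c>0$ (Lemma~\ref{theorem_perthame}) this yields $\inf_\Omega\sigma_{1,h}\ge c/2$ for $h$ small, which is precisely the bound announced in item~(i). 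For $\sigma\equiv\sigma_2$, write $\kappa_h=1+\overline\kappa_h$; first one checks that $\overline\kappa_h$ stays bounded as $h\to0$ (pick $\kappa^\star$ with $\sigma_2^0+\kappa^\star\sigma_1\ge\delta>0$ on $\Omega$, using $\sigma_1\ge c$; uniform convergence of $\sigma_{2,h}^0$ and $\sigma_{1,h}$ then forces $\overline\kappa_h\le\kappa^\star$ for $h$ small). Next, by continuity of $v\mapsto\inf_\Omega v$ and the very definition of $\overline\kappa_h$, one has $\sigma_{2,h}^0+\overline\kappa_h\sigma_{1,h}\ge0$ on $\Omega$, so that $\sigma_{2,h}=(\sigma_{2,h}^0+\overline\kappa_h\sigma_{1,h})+\sigma_{1,h}\ge\sigma_{1,h}\ge c/2$ for $h$ small. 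A similar computation shows $\sigma_2=\sigma_2^0+\kappa_h\sigma_1\ge c/2>0$ for $h$ small, so that $\sigma_2$ is indeed a solution to~\eqref{sigma_2positive}.

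Next I estimate $\sigma-\sigma_h$ in the relevant norms. For $\sigma_1$, Proposition~\ref{prop_brenner_scott_estimate} and Proposition~\ref{prop:sigma1} give $\|\sigma_1-\sigma_{1,h}\|_{W^{1,p}(\Omega)}\le C h$. For $\sigma_2$, the decisive point --- the flexibility noted before the statement --- is that the \emph{same} $\kappa_h$ is used at both the continuous and the discrete level, so that $\sigma_2-\sigma_{2,h}=(\sigma_2^0-\sigma_{2,h}^0)+\kappa_h(\sigma_1-\sigma_{1,h})$; applying Proposition~\ref{prop_brenner_scott_estimate} to $\sigma_2^0$ (with the $W^{2,p}$ bound of Proposition~\ref{prop_sigma_two_A_id}) and to $\sigma_1$, and using that $\kappa_h$ is bounded, gives $\|\sigma_2-\sigma_{2,h}\|_{W^{1,p}(\Omega)}\le C h$. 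In both cases, since $p>d$ and $b\in L^\infty(\Omega)$, this implies $\|\sigma-\sigma_h\|_{L^p(\Omega)}\le C h$ and, with $B-B_h=\nabla(\sigma-\sigma_h)+b(\sigma-\sigma_h)$, also $\vvvert\sigma-\sigma_h\vvvert_p=\|\sigma-\sigma_h\|_{L^\infty(\Omega)}+\|B-B_h\|_{L^p(\Omega)}\le C h$. Moreover $\sigma\in W^{2,p}(\Omega)\subset L^\infty(\Omega)\cap W^{1,p}(\Omega)$ and $b\in L^\infty(\Omega)$ give $\vvvert\sigma\vvvert_p\le C$. Plugging all of this into~\eqref{ineq_12}, and using $\inf_\Omega\sigma_h\ge c/2$, the coefficient of $\|u-v_H\|_{H^1(\Omega)}$ becomes $\le C$, the coefficient of $\|v_H\|_{H^1(\Omega)}$ becomes $\le C h$, and the first term becomes $\le C h\,\|f\|_{L^2(\Omega)}$; this is exactly~\eqref{ineq_13}.

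The main obstacle is not any individual estimate but the handling of the second invariant measure: one must (a) verify that $\kappa_h$ remains bounded uniformly in $h$, (b) exploit the ``$+1$'' offset in the definition of $\kappa_h$ to obtain the \emph{uniform} lower bound $\sigma_{2,h}\ge\sigma_{1,h}\ge c/2$ rather than a merely $h$-dependent one, and (c) justify defining the continuous invariant measure as $\sigma_2:=\sigma_2^0+\kappa_h\sigma_1$ with a discretization-dependent $\kappa_h$ --- which is legitimate precisely because the solution $u$ of~\eqref{eq_modified_problem} does not depend on the choice of invariant measure. Everything else is a routine insertion of Proposition~\ref{prop_brenner_scott_estimate} into Proposition~\ref{prop_convergence_sigma_h}.
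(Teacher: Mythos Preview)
Your proposal is correct and follows essentially the same approach as the paper: feed the $W^{1,p}$ estimate of Proposition~\ref{prop_brenner_scott_estimate} into the Strang-type bound~\eqref{ineq_12}, after securing the uniform lower bound $\inf_\Omega\sigma_h\ge c_{\min}$ via the Sobolev embedding $W^{1,p}\hookrightarrow C^0$, and handle $\sigma_2$ by the decomposition $\sigma_2-\sigma_{2,h}=(\sigma_2^0-\sigma_{2,h}^0)+\kappa_h(\sigma_1-\sigma_{1,h})$ with $\kappa_h$ bounded. Your treatment of the boundedness of $\kappa_h$ (via a fixed $\kappa^\star$ and uniform convergence) differs slightly in presentation from the paper's (which bounds $\kappa_h$ directly by $\sup|\sigma_{2,h}^0|/\inf\sigma_{1,h}$), but both arguments are equivalent and you supply the extra check that $\sigma_2>0$, which the paper leaves implicit.
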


\begin{proof} 
We first consider the case of our first invariant measure~$\sigma_1$, solution to~\eqref{invariant_measure_droniou}--\eqref{invariant_measure_droniou-int-1}--\eqref{invariant_measure_droniou-pos}, approximated by its $\mathbb{P}^1$ finite element approximation $\sigma_{1,h}$. We have shown in Proposition~\ref{prop_brenner_scott_estimate} that $\| \sigma_1 -\sigma_{1,h} \|_{W^{1,p}(\Omega)}=O(h)$ for any $2 < p < \infty$. Using the Sobolev injections, we obtain that $\| \sigma_1 -\sigma_{1,h} \|_{C^0(\Omega)}=O(h)$. Since $\dis \inf_\Omega \sigma_1 > 0$, we get that, for any $h$ sufficiently small, $\dis \inf_\Omega \sigma_{1,h} \geq c_{\min} > 0$ for some $c_{\min}$ independent of $h$. The estimate~\eqref{ineq_12} thus holds. In order to deduce~\eqref{ineq_13} from~\eqref{ineq_12}, we have to estimate both $\| \sigma_1-\sigma_{1,h} \|_{L^\infty(\Omega)}$ and $\|\nabla (\sigma_1 -\sigma_{1,h})\|_{L^p(\Omega)}$ by $O(h)$ terms. Since, for $p$ sufficiently large, the $L^\infty$ norm is controlled by the $W^{1,p}$ norm, we will conclude our proof if we show that $\| \sigma_1 -\sigma_{1,h} \|_{W^{1,p}(\Omega)}=O(h)$. This latter bound is precisely the purpose of Proposition~\ref{prop_brenner_scott_estimate}.

\medskip

As we said, the case of the second invariant measure is essentially similar with the suitable definition of $\sigma_2$ and $\sigma_{2,h}$. Note first that, when $h$ is sufficiently small, we have $\dis \inf_\Omega \sigma_{1,h} \geq c_{\min} > 0$ for some $c_{\min}$ independent of $h$, as explained above. We then observe that
$$
\sigma_{2,h} = \sigma_{2,h}^0 + \kappa_h \, \sigma_{1,h} = \sigma_{2,h}^0 + (1+\overline{\kappa}_h) \, \sigma_{1,h} \geq c_{\min} > 0,
$$
since, by definition of $\overline{\kappa}_h$, we have $\sigma_{2,h}^0 + \overline{\kappa}_h \, \sigma_{1,h} \geq 0$ on $\Omega$. The estimate~\eqref{ineq_12} thus again holds. 

In our argument to establish~\eqref{ineq_13}, we use $\| \sigma_2-\sigma_{2,h} \|_{L^\infty(\Omega)}$ and $\|\nabla (\sigma_2 -\sigma_{2,h})\|_{L^p(\Omega)}$ for those particular choices of~$\sigma_2$ and~$\sigma_{2,h}$. Obviously, 
$$
\sigma_2-\sigma_{2,h}=(\sigma_2^0-\sigma_{2,h}^0)+\kappa_h\,(\sigma_1-\sigma_{1,h}).
$$
The term $\sigma_1-\sigma_{1,h}$ has just been estimated above in the suitable norms, while the term $\sigma_2^0-\sigma_{2,h}^0$ is estimated similarly. Eventually, for $h$ sufficiently small, because of the convergence in $C^0(\Omega)$ of the $\mathbb{P}^1$ finite element approximations, we know that~$\dis \sup_\Omega |\sigma_{2,h}^0|$ is bounded uniformly in~$h$ while $\dis \inf_\Omega \sigma_{1,h} \geq c_{\min} > 0$ for some $c_{\min}$ independent of $h$. Thus~$\kappa_h$ is bounded uniformly in~$h$, when $h$ is sufficiently small. The triangle inequality allows to conclude our proof. 
\end{proof}

\section{Implementation details and numerical results}
\label{sec:numerical-results}

\subsection{Discretization of the invariant measure}
\label{ssec:discrete_invariant_measure}

The numerical approximation of~$\sigma_1$, solution to~\eqref{invariant_measure_droniou}--\eqref{invariant_measure_droniou-int-1}--\eqref{invariant_measure_droniou-pos} is, as we said in the previous section, obtained using a classical $\mathbb{P}^1$ finite element space associated to a uniform mesh of size $h$ that we denote $\mathcal{T}_h$. 

Problem~\eqref{invariant_measure_droniou}--\eqref{invariant_measure_droniou-int-1}--\eqref{invariant_measure_droniou-pos} involves two constraints: a normalization constraint and a sign constraint. We comply with the normalization constraint by implementing an iterative algorithm. With a view to satisfying the positivity constraint, we use the adjoint equation and stabilize the problem (see~\eqref{eq:stabilization-sigma1} below). The stiffness matrix of the formulation employed to compute $\sigma_{1,h}$ is the adjoint matrix to the matrix of the Douglas-Wang (DW) stabilized version of the approximation of the solution to the advection-diffusion equation. Since, in most situations, it is observed that the latter approximation preserves the maximum principle, it is intuitively expected that the same applies for the adjoint formulation. Nevertheless, we have no general theoretical argument that shows our formulation guarantees positivity of the solution $\sigma_{1,h}$ (except of course if $h$ is sufficiently small, as shown in the proof of Theorem~\ref{prop_convergence_sigma_h_p1}). 

For essentially all the practical computations we have performed, we observe that positivity is preserved, in the sense that $\displaystyle \int_K \sigma_{1,h}$ is positive for all $K\in\mathcal{T}_H$. This is all we need to proceed with a coercive bilinear form at the discrete level for the advection-diffusion equation~\eqref{eq_modified_problem}, since we will be using $\mathbb{P}^1$ finite elements for the approximation $u_H$ of $u$ (see Section~\ref{ssec:discrete_u} below). We mention that Droniou~\cite{droniou2011neumann} and Tobiska~\cite{tobiska2012finite} have proposed a discretization that gives a positive discrete solution, but we do not proceed this way.

\medskip

We implement the following iterative algorithm: $\sigma_{1,h}^{n+1}$ is defined as the solution to the problem
\begin{equation}
  \label{eq:titi}
  \left\{
\begin{array}{c}
\text{Find $\sigma_{1,h}^{n+1}\in\Sigma_h$ such that, for all $\varphi\in\Sigma_h$},
\\ \noalign{\vskip 3pt}
\dis \abil\left(\sigma_{1,h}^{n+1},\varphi\right)
+ \lambda\int_\Omega \sigma_{1,h}^{n+1} \, \varphi + a_{\text{stab, \text{DW}}}\left(\sigma_{1,h}^{n+1},\varphi\right) = \lambda\int_\Omega \sigma_{1,h}^n \, \varphi,
\end{array}
\right.
\end{equation}
where $\lambda$ is a positive parameter, $\Sigma_h$ is the $\mathbb{P}^1$ finite element space associated to $\mathcal{T}_h$, $\abil$ is defined by~\eqref{eq:def_astar} and
\begin{equation}
\label{eq:stabilization-sigma1}
a_{\text{stab},\text{DW}}(\sigma,\varphi)=\sum_{K\in\mathcal{T}_h}\int_K \tau^\star \, \left[ \text{div}(\nabla \sigma + b \sigma) \right] \, ( -\Delta\varphi + b\cdot\nabla\varphi),
\end{equation}
with 
\begin{equation}
\tau^\star(x) = \frac{h}{2|b(x)|}\left(\coth(\text{Pe}^\star_K(x))-\frac{1}{\text{Pe}^\star_K(x)}\right),
\qquad
\text{Pe}^\star_K(x) = \frac{|b(x)|h}{2}.
\label{def_tau_star}
\end{equation}
Note that the stabilization term~\eqref{eq:stabilization-sigma1} is the standard DW stabilization term for the invariant measure equation~\eqref{invariant_measure_droniou}. The formulation is thus strongly consistent. 

We set $\lambda =10^{-3}$ in our numerical tests. The iterations are initialized with an approximation of the invariant measure of the potential part of $b$, namely~$I_{\Sigma_h}(e^{-\psi_H})$, where $I_{\Sigma_h}$ is the nodal interpolation operator in $\Sigma_h$ and $\psi_H\in U_H$ satisfies, for any $v_H\in U_H$, $\displaystyle \int_{\Omega}\nabla \psi_H\cdot\nabla v_H=\int_\Omega b\cdot\nabla v_H$. The stopping criterion we use is $\displaystyle\left\|1-\frac{\sigma_{1,h}^{n+1}}{\sigma_{1,h}^n}\right\|_{L^1(\Omega)}< 10^{-3}$. Temporarily ignoring the stabilization term $a_{\text{stab},\text{DW}}$ in~\eqref{eq:titi}, we formally see that using this stopping criterion aims at enforcing that $\dis \frac{\text{div}(\nabla \sigma_{1,h}^{n+1} + b \sigma_{1,h}^{n+1})}{\sigma_{1,h}^{n+1}}$ is small when we stop the iterations~\eqref{eq:titi}. This is a better criterion than enforcing that $\text{div}(\nabla \sigma_{1,h}^{n+1} + b \sigma_{1,h}^{n+1})$ is small, as $\sigma_1$ may vary a lot over the domain $\Omega$.

\bigskip

We similarly obtain an approximation $\sigma_{2,h}^0\in\Sigma_h$ of $\sigma_2^0$ solution to~\eqref{sig_helmholtz_hodge}, considering iterations where~$(\sigma_2^0)_h^{n+1}$ is the solution to the problem:
$$
\left\{
\begin{array}{c}
\text{Find $(\sigma_2^0)_h^{n+1}\in\Sigma_h$ such that, for all $\varphi\in\Sigma_h$},
\\ \noalign{\vskip 3pt}
\dis \abil\left((\sigma_2^0)_h^{n+1},\varphi\right)
+ \lambda\int_\Omega (\sigma_2^0)_h^{n+1} \, \varphi = \lambda\int_\Omega (\sigma_2^0)_h^n \, \varphi + \int_{\partial\Omega}\left(b\cdot n-\frac{1}{|\partial\Omega|} \int_{\partial\Omega}b\cdot n\right)\varphi.
\end{array}
\right.
$$
The iterations are initialized using~$(\sigma_2^0)_h^0=1$. The same stopping criterion is adopted as for $\sigma_1$. Notice that, in that case, we need not account for the positivity, which will be obtained by the combination $\sigma_{2,h}=(\sigma_2^0)_h+\kappa_h \, \sigma_{1,h}$ described above, so no stabilization of the formulation is employed. 

\subsection{Discretization of $u$}
\label{ssec:discrete_u}

A natural way to define the $(\mathbb{P}^1,\sigma_h\mathbb{P}^1)$ method would be to consider the following variational formulation (see~\eqref{varf_sigmah_symmetric}):
\begin{equation}
\text{Find $u_H \in \mathbb{P}^1(\mathcal{T}_H)$ s.t., for all $v_H\in \mathbb{P}^1(\mathcal{T}_H)$, \quad $a_{\text{ss}}(\sigma_h;u_H,v_H) = F(\sigma_h v_H)$},
\label{eq:form-var-uH-1}
\end{equation}
where we recall (see~\eqref{eq:faible-uH}--\eqref{eq:Bh}) that
\begin{align}
\label{eq:form-var-uH-2}
a_{\text{ss}}(\sigma_h;u_H,v_H) &= \int_\Omega \sigma_h\nabla u_H\cdot\nabla v_H + B_h\cdot\frac{(\nabla u_H) v_H -(\nabla v_H)u_H }{2},
\\
B_h &=\nabla \sigma_h +\sigma_h\,b.
\nonumber
\end{align}
In the case of the invariant measure $\sigma_1$, we need to add an extra term related to the stabilized discretization used for $\sigma_{1,h}$. The reason is the following. Formally, the variational formulation~\eqref{eq:form-var-uH-1}--\eqref{eq:form-var-uH-2} corresponds to the approximation
\begin{equation}
\label{eq:titi2}
-\hbox{\rm div}(\sigma_h\,\nabla u) + B_h \cdot \nabla u+ \frac{1}{2} \,(\hbox{\rm div}\,B_h)\,u=\sigma_h \, f
\end{equation}
of equation~\eqref{eq_modified_problem}. The zero order term in~$\hbox{\rm div}\,B_h$ (originating from the skew-symmetric formulation that ensures coercivity at the discrete level) affects the accuracy. Now, the stabilization~\eqref{eq:stabilization-sigma1} introduced in the variational formulation for~$\sigma_{1,h}$ amounts to modifying $B_h=(B_1)_h=\nabla \sigma_{1,h}+\sigma_{1,h}\,b$ into
\begin{equation}
(\overline{B}_1)_h = (B_1)_h + \left(\sum_{K\in\mathcal{T}_h}\tau^\star\text{div}((B_1)_h)\mathds{1}_K\right)b 
\label{eq:Bbarre}
\end{equation}
with $\tau^\star$ defined by~\eqref{def_tau_star}. More precisely, at convergence (i.e. when $n \to \infty$), the formulation~\eqref{eq:titi} amounts to requesting that, for any $\varphi\in\Sigma_h$, 
$$
\int_\Omega (\overline{B}_1)_h \cdot \nabla \varphi = 0
$$
rather than $\dis \int_\Omega (B_1)_h \cdot \nabla \varphi = 0$, which is the standard discretization of~\eqref{invariant_measure_droniou}. Formally, the quantity the divergence of which is zero is not $(B_1)_h$, but $(\overline{B}_1)_h$. In view of the last term of the left-hand side of~\eqref{eq:titi2}, and with the aim of obtaining the best possible accuracy, we thus need to modify $(B_1)_h$ into $(\overline{B}_1)_h$ in~\eqref{eq:form-var-uH-2}.

In order to be consistent, we therefore define
\begin{equation}
a_{\text{ss}}(\sigma_{1,h};u_H,v_H)= \int_\Omega \sigma_{1,h}\nabla u_H\cdot\nabla v_H + (\overline{B}_1)_h\cdot\frac{(\nabla u_H) v_H -(\nabla v_H)u_H }{2}
\label{eq:formulation-modifiee-barre}
\end{equation}
instead of~\eqref{eq:form-var-uH-2}. The problem is, by construction, coercive, and may be analyzed by the standard tools of numerical analysis we have used in the previous section. We readily note that the replacement of $(B_1)_h$ by $(\overline{B}_1)_h$ does not affect this analysis. Indeed, on any $K\in\mathcal{T}_h$,
$$
\hbox{\rm div}\left[(B_1)_h\right]=\hbox{\rm div}(\nabla \sigma_{1,h})+\hbox{\rm div}(\sigma_{1,h}\,b),
$$
where the first term vanishes for $\mathbb{P}^1$ finite elements. Thus, for any $p > d$,
$$
\left\| (\overline{B}_1)_h - (B_1)_h \right\|_{L^p(\Omega)}
\leq 
\left\| \tau^\star \, b \, \hbox{\rm div}(\sigma_{1,h}\,b) \right\|_{L^p(\Omega)}
\leq
\mathcal{C} h \, \| b \|_{W^{1,\infty}(\Omega)} \, \left\| \sigma_{1,h} \right\|_{W^{1,p}(\Omega)}
$$
where $\mathcal{C}$ is a universal constant such that $|\coth(y) - y^{-1} | \leq \mathcal{C}$ for any $y \in \R$. The factor $\left\| \sigma_{1,h} \right\|_{W^{1,p}(\Omega)}$ can be bounded from above independently of $h$ as a consequence of Proposition~\ref{prop_brenner_scott_estimate}. Using arguments similar to those used in the proofs of Proposition~\ref{prop_convergence_sigma_h} and Theorem~\ref{prop_convergence_sigma_h_p1}, we thus see that~\eqref{ineq_13} again holds when using the bilinear form~\eqref{eq:formulation-modifiee-barre} instead of~\eqref{eq:form-var-uH-2}.

\medskip

In the case of the invariant measure $\sigma_2$, we also need to add an extra term to~\eqref{eq:form-var-uH-1}--\eqref{eq:form-var-uH-2}. Recall that $\sigma_{2,h}=(\sigma_2^0)_h+\kappa_h \, \sigma_{1,h}$, where no stabilization is employed to compute $(\sigma_2^0)_h$, in contrast to $\sigma_{1,h}$. Formally, and again in view of the last term of the left-hand side of~\eqref{eq:titi2}, it seems advantageous to work with $(B_2^0)_h +\kappa_h \, (\overline{B}_1)_h$ rather than $(B_2^0)_h +\kappa_h \, (B_1)_h$, as we expect that the divergence of the former is smaller than that of the latter. In order to be consistent, we therefore define
\begin{equation}
a_{\text{ss}}(\sigma_{2,h};u_H,v_H)= \int_\Omega \sigma_{2,h}\nabla u_H\cdot\nabla v_H + (\overline{B}_2)_h\cdot\frac{(\nabla u_H) v_H -(\nabla v_H)u_H }{2}
\label{eq:formulation-modifiee-barre2}
\end{equation}
with $(\overline{B}_2)_h = (B_2^0)_h +\kappa_h \, (\overline{B}_1)_h$ instead of~\eqref{eq:form-var-uH-2} (recall that $(\overline{B}_1)_h$ is defined by~\eqref{eq:Bbarre} and that $(B_2^0)_h = \nabla (\sigma_2^0)_h + (\sigma_2^0)_h \, b$). As in the case of the invariant measure $\sigma_1$, the problem is, by construction, coercive, and may be analyzed by the standard tools of numerical analysis we have used in the previous section.

\medskip

In addition to the above practical and theoretical considerations, we also need to possibly modify the formulation when the problem is advection-domina\-ted. It turns out that we only need to use such a stabilized formulation when working with the invariant measure $\sigma_2$. In that case, we use a GLS type method and define the $(\mathbb{P}^1,\sigma_{2,h}\mathbb{P}^1)$-GLS method by the following variational formulation:
\begin{equation}
  \label{eq:stab-global}
  \left\{
\begin{array}{c}
\text{Find $u_H \in \mathbb{P}^1(\mathcal{T}_H)$ such that, for all $v_H\in \mathbb{P}^1(\mathcal{T}_H)$},
\\
a_{\text{ss}}(\sigma_{2,h};u_H,v_H) + a_{\text{stab}}(u_H,v_H) = F(\sigma_{2,h} v_H) + F_{\text{stab}}(v_H),
\end{array}
\right.
\end{equation}
where $a_{\text{ss}}$ is defined by~\eqref{eq:formulation-modifiee-barre2}, and
\begin{align}
\label{eq:a_stab}
a_{\text{stab}}(u_H,v_H)&=\sum_{K\in\mathcal{T}_H} \int_K \tau \, (\sigma_{2,h} b\cdot\nabla u_H) \, (\sigma_{2,h} b\cdot \nabla v_H),
\\
\nonumber
F_{\text{stab}}(v_H)&=\sum_{K\in\mathcal{T}_H}\int_K \tau \, (\sigma_{2,h} f) \, (\sigma_{2,h} b\cdot \nabla v_H),
\end{align}
with 
$$
\tau(x) = \frac{H}{2|(B_2)_h(x)|}\left(\coth(\text{Pe}_K(x))-\frac{1}{\text{Pe}_K(x)}\right),
\qquad
\text{Pe}_K(x) = \frac{|(B_2)_h(x)|H}{2\sigma_{2,h}(x)},
$$
where $(B_2)_h = (B_2^0)_h +\kappa_h \, (B_1)_h = \nabla \sigma_{2,h} + \sigma_{2,h} \, b$. Denoting $\mathcal{L}_h v = -\textnormal{div} (\sigma_h \nabla v) + B_h \cdot \nabla v$ the operator approximating that of~\eqref{eq_modified_problem} when only $\sigma_h$ is available, we indeed see that, on any $K \in \mathcal{T}_H$, we have
$$
\mathcal{L}_h u_H = \sigma_h b\cdot\nabla u_H
$$
as a consequence of the fact that $u_H$ is a $\mathbb{P}^1$ function. The term~\eqref{eq:a_stab} is indeed a GLS-type stabilization term, in the sense that it reads $\dis a_{\text{stab}}(u_H,v_H) = \sum_{K\in\mathcal{T}_H} \int_K \tau \, (\mathcal{L}_h u_H) \, (\mathcal{L}_h v_H)$.

\begin{remark}
We could have defined the function $\tau$ above using $(\overline{B}_2)_h$ instead of $(B_2)_h$. This leads to essentially identical numerical results.
\end{remark}

\subsection{Irrotational case}
\label{sec:section_irrotational_case}

For all our numerical tests throughout this article, we work on the unit square $\Omega=(0,1)^2$ and choose the right-hand side~$f=1$. All computations are performed on a Intel$\textregistered$ Xeon$\textregistered$ Processor E5-2667 v2. We use the FreeFem++ software~\cite{freefem}. 

\medskip

We assume in this Section~\ref{sec:section_irrotational_case} that the velocity field $b$ is irrotational: $b=\nabla \phi$. In that case, we know that $\displaystyle \sigma_1=\left(\fint_\Omega e^{-{\phi}}\right)^{-1}e^{-{\phi}}$ and that $\nabla\sigma_1 + \sigma_1\,b =0$ in $\Omega$. Specifically here, the velocity field $b$ is taken of the form
\begin{multline*}
b = \big(b_x^0,b_y^0\big)^T + \lambda_1 \big( \cos(2\pi x)\sin(2\pi y),\sin(2\pi x)\cos(2\pi y) \big)^T
\\
+\lambda_2 \big(\cos^2(2\pi x),0\big)^T +\lambda_3(y,x)^T,
\end{multline*} 
where $b^0_x$, $b^0_y$, $\lambda_1$, $\lambda_2$, $\lambda_3>0$. We take $b_x^0 = b_y^0 = 64$. The parameters $\lambda_1$, $\lambda_2$ and $\lambda_3$ are given in Table~\ref{pbs_inf_sup_d_2} for the four test cases (i) through~(iv) we consider. The last column of Table~\ref{pbs_inf_sup_d_2} shows that the problem is not coercive in the tests (ii) to (iv).

\medskip

\begin{table}[htbp]
\centering{
\begin{tabular}{c|ccc|c}
& $\lambda_1$ & $\lambda_2$ & $\lambda_3$ & $\dis \inf_{v_H\in U_H} \frac{a(v_H,v_H)}{\|v_H\|^2_{L^2(\Omega)}}$
\\ \noalign{\vskip 3pt}
\hline
Test (i) & 0 & 0 & 0 & 19.93 \\ 
Test (ii) & 0 & 50.34 & 0 & $-45.05$ \\ 
Test (iii) & 0 & 50.34 & 30 & $-45.05$ \\ 
Test (iv) & 20 & 50.34 & 0 & $-95.21$ 
\end{tabular}}
\caption{Definition of the parameters for the four discrete problems (i)-(iv)}
\label{pbs_inf_sup_d_2}
\end{table}

\medskip

Tables~\ref{table_10} through~\ref{table_13} show the relative error
\begin{equation}
\label{eq:def_erreur}
\text{err} = \frac{\| \nabla (u_H-u_{\rm ref}) \|_{L^2(\Omega\setminus\Omega_{\text{layer}})}}{\| \nabla u_{\rm ref} \|_{L^2(\Omega)}}
\end{equation}
for various numerical solutions $u_H$. In the convection-dominated regime, the solution presents a boundary layer of approximate width 
$$
\delta_{\text{layer}} = \frac{2}{\|b\|_{L^\infty(\Omega)}} \log \frac{\|b\|_{L^\infty(\Omega)}}{2}. 
$$
For the convection fields we consider, we set the boundary layer region (see Figure~\ref{non_coercive_omega_layer}) as 
$$
\Omega_{\text{layer}} = 
\big( (0,1)\times(1-\delta_{\text{layer}},1) \big) \cup \big( (1-\delta_{\text{layer}},1) \times(0,1) \big) \cup \big( (0,1) \times (0,\delta_{\text{layer}}) \big)
$$ 
and we only measure the accuracy of $u_H$ outside this layer. We have also assessed the accuracy in $L^2(\Omega)$ norm and obtained similar qualitative conclusions. The reference solution $u_{\rm ref}$ is computed using a $\mathbb{P}^1$ approach with a tiny mesh size. 

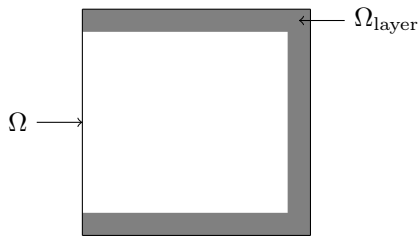
\begin{figure}[htbp]
\begin{center}
\begin{tikzpicture}[scale=3]
\fill [gray] (0.9,0.1) -- (0.,0.1)  -- (0,0) -- (1,0) -- (1,1) -- (0,1) -- (0,0.9) -- (0.9,0.9) -- cycle;
\draw (0,0) -- (1,0);
\draw (1,0) -- (1,1);
\draw (1,1) -- (0,1);
\draw (0,1) -- (0,0);
\draw [->] (-0.2,0.5) -- (0,0.5);
\draw (-0.2,0.5) node[left] {$\Omega$};
\draw [<-] (0.95,0.95) -- (1.15,0.95);
\draw (1.15,0.95) node[right] {$\Omega_{\text{layer}}$};
\end{tikzpicture}
\end{center}
\caption{The domain $\Omega_{\text{layer}}$ coloured in grey}
\label{non_coercive_omega_layer}
\end{figure} 

\medskip

For all approaches, we fix the mesh size $H=1/16$ for the approximation~$u_H$ of~$u$. We compare six approaches: the classical $\mathbb{P}^1$ finite element approximation (which may be unstable in the advection-dominated regime), its stabilized Galerkin least-square variant~$\mathbb{P}^1$-GLS, and our four approaches~$(\mathbb{P}^1 , \sigma_1\mathbb{P}^1)$, $(\mathbb{P}^1 , \sigma_{1,h}\mathbb{P}^1)$, $(\mathbb{P}^1 , \sigma_{2,h}\mathbb{P}^1)$ and $(\mathbb{P}^1 , \sigma_{2,h}\mathbb{P}^1)$-GLS, respectively using the exact value of~$\sigma_1$, its approximation~$\sigma_{1,h}$, and the approximation~$ \sigma_{2,h}$ of $\sigma_2$, either classical (as in~\eqref{eq:form-var-uH-1} with $a_{\text{ss}}$ defined by~\eqref{eq:formulation-modifiee-barre} or~\eqref{eq:formulation-modifiee-barre2}) or stabilized (as in~\eqref{eq:stab-global}). Note that the parameter $h$ is not used in the three first approaches.


\medskip

The comparison between $\mathbb{P}^1$ and~$\mathbb{P}^1$-GLS is used as an empirical measure of the instability of the problem. Likewise, comparing~$(\mathbb{P}^1 , \sigma_{2,h}\mathbb{P}^1)$ and $(\mathbb{P}^1 , \sigma_{2,h}\mathbb{P}^1)$-GLS allows to see the potential added value of a stabilization of the problem when using $\sigma_{2,h}$ as an approximation of the invariant measure. The approach~$(\mathbb{P}^1 , \sigma_1\mathbb{P}^1)$ using the exact value of the invariant measure is of course the most accurate one, and performs equally well as (and often better than)~$\mathbb{P}^1$-GLS. When we forbid ourselves to use that exact value of the invariant measure, $(\mathbb{P}^1 , \sigma_{1,h}\mathbb{P}^1)$ is the best method to use, and it does not require stabilization (see the third column of Tables~\ref{table_10} through~\ref{table_13}). Note yet that, if one has to work with $h=H$, then $(\mathbb{P}^1 , \sigma_{2,h}\mathbb{P}^1)$-GLS is the best method (see the second column of Tables~\ref{table_10} through~\ref{table_13}), providing results the accuracy of which is around 8\%. We also note the following fact. The two rightmost columns show tests that use a mesh to approximate~$\sigma$ that is not a subset of the mesh used to compute~$u$. In that case, the approach $(\mathbb{P}^1 , \sigma_{1,h}\mathbb{P}^1)$ deteriorates. In the present state of our understanding, we are unable to explain this phenomenon. We therefore advocate to employ~$(\mathbb{P}^1 , \sigma_{1,h}\mathbb{P}^1)$ with meshes that are a subset of one another, or, otherwise, to switch to $(\mathbb{P}^1 , \sigma_{2,h}\mathbb{P}^1)$-GLS.

\begin{remark}
\label{rem:h_grand}
For the Test (iv) reported on in Table~\ref{table_13}, and in the particular case $h=H$, it turns out that $\dis \int_K \sigma_{1,h}$ is not positive for all $K \in \mathcal{T}_H$ (it is positive for all the other values of $h$ considered, and for all the computations reported on in Tables~\ref{table_10} through~\ref{table_12}). In that case, it is thus not possible to use the $(\mathbb{P}^1 , \sigma_{1,h}\mathbb{P}^1)$ method. However, it turns out, still for that value of $h$, that there exists $\kappa_h$ such that $\dis \int_K \left( \sigma_{2,h}^0 + \kappa_h \, \sigma_{1,h} \right)$ is positive for all $K \in \mathcal{T}_H$. The $(\mathbb{P}^1 , \sigma_{2,h}\mathbb{P}^1)$-GLS approach can thus be used. 
\end{remark}

\begin{table}[htbp]
\centering{
\begin{tabular}{c|cccc}
Error~\eqref{eq:def_erreur} & ($h=H$) & ($h=H/5$) & ($h=1/150$) & ($h=1/230$) \\
\hline
$\mathbb{P}^1$ & 0.191 \\
$\mathbb{P}^1$-GLS & 0.0328\\
$(\mathbb{P}^1 , (\sigma_1)\mathbb{P}^1)$ & 0.0187 \\
$(\mathbb{P}^1 , \sigma_{1,h}\mathbb{P}^1)$ & 0.313 & 0.0208 & 0.127 & 0.0818 \\
$(\mathbb{P}^1 , \sigma_{2,h}\mathbb{P}^1)$ & 0.191 & 0.191 & 0.191 & 0.191 \\
$(\mathbb{P}^1 , \sigma_{2,h}\mathbb{P}^1)$-GLS & 0.0328 & 0.0328 & 0.0326 & 0.0327
\end{tabular}}
\caption{Test (i)}
\label{table_10}
\end{table}

\begin{table}[htbp]
\centering{
\begin{tabular}{c|cccc}
Error~\eqref{eq:def_erreur} & ($h=H$) & ($h=H/7$)& ($h=1/150$)& ($h=1/230$) \\
\hline
$\mathbb{P}^1$ & 0.479 \\
$\mathbb{P}^1$-GLS & 0.0551 \\
$(\mathbb{P}^1 , (\sigma_1)\mathbb{P}^1)$ & 0.0199 \\
$(\mathbb{P}^1 , \sigma_{1,h}\mathbb{P}^1)$ & 0.385 & 0.0218 & 0.139 & 0.102 \\
$(\mathbb{P}^1 , \sigma_{2,h}\mathbb{P}^1)$ & 0.614 & 0.398 & 0.362 & 0.377 \\
$(\mathbb{P}^1 , \sigma_{2,h}\mathbb{P}^1)$-GLS & 0.0827 & 0.0532 & 0.0511 & 0.0520
\end{tabular}}
\caption{Test (ii)}
\label{table_11}
\end{table}

\begin{table}[htbp]
\centering{
\begin{tabular}{c|cccc}
Error~\eqref{eq:def_erreur}& ($h=H$)& ($h=H/9$)& ($h=1/150$)& ($h=1/230$) \\
\hline
$\mathbb{P}^1$ & 0.536 \\
$\mathbb{P}^1$-GLS & 0.0411 \\
$(\mathbb{P}^1 , (\sigma_1)\mathbb{P}^1)$ & 0.0302 \\
$(\mathbb{P}^1 , \sigma_{1,h}\mathbb{P}^1)$ & 0.453& 0.0250 & 0.153 & 0.126 \\
$(\mathbb{P}^1 , \sigma_{2,h}\mathbb{P}^1)$ & 0.862 & 0.461 & 0.412 & 0.429 \\
$(\mathbb{P}^1 , \sigma_{2,h}\mathbb{P}^1)$-GLS & 0.0784 & 0.0420 & 0.0397 & 0.0405
\end{tabular}}
\caption{Test (iii)}
\label{table_12}
\end{table}

\begin{table}[htbp]
\centering{
\begin{tabular}{c|cccc}
Error~\eqref{eq:def_erreur} & ($h=H$) & ($h=H/7$) & ($h=1/150$) & ($h=1/230$) \\
\hline
$\mathbb{P}^1$ & 0.468 \\
$\mathbb{P}^1$-GLS & 0.0573\\
$(\mathbb{P}^1 , (\sigma_1)\mathbb{P}^1)$ & 0.0250 \\
$(\mathbb{P}^1 , \sigma_{1,h}\mathbb{P}^1)$ & - & 0.0266 & 0.153 & 0.111 \\
$(\mathbb{P}^1 , \sigma_{2,h}\mathbb{P}^1)$ & 0.612 & 0.401 & 0.379 & 0.388 \\
$(\mathbb{P}^1 , \sigma_{2,h}\mathbb{P}^1)$-GLS & 0.0894 & 0.0550 & 0.0535 & 0.0544
\end{tabular}}
\caption{Test (iv)}
\label{table_13}
\end{table}


\subsection{General case}

We now consider the general, not necessarily irrotational case. This time, $b$ reads as
\begin{multline*}
b = \big(b_x^0,b_y^0\big)^T + \lambda_1 \big(\cos(2\pi x)\sin(2\pi y),\sin(2\pi x)\cos(2\pi y) \big)^T
\\
+\lambda_2 \big(\cos^2(2\pi x),0\big)^T +\lambda_3(y,x)^T+\lambda_4(y,-x)^T,
\end{multline*}
where $b^0_x = b^0_y = 64$ and where $\lambda_1$, $\lambda_2$, $\lambda_3$, $\lambda_4>0$. We study the three examples defined in Table~\ref{pbs_inf_sup_d_2_general}.

\medskip

\begin{table}[htbp]
\centering{
\begin{tabular}{c|cccc|c}
& $\lambda_1$ & $\lambda_2$ &$\lambda_3$ & $\lambda_4$& $\dis \inf_{v_H\in U_H} \frac{a(v_H,v_H)}{\|v_H\|^2_{L^2(\Omega)}}$
\\
\hline \noalign{\vskip 3pt}
Test (v) & 0 & 50.34  & 0 & 64 & $-45.05$ \\ 
Test (vi) & 20 & 50.34 & 0 & 64 & $-95.21$ \\ 
Test (vii) & 0 & 50.34 & 30 & 64 & $-45.05$ 
\end{tabular}}
\caption{Definition of the parameters for the three discrete problems (v)-(vii)}
\label{pbs_inf_sup_d_2_general}
\end{table}

Tables~\ref{table_15} through~\ref{table_17} show our results, for $H=1/16$ as in the previous section. The approaches evaluated are identical to those of Tables~\ref{table_10} through~\ref{table_13}, with the notable exception of the approach $(\mathbb{P}^1 , \sigma_1\mathbb{P}^1)$ since now the exact invariant measure $\sigma_1$ is unknown. The conditions in which we perform our tests are identical. The results confirm our conclusions of the previous section. 

\begin{remark}
For the largest value of $h$ used in Tables~\ref{table_15} through~\ref{table_17}, it turns out that $\dis \int_K \sigma_{1,h}$ is not positive for all $K \in \mathcal{T}_H$ (see also Remark~\ref{rem:h_grand}). But there still exists $\kappa_h$ such that $\dis \int_K \left( \sigma_{2,h}^0 + \kappa_h \, \sigma_{1,h} \right)$ is positive for all $K \in \mathcal{T}_H$, which allows us to use the $(\mathbb{P}^1 , \sigma_{2,h}\mathbb{P}^1)$-GLS approach. 
\end{remark}

\begin{table}[htbp]
\centering{
\begin{tabular}{c|cccc}
Error~\eqref{eq:def_erreur} & ($h=1/17$) & ($h=H/7$) & ($h=1/150$) & ($h=1/230$) \\
\hline
$\mathbb{P}^1$ & 0.568 \\
$\mathbb{P}^1$-GLS & 0.0704 \\
$(\mathbb{P}^1 , \sigma_{1,h}\mathbb{P}^1)$ & - & 0.0390 & 0.146 & 0.0981 \\
$(\mathbb{P}^1 , \sigma_{2,h}\mathbb{P}^1)$ & 0.657 & 0.515 & 0.462 & 0.480 \\
$(\mathbb{P}^1 , \sigma_{2,h}\mathbb{P}^1)$-GLS & 0.117 & 0.0672 & 0.0658 & 0.0660
\end{tabular}}
\caption{Test (v)}
\label{table_15}
\end{table}

\begin{table}[htbp]
\centering{
\begin{tabular}{c|cccc}
Error~\eqref{eq:def_erreur}& ($h=H$)& ($h=H/7$)& ($h=1/150$)& ($h=1/230$) \\
\hline
$\mathbb{P}^1$ & 0.620 \\
$\mathbb{P}^1$-GLS & 0.0807 \\
$(\mathbb{P}^1 , \sigma_{1,h}\mathbb{P}^1)$ & - & 0.0549 & 0.151 & 0.105 \\
$(\mathbb{P}^1 , \sigma_{2,h}\mathbb{P}^1)$ & 0.641 & 0.522 & 0.482 & 0.495 \\
$(\mathbb{P}^1 , \sigma_{2,h}\mathbb{P}^1)$-GLS & 0.134 & 0.0772 & 0.0764 & 0.0768
\end{tabular}}
\caption{Test (vi)}
\label{table_16}
\end{table}

\begin{table}[h!]
\centering{
\begin{tabular}{c|cccc}
Error~\eqref{eq:def_erreur} & ($h=1/17$) & ($h=H/9$) & ($h=1/150$) & ($h=1/230$) \\
\hline
$\mathbb{P}^1$ & 0.636 \\
$\mathbb{P}^1$-GLS & 0.0606 \\
$(\mathbb{P}^1 , \sigma_{1,h}\mathbb{P}^1)$ & - & 0.0285 & 0.159 & 0.116 \\
$(\mathbb{P}^1 , \sigma_{2,h}\mathbb{P}^1)$ & 0.648 & 0.550 & 0.489 & 0.509 \\
$(\mathbb{P}^1 , \sigma_{2,h}\mathbb{P}^1)$-GLS & 0.112 & 0.0588 & 0.0571 & 0.0573
\end{tabular}}
\caption{Test (vii)}
\label{table_17}
\end{table}


\subsection{Computational cost and efficiency}

We now evaluate the computational cost of the most accurate of our approaches, namely the $(\mathbb{P}^1 , \sigma_{1,h}\mathbb{P}^1)$ method, given the results of the tests performed in the previous sections, as compared to the classical $\mathbb{P}^1$-GLS method. 

\medskip

As can be easily seen upon considering some particular situations where $\sigma_1$ is known analytically (some of these cases are considered in Section~\ref{sec:section_irrotational_case} above), the contrast of $\sigma_1$ over the domain, say measured by the ratio~$\displaystyle\frac{\sup_\Omega\sigma_1}{\inf_\Omega\sigma_1}$, may be huge, especially in the advection-dominated regime. Therefore, the stiffness matrix involved in the solution procedure for the modified equation~\eqref{eq_modified_problem} is often ill-conditioned. We therefore use, in our tests, a direct solver (from the UMFPACK library) for the linear algebraic systems. An alternate, equally effective approach is to use an iterative inversion algorithm together with a diagonal preconditioner. We have indeed tested such an approach in other tests not reproduced here, obtaining similar conclusions. In particular, the diagonal preconditionner, although simple, turns out to be very effective in diminishing the number of iterations. 

\subsubsection{Fixed cost} 

We compare the $(\mathbb{P}^1 , \sigma_{1,h}\mathbb{P}^1)$ method and the $\mathbb{P}^1$-GLS method at fixed cost. 
Tables~\ref{table_27} through~\ref{table_28} show the accuracy of the two methods for the tests~(v)-(vi)-(vii). Similar results have been obtained for our tests~(i) through (iv). We observe that the $\mathbb{P}^1$-GLS is definitely more accurate than the $(\mathbb{P}^1 , \sigma_{1,h}\mathbb{P}^1)$ method. However, as already mentioned and as will be confirmed in the next tests, the latter approach is more adequate in a multiquery context, where several resolutions of the advection-diffusion equation~\eqref{pb_adv_diff_non_coercive_without_bc}--\eqref{boundary_cond_dirichlet_homog} are to be performed. 

\begin{table}[htbp]
\centering{
\begin{tabular}{c|cc}
	& cost & Error~\eqref{eq:def_erreur} \\
\hline
$\mathbb{P}^1$-GLS ($H=1/122$) & 4.76 & 0.00293 \\
$(\mathbb{P}^1 , \sigma_{1,h}\mathbb{P}^1)$ ($H=1/16$, $h=H/7$) & 4.79 & 0.0390
\end{tabular}}
\caption{Test (v)}
\label{table_27}
\end{table}

\begin{table}[htbp]
\centering{
\begin{tabular}{c|cc}
	& cost & Error~\eqref{eq:def_erreur} \\
\hline
$\mathbb{P}^1$-GLS ($H=1/127$) & 6.42 & 0.00485 \\
$(\mathbb{P}^1 , \sigma_{1,h}\mathbb{P}^1)$ ($H=1/16$, $h=H/7$) & 6.30 & 0.0549
\end{tabular}}
\caption{Test (vi)}
\label{table_30}
\end{table}

\begin{table}[h!]
\centering{
\begin{tabular}{c|cc}
	& cost & Error~\eqref{eq:def_erreur} \\
\hline
$\mathbb{P}^1$-GLS ($H=1/144$) & 7.43 &	0.00143 \\
$(\mathbb{P}^1 , \sigma_{1,h}\mathbb{P}^1)$ ($H=1/16$, $h=H/9$) & 7.09 & 0.0285
\end{tabular}}
\caption{Test (vii)}
\label{table_28}
\end{table}

\subsubsection{Fixed meshsize $h$}

We fix the meshsize $h=1/2048$. In order to measure the cost of the methods in a multiquery context, we distinguish, in the computational cost of the $(\mathbb{P}^1 , \sigma_{1,h}\mathbb{P}^1)$ method, (i) the offline cost, which comprises the assembling phase of the stiffness matrix, which itself involves the pre-computation of $\sigma_{1,h}$, and (ii) the online cost equal to the resolution time for the modified advection-diffusion equation. 

Tables~\ref{table_34} through~\ref{table_36} show our results for the $(\mathbb{P}^1 , \sigma_{1,h}\mathbb{P}^1)$ method and the $\mathbb{P}^1$-GLS method for the tests~(v)-(vi)-(vii). Again, similar results we do not show and which lead to similar conclusions have been obtained for our tests~(i) through (iv). The two columns on the left of each table show the relative accuracy obtained for different mesh sizes~$H$ (employed, we recall, for the approximation of the advection-diffusion equation). The two columns on the right allow to compare the online cost of the $(\mathbb{P}^1 , \sigma_{1,h}\mathbb{P}^1)$ method, as defined above, with the total cost of the the $\mathbb{P}^1$-GLS method. The specific function used to measure the CPU time is clock\_gettime() with the clock CLOCK\_PROCESS\_CPUTIME\_ID. 

\medskip

The main two conclusions are, on the one hand, that the $(\mathbb{P}^1 , \sigma_{1,h}\mathbb{P}^1)$ method is more robust and allow for larger mesh sizes than the $\mathbb{P}^1$-GLS method, and, on the other hand, that the two approaches essentially share the same cost, if we assume that $\sigma_{1,h}$ has been precomputed. Other tests, not reported on here, show that roughly ten solutions of the advection-diffusion equation are necessary to make the approach profitable if we take into account the cost to compute $\sigma_{1,h}$. 

\begin{table}[htbp]
\centering{
\begin{tabular}{ccc|cc}
 & Error~\eqref{eq:def_erreur} & & Online cost & \\ 
$1/H$	& $(\mathbb{P}^1 , \sigma_{1,h}\mathbb{P}^1)$ & $\mathbb{P}^1$-GLS& $(\mathbb{P}^1 , \sigma_{1,h}\mathbb{P}^1)$ & $\mathbb{P}^1$-GLS \\
\hline
16	& 0.0291 & 0.0704 & 0.00107 & 0.000921 \\ 
24	& 0.0190 & 0.0508 & 0.00177 & 0.00171 \\ 
28	& 0.0173 & 0.0235 & 0.00250 & 0.00229 \\ 
32	& 0.0135 & 0.0187 & 0.00312 & 0.00294 \\ 
64	& 0.00626 & 0.00608 & 0.0138 & 0.0137 
\end{tabular}}
\caption{Test (v)}
\label{table_34}
\end{table}

\begin{table}[htbp]
\centering{
\begin{tabular}{ccc|cc}
 & Error~\eqref{eq:def_erreur} & & Online cost & \\ 
$1/H$	& $(\mathbb{P}^1 , \sigma_{1,h}\mathbb{P}^1)$ & $\mathbb{P}^1$-GLS& $(\mathbb{P}^1 , \sigma_{1,h}\mathbb{P}^1)$ & $\mathbb{P}^1$-GLS \\
\hline
16	& 0.0475 & 0.0807 & 0.00102 & 0.000804 \\
24	& 0.0308 & 0.0615 & 0.00157 & 0.00166 \\
28	& 0.0275 & 0.0440 & 0.00238 & 0.00232 \\
32	& 0.0226 & 0.0258 & 0.00298 & 0.00301 \\
64	& 0.0105 & 0.0102 & 0.0143 & 0.0139 
\end{tabular}}
\caption{Test (vi)}
\label{table_35}
\end{table}

\begin{table}[h!]
\centering{
\begin{tabular}{ccc|cc}
 & Error~\eqref{eq:def_erreur} & & Online cost & \\ 
$1/H$	& $(\mathbb{P}^1 , \sigma_{1,h}\mathbb{P}^1)$ & $\mathbb{P}^1$-GLS& $(\mathbb{P}^1 , \sigma_{1,h}\mathbb{P}^1)$ & $\mathbb{P}^1$-GLS \\
\hline
16	& 0.0219 & 0.0606 & 0.000976 & 0.000807 \\
24	& 0.0139 & 0.0437 & 0.00186 & 0.00171 \\
28	& 0.0119 & 0.0189 & 0.00256 & 0.00229 \\
32	& 0.00946 & 0.0146 & 0.00332 & 0.0296 \\
64	& 0.00401 & 0.00388 & 0.0156 & 0.0138 
\end{tabular}}
\caption{Test (vii)}
\label{table_36}
\bigskip
\end{table}

\section*{Acknowledgments}

The work of the authors is partially supported by the ONR under grant N00014-15-1-2777 and by the EOARD under grant FA8655-13-1-3061. Stimulating discussions with Y.~Achdou and O.~Pironneau are gratefully acknowledged. We warmly thank A. Lozinski for his remarks on a draft version of this article.


\appendix

\section{Proof of Proposition~\ref{prop_brenner_scott_estimate_debut}}
\label{sec:appendix_H1}

Proposition~\ref{prop_brenner_scott_estimate_debut} is shown as a consequence of a more general result, namely Theorem~\ref{th:main_debut} below. We introduce the space
$$
V=\left\{u \in H^1(\Omega), \quad \fint_\Omega u = 1 \right\}
$$
and recall (see~\eqref{eq:def_astar}) that the bilinear form $\abil$ is defined by
$$
\abil(u,v) = \int_\Omega (\nabla u + bu) \cdot \nabla v.
$$
Let $f \in L^2(\Omega)$ and $g$ be Lipschitz-continuous on $\partial \Omega$ such that $\dis \int_\Omega f + \int_{\partial \Omega} g = 0$. Consider the problem
\begin{equation}
\label{eq:pb_disc}
\text{Find $u \in V$ such that, for any $v \in H^1(\Omega)$, \quad $\abil(u,v) = \int_\Omega f v + \int_{\partial \Omega} g v$}.
\end{equation}
Under assumptions~\eqref{hyp_H0_debut}--\eqref{hyp_H_11_num}--\eqref{hyp_H_22_num}, we have shown above (in Proposition~\ref{prop_u_f_A_id} for the specific case $g=0$ and in Proposition~\ref{prop_u_f_sigma_two_A_id} for the case $\dis g = b\cdot n -\fint_{\partial\Omega} b\cdot n$, where we actually did not use the specific expression of $g$) that problem~\eqref{eq:pb_disc} is well-posed, and that its unique solution belongs to $H^2(\Omega)$.

We here consider the Galerkin discretization of~\eqref{eq:pb_disc}. Consider a mesh of $\Omega$ made of elements $T \in \mathcal{T}_h$. Following Proposition~\ref{prop_brenner_scott_estimate_debut}, we take $\Sigma_h \subset H^1(\Omega)$ the associated finite dimensional space made of continuous piecewise affine functions and introduce
$$
V_h = \left\{ u \in \Sigma_h, \quad \fint_\Omega u = 1 \right\} \subset V.
$$

\begin{theorem}
\label{th:main_debut}
We assume that~\eqref{hyp_H0_debut}--\eqref{hyp_H_11_num}--\eqref{hyp_H_22_num} hold. Let $u$ denote the solution to~\eqref{eq:pb_disc}. For $h$ sufficiently small, there exists a unique $u_h \in V_h$ solution to
\begin{equation}
\label{eq:carnon1}
\forall v_h \in \Sigma_h, \qquad \abil(u_h,v_h) = \int_\Omega f v_h + \int_{\partial \Omega} g \, v_h.
\end{equation}
Furthermore, we have, for $h$ sufficiently small,
\begin{equation}
\label{eq:carnon1_bis}
\| u - u_h \|_{H^1(\Omega)} \leq C h \| u \|_{H^2(\Omega)}
\end{equation}
where $C$ is independent of $h$. 
\end{theorem}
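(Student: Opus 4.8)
The strategy is the classical one of Schatz for Galerkin discretizations of well-posed but non-coercive problems: first establish a discrete inf-sup condition for $\abil$ that is uniform in $h$ (for $h$ small), and then deduce well-posedness of~\eqref{eq:carnon1} and the quasi-optimal estimate~\eqref{eq:carnon1_bis} by standard arguments. I would begin by reducing to a square system: writing $u_h = 1 + \overline{u}_h$ with $\overline{u}_h$ in $\Sigma_{h,0} := \{ v_h \in \Sigma_h : \fint_\Omega v_h = 0 \}$, and noticing that $\abil(w_h, c) = 0$ for any constant $c$ while the right-hand side of~\eqref{eq:carnon1} also annihilates constants because of the compatibility condition $\int_\Omega f + \int_{\partial \Omega} g = 0$, one sees that~\eqref{eq:carnon1} is equivalent to finding $\overline{u}_h \in \Sigma_{h,0}$ such that $\abil(\overline{u}_h, v_h) = \int_\Omega f v_h + \int_{\partial\Omega} g v_h - \abil(1, v_h)$ for all $v_h \in \Sigma_{h,0}$. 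Likewise the error $u - u_h$ has zero mean, since $u \in V$ and $u_h \in V_h$.

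The heart of the proof is the claim that there exist $\gamma > 0$ and $h_0 > 0$ such that, for all $0 < h < h_0$,
\begin{equation}
\label{eq:plan-infsup}
\inf_{w_h \in \Sigma_{h,0}} \ \sup_{v_h \in \Sigma_h} \ \frac{\abil(w_h, v_h)}{\|w_h\|_{H^1(\Omega)}\,\|v_h\|_{H^1(\Omega)}} \ \geq \ \gamma .
\end{equation}
I would argue by contradiction. If~\eqref{eq:plan-infsup} fails, there are $h_n \to 0$ and $w_n \in \Sigma_{h_n,0}$ with $\|w_n\|_{H^1(\Omega)} = 1$ and $\sup_{v_h \in \Sigma_{h_n}} \abil(w_n,v_h)/\|v_h\|_{H^1(\Omega)} \to 0$. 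By the Rellich theorem, up to extraction $w_n \rightharpoonup w$ weakly in $H^1(\Omega)$ and $w_n \to w$ strongly in $L^2(\Omega)$; in particular $\fint_\Omega w = 0$. Fix a smooth $v$ and let $v_{h_n}$ be its nodal interpolant, so $v_{h_n} \to v$ in $H^1(\Omega)$. Since $\abil(w_n, v_{h_n}) = \int_\Omega \nabla w_n \cdot \nabla v_{h_n} + \int_\Omega (b\, w_n) \cdot \nabla v_{h_n}$, the weak convergence of $\nabla w_n$ tested against $\nabla v_{h_n} \to \nabla v$ and the strong $L^2$-convergence $b\, w_n \to b\, w$ tested against $\nabla v_{h_n} \to \nabla v$ yield $\abil(w_n, v_{h_n}) \to \abil(w, v)$; on the other hand $|\abil(w_n, v_{h_n})| \leq \|v_{h_n}\|_{H^1(\Omega)} \, \sup_{v_h \in \Sigma_{h_n}} \abil(w_n,v_h)/\|v_h\|_{H^1(\Omega)} \to 0$. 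Hence $\abil(w,v) = 0$ for all smooth $v$, and by density for all $v \in H^1(\Omega)$. Thus $w$ is a weak solution of~\eqref{invariant_measure_droniou}, so $w \in \R \sigma_1$ by Lemma~\ref{theorem_droniou}(i), and since $\fint_\Omega w = 0$ while $\sigma_1 > 0$ a.e.\ on $\Omega$, we conclude $w = 0$. Finally, from $\|\nabla w_n\|_{L^2(\Omega)}^2 = \abil(w_n, w_n) - \int_\Omega (b\, w_n)\cdot \nabla w_n$ we see that the first term tends to $0$ (because $w_n \in \Sigma_{h_n}$) and the second tends to $0$ (because $b\, w_n \to 0$ in $L^2(\Omega)$ while $\|\nabla w_n\|_{L^2(\Omega)}$ stays bounded), whence $\|w_n\|_{H^1(\Omega)} \to 0$, contradicting $\|w_n\|_{H^1(\Omega)} = 1$. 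This proves~\eqref{eq:plan-infsup}.

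Once~\eqref{eq:plan-infsup} is available, existence and uniqueness of $u_h$ follow because the reduced problem is a square linear system whose matrix is injective (by~\eqref{eq:plan-infsup}, testing against $\Sigma_{h,0}$, which yields the same supremum as $\Sigma_h$ up to a fixed factor since constants do not affect $\abil(w_h,\cdot)$). Then, using the Galerkin orthogonality $\abil(u - u_h, v_h) = 0$ for $v_h \in \Sigma_h$, the continuity of $\abil$ on $H^1(\Omega) \times H^1(\Omega)$, and~\eqref{eq:plan-infsup}, one gets for every $v_h \in V_h$ (so that $u_h - v_h \in \Sigma_{h,0}$) that $\gamma \|u_h - v_h\|_{H^1(\Omega)} \leq \sup_{w_h \in \Sigma_h} \abil(u - v_h, w_h)/\|w_h\|_{H^1(\Omega)} \leq \|\abil\| \, \|u - v_h\|_{H^1(\Omega)}$, whence $\|u - u_h\|_{H^1(\Omega)} \leq (1 + \|\abil\|/\gamma) \inf_{v_h \in V_h} \|u - v_h\|_{H^1(\Omega)}$. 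To bound the last infimum I would take $v_h = I_h u + \big(1 - \fint_\Omega I_h u\big) \in V_h$, where $I_h$ is the nodal interpolant, well defined since $u \in H^2(\Omega) \subset \mathcal{C}^0(\overline{\Omega})$ for $2 \leq d \leq 3$ (the membership $u \in H^2(\Omega)$ being guaranteed by~\eqref{hyp_H_22_num}); then $\big| 1 - \fint_\Omega I_h u \big| = \big| \fint_\Omega (u - I_h u) \big| \leq C \|u - I_h u\|_{L^2(\Omega)} \leq C h^2 \|u\|_{H^2(\Omega)}$ and the standard estimate $\|u - I_h u\|_{H^1(\Omega)} \leq C h \|u\|_{H^2(\Omega)}$ give $\inf_{v_h \in V_h} \|u - v_h\|_{H^1(\Omega)} \leq C h \|u\|_{H^2(\Omega)}$, which is~\eqref{eq:carnon1_bis}. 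The only genuinely non-routine point is the uniform discrete inf-sup~\eqref{eq:plan-infsup}; it rests on the compactness of the map $w \mapsto b\,w$ from $H^1(\Omega)$ into $L^2(\Omega)$ (Rellich), which makes $\abil$ a compact perturbation of a coercive form, combined with the uniqueness statement of Lemma~\ref{theorem_droniou}(i). Everything else is standard Babu\v{s}ka theory and finite element interpolation.
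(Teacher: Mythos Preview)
Your proof is correct. The key discrete inf-sup~\eqref{eq:plan-infsup} is established by a clean Schatz-type contradiction/compactness argument, and the remaining steps (square system, Galerkin orthogonality, interpolation with mean correction) are carried out properly.

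The paper takes a different route to the same destination. Instead of proving the discrete inf-sup for $\abil$ on $\Sigma_{h,0}$ by contradiction, the paper introduces the regularized form $\abil_\lambda(u,v) = \abil(u,v) + \lambda \int_\Omega u v$ with $\lambda>0$, invokes a general result (a ``coercive plus compact'' argument from~\cite{livre-sauter}) to obtain a uniform discrete inf-sup for $\abil_\lambda$ on all of $\Sigma_h$, and then constructs the solution to~\eqref{eq:carnon1} as the limit of the fixed-point iterations $\abil_\lambda(u_h^{n+1},v_h) = \lambda\int_\Omega u_h^n v_h + \int_\Omega f v_h + \int_{\partial\Omega} g v_h$, showing contraction via an inf-sup for $\abil$ on $\Sigma_h\cap H^1_{\int=0}(\Omega)$ (stated by reference to the same compact-perturbation mechanism). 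Uniqueness is proved separately from that same inf-sup, and the error estimate is derived by applying the inf-sup to $I_h u - u_h - c$ (with $c=\fint_\Omega(I_h u - u_h)$) together with the Poincar\'e--Wirtinger inequality. So both proofs ultimately rest on the compact-perturbation structure of $\abil$; your argument packages it as a single self-contained contradiction yielding~\eqref{eq:plan-infsup}, while the paper's argument is more constructive (explicit iterations) but outsources the inf-sup step to a cited result. Your approach is more direct; the paper's iterative construction has the side benefit of being reused later (see the proofs of Theorems~\ref{th:inv_measure_mu_h} and~\ref{th:adjoint_mu_h}).
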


Theorem~\ref{th:main_debut} obviously implies Proposition~\ref{prop_brenner_scott_estimate_debut}. Consider indeed the invariant measure $\sigma_1 \in H^1(\Omega)$ solution to~\eqref{invariant_measure_droniou}--\eqref{invariant_measure_droniou-int-1}. It is the solution to~\eqref{eq:pb_disc} with $f=g=0$. Likewise, the invariant measure $\sigma_2^0 \in H^1(\Omega)$ solution to~\eqref{sig_helmholtz_hodge} is the solution to~\eqref{eq:pb_disc} with $f=0$ and $\dis g = b\cdot n -\fint_{\partial\Omega} b\cdot n$. Theorem~\ref{th:main_debut} then implies that~\eqref{eq:sigma_h_bien_pose} is well-posed and that the error estimate~\eqref{eq:error_sigma_h} holds. 

\begin{proof}[Proof of Theorem~\ref{th:main_debut}]

The proof falls in two steps. 

\medskip

\noindent {\bf Step 1: well-posedness of~\eqref{eq:carnon1}.}
Let $\lambda > 0$. The bilinear form $\dis a_{\rm coer}(u,v) = \int_\Omega \nabla v \cdot \nabla u + \lambda \int_\Omega v \, u$ is coercive in $H^1(\Omega)$, while the bilinear form $\dis a_{\rm comp}(u,v) = \int_\Omega b u \cdot \nabla v$ can be represented by a compact operator $\dis T \in \mathcal{L} \left( H^1(\Omega), \left( H^1(\Omega) \right)' \right)$ as $a_{\rm comp}(u,v) = \langle Tu,v \rangle$. Consequently (see the proof of~\cite[Theorem 4.2.9]{livre-sauter}),
\begin{equation}
\label{eq:these8}
\begin{array}{c}
\text{when $h$ is sufficiently small, the bilinear form}
\\
\text{$\abil_\lambda(u,v) = a_{\rm coer}(u,v) + a_{\rm comp}(u,v)$ satisfies an inf-sup condition on $\Sigma_h$.}
\end{array}
\end{equation}
Using~\cite[Prop. 2.21]{EG}, we thus see that the problem
$$
\text{Find $u_h \in \Sigma_h$ such that, for all $v_h \in \Sigma_h$, \quad $\abil_\lambda(u_h,v_h) = \int_\Omega \overline{f} v_h + \int_{\partial \Omega} g \, v_h$},
$$
is well-posed for any $\overline{f} \in L^2(\Omega)$. 

We now consider the iterations
\begin{equation}
\label{eq:carnon2}
\left\{
\begin{array}{c}
\text{Find $u_h^{n+1} \in \Sigma_h$ such that, for all $v_h \in \Sigma_h$},
\\ \noalign{\vskip 3pt}
\dis \abil(u_h^{n+1},v_h) + \lambda \int_\Omega u_h^{n+1} \, v_h 
=
\abil_\lambda(u_h^{n+1},v_h) 
= 
\lambda \int_\Omega u_h^n \, v_h + \int_\Omega f v_h + \int_{\partial \Omega} g \, v_h,
\end{array}
\right.
\hspace{-13mm}
\end{equation}
with the initial condition $u_h^0 = |\Omega|^{-1}$ (or any function in $\Sigma_h$ of mean equal to 1). Thanks to the above argument, these problems are well-posed and define a sequence $u_h^n \in \Sigma_h \subset H^1(\Omega)$. Furthermore, taking $v_h=1$ as test function, we see that all the functions $u_h^n$ share the same mean, and due to the choice of $u_h^0$, we get $u_h^n \in V_h$ for any $n$.

We next prove that the sequence $\left\{ u_h^n \right\}_{n \in \N}$ converges to a solution to~\eqref{eq:carnon1}. We recall that $\dis H^1_{\int=0}(\Omega) = \left\{ v\in H^1(\Omega), \quad \int_\Omega v=0 \right\}$. We infer from~\eqref{eq:carnon2} that, for any $v_h \in \Sigma_h \cap H^1_{\int=0}(\Omega)$,
$$
\abil(u_h^{n+1} - u_h^n,v_h) + \lambda \int_\Omega (u_h^{n+1}-u_h^n) \, v_h 
=
\lambda \int_\Omega (u_h^n - u_h^{n-1}) \, v_h, 
$$
from which we deduce that
\begin{equation}
\label{eq:titit}
\sup_{v_h \in \Sigma_h \cap H^1_{\int=0}(\Omega) } \frac{\abil(u_h^{n+1} - u_h^n,v_h)}{\| v_h \|_{H^1(\Omega)}} - \lambda \| u_h^{n+1} - u_h^n \|_{H^1(\Omega)} 
\leq
\lambda \| u_h^n - u_h^{n-1} \|_{H^1(\Omega)}.
\hspace{-3mm}
\end{equation}
Using the same arguments as above (this time for $\lambda=0$ and on $\Sigma_h \cap H^1_{\int=0}(\Omega)$), we have that the bilinear form $\abil$ satisfies an inf-sup condition on $\Sigma_h \cap H^1_{\int=0}(\Omega)$ with a constant $\gamma>0$ independent of $h$:
\begin{equation}
\label{eq:carnon3}
\inf_{w_h \in \Sigma_h \cap H^1_{\int=0}(\Omega)} \ \sup_{v_h \in \Sigma_h \cap H^1_{\int=0}(\Omega)} \ \frac{\abil(w_h,v_h)}{\| w_h \|_{H^1(\Omega)} \ \| v_h \|_{H^1(\Omega)}} \geq \gamma.
\end{equation}
We thus infer from~\eqref{eq:titit} that
$$
(\gamma - \lambda) \| u_h^{n+1} - u_h^n \|_{H^1(\Omega)} 
\leq
\lambda \| u_h^n - u_h^{n-1} \|_{H^1(\Omega)}.
$$
Taking $\lambda$ sufficiently small (so that $0 < \lambda/(\gamma - \lambda) < 1$), we obtain that the sequence $\left\{ u_h^n \right\}_{n \in \N}$ converges in $H^1(\Omega)$ to some $u_h^\infty \in V_h$. Passing to the limit $n \to \infty$ in~\eqref{eq:carnon2}, we get that $u_h^\infty$ is a solution to~\eqref{eq:carnon1}.

\medskip

We now prove that~\eqref{eq:carnon1} has a unique solution. Consider two solutions $u_h$ and $\overline{u}_h$ to~\eqref{eq:carnon1}. Then $u_h - \overline{u}_h \in \Sigma_h \cap H^1_{\int=0}(\Omega)$ and satisfies $\abil(u_h - \overline{u}_h,v_h) = 0$ for any $v_h \in \Sigma_h \cap H^1_{\int=0}(\Omega)$. We deduce from~\eqref{eq:carnon3} that $u_h - \overline{u}_h=0$. 

\medskip

\noindent {\bf Step 2: estimate~\eqref{eq:carnon1_bis}.} 
Introducing the interpolant $I_h u \in \Sigma_h$, we deduce from~\eqref{eq:carnon3} that
$$
\gamma \| I_h u - u_h - c \|_{H^1(\Omega)} \leq \sup_{w_h \in \Sigma_h \cap H^1_{\int=0}(\Omega)} \ \frac{\abil(I_h u - u_h - c,w_h)}{\| w_h \|_{H^1(\Omega)}},
$$
where $\dis c = \fint_\Omega (I_h u - u_h)$. Using~\eqref{eq:pb_disc} and~\eqref{eq:carnon1}, we deduce from the above estimate that
\begin{eqnarray}
\gamma \| I_h u - u_h - c \|_{H^1(\Omega)} 
&\leq& 
\sup_{w_h \in \Sigma_h \cap H^1_{\int=0}(\Omega)} \ \frac{\abil(I_h u - u - c,w_h)}{\| w_h \|_{H^1(\Omega)}}
\nonumber
\\
&\leq&
\left (1 + \| b \|_{L^\infty(\Omega)} \right) \| I_h u - u - c \|_{H^1(\Omega)}.
\label{eq:carnon4}
\end{eqnarray}
We next write that
\begin{eqnarray*}
\| u - u_h \|_{H^1(\Omega)} 
& \leq &
\| u - I_h u + c \|_{H^1(\Omega)} + \| I_h u - u_h - c \|_{H^1(\Omega)} 
\\
& \leq &
C \| u - I_h u + c \|_{H^1(\Omega)} \quad \text{[using~\eqref{eq:carnon4}]}
\\
& \leq &
C \| \nabla (u - I_h u) \|_{L^2(\Omega)},
\end{eqnarray*}
where, in the last line, we have used the Poincar\'e-Wirtinger inequality, as a consequence of the fact that $\dis c = \fint_\Omega (I_h u - u_h) = \fint_\Omega (I_h u - u)$. We then conclude using the approximation result $\| u - I_h u \|_{H^1(\Omega)} \leq C h \| u \|_{H^2(\Omega)}$. This yields~\eqref{eq:carnon1_bis}.
\end{proof}

\section{Proof of Proposition~\ref{prop:BS_mu}}
\label{sec:appendix_BS}

In order to prove Proposition~\ref{prop:BS_mu}, we follow and adapt the arguments of~\cite[Chap. 8]{BS}. The proof relies on several technical results, the proof of which are given in the subsequent appendices~\ref{sec:proof_lem4} and~\ref{sec:proofs}.

We fix some $0 < \etaa \leq 1$, and we recall (see~\eqref{eq:def_astar_mu}) that the bilinear form $\abil_\etaa$ is defined by
$$
\abil_\etaa(u,v) = \int_\Omega (\nabla u + bu) \cdot \nabla v + \etaa \int_\Omega u \, v.
$$
We also define $a_\etaa(u,v) = \abil_\etaa(v,u)$.

\subsection{Elliptic regularity results}

Let $q$ be such that $1< q <+\infty$ if $d=2$ and $2d/(d+2) \leq q <+\infty$ otherwise, and let $f \in L^q(\Omega)$. We consider the problem
\begin{equation}
\label{eq:pb}
\text{Find $u \in H^1(\Omega)$ such that, for any $v \in H^1(\Omega)$, \quad $\abil_\etaa(u,v) = \int_\Omega f v$},
\end{equation}
for which we have the following result.

\begin{lemma}
\label{lem:these}
We work under the assumptions of Proposition~\ref{prop:BS_mu} and assume that $0 < \etaa \leq 1$ and $f \in L^q(\Omega)$ with $q$ chosen as above. Then Problem~\eqref{eq:pb} has a unique solution $u \in H^1(\Omega)$. In addition, if $\etaa$ is sufficiently small, then $u \in W^{2,q}(\Omega)$ and it satisfies
\begin{equation}
\label{eq:regul}
\left\| u - \sigma_1 \fint_\Omega u \right\|_{W^{2,q}(\Omega)}
\leq
C \| f \|_{L^q(\Omega)}
\end{equation}
for some $C$ independent of $\etaa$ and $f$, where $\sigma_1$ is the invariant measure defined by~\eqref{invariant_measure_droniou}--\eqref{invariant_measure_droniou-int-1}--\eqref{invariant_measure_droniou-pos}.
\end{lemma}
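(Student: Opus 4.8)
The plan is to prove Lemma~\ref{lem:these} in three stages: well-posedness in $H^1(\Omega)$, the $W^{2,q}$ regularity, and finally the quantitative bound~\eqref{eq:regul} with a constant independent of $\etaa$.

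\textbf{Step 1: Well-posedness in $H^1(\Omega)$.} First I would observe that, thanks to the Sobolev embeddings and the constraint on $q$, the right-hand side $v \mapsto \int_\Omega f v$ is a bounded linear form on $H^1(\Omega)$. For existence and uniqueness, I would argue that $\abil_\etaa$ satisfies an inf-sup condition on $H^1(\Omega)\times H^1(\Omega)$. This follows from a compactness/Fredholm argument: write $\abil_\etaa(u,v) = \int_\Omega \nabla u\cdot\nabla v + \etaa\int_\Omega u\,v + \int_\Omega (bu)\cdot\nabla v$; the first two terms form a coercive bilinear form on $H^1(\Omega)$ (here the term $\etaa\int u v$ with $\etaa>0$ is precisely what supplies coercivity without needing to subtract a mean), and the last term is compact (it factors through the compact embedding $H^1(\Omega)\hookrightarrow L^2(\Omega)$). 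So well-posedness is equivalent to uniqueness, i.e.\ to the implication $(\abil_\etaa(u,v)=0\ \forall v) \Rightarrow u=0$. Taking $v=u$ gives $\|\nabla u\|_{L^2}^2 + \etaa\|u\|_{L^2}^2 = -\int_\Omega (bu)\cdot\nabla u = -\frac12\int_\Omega b\cdot\nabla(u^2) = \frac12\int_\Omega (\mathrm{div}\,b)\,u^2 - \frac12\int_{\partial\Omega}(b\cdot n)u^2$; this needs care, but more robustly one notes that $u$ solves the homogeneous adjoint-type Neumann problem $-\mathrm{div}(\nabla u + bu) + \etaa u = 0$ with $(\nabla u + bu)\cdot n = 0$, and by the maximum principle (or by testing against $u$ and using $\etaa>0$ to control the zero-order term) $u\equiv 0$. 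Hence~\eqref{eq:pb} is uniquely solvable, for every $\etaa\in(0,1]$.

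\textbf{Step 2: $W^{2,q}$ regularity and decomposition.} Next I would set $U = \fint_\Omega u$ and $w = u - \sigma_1 U$, and compute the equation solved by $w$. Since $\abil(\sigma_1,v)=0$ for all $v\in H^1(\Omega)$, we get $\abil_\etaa(w,v) = \abil_\etaa(u,v) - U\,\abil_\etaa(\sigma_1,v) = \int_\Omega f v - \etaa U \int_\Omega \sigma_1 v$ for all $v$, i.e.\ $w$ solves, in weak form, $-\mathrm{div}(\nabla w + bw) + \etaa w = f - \etaa U \sigma_1$ with homogeneous Neumann condition $(\nabla w + bw)\cdot n = 0$. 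Crucially, testing with $v\equiv 1$ in the equation for $w$ shows $\etaa \int_\Omega w = \int_\Omega f - \etaa U \int_\Omega \sigma_1 = \int_\Omega f - \etaa U |\Omega|$; since $\fint_\Omega \sigma_1 = 1$ we have $U|\Omega| = \int_\Omega u$, and $\int_\Omega w = \int_\Omega u - U\int_\Omega\sigma_1 = 0$. So $w$ has zero mean. Now I would rewrite the equation as $-\Delta w = \mathrm{div}(bw) + f - \etaa U\sigma_1 - \etaa w$ with Neumann data $\nabla w\cdot n = -(b\cdot n)w$, move the first-order term to the right and bootstrap via Lemma~\ref{theorem_girault} exactly as in the proof of Proposition~\ref{prop_u_f_A_id}: one starts with the $H^1$ bound on $w$, uses the Lipschitz regularity of $b$ to bound $\|\mathrm{div}(bw)\|_{L^q}$ and $\|(b\cdot n)w\|_{W^{1-1/q,q}(\partial\Omega)}$ by $\|w\|_{W^{1,q}}$, applies Lemma~\ref{theorem_girault} to gain two derivatives, and iterates finitely many times through the Sobolev chain (distinguishing $q\le 2$ and $q>2$, and $d=2$ vs $d\ge 3$) until reaching $w\in W^{2,q}(\Omega)$.

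\textbf{Step 3: the $\etaa$-independent estimate.} The delicate point—and what I expect to be the main obstacle—is that the constants must not blow up as $\etaa\to 0$. The natural route is to invoke Proposition~\ref{prop_u_f_A_id}(i): the function $w$ has zero mean, homogeneous Neumann flux, and satisfies $-\mathrm{div}(\nabla w + bw) = f - \etaa U\sigma_1 - \etaa w =: \widetilde f$ with $\int_\Omega \widetilde f = 0$ (check: $\int_\Omega \widetilde f = \int_\Omega f - \etaa U|\Omega| - \etaa\cdot 0 = 0$ by the mean computation above, using $\int_\Omega w = 0$). Proposition~\ref{prop_u_f_A_id} then yields $\|w\|_{W^{2,q}(\Omega)} = \|w - \sigma\cdot 0\|_{W^{2,q}(\Omega)} \le C\|\widetilde f\|_{L^q(\Omega)} \le C(\|f\|_{L^q} + \etaa|U|\,\|\sigma_1\|_{L^q} + \etaa\|w\|_{L^q})$ with $C$ depending only on $p$, $\Omega$, $\|b\|_{\mathrm{Lip}}$—in particular independent of $\etaa$. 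It remains to absorb the last two terms. For $\etaa\|w\|_{L^q}$: since $\etaa\le 1$, $\etaa\|w\|_{L^q}\le \|w\|_{L^q} \le C\|w\|_{W^{2,q}}$, which is the wrong direction; instead I would use that $w$ has zero mean, so by Poincar\'e–Wirtinger $\|w\|_{L^q} \le C\|\nabla w\|_{L^q}$, and then bound $\etaa\|\nabla w\|_{L^q}$ by first getting the $H^1$ estimate $\|\nabla w\|_{L^2}\le C\|\widetilde f\|_{(H^1)'}\le C\|f\|_{L^q}$ (from Lemma~\ref{theorem_droniou}(ii), the map being bounded uniformly since it does not see $\etaa$ once we have cancelled it) — here one needs $\etaa\|w\|_{L^q}$ genuinely small, which for $\etaa$ sufficiently small follows once $\|w\|_{L^q}$ is controlled, so one takes $\etaa$ small enough that $C\etaa < 1/2$ and absorbs. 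For $\etaa|U|$: from $\etaa\int_\Omega u = \int_\Omega f$ (test~\eqref{eq:pb} with $v\equiv 1$, using $\abil(\sigma_1\cdot,\,)$... more directly: $v=1$ gives $\etaa\int_\Omega u = \int_\Omega f$) we get $\etaa|U||\Omega| = |\int_\Omega f| \le |\Omega|^{1-1/q}\|f\|_{L^q}$, so $\etaa|U| \le C\|f\|_{L^q}$, again with a constant independent of $\etaa$. Combining these absorptions gives $\|w\|_{W^{2,q}(\Omega)}\le C\|f\|_{L^q(\Omega)}$, which is exactly~\eqref{eq:regul}. The one subtlety to watch is that ``$\etaa$ sufficiently small'' is used both to secure $W^{2,q}$ regularity (via Lemma~\ref{theorem_girault} bootstrapping, where $\etaa$ only appears benignly) and to absorb $C\etaa\|\nabla w\|_{L^q}$; both requirements are threshold conditions on $\etaa$ that do not depend on $f$, so the statement holds as phrased.
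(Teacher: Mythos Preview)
Your Steps~2--3 are essentially the paper's argument. The paper sets $\bar u = u + \sigma_1 - \sigma_1\fint_\Omega u$, so that $\bar u - \sigma_1$ is exactly your $w$; it then applies Proposition~\ref{prop_u_f_A_id}(i) to $\bar u$ and absorbs the $\etaa$-terms. Two remarks. First, in Step~3 you overcomplicate the absorption: your first instinct, $\etaa\|w\|_{L^q}\le C\etaa\|w\|_{W^{2,q}}$, is \emph{not} the wrong direction---once Step~2 has established $w\in W^{2,q}$ qualitatively, the inequality $\|w\|_{W^{2,q}}\le C\|f\|_{L^q}+C\etaa\|w\|_{W^{2,q}}$ is between finite quantities and absorbs for $C\etaa<1/2$; the Poincar\'e--Wirtinger detour is unnecessary. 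Second, for $d=3$ and $q>6$ the paper makes the bootstrap explicit (first $W^{2,6}\hookrightarrow L^\infty$, then $W^{2,q}$), whereas you only allude to it.

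Step~1 is where your proposal differs from the paper and has a genuine gap. The paper proves the inf-sup condition directly using the invariant measure: for any $w\in H^1(\Omega)$ one computes $\abil_\etaa(w,\sigma_1^{-1}w)=a_\etaa(\sigma_1^{-1}w,w)$ and uses the identity $a_\etaa(v,\sigma_1 v)=\int_\Omega\sigma_1|\nabla v|^2+\etaa\int_\Omega\sigma_1 v^2$ (valid for all $v\in H^1$ by~(C1)) to get $\abil_\etaa(w,\sigma_1^{-1}w)\ge c\min(1,\etaa)\|w\|_{H^1}\|\sigma_1^{-1}w\|_{H^1}$. This gives both BNB conditions for every $\etaa>0$. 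Your Fredholm route is sound in principle, but your uniqueness argument fails: testing $\abil_\etaa(u,u)=0$ yields $\int|\nabla u|^2+\etaa\int u^2=-\int_\Omega bu\cdot\nabla u=\tfrac12\int_\Omega(\mathrm{div}\,b)\,u^2-\tfrac12\int_{\partial\Omega}(b\cdot n)u^2$, and neither boundary nor bulk term has a sign. The maximum principle you invoke does not apply either, since in non-divergence form the zero-order coefficient is $\etaa-\mathrm{div}\,b$, which need not be nonnegative. The lemma claims well-posedness for all $\etaa\in(0,1]$, so you do need a working argument here; the paper's $\sigma_1^{-1}$-weighted test function is the cleanest fix.
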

We only prove this result in dimension $2 \leq d \leq 3$ (see the assumptions of Proposition~\ref{prop:BS_mu}), but it certainly holds for larger dimensions.

\begin{proof}
The proof falls in two steps.

\medskip

\noindent
\textbf{Step 1: existence and uniqueness of a solution.} 
We first show that the bilinear form $\abil_\etaa$ satisfies the~$(\textnormal{BNB}1)$ condition on $H^1(\Omega)$. Using the invariant measure $\sigma_1$ defined by~\eqref{invariant_measure_droniou}--\eqref{invariant_measure_droniou-int-1}--\eqref{invariant_measure_droniou-pos}, a simple computation indeed yields that, for any $v \in H^1(\Omega)$,
\begin{align}
a_\etaa(v,\sigma_1 v) 
&= 
\int_\Omega \sigma_1 |\nabla v|^2 + \frac{1}{2} \int_\Omega (\nabla \sigma_1 + b \sigma_1) \cdot \nabla (v^2) + \etaa \int_\Omega \sigma_1 \, v^2
\nonumber
\\
&=\int_\Omega \sigma_1 |\nabla v|^2 + \etaa \int_\Omega \sigma_1 \, v^2
\label{eq:these3}
\\
&\geq (\inf \sigma_1) \min(1,\etaa) \, \| v \|_{H^1(\Omega)}^2
\nonumber
\\
&\geq c \min(1,\etaa) \, \| v \|_{H^1(\Omega)} \| \sigma_1 v \|_{H^1(\Omega)},
\label{eq:these}
\end{align}
for some $c>0$ independent of $\etaa$. For any $w \in H^1(\Omega)$, we set $v = \sigma_1^{-1} w$, which belongs to $H^1(\Omega)$, and thus have
\begin{multline}
\abil_\etaa(w,\sigma_1^{-1} w)
=
a_\etaa(\sigma_1^{-1} w,w)
\geq
c \min(1,\etaa) \, \| v \|_{H^1(\Omega)} \| \sigma_1 v \|_{H^1(\Omega)}
\\
=
c \min(1,\etaa) \, \| w \|_{H^1(\Omega)} \| \sigma_1^{-1} w \|_{H^1(\Omega)}.
\label{eq:these6}
\end{multline}
We thus deduce the~$(\textnormal{BNB}1)$ condition.

The bilinear form $\abil_\etaa$ also satisfies the~$(\textnormal{BNB}2)$ condition on $H^1(\Omega)$. Indeed, if $v \in H^1(\Omega)$ is such that $\abil_\etaa(u,v)=0$ for any $u \in H^1(\Omega)$, then we have $\abil_\etaa(\sigma_1 v,v)=0$, and we have thus found a function $w \in H^1(\Omega)$ (namely $w = \sigma_1 v$) such that $\abil_\etaa(w,\sigma_1^{-1} w)=0$. The estimate~\eqref{eq:these6} shows that $w$ (and hence $v$) vanishes, which implies the~$(\textnormal{BNB}2)$ condition on $H^1(\Omega)$.

We have chosen $f \in L^q(\Omega)$ with an exponent $q$ such that $f \in (H^1(\Omega))'$. We thus obtain that Problem~\eqref{eq:pb} is well-posed.

\medskip

\noindent
\textbf{Step 2: $W^{2,q}$ estimate.} Introduce $\dis \overline{u} = u + \sigma_1 - \sigma_1 \fint_\Omega u$, which satisfies
$$
\left\{
\begin{aligned}
&-\textnormal{div }(\nabla \overline{u} + b \overline{u}) = F \ \ \text{in $\Omega$}, \qquad \fint_\Omega \overline{u} =1, 
\\
&(\nabla \overline{u} + b \overline{u})\cdot n = 0 \ \ \text{on $\partial\Omega$},
\end{aligned}
\right.
$$
with
$$
F = f - \etaa u = f - \etaa \overline{u} + \etaa \sigma_1 - \etaa \sigma_1 \fint_\Omega u.
$$
Using $v \equiv 1$ as test function in~\eqref{eq:pb}, we see that $\dis \etaa \fint_\Omega u = \fint_\Omega f$. We hence get that
$$
F = f - \etaa (\overline{u} - \sigma_1) - \sigma_1 \fint_\Omega f.
$$
Using that $\sigma_1 \in W^{2,s}(\Omega)$ for any $1 < s < \infty$, we deduce that, for any $1 < s \leq q$, 
\begin{equation}
\label{eq:these7}
\| F \|_{L^s(\Omega)}
\leq
C \|f\|_{L^s(\Omega)} + \etaa \| \overline{u} - \sigma_1 \|_{L^s(\Omega)},
\end{equation}
where we only know, at this stage, that $\overline{u} \in H^1(\Omega)$. To proceed and obtain a $W^{2,q}$ estimate, we distinguish two cases, whether $d=2$ or $d=3$.

\medskip

Suppose first that $d=2$. Using the continuous injection $H^1(\Omega) \subset L^q(\Omega)$, we deduce from~\eqref{eq:these7} (written with $s=q$) that $F \in L^q(\Omega)$. We are thus in position to apply Proposition~\ref{prop_u_f_A_id}, which yields (see~\eqref{estimate_v_g_A_id}) that there exists some $C$ independent of $\etaa$ such that
$$
\left\| \overline{u} - \sigma_1 \right\|_{W^{2,q}(\Omega)}
\leq
C \| F \|_{L^q(\Omega)}
\leq
C \|f\|_{L^q(\Omega)} + C \etaa \| \overline{u} - \sigma_1 \|_{L^q(\Omega)}.
$$
For $\etaa$ sufficiently small, this implies~\eqref{eq:regul}. 

\medskip

Suppose now that $d=3$. If $q \leq 6$, we proceed as above, using the continuous injection $H^1(\Omega) \subset L^q(\Omega)$. We now turn to the case $q>6$. Using~\eqref{eq:these7} for $s=6$, we deduce that $F \in L^6(\Omega)$. Applying Proposition~\ref{prop_u_f_A_id}, we obtain that
$$
\left\| \overline{u} - \sigma_1 \right\|_{W^{2,6}(\Omega)}
\leq
C \|f\|_{L^q(\Omega)} + C \etaa \| \overline{u} - \sigma_1 \|_{L^6(\Omega)},
$$
which implies that $\left\| \overline{u} - \sigma_1 \right\|_{W^{2,6}(\Omega)} \leq C \|f\|_{L^q(\Omega)}$. Using the continuous injection $W^{2,6}(\Omega) \subset L^\infty(\Omega)$, we deduce that  $\left\| \overline{u} - \sigma_1 \right\|_{L^\infty(\Omega)} \leq C \|f\|_{L^q(\Omega)}$. The estimate~\eqref{eq:these7}, written with $s=q$, now yields 
$$
\| F \|_{L^q(\Omega)}
\leq
C \|f\|_{L^q(\Omega)} + \etaa \| \overline{u} - \sigma_1 \|_{L^q(\Omega)}
\leq
C \|f\|_{L^q(\Omega)},
$$
from which, applying again Proposition~\ref{prop_u_f_A_id}, we infer~\eqref{eq:regul}.
\end{proof}

\medskip

Likewise, for any $f \in L^2(\Omega)$, we consider the problem
\begin{equation}
\label{eq:pb_adjoint}
\text{Find $u \in H^1(\Omega)$ s.t., for any $v \in H^1(\Omega)$, \quad $\abil_\etaa(v,u) = a_\etaa(u,v) = \int_\Omega f v$},
\end{equation}
for which we have the following result.

\begin{lemma}
\label{lem:these_adjoint}
We work under the assumptions of Proposition~\ref{prop:BS_mu} and assume that $0 < \etaa \leq 1$ and $f \in L^2(\Omega)$. Then Problem~\eqref{eq:pb_adjoint} has a unique solution $u \in H^1(\Omega)$. In addition, $u \in H^2(\Omega)$ and it satisfies
\begin{equation}
\label{eq:regul_adjoint}
\left\| u - \fint_\Omega u \right\|_{H^2(\Omega)}
\leq
C \| f \|_{L^2(\Omega)}
\end{equation}
for some $C$ independent of $\etaa$ and $f$.
\end{lemma}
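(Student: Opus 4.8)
The plan is to follow the strategy of the proof of Lemma~\ref{lem:these}, now with the constant function~$1$ playing the role the invariant measure~$\sigma_1$ played there (indeed $1$ solves the homogeneous equation $-\Delta w + b\cdot\nabla w = 0$ in~$\Omega$, $\nabla w\cdot n = 0$ on~$\partial\Omega$), and with assertion~(ii) of Proposition~\ref{prop_u_f_A_id} replacing assertion~(i). Since only an $H^2 = W^{2,2}$ estimate is sought and $f \in L^2(\Omega)$, no bootstrap on the integrability exponent is needed, which shortens the argument compared to Lemma~\ref{lem:these}.

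First I would settle well-posedness. Step~1 of the proof of Lemma~\ref{lem:these} shows that $\abil_\etaa$ satisfies $(\textnormal{BNB}1)$ and $(\textnormal{BNB}2)$ on the reflexive space $H^1(\Omega)$; hence the associated operator $A_\etaa \in \mathcal{L}(H^1(\Omega),(H^1(\Omega))')$ is an isomorphism, and so is its adjoint, which is precisely the operator associated with $a_\etaa(u,v) = \abil_\etaa(v,u)$. Since $f \in L^2(\Omega) \subset (H^1(\Omega))'$, Problem~\eqref{eq:pb_adjoint} thus has a unique solution $u \in H^1(\Omega)$. (Alternatively, one may argue directly: from the computation~\eqref{eq:these3}, choosing $\sigma_1 u$ as test function gives $a_\etaa(u,\sigma_1 u) \geq (\inf\sigma_1)\min(1,\etaa)\|u\|_{H^1(\Omega)}^2$, which together with $\|\sigma_1 u\|_{H^1(\Omega)} \leq C\|u\|_{H^1(\Omega)}$ yields $(\textnormal{BNB}1)$ for $a_\etaa$, while $(\textnormal{BNB}2)$ for $a_\etaa$ is nothing but the $(\textnormal{BNB}1)$ condition for $\abil_\etaa$.)

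For the estimate~\eqref{eq:regul_adjoint}, I would introduce $\overline{u} = u - \fint_\Omega u + 1$, which satisfies $\fint_\Omega \overline{u} = 1$ and, writing the strong form of~\eqref{eq:pb_adjoint}, solves $-\Delta \overline{u} + b\cdot\nabla\overline{u} = F$ in~$\Omega$, $\nabla\overline{u}\cdot n = 0$ on~$\partial\Omega$, with $F = f - \etaa(\overline{u}-1) - \etaa \fint_\Omega u \in L^2(\Omega)$. Testing~\eqref{eq:pb_adjoint} with $v = \sigma_1$ and integrating by parts, using $\textnormal{div}(\nabla\sigma_1 + b\sigma_1) = 0$ in~$\Omega$ and $(\nabla\sigma_1 + b\sigma_1)\cdot n = 0$ on~$\partial\Omega$, gives $\int_\Omega(\nabla\sigma_1 + b\sigma_1)\cdot\nabla u = 0$ and hence $\etaa \int_\Omega \sigma_1 u = \int_\Omega f\sigma_1$. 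From this identity and $u = \overline{u} - 1 + \fint_\Omega u$, together with $\int_\Omega \sigma_1 = |\Omega|$, one checks on the one hand the compatibility condition $\int_\Omega F\sigma_1 = 0$ required by Proposition~\ref{prop_u_f_A_id}(ii), and on the other hand the bound $\etaa\,|\fint_\Omega u| \leq C\|f\|_{L^2(\Omega)} + C\etaa\|\overline{u}-1\|_{L^2(\Omega)}$. By uniqueness in Proposition~\ref{prop_u_f_A_id}(ii) (with $p=2$, which lies in the admissible range for $2 \leq d \leq 3$), $\overline{u}$ is the solution provided by that proposition, so $\|\overline{u}-1\|_{H^2(\Omega)} \leq C\|F\|_{L^2(\Omega)}$ with $C$ independent of $\etaa$. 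Combining this with $\|F\|_{L^2(\Omega)} \leq \|f\|_{L^2(\Omega)} + \etaa\|\overline{u}-1\|_{L^2(\Omega)} + |\Omega|^{1/2}\etaa|\fint_\Omega u|$ and the bound on $\etaa|\fint_\Omega u|$ yields $\|\overline{u}-1\|_{H^2(\Omega)} \leq C\|f\|_{L^2(\Omega)} + C\etaa\|\overline{u}-1\|_{H^2(\Omega)}$; taking $\etaa$ small enough to absorb the last term and noting $\overline{u}-1 = u - \fint_\Omega u$ gives~\eqref{eq:regul_adjoint}.

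The main obstacle is the uniform-in-$\etaa$ control of the zero mode $\etaa\fint_\Omega u$: unlike in Lemma~\ref{lem:these}, where the test function $v \equiv 1$ immediately produces $\etaa\fint_\Omega u = \fint_\Omega f$, here that choice leaves the extra term $\int_\Omega b\cdot\nabla u$, so one must instead extract the needed information from the test function $v = \sigma_1$. This is also why~\eqref{eq:regul_adjoint} only estimates $u - \fint_\Omega u$ rather than $u$ itself (the mean $\fint_\Omega u$ genuinely behaves like $\etaa^{-1}\|f\|_{L^2(\Omega)}$).
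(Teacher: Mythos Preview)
Your argument is essentially correct and follows a route close in spirit to the paper's, but with one small gap and one genuine difference worth noting.

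\textbf{The gap.} Your absorption step ``taking $\etaa$ small enough'' is not licensed by the hypotheses: the lemma is stated for all $0 < \etaa \leq 1$, with $C$ independent of $\etaa$. As written, your argument only yields~\eqref{eq:regul_adjoint} for $\etaa$ below some threshold depending on the constant in Proposition~\ref{prop_u_f_A_id}(ii). The fix is easy and is exactly what the paper does: before invoking any $H^2$ regularity, test~\eqref{eq:pb_adjoint} with $v = \sigma_1 u$ and use the identity $a_\etaa(u,\sigma_1 u) = \int_\Omega \sigma_1 |\nabla u|^2 + \etaa \int_\Omega \sigma_1 u^2$ to obtain $\etaa \|u\|_{L^2(\Omega)} \leq C\|f\|_{L^2(\Omega)}$ uniformly in $\etaa$. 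This immediately gives $\etaa\,|\fint_\Omega u| \leq C\|f\|_{L^2(\Omega)}$ and $\etaa\|\overline{u}-1\|_{L^2(\Omega)} \leq C\|f\|_{L^2(\Omega)}$ outright, so no absorption is needed.

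\textbf{The difference in route.} With that fix in place, your approach is a legitimate alternative. You go directly to $H^2$ via the black-box regularity of Proposition~\ref{prop_u_f_A_id}(ii) for the full operator $-\Delta + b\cdot\nabla$; the paper instead first establishes a uniform-in-$\etaa$ $H^1$ bound on $u-\fint_\Omega u$ (via Poincar\'e--Wirtinger and the same identity above) and then passes to $H^2$ by treating the advection as a right-hand side for the pure Neumann Laplacian. Your route is a bit shorter once the $L^2$ control of $\etaa u$ is in hand; the paper's route has the advantage of never circling back through an absorption, which is why it works for all $\etaa\in(0,1]$ without modification.
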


A similar result certainly holds for $f \in L^q(\Omega)$ with $q$ chosen such that $f \in (H^1(\Omega))'$, yielding a control on $\dis \left\| u - \fint_\Omega u \right\|_{W^{2,q}(\Omega)}$. We will however not need such a result and therefore do not pursue in that direction. As for Lemma~\ref{lem:these}, we only prove Lemma~\ref{lem:these_adjoint} in dimension $2 \leq d \leq 3$ (see the assumptions of Proposition~\ref{prop:BS_mu}), but it certainly holds for larger dimensions.

\begin{proof}
The proof falls in three steps.

\medskip

\noindent
\textbf{Step 1: existence and uniqueness of a solution.} 
The bilinear form $a_\etaa$ satisfies the~$(\textnormal{BNB}1)$ condition on $H^1(\Omega)$, as a direct consequence of~\eqref{eq:these}. It also satisfies the~$(\textnormal{BNB}2)$ condition on $H^1(\Omega)$. Indeed, if $v \in H^1(\Omega)$ is such that $a_\etaa(u,v)=0$ for any $u \in H^1(\Omega)$, then we have $a_\etaa(\sigma_1^{-1} v,v)=0$, and we have thus found a function $w \in H^1(\Omega)$ (namely $w = \sigma_1^{-1} v$) such that $a_\etaa(w,\sigma_1 w)=0$. The estimate~\eqref{eq:these} shows that $w$ (and hence $v$) vanishes, which implies the~$(\textnormal{BNB}2)$ condition on $H^1(\Omega)$. We thus obtain that Problem~\eqref{eq:pb_adjoint} is well-posed.

\medskip

\noindent
\textbf{Step 2: $H^1$ estimate.} 
We claim that the solution $u$ to~\eqref{eq:pb_adjoint} satisfies
\begin{equation}
\label{eq:these2}
\left\| u - \fint_\Omega u \right\|_{H^1(\Omega)}
\leq
C \| f \|_{L^2(\Omega)}
\end{equation}
for some $C$ independent of $\etaa$ and $f$. Consider indeed $\dis \overline{u} = u - \fint_\Omega u$. Using the Poincar\'e-Wirtinger inequality and~\eqref{eq:these3} for the function $\overline{u}$, we have
\begin{align*}
\left\| \overline{u} \right\|_{H^1(\Omega)}^2
& \leq
C \left\| \nabla \overline{u} \right\|_{L^2(\Omega)}^2
\\
& \leq
C a_\etaa(\overline{u},\sigma_1 \overline{u})
\\
& =
C a_\etaa(u,\sigma_1 \overline{u}) - C a_\etaa(1,\sigma_1 \overline{u}) \fint_\Omega u
\\
& =
C \int_\Omega f \, \sigma_1 \overline{u} - C \etaa \int_\Omega \sigma_1 \overline{u} \fint_\Omega u
\\
& \leq
C \| \sigma_1 \|_{L^\infty(\Omega)} \| f \|_{L^2(\Omega)} \| \overline{u} \|_{H^1(\Omega)} + C \etaa \| \sigma_1 \|_{L^\infty(\Omega)} \| \overline{u} \|_{H^1(\Omega)} \| u \|_{L^2(\Omega)}.
\end{align*}
We hence deduce that
\begin{equation}
\label{eq:these4}
\left\| \overline{u} \right\|_{H^1(\Omega)}
\leq
C \left( \| f \|_{L^2(\Omega)} + \etaa \| u \|_{L^2(\Omega)} \right).
\end{equation}
We now write~\eqref{eq:these3} for the function $u$, from which we deduce that
$$
\| \nabla u \|^2_{L^2(\Omega)} + \etaa \| u \|^2_{L^2(\Omega)}
\leq
C a_\etaa(u,\sigma_1 u)
=
C \int_\Omega f \, \sigma_1 u
\leq
C \| f \|_{L^2(\Omega)} \| u \|_{L^2(\Omega)},
$$
which implies that
\begin{equation}
\label{eq:these5}
\etaa \| u \|_{L^2(\Omega)} \leq C \| f \|_{L^2(\Omega)}.
\end{equation}
Inserting this estimate in~\eqref{eq:these4}, we obtain the claimed estimate~\eqref{eq:these2}.

\medskip

\noindent
\textbf{Step 3: $H^2$ estimate.} We proceed as in the proof of Proposition~\ref{prop_u_f_A_id}. Introducing again $\dis \overline{u} = u - \fint_\Omega u$, we observe that
$$
\left\{
\begin{aligned}
&-\Delta \overline{u} = F \ \ \text{in $\Omega$}, \qquad \fint_\Omega \overline{u} = 0, 
\\ 
&\nabla \overline{u} \cdot n = 0 \ \ \text{on $\partial\Omega$},
\end{aligned}
\right.
$$
with $\dis F = f - b \nabla \overline{u} - \etaa \overline{u} - \etaa \fint_\Omega u$. We compute that
$$
\| F \|_{L^2(\Omega)} \leq \| f \|_{L^2(\Omega)} + \| b \|_{L^\infty(\Omega)} \| \overline{u} \|_{H^1(\Omega)} + \etaa \| \overline{u} \|_{L^2(\Omega)} + \etaa \| u \|_{L^2(\Omega)}
$$
and we deduce, using~\eqref{eq:these2} and~\eqref{eq:these5}, that $\| F \|_{L^2(\Omega)} \leq C \| f \|_{L^2(\Omega)}$. We are then in position to use~\cite[Theorem 3.12]{EG}, which implies that $\overline{u} \in H^2(\Omega)$ with
$$
\| \overline{u} \|_{H^2(\Omega)} \leq C \| F \|_{L^2(\Omega)} \leq C \| f \|_{L^2(\Omega)}.
$$
This concludes the proof of~\eqref{eq:regul_adjoint}.
\end{proof}

\subsection{Discretized problems}

We now consider the discretization of Problems~\eqref{eq:pb} and~\eqref{eq:pb_adjoint}. Let $f \in L^2(\Omega)$ and let $\Sigma_h \subset H^1(\Omega)$ be the $\mathbb{P}^1$ approximation space associated to a regular quasi-uniform polyhedral mesh of $\Omega$.

\begin{theorem}[Discretization of Problem~\eqref{eq:pb}]
\label{th:inv_measure_mu_h}
We assume that~\eqref{hyp_H0_debut}--\eqref{hyp_H_11_num}--\eqref{hyp_H_22_num} hold, and that $0 < \etaa \leq 1$. 
Then, there exists $h_0$ independent of $\etaa$ such that, for sufficiently small $\etaa$ and any $h \leq h_0$, there exists a unique $u_h \in \Sigma_h$ solution to
\begin{equation}
\label{eq:carnon1_mu_h}
\forall v_h \in \Sigma_h, \qquad \abil_\etaa(u_h,v_h) = \int_\Omega f v_h.
\end{equation}
\end{theorem}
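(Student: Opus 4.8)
The plan is to establish well-posedness of~\eqref{eq:carnon1_mu_h} by an inf-sup argument on the finite-dimensional space $\Sigma_h$, exactly mirroring the structure of Step~1 in the proof of Theorem~\ref{th:main_debut}. The key observation is that, at the continuous level, Lemma~\ref{lem:these} (Step~1 of its proof) establishes that $\abil_\etaa$ satisfies the $(\textnormal{BNB}1)$ condition on $H^1(\Omega)$, with the inf-sup constant $c \min(1,\etaa)$ that is \emph{independent of $h$}. I would like to transfer this to $\Sigma_h$, but the trick used at the continuous level (testing against $\sigma_1^{-1} w$) is not directly available discretely because $\sigma_1^{-1} w_h \notin \Sigma_h$. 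So the argument must go via the standard perturbation-of-coercive-plus-compact mechanism.

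First I would write $\abil_\etaa(u,v) = a_{\rm coer}^{(\etaa)}(u,v) + a_{\rm comp}(u,v)$, where $a_{\rm coer}^{(\etaa)}(u,v) = \int_\Omega \nabla u \cdot \nabla v + (1+\etaa)\int_\Omega u\,v$ is coercive on $H^1(\Omega)$ (uniformly for $0 < \etaa \le 1$, with coercivity constant bounded below by $1$), and $a_{\rm comp}(u,v) = \int_\Omega b\,u \cdot \nabla v - \int_\Omega u\,v$ is represented by a compact operator $T \in \mathcal{L}(H^1(\Omega),(H^1(\Omega))')$. Since $\abil_\etaa$ satisfies $(\textnormal{BNB}1)$ on $H^1(\Omega)$ (by Lemma~\ref{lem:these}) and $(\textnormal{BNB}2)$ as well, the abstract theory invoked in~\eqref{eq:these8} (see the proof of~\cite[Theorem 4.2.9]{livre-sauter}) yields that for $h$ sufficiently small $\abil_\etaa$ satisfies a discrete inf-sup condition on $\Sigma_h$; then~\cite[Prop.~2.21]{EG} gives existence and uniqueness of $u_h$. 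The point requiring care is the uniformity of the threshold $h_0$ with respect to $\etaa$: the compact operator $T$ does not depend on $\etaa$, and the coercivity constant of $a_{\rm coer}^{(\etaa)}$ is bounded below uniformly in $\etaa \in (0,1]$; what does degenerate is the continuous inf-sup constant of $\abil_\etaa$, which is $O(\etaa)$. So I would track through the proof of~\cite[Theorem 4.2.9]{livre-sauter} to check that $h_0$ can be taken to depend only on $\|b\|_{L^\infty(\Omega)}$, the coercivity constant of $a_{\rm coer}^{(\etaa)}$ (bounded below by $1$), and the approximation properties of $\Sigma_h$ — but \emph{not} on the continuous inf-sup constant of $\abil_\etaa$ itself — so that $h_0$ is indeed independent of $\etaa$. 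This is precisely what the statement claims ("there exists $h_0$ independent of $\etaa$"). The phrase "for sufficiently small $\etaa$" in the statement is then only needed so that, by Lemma~\ref{lem:these}, the continuous problem enjoys the $W^{2,q}$ regularity that guarantees $\abil_\etaa$ satisfies $(\textnormal{BNB}1)$–$(\textnormal{BNB}2)$ in the first place; alternatively one simply notes that $(\textnormal{BNB}1)$ holds for all $\etaa \in (0,1]$ directly from~\eqref{eq:these6}, so well-posedness at the continuous level holds without smallness, and "sufficiently small $\etaa$" is harmless.

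An equivalent, perhaps cleaner, route — and the one I would actually present — is the fixed-point iteration used in Step~1 of Theorem~\ref{th:main_debut}: define $u_h^{n+1} \in \Sigma_h$ by $\abil_\lambda(u_h^{n+1},v_h) = \lambda \int_\Omega u_h^n v_h + \int_\Omega f v_h$ for all $v_h \in \Sigma_h$, where $\abil_\lambda = a_{\rm coer} + a_{\rm comp}$ with a fixed $\lambda > 0$ and where the $\etaa$-term is moved into the right-hand side iteration. Each step is well-posed once $h \le h_0$ (with $h_0$ depending only on $\lambda$, $\|b\|_{L^\infty}$, $\Omega$, \emph{not} on $\etaa$). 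One then shows the iteration is a contraction in $H^1(\Omega)$: subtracting consecutive iterates and using the $h$-uniform discrete inf-sup constant $\gamma$ for $\abil$ (established in~\eqref{eq:carnon3}), one gets $(\gamma - \lambda - \etaa)\|u_h^{n+1}-u_h^n\|_{H^1} \le (\lambda+\etaa)\|u_h^n - u_h^{n-1}\|_{H^1}$, so for $\lambda$ and $\etaa$ small enough the contraction factor is $< 1$ and the sequence converges to the unique solution of~\eqref{eq:carnon1_mu_h}; uniqueness follows from~\eqref{eq:carnon3} as in Theorem~\ref{th:main_debut}.

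The main obstacle I anticipate is exactly the $\etaa$-uniformity of $h_0$: one must be scrupulous that neither the coercive part's constant nor the compact operator $T$ feels $\etaa$, and that the abstract argument only uses these objects (plus the — $\etaa$-independent — approximation properties of $\Sigma_h$) when producing its threshold. Everything else is a routine repetition of the arguments already spelled out for Theorem~\ref{th:main_debut}, with $\abil$ replaced by $\abil_\etaa$ and the mild bookkeeping of the extra $\etaa\int u v$ term.
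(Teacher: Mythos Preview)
Your second (and preferred) route --- the fixed-point iteration --- is indeed the paper's, but as written it has two concrete gaps.

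First, the iteration $\abil_\lambda(u_h^{n+1},v_h) = \lambda \int_\Omega u_h^n v_h + \int_\Omega f v_h$ converges (if at all) to a solution of $\abil(u_h,v_h) = \int_\Omega f v_h$, not of $\abil_\etaa(u_h,v_h) = \int_\Omega f v_h$: the $\etaa$ term has vanished. The paper's iteration carries $(\lambda-\etaa)\int_\Omega u_h^n v_h$ on the right-hand side.

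Second, and more importantly, the inf-sup~\eqref{eq:carnon3} you invoke for the contraction holds only on $\Sigma_h \cap H^1_{\int=0}(\Omega)$. You must therefore ensure that $u_h^{n+1}-u_h^n$ has zero mean, and this does \emph{not} come for free. The paper secures it by initializing with $u_h^0 = \etaa^{-1}\fint_\Omega f$ and testing the iteration against $v_h = 1$: this yields $\lambda\fint_\Omega u_h^{n+1} = (\lambda-\etaa)\fint_\Omega u_h^n + \fint_\Omega f$, hence $\etaa\fint_\Omega u_h^n = \fint_\Omega f$ for every $n$, so consecutive differences are mean-zero. With your iteration as written, testing against $v_h=1$ gives $\fint_\Omega u_h^{n+1} = \fint_\Omega u_h^n + \lambda^{-1}\fint_\Omega f$, the means drift, and~\eqref{eq:carnon3} does not apply. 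Once the iteration is corrected, the contraction reads $(\gamma-\lambda)\|u_h^{n+1}-u_h^n\|_{H^1} \le (\lambda-\etaa)\|u_h^n-u_h^{n-1}\|_{H^1}$ (take $\lambda$ small, then $\etaa<\lambda$); uniqueness follows by first testing the difference of two solutions against $v_h=1$ to get zero mean, and then using~\eqref{eq:carnon3} together with $\etaa<\gamma$.

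On your first approach: the worry you flag is real. The continuous inf-sup constant of $\abil_\etaa$ on $H^1(\Omega)$ is $O(\etaa)$ (see~\eqref{eq:these6}), and a direct application of the coercive-plus-compact transfer yields a threshold $h_0$ that scales with this constant. One could rescue it via a Schatz-type duality argument exploiting that the interpolation error of the adjoint solution sees only its mean-free part --- for which~\eqref{eq:regul_adjoint} gives $\etaa$-independent $H^2$ control --- but that is not what ``tracking through'' the cited abstract theorem delivers. The paper sidesteps the issue entirely: it never proves a discrete inf-sup for $\abil_\etaa$ itself, relying instead on the $\etaa$-independent~\eqref{eq:these8} (for $\abil_\lambda$ with fixed $\lambda$) to define the iterates and on~\eqref{eq:carnon3} for the contraction.
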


\begin{proof}
The proof follows the lines of that of Theorem~\ref{th:main_debut}. Let $\lambda > 0$. We consider the iterations
\begin{equation}
\label{eq:carnon2_mu_h}
\left\{
\begin{array}{c}
\text{Find $u_h^{n+1} \in \Sigma_h$ such that, for all $v_h \in \Sigma_h$},
\\ \noalign{\vskip 3pt}
\dis \abil(u_h^{n+1},v_h) + \lambda \int_\Omega u_h^{n+1} \, v_h 
=
\abil_\lambda(u_h^{n+1},v_h) 
= 
(\lambda - \etaa) \int_\Omega u_h^n \, v_h + \int_\Omega f v_h,
\end{array}
\right.
\hspace{-13mm}
\end{equation}
with the initial condition $\dis u_h^0 = \etaa^{-1} \fint_\Omega f$ (or any function in $\Sigma_h$ such that $\dis \etaa \fint_\Omega u_h^0 = \fint_\Omega f$). In view of~\eqref{eq:these8}, these problems are well-posed for any $h \leq h_0$, where $h_0$ is independent of $\etaa$. They thus define a sequence $u_h^n \in \Sigma_h \subset H^1(\Omega)$. Furthermore, taking $v_h=1$ as test function, we see that
$$
\lambda \int_\Omega u_h^{n+1}
= 
(\lambda - \etaa) \int_\Omega u_h^n + \int_\Omega f.
$$
Our choice of $u_h^0$ implies that all the functions $u_h^n$ share the same mean.

We next prove that the sequence $\left\{ u_h^n \right\}_{n \in \N}$ converges to a solution to~\eqref{eq:carnon1_mu_h}. We recall that $\dis H^1_{\int=0}(\Omega) = \left\{ v\in H^1(\Omega), \quad \int_\Omega v=0 \right\}$. We infer from~\eqref{eq:carnon2_mu_h} that, for any $v_h \in \Sigma_h \cap H^1_{\int=0}(\Omega)$,
$$
\abil(u_h^{n+1} - u_h^n,v_h) + \lambda \int_\Omega (u_h^{n+1}-u_h^n) \, v_h 
=
(\lambda - \etaa) \int_\Omega (u_h^n - u_h^{n-1}) \, v_h, 
$$
from which we deduce that
\begin{equation}
\label{eq:titit_mu_h}
\sup_{v_h \in \Sigma_h \cap H^1_{\int=0}(\Omega) } \frac{\abil(u_h^{n+1} - u_h^n,v_h)}{\| v_h \|_{H^1(\Omega)}} - \lambda \| u_h^{n+1} - u_h^n \|_{H^1(\Omega)} 
\leq
(\lambda-\etaa) \| u_h^n - u_h^{n-1} \|_{H^1(\Omega)}.
\end{equation}
Using~\eqref{eq:carnon3}, we infer from~\eqref{eq:titit_mu_h} that
$$
(\gamma - \lambda) \| u_h^{n+1} - u_h^n \|_{H^1(\Omega)} 
\leq
(\lambda-\etaa) \| u_h^n - u_h^{n-1} \|_{H^1(\Omega)}.
$$
Taking $\lambda$ sufficiently small (so that $0 < \lambda/(\gamma - \lambda) < 1$) and $\etaa < \lambda$, we obtain that the sequence $\left\{ u_h^n \right\}_{n \in \N}$ converges in $H^1(\Omega)$ to some $u_h^\infty \in \Sigma_h$. Passing to the limit $n \to \infty$ in~\eqref{eq:carnon2_mu_h}, we get that $u_h^\infty$ is a solution to~\eqref{eq:carnon1_mu_h}.

\medskip

We now prove that~\eqref{eq:carnon1_mu_h} has a unique solution. Consider two solutions $u_h$ and $\overline{u}_h$ to~\eqref{eq:carnon1_mu_h}. Then $u_h - \overline{u}_h \in \Sigma_h \cap H^1_{\int=0}(\Omega)$ and satisfies $\dis \abil(u_h - \overline{u}_h,v_h) = - \etaa \int_\Omega (u_h - \overline{u}_h) \, v_h$ for any $v_h \in \Sigma_h$. We deduce from~\eqref{eq:carnon3} that
$$
\gamma \| u_h - \overline{u}_h \|_{H^1(\Omega)} \leq \etaa \| u_h - \overline{u}_h \|_{H^1(\Omega)},
$$
which implies, whenever $\etaa < \gamma$, that $u_h = \overline{u}_h$. 
\end{proof}

\begin{theorem}[Discretization of Problem~\eqref{eq:pb_adjoint}]
\label{th:adjoint_mu_h}
We assume that~\eqref{hyp_H0_debut}--\eqref{hyp_H_11_num}--\eqref{hyp_H_22_num} hold, and that $0 < \etaa \leq 1$. 
Then, there exists $h_0$ independent of $\etaa$ such that, for sufficiently small $\etaa$ and any $h \leq h_0$, there exists a unique $u_h \in \Sigma_h$ solution to
\begin{equation}
\label{eq:carnon1_adjoint_mu_h}
\forall v_h \in \Sigma_h, \qquad a_\etaa(u_h,v_h) = \int_\Omega f v_h.
\end{equation}
\end{theorem}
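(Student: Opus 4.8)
The plan is to obtain Theorem~\ref{th:adjoint_mu_h} as an immediate corollary of Theorem~\ref{th:inv_measure_mu_h}, exploiting that $\Sigma_h$ is finite-dimensional. First I would fix $h \leq h_0$ and $\etaa$ small enough that Theorem~\ref{th:inv_measure_mu_h} applies, choose a basis $\{\phi_i\}_{1\leq i\leq N}$ of $\Sigma_h$, and write down the stiffness matrix of Problem~\eqref{eq:carnon1_mu_h}: expanding $u_h=\sum_i U_i\phi_i$ and testing against $\phi_j$, that problem reads $MU=F$ with $M_{ji}=\abil_\etaa(\phi_i,\phi_j)$ and $F_j=\int_\Omega f\,\phi_j$. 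Since $a_\etaa(u,v)=\abil_\etaa(v,u)$ by definition, the stiffness matrix of Problem~\eqref{eq:carnon1_adjoint_mu_h} in the \emph{same} basis has entries $a_\etaa(\phi_i,\phi_j)=\abil_\etaa(\phi_j,\phi_i)=M_{ij}$, i.e. it is exactly $M^T$. Theorem~\ref{th:inv_measure_mu_h} guarantees unique solvability of $MU=F$, hence $M$ is invertible, hence $M^T$ is invertible, hence~\eqref{eq:carnon1_adjoint_mu_h} has a unique solution $u_h\in\Sigma_h$. The threshold $h_0$ and the smallness condition on $\etaa$ are inherited verbatim from Theorem~\ref{th:inv_measure_mu_h}, so no additional work is needed there.

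The only point requiring a little care — and the closest thing to an obstacle in this argument — is to make sure that Theorem~\ref{th:inv_measure_mu_h} is being invoked for an \emph{arbitrary} right-hand side, not merely for the particular $f$ at hand: its statement does provide solvability of~\eqref{eq:carnon1_mu_h} for every $f\in L^2(\Omega)$, and since the bilinear form does not depend on $f$, this is precisely what makes the square matrix $M$ genuinely invertible rather than merely injective on some subspace. One should also note that the theorem only claims existence and uniqueness, so no error estimate (in the spirit of Proposition~\ref{prop:BS_mu}) is needed, which keeps the transposition shortcut legitimate.

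If instead one wanted a self-contained proof mirroring that of Theorem~\ref{th:inv_measure_mu_h} — iterating with the $\lambda$-shifted, coercive-plus-compact bilinear form and passing to the limit — the genuine subtlety would be the following: for the adjoint form, the iterates produced by the analogue of~\eqref{eq:carnon2_mu_h} no longer share a common mean, because testing against $v_h\equiv 1$ leaves a term $\int_\Omega b\cdot\nabla u_h^{n+1}$ that does not vanish. One could then not reduce the contraction estimate to mean-zero test functions as in the primal case; one would have to work on all of $\Sigma_h$, controlling the mean-zero component via the two-sided form of the discrete inf-sup~\eqref{eq:carnon3} (which is legitimate since in finite dimension the inf-sup constant coincides from both arguments) and estimating the mean separately using $\etaa>0$. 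This route works but is visibly more cumbersome than the transposition argument, which is why I would present the latter.
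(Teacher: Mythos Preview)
Your transposition argument is correct and is in fact the cleanest route to the result. It is worth noting that uniqueness alone already suffices: since $\Sigma_h$ is finite-dimensional and the map $u_h\mapsto\big(\abil_\etaa(u_h,\phi_j)\big)_j$ is square, uniqueness of the solution to~\eqref{eq:carnon1_mu_h} for a single right-hand side (say $f=0$) already forces $M$ to be injective, hence invertible. So your caveat about needing solvability for \emph{arbitrary} $f$ is slightly overcautious, though of course harmless.

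The paper takes the longer, constructive route you sketch in your third paragraph, and resolves the difficulty you identify in a way you did not anticipate. Rather than testing the iterates against $v_h\equiv 1$, it tests against the discrete invariant measure $\sigma_{1,h}$ (which satisfies $a(v_h,\sigma_{1,h})=\abil(\sigma_{1,h},v_h)=0$ for all $v_h\in\Sigma_h$), so that the advection term drops out and the quantity $\int_\Omega u_h^n\,\sigma_{1,h}$ is conserved along the iteration. The mean-zero parts $\overline{u}_h^n=u_h^n-\fint_\Omega u_h^n$ are then shown to converge via an inf-sup for $a$ on $\Sigma_h\cap H^1_{\int=0}(\Omega)$, which the paper obtains precisely by your transposition trick applied to the known inf-sup~\eqref{eq:carnon3} for $\abil$. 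Finally the conserved $\sigma_{1,h}$-weighted integral (together with $\fint_\Omega\sigma_{1,h}\neq 0$ for small $h$) pins down the limit of the means. Amusingly, then, the paper invokes the matrix-transpose argument twice as a sub-step (once for well-posedness of the $a_\lambda$-iterations, once for the adjoint inf-sup on mean-zero functions) while stopping short of applying it to the final statement itself. Your shortcut is entirely legitimate; the paper's version has the minor advantage of exhibiting an explicit fixed-point iteration and making the role of $\sigma_{1,h}$ visible, but for the bare existence-and-uniqueness claim this is unnecessary.
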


\begin{proof}
The proof follows the lines of that of Theorem~\ref{th:inv_measure_mu_h}. We consider the iterations
\begin{equation}
\label{eq:carnon2_mumu_h}
\left\{
\begin{array}{c}
\text{Find $u_h^{n+1} \in \Sigma_h$ such that, for all $v_h \in \Sigma_h$},
\\ \noalign{\vskip 3pt}
\dis a(u_h^{n+1},v_h) + \lambda \int_\Omega u_h^{n+1} \, v_h 
=
a_\lambda(u_h^{n+1},v_h) 
= 
(\lambda - \etaa) \int_\Omega u_h^n \, v_h + \int_\Omega f v_h,
\end{array}
\right.
\hspace{-13mm}
\end{equation}
with an initial condition $u_h^0$ such that $\dis \etaa \fint_\Omega u_h^0 \, \sigma_{1,h} = \fint_\Omega f \, \sigma_{1,h}$ (where $\sigma_{1,h}$ satisfies~\eqref{eq:sigma_h_bien_pose} with $g \equiv 0$). We have shown in Proposition~\ref{prop_brenner_scott_estimate_debut} that $\| \sigma_{1,h} - \sigma_1 \|_{H^1(\Omega)} \leq C h$, and we have shown in Lemma~\ref{theorem_perthame} that $\sigma_1$ is positive and bounded away from 0. We hence have $\dis \fint_\Omega \sigma_{1,h} > 0$ when $h$ is sufficiently small, and it is thus possible to pick $u_h^0$ as a constant function.

The problems~\eqref{eq:carnon2_mumu_h} are well-posed for any $h \leq h_0$, where $h_0$ is independent of $\etaa$. Consider indeed a basis $(\varphi_i)_{1 \leq i \leq I}$ of $\Sigma_h$. Since $\abil_\lambda$ satisfies the inf-sup condition~\eqref{eq:these8} on $\Sigma_h$ as soon as $h \leq h_0$, we know that the matrix $K$, defined by $K_{ij} = \abil_\lambda(\varphi_j,\varphi_i)$ for any $1 \leq i,j \leq I$, is invertible. The matrix $K^T$ is therefore invertible. This implies that~\eqref{eq:carnon2_mumu_h} is indeed well-posed for any $h \leq h_0$, and thus defines a sequence $u_h^n \in \Sigma_h \subset H^1(\Omega)$. Furthermore, taking $v_h=\sigma_{1,h}$ as test function, we see that
$$
\lambda \int_\Omega u_h^{n+1} \, \sigma_{1,h}
= 
(\lambda - \etaa) \int_\Omega u_h^n \, \sigma_{1,h} + \int_\Omega f \, \sigma_{1,h}.
$$
Our choice of $u_h^0$ implies that, for any $n$, we have
\begin{equation}
\label{eq:maison}
\etaa \fint_\Omega u_h^n \, \sigma_{1,h} = \fint_\Omega f \, \sigma_{1,h}.
\end{equation}

Let $\dis \overline{u}_h^n = u_h^n - \fint_\Omega u_h^n$. We infer from~\eqref{eq:carnon2_mumu_h} that, for any $v_h \in \Sigma_h$,
$$
a(\overline{u}_h^{n+1},v_h) + \lambda \int_\Omega u_h^{n+1} \, v_h 
=
(\lambda - \etaa) \int_\Omega u_h^n \, v_h + \int_\Omega f v_h.
$$
We recall that $\dis H^1_{\int=0}(\Omega) = \left\{ v\in H^1(\Omega), \quad \int_\Omega v=0 \right\}$. Taking now $v_h \in \Sigma_h \cap H^1_{\int=0}(\Omega)$, we get
$$
a(\overline{u}_h^{n+1},v_h) + \lambda \int_\Omega \overline{u}_h^{n+1} \, v_h 
=
(\lambda - \etaa) \int_\Omega \overline{u}_h^n \, v_h + \int_\Omega f v_h,
$$
hence
$$
a(\overline{u}_h^{n+1} - \overline{u}_h^n,v_h) + \lambda \int_\Omega (\overline{u}_h^{n+1}-\overline{u}_h^n) \, v_h 
=
(\lambda - \etaa) \int_\Omega (\overline{u}_h^n - \overline{u}_h^{n-1}) \, v_h, 
$$
from which we deduce that
\begin{equation}
\label{eq:titit_mumu_h}
\sup_{v_h \in \Sigma_h \cap H^1_{\int=0}(\Omega) } \frac{a(\overline{u}_h^{n+1} - \overline{u}_h^n,v_h)}{\| v_h \|_{H^1(\Omega)}} - \lambda \| \overline{u}_h^{n+1} - \overline{u}_h^n \|_{H^1(\Omega)} 
\leq
(\lambda-\etaa) \| \overline{u}_h^n - \overline{u}_h^{n-1} \|_{H^1(\Omega)}.
\end{equation}
The bilinear form $\abil$ satisfies an infsup condition on $\Sigma_h \cap H^1_{\int=0}(\Omega)$ for $h$ sufficiently small (see~\eqref{eq:carnon3}). Considering a basis $(\varphi_i)_{1 \leq i \leq I}$ of $\Sigma_h \cap H^1_{\int=0}(\Omega)$, we get that the matrix $K$ defined by $K_{ij} = \abil(\varphi_j,\varphi_i)$ for any $1 \leq i,j \leq I$, is invertible. The matrix $K^T$ is therefore invertible, which implies that the bilinear form $a$ also satisfies an infsup condition on $\Sigma_h \cap H^1_{\int=0}(\Omega)$ (for $h$ sufficiently small):
\begin{equation}
\label{eq:carnon3_adjoint}
\inf_{w_h \in \Sigma_h \cap H^1_{\int=0}(\Omega)} \ \sup_{v_h \in \Sigma_h \cap H^1_{\int=0}(\Omega)} \ \frac{a(w_h,v_h)}{\| w_h \|_{H^1(\Omega)} \ \| v_h \|_{H^1(\Omega)}} \geq \gamma_a.
\end{equation}
We then infer from~\eqref{eq:titit_mumu_h} that
$$
(\gamma_a - \lambda) \| \overline{u}_h^{n+1} - \overline{u}_h^n \|_{H^1(\Omega)} 
\leq
(\lambda-\etaa) \| \overline{u}_h^n - \overline{u}_h^{n-1} \|_{H^1(\Omega)}.
$$
Taking $\lambda$ sufficiently small (so that $0 < \lambda/(\gamma_a - \lambda) < 1$) and $\etaa < \lambda$, we obtain that the sequence $\left\{ \overline{u}_h^n \right\}_{n \in \N}$ converges in $H^1(\Omega)$ to some $\overline{u}_h^\infty \in \Sigma_h$. We also deduce from~\eqref{eq:maison} that
$$
\etaa \fint_\Omega \overline{u}_h^n \, \sigma_{1,h} + \etaa \fint_\Omega u_h^n \fint_\Omega \sigma_{1,h} = \fint_\Omega f \, \sigma_{1,h}.
$$
Since $\dis \fint_\Omega \sigma_{1,h} \neq 0$, we obtain that $\dis \fint_\Omega u_h^n$ converges to some $\ell$ satisfying
$$
\etaa \fint_\Omega \overline{u}_h^\infty \, \sigma_{1,h} + \etaa \, \ell \fint_\Omega \sigma_{1,h} = \fint_\Omega f \, \sigma_{1,h}.
$$
We thus get that the sequence $\left\{ u_h^n \right\}_{n \in \N}$ converges in $H^1(\Omega)$ to $u_h^\infty := \ell + \overline{u}_h^\infty \in \Sigma_h$.

Passing to the limit $n \to \infty$ in~\eqref{eq:carnon2_mumu_h}, we get that $u_h^\infty$ is a solution to~\eqref{eq:carnon1_adjoint_mu_h}.

\medskip

We now prove that~\eqref{eq:carnon1_adjoint_mu_h} has a unique solution. Consider two solutions $u_{1,h}$ and $u_{2,h}$ to~\eqref{eq:carnon1_adjoint_mu_h}. Introduce $\dis \overline{u}_{1,h} = u_{1,h} - \fint_\Omega u_{1,h}$ and likewise for $u_{2,h}$. Then $\overline{u}_{1,h} - \overline{u}_{2,h} \in \Sigma_h \cap H^1_{\int=0}(\Omega)$ and satisfies $\dis a(\overline{u}_{1,h} - \overline{u}_{2,h},v_h) = - \etaa \int_\Omega (\overline{u}_{1,h} - \overline{u}_{2,h}) \, v_h$ for any $v_h \in \Sigma_h \cap H^1_{\int=0}(\Omega)$. We deduce from~\eqref{eq:carnon3_adjoint} that
$$
\gamma_a \| \overline{u}_{1,h} - \overline{u}_{2,h} \|_{H^1(\Omega)} \leq \etaa \| \overline{u}_{1,h} - \overline{u}_{2,h} \|_{H^1(\Omega)},
$$
which implies, whenever $\etaa < \gamma_a$, that $\overline{u}_{1,h} = \overline{u}_{2,h}$. The functions $u_{1,h}$ and $u_{2,h}$ are thus equal up to the addition of a constant. Taking $v_h=\sigma_{1,h}$ as test function in~\eqref{eq:carnon1_adjoint_mu_h}, we see that
$$
\etaa \int_\Omega u_{1,h} \, \sigma_{1,h}
= 
\int_\Omega f \, \sigma_{1,h}
$$
and likewise for $u_{2,h}$, which implies that $u_{1,h}=u_{2,h}$. 
\end{proof}

\subsection{Weight function}

For some $\kappa \geq 1$, we set, for any $x$ and $z$ in $\Omega$,
\begin{equation}
\label{eq:def_sigma}
\weight_z(x) = \sqrt{|x-z|^2 + (\kappa h)^2}.
\end{equation}
It is easy to show that there exists $C$ (independent of $\kappa$ and $h$) such that
$$
\forall T \in \mathcal{T}_h, \quad \forall z \in \Omega, \quad \frac{\sup_{x \in T} \weight_z(x)}{\inf_{x \in T} \weight_z(x)} \leq C.
$$
Likewise, for any $\beta \in \N^d$ and $\Lambda \in \R$, there exists $C$ (independent of $\kappa$ and $h$) such that, for any $x$ and $z$ in $\Omega$,
\begin{equation}
\label{eq:bound_sigma2}
\left| \partial_\beta (\weight_z^\Lambda)(x) \right| \leq C \weight_z^{\Lambda - |\beta|}(x).
\end{equation}
The following estimate will be useful:
\begin{lemma}
For any real number $\theta > d$, we have, for any $x \in \Omega$,
\begin{equation}
\int_\Omega \weight_z^{-\theta}(x) \, dz
\leq 
C_d \frac{1}{(\kappa h)^{\theta-d}} \left( \frac{1}{d} + \frac{1}{\theta-d} \right),
\label{eq:util}
\end{equation}
where $C_d$ only depends on the dimension $d$.
\end{lemma}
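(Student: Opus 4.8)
The plan is to prove the integral bound~\eqref{eq:util} by a direct change of variables, reducing the integral over $\Omega$ to an integral over all of $\R^d$ (which only makes the left-hand side larger since the integrand is positive) and then passing to polar coordinates. First I would fix $x \in \Omega$ and write $\dis \int_\Omega \weight_z^{-\theta}(x) \, dz \leq \int_{\R^d} \left( |x-z|^2 + (\kappa h)^2 \right)^{-\theta/2} dz$, using the definition~\eqref{eq:def_sigma} of $\weight_z$ and the nonnegativity of the integrand. The translation $y = z - x$ turns this into $\dis \int_{\R^d} \left( |y|^2 + (\kappa h)^2 \right)^{-\theta/2} dy$, which is independent of $x$, so it suffices to bound this quantity.

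Next I would carry out the rescaling $y = \kappa h \, w$, which gives $dy = (\kappa h)^d \, dw$ and $|y|^2 + (\kappa h)^2 = (\kappa h)^2 (|w|^2+1)$, hence
\begin{equation}
\int_{\R^d} \left( |y|^2 + (\kappa h)^2 \right)^{-\theta/2} dy = \frac{1}{(\kappa h)^{\theta - d}} \int_{\R^d} \left( |w|^2 + 1 \right)^{-\theta/2} dw.
\label{eq:plan_rescale}
\end{equation}
It remains to bound the dimensionless integral $\dis \int_{\R^d} (|w|^2+1)^{-\theta/2} dw$ by $\dis C_d \left( \frac1d + \frac{1}{\theta-d}\right)$. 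Passing to polar coordinates, this integral equals $\dis \omega_{d-1} \int_0^\infty \frac{r^{d-1}}{(1+r^2)^{\theta/2}} \, dr$, where $\omega_{d-1}$ is the surface area of the unit sphere. I would split the radial integral at $r=1$: on $[0,1]$ the denominator is $\geq 1$, so $\dis \int_0^1 \frac{r^{d-1}}{(1+r^2)^{\theta/2}} dr \leq \int_0^1 r^{d-1} dr = \frac1d$; on $[1,\infty)$ we have $1+r^2 \geq r^2$, so $\dis \int_1^\infty \frac{r^{d-1}}{(1+r^2)^{\theta/2}} dr \leq \int_1^\infty r^{d-1-\theta} dr = \frac{1}{\theta-d}$, where the last integral converges precisely because $\theta > d$. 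Summing the two pieces and absorbing $\omega_{d-1}$ into the dimensional constant yields~\eqref{eq:util}.

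There is no real obstacle here; the only point requiring a small amount of care is making sure the constant is genuinely independent of $\kappa$ and $h$ — which it is, since after the rescaling~\eqref{eq:plan_rescale} all $\kappa h$ dependence is confined to the explicit prefactor $(\kappa h)^{-(\theta-d)}$ and the remaining integral depends only on $d$ and $\theta$. One could even keep the $\theta$-dependence explicit in the form stated (with the factor $\frac1d + \frac{1}{\theta-d}$) rather than hiding it in $C_d$, which is exactly what the split at $r=1$ produces naturally.
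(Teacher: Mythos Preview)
Your proof is correct and follows essentially the same route as the paper: both bound the integral over $\Omega$ by the radial integral $\int_0^\infty r^{d-1}/(r^2+(\kappa h)^2)^{\theta/2}\,dr$ (the paper integrates up to the diameter $R$ of $\Omega$, you up to $\infty$, which is harmless), then split at the natural scale $r=\kappa h$ (equivalently, after your rescaling, at $r=1$) and estimate each piece exactly as you do. The only cosmetic difference is that the paper splits directly in the original variable while you rescale first; the resulting bounds are identical.
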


\begin{proof}
Let $R$ be the diameter of $\Omega$, so that, for any $x \in \Omega$, we have $\Omega \subset B_R(x)$. We write
$$
\int_\Omega \weight_z^{-\theta}(x) dz
\leq
\int_{B_R(x)} \weight_z^{-\theta}(x) dz
=
C_d \int_0^R \frac{r^{d-1}}{(r^2 + (\kappa h)^2)^{\theta/2}} dr.
$$
We split the integral from $r=0$ to $r=\kappa h$ (for which we write that $r^2 + (\kappa h)^2 \geq (\kappa h)^2$) and from $r=\kappa h$ to $r=R$ (for which we write that $r^2 + (\kappa h)^2 \geq r^2$). A straightforward computation then leads to~\eqref{eq:util}.
\end{proof}

We recall some useful properties of $\Sigma_h$, the subset of $H^1(\Omega)$ of piecewise affine functions. First, for any $\Lambda \in \R$, there exists $C$ such that, for any $\psi \in H^1(\Omega)$ such that $\psi_{|T} \in H^2(T)$ for any $T \in \mathcal{T}_h$, there exists $I_h \psi \in \Sigma_h$ such that 
\begin{equation}
\label{eq:inter1}
\int_\Omega \weight_z^\Lambda (\psi - I_h \psi)^2 + h^2 \int_\Omega \weight_z^\Lambda |\nabla (\psi - I_h \psi)|^2 \leq C h^4 \sum_{T \in \mathcal{T}_h} \int_T \weight_z^\Lambda |\nabla^2 \psi|^2
\end{equation}
where $C$ is independent of $z$, $\kappa$, $h$ and $\psi$. Second, we have, for any $\Lambda \in \R$ and any $\psi_h \in \Sigma_h$, that
\begin{equation}
\label{eq:inter3}
\int_\Omega \weight_z^\Lambda |\nabla \psi_h|^2
\leq 
C h^{-2} \int_\Omega \weight_z^\Lambda \, \psi_h^2.
\end{equation}

\subsection{Numerical Green functions}

For any element $T \in \mathcal{T}_h$ of the mesh, we introduce a function $\overline{\delta}_T \in C^\infty_0(T)$ such that $\overline{\delta}_T \geq 0$ on $T$ and $\dis \int_T \overline{\delta}_T = 1$.

Let $z \in \Omega$ such that $z$ does not lie on an edge of the mesh. We call $K^z$ the element containing $z$, and set $\delta^z = \overline{\delta}_{K^z}$. For any function $P$ which is piecewise constant on $\mathcal{T}_h$, we thus have
\begin{equation}
\label{eq:dirac}
\int_\Omega \delta^z P
=
\int_{K^z} \delta^z P
=
P(z) \int_{K^z} \delta^z
=
P(z).
\end{equation}
It is possible to build $\delta^z$ such that it satisfies the following bounds:
$$
\forall k \in \N, \quad \| \nabla^k \delta^z \|_{L^\infty(\Omega)} \leq \frac{C_k}{h^{d+k}}.
$$
Note that $\delta^z$ depends on $h$ which is the diameter of $K^z$. 

\medskip

Let $\nu \in \R^d$ be a constant vector. Since $\delta^z \in C^\infty_0(K^z)$, we have that $\nu \cdot \nabla \delta^z \in L^2(\Omega)$. Problem~\eqref{eq:pb_adjoint} is thus well posed for the right-hand side $\nu \cdot \nabla \delta^z$ (see Lemma~\ref{lem:these_adjoint}). We hence define $g^z \in H^1(\Omega)$ such that
\begin{equation}
\label{eq:def_gz}
\forall v \in H^1(\Omega), \quad \abil_\etaa(v,g^z) = - \int_\Omega (\nu \cdot \nabla \delta^z) \ v.
\end{equation}
Likewise, we introduce $g^z_h \in \Sigma_h$ such that
\begin{equation}
\label{eq:def_gzh}
\forall v \in \Sigma_h, \quad \abil_\etaa(v,g^z_h) = - \int_\Omega (\nu \cdot \nabla \delta^z) \ v.
\end{equation}
In view of Theorem~\ref{th:adjoint_mu_h}, we know that there exists $h_0$ independent of $\etaa$ such that, for any $h < h_0$, the above problem is well-posed. 

\medskip

For any $\lambda > 0$, we define
\begin{equation}
\label{eq:def_M}
M_{h,\lambda} = \sup_{z \in \Omega, \ \text{$z$ not on edges}} \quad \sqrt{ \int_\Omega \weight_z^{d+\lambda} \Big( |g^z - g^z_h|^2 + |\nabla (g^z - g^z_h) |^2 \Big) }
\end{equation}
with $\weight_z$ defined by~\eqref{eq:def_sigma}. The following lemma will be most useful:

\begin{lemma}
\label{lem:bound_M}
We work under the assumptions of Proposition~\ref{prop:BS_mu}. Then there exists $h_0>0$, $\lambda > 0$, $\kappa \geq 1$ (possibly depending on $\etaa$) and $C_{\kappa,\lambda,\etaa}$ (possibly depending on $\kappa$, $\lambda$ and $\etaa$) such that, for any $h$ such that $0 < h \leq h_0$ and $\kappa h \leq 1$, we have
$$
M_{h,\lambda}^2 \leq C_{\kappa,\lambda,\etaa} \, h^\lambda.
$$
\end{lemma}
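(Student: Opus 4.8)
The target estimate bounds a weighted $H^1$-norm of the error $g^z - g^z_h$ between the continuous and discrete ``numerical Green functions'' associated to the adjoint bilinear form $\abil_\etaa$ with right-hand side $-\nu\cdot\nabla\delta^z$. The strategy is the classical weighted-norm technique of Nitsche--Schatz, as presented in~\cite[Chap. 8]{BS}, adapted here to the advection term and the zero-order term $\etaa$. The plan is to first reduce $M_{h,\lambda}^2$ to a weighted \emph{best-approximation} error, i.e.\ to show that
$$
\int_\Omega \weight_z^{d+\lambda}\Big(|g^z-g^z_h|^2+|\nabla(g^z-g^z_h)|^2\Big)
\le C\,\inf_{\chi_h\in\Sigma_h}\int_\Omega \weight_z^{d+\lambda}\Big(|g^z-\chi_h|^2+|\nabla(g^z-\chi_h)|^2\Big)
+ \text{(lower-order terms)},
$$
using the Galerkin orthogonality $\abil_\etaa(v_h,g^z-g^z_h)=0$ for $v_h\in\Sigma_h$ (which follows from~\eqref{eq:def_gz}--\eqref{eq:def_gzh}), together with the ``superapproximation'' properties of $\Sigma_h$ recorded in~\eqref{eq:inter1} and~\eqref{eq:inter3}, and the bounds~\eqref{eq:bound_sigma2} on derivatives of $\weight_z^\Lambda$. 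The delicate point in this reduction is that multiplying a test function by the weight $\weight_z^{d+\lambda}$ takes it out of $\Sigma_h$; one handles this by testing with $I_h(\weight_z^{d+\lambda}(g^z-g^z_h))$ and carefully estimating the commutator terms, absorbing the bad terms on the left provided $\kappa$ is chosen large enough and $h$ small enough (this is where the freedom to let $\kappa$ and $h_0$ depend on $\etaa$ is used).

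Next I would estimate the right-hand side weighted best-approximation term by choosing $\chi_h = I_h g^z$ and invoking the interpolation estimate~\eqref{eq:inter1} with $\Lambda = d+\lambda$, which reduces matters to controlling $\sum_{T}\int_T \weight_z^{d+\lambda}|\nabla^2 g^z|^2$. This in turn requires \emph{weighted elliptic regularity} for the adjoint problem~\eqref{eq:pb_adjoint} with the singular right-hand side $-\nu\cdot\nabla\delta^z$: away from $K^z$ the function $g^z$ solves a homogeneous equation, so one expects $|\nabla^2 g^z(x)| \lesssim |x-z|^{-d}$-type decay, while near $z$ the $\delta^z$-regularization (whose derivatives blow up like $h^{-d-k}$) contributes a controlled amount on a ball of radius $O(h)$. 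Combining such pointwise/local bounds with the key integral estimate~\eqref{eq:util} (which is exactly tuned so that $\int_\Omega \weight_z^{-\theta}\,dz \lesssim (\kappa h)^{d-\theta}$ for $\theta>d$) produces the power $h^\lambda$ on the right-hand side. One also needs the a~priori bounds of Lemma~\ref{lem:these_adjoint} (and its scaled/weighted analogues) to control the $\etaa$-dependence uniformly, which is why $\etaa$ is taken sufficiently small rather than sent to $0$.

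Finally I would assemble the pieces: the reduction step plus the weighted regularity step yield $M_{h,\lambda}^2 \le C_{\kappa,\lambda,\etaa}\,h^\lambda$, uniformly over all admissible $z$ (those not lying on mesh edges), which is the claimed bound. I expect the \textbf{main obstacle} to be the weighted elliptic regularity estimate for $g^z$ with the quasi-Dirac right-hand side $-\nu\cdot\nabla\delta^z$: one must track, with the correct powers of $|x-z|$ and of $h$, both the interior decay of the second derivatives of the Green-type function and the local contribution from the mollifier $\delta^z$, for a PDE that is neither self-adjoint nor coercive in the usual sense (the coercivity being recovered only through the multiplier $\sigma_1$, as in~\eqref{eq:these3}). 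A secondary technical difficulty is the bookkeeping of commutator terms when inserting the non-polynomial weight $\weight_z^{d+\lambda}$ into the discrete variational formulation, and verifying that all error terms can indeed be absorbed for a suitable choice of $\kappa$ depending on $\|b\|_{L^\infty(\Omega)}$, $\inf\sigma_1$ and $\etaa$.
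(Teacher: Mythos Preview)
Your plan is correct and follows essentially the same Nitsche--Schatz weighted-norm strategy as the paper, which packages it into four auxiliary results (Propositions~\ref{prop:831} and~\ref{prop:835}, Lemmas~\ref{lem:837} and~\ref{lem:8311}). Two points where your sketch is looser than the paper's execution: (i) the ``absorption'' of the lower-order term $\int_\Omega\weight_z^{d+\lambda-2}|e|^2$ is not achieved just by taking $\kappa$ large in the commutator estimates; the paper uses a genuine duality argument (Proposition~\ref{prop:835}): one solves $\abil_\etaa(v,\phi)=\int_\Omega\weight_z^{d+\lambda-2}e\,\phi$, applies Galerkin orthogonality to replace $v$ by $v-I_hv$, and then invokes a weighted regularity estimate for the \emph{direct} problem (Lemma~\ref{lem:837}) to get the factor $C_\etaa/\kappa^2$ that becomes small; (ii) for the weighted $H^2$ bound on $g^z$, the paper does not use pointwise decay $|\nabla^2 g^z(x)|\lesssim|x-z|^{-d}$ but stays entirely in weighted-integral form via Lemma~\ref{lem:8311}, which gives $\int_\Omega\weight_z^{d+\lambda}|\nabla^2 g^z|^2\le C\int_\Omega\weight_z^{d+\lambda}|\nabla\delta^z|^2+C_\etaa(\kappa h)^{-2}\int_\Omega\weight_z^{d+\lambda}(\delta^z)^2$ directly, avoiding Green-function pointwise estimates for the non-self-adjoint operator.
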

The proof of this lemma is postponed until Appendix~\ref{sec:proof_lem4}. The restriction $h \leq h_0$ comes from the fact that the existence of $g^z_h$ is only ensured for sufficiently small $h$. A careful inspection of the proof shows that one can take $\kappa = C/\etaa$ and $\dis C_{\kappa,\lambda,\etaa} = C \, \kappa^{d+\lambda} \, \etaa^{-2}$ for some $C$ independent of $\kappa$, $\etaa$ and $\lambda$.

\medskip

We proceed in the sequel of this Appendix~\ref{sec:appendix_BS} with the proof of Proposition~\ref{prop:BS_mu}.

\subsection{Proof of~\eqref{eq:main1}}

Let $u$ and $u_h$ (resp. in $H^1(\Omega)$ and $\Sigma_h$) satisfying the assumptions of Proposition~\ref{prop:BS_mu}. We write, for any fixed $z$ not lying on the mesh edges, that
\begin{eqnarray}
&& \nu \cdot \nabla u_h(z)
\nonumber
\\
&=& \int_\Omega \delta^z \ (\nu \cdot \nabla u_h) \qquad \text{[eq.~\eqref{eq:dirac}]}
\nonumber
\\
&=&
- \int_\Omega (\nu \cdot \nabla \delta^z) u_h \qquad \text{[int. by part and $\delta^z = 0$ on $\partial \Omega$]}
\nonumber
\\
&=&
\abil_\etaa(u_h,g^z_h) \qquad \text{[def.~\eqref{eq:def_gzh} of $g^z_h$ and $u_h \in \Sigma_h$]}
\nonumber
\\
&=&
\abil_\etaa(u,g^z_h) \qquad \text{[Assumption~\eqref{eq:galerkin_orth} and $g^z_h \in \Sigma_h$]}
\nonumber
\\
&=&
\abil_\etaa(u,g^z) + \abil_\etaa(u,g^z_h-g^z)
\nonumber
\\
&=&
- \int_\Omega (\nu \cdot \nabla \delta^z) u + \abil_\etaa(u,g^z_h-g^z) \qquad \text{[def.~\eqref{eq:def_gz} of $g^z$ and $u \in H^1(\Omega)$]}
\nonumber
\\
&=&
\int_\Omega \delta^z \, (\nu \cdot \nabla u)  + \abil_\etaa(u,g^z_h-g^z). \quad \text{[int. by part and $\delta^z = 0$ on $\partial \Omega$]}
\label{eq:split}
\end{eqnarray}
In Sections~\ref{sec:22} and~\ref{sec:22bis} below, we successively bound the two terms of the right-hand side of~\eqref{eq:split}. In Section~\ref{sec:22ter}, we conclude the proof of~\eqref{eq:main1}.

\subsubsection{Bound on the second term of the right hand side of~\eqref{eq:split}}
\label{sec:22}

In view of the assumptions of Proposition~\ref{prop:BS_mu}, we know that $u \in W^{1,p}(\Omega)$ for some $p \geq 2$. Let $1 < q \leq 2$ such that 
$$
1 = \frac{1}{p} + \frac{1}{q}.
$$
We know that $g^z_h-g^z \in H^1(\Omega) \subset W^{1,q}(\Omega)$. By H\"older inequality and since $0 < \etaa \leq 1$, we write that
\begin{align}
& |\abil_\etaa(u,g^z_h-g^z)|
\nonumber
\\
& \leq
\int_\Omega | \nabla u | \ | \nabla (g^z_h-g^z) | + \| b \|_{L^\infty} \int_\Omega | u | \ | \nabla (g^z_h-g^z) | + \int_\Omega | u | \ | g^z_h-g^z |
\nonumber
\\
& \leq 
\left( 1 + \| b \|_{L^\infty} \right) \left( \| \tau_z^{-1} \nabla u \|_{L^p(\Omega)} + \| \tau_z^{-1} u \|_{L^p(\Omega)} \right)
\nonumber
\\
& \qquad \times \left( \| \tau_z \nabla (g^z_h-g^z) \|_{L^q(\Omega)} + \| \tau_z (g^z_h-g^z) \|_{L^q(\Omega)} \right)
\label{eq:francois4}
\end{align}
where the function $\tau_z$ is defined by~\eqref{eq:def_tauz} below. Since $q \leq 2$, there exists $s>1$ such that $1 = 1/s + q/2$ (if $q=2$, we take $s=\infty$). Introducing real numbers $\alpha$ and $\beta$ such that $\alpha+\beta=1$, we write
$$
\left\| \tau_z \, \nabla (g^z_h-g^z) \right\|^q_{L^q(\Omega)} 
\leq
\left\| \tau_z^{\alpha q} \right\|_{L^s(\Omega)} \ \left\| \tau_z^{\beta q} \, |\nabla (g^z_h-g^z)|^q \right\|_{L^{2/q}(\Omega)}.
$$
We hence have
\begin{equation}
\left\| \tau_z \nabla (g^z_h-g^z) \right\|^2_{L^q(\Omega)} 
\leq
\left\| \tau_z^{\alpha q} \right\|^{2/q}_{L^s(\Omega)} \ \int_\Omega \tau_z^{2\beta} \, |\nabla (g^z_h-g^z)|^2.
\label{eq:francois2}
\end{equation}
Inspired by~\cite{rannacher-scott}, we take
\begin{equation}
\label{eq:def_tauz}
\tau_z = \weight_z^{(d+\lambda)/p}, \qquad 2 \beta = p,
\end{equation}
where $\lambda > 0$ and the parameter $\kappa \geq 1$ in the definition~\eqref{eq:def_sigma} of $\chi_z$ are defined in Lemma~\ref{lem:bound_M}. We hence have $\tau^{2\beta}_z = \weight_z^{d+\lambda}$. In view of the definition~\eqref{eq:def_M} of $M_{h,\lambda}$, we infer from~\eqref{eq:francois2} that
\begin{equation}
\label{eq:francois3}
\left\| \tau_z \nabla (g^z_h-g^z) \right\|^2_{L^q(\Omega)} \leq M^2_{h,\lambda} \, \left\| \tau_z^{\alpha q} \right\|^{2/q}_{L^s(\Omega)}.
\end{equation}
Our choice of $\beta$ implies that $\alpha = 1-\beta = 1 - p/2$ and $\alpha q s = -p$, hence $\dis \left\| \tau_z^{\alpha q} \right\|^s_{L^s(\Omega)} = \left\| \weight_z^{-(d+\lambda)} \right\|_{L^1(\Omega)}$, and therefore
$$
\left\| \tau_z^{\alpha q} \right\|^{2/q}_{L^s(\Omega)} 
= 
\left\| \weight_z^{-(d+\lambda)} \right\|^{2/(qs)}_{L^1(\Omega)}
=
\left\| \weight_z^{-(d+\lambda)} \right\|^{(p-2)/p}_{L^1(\Omega)}.
$$
If $s=\infty$ (which corresponds to the case $p=q=2$), the above estimate still holds, since $\alpha=0$ in that case. We thus get from~\eqref{eq:francois3} that
$$
\left\| \tau_z \nabla (g^z_h-g^z) \right\|_{L^q(\Omega)} \leq M_{h,\lambda} \ \left\| \weight_z^{-(d+\lambda)} \right\|^{(p-2)/(2p)}_{L^1(\Omega)}.
$$
We likewise have
$$
\left\| \tau_z (g^z_h-g^z) \right\|_{L^q(\Omega)} \leq M_{h,\lambda} \ \left\| \weight_z^{-(d+\lambda)} \right\|^{(p-2)/(2p)}_{L^1(\Omega)}.
$$
We then deduce from~\eqref{eq:francois4} that
\begin{multline}
|\abil_\etaa(u,g^z_h-g^z)|
\\
\leq
2 \left( 1+\| b \|_{L^\infty} \right) M_{h,\lambda} \ \left\| \weight_z^{-(d+\lambda)} \right\|^{(p-2)/(2p)}_{L^1(\Omega)} \ \left( \| \tau_z^{-1} \nabla u \|_{L^p(\Omega)} + \| \tau_z^{-1} u \|_{L^p(\Omega)} \right).
\label{eq:fm}
\end{multline}
Using~\eqref{eq:util} with $\theta=d+\lambda$, and noting that $\weight_z(x) = \weight_x(z)$, we get
$$
\left\| \weight_z^{-(d+\lambda)} \right\|_{L^1(\Omega)} \leq C_d \frac{1}{(\kappa h)^\lambda} \left( \frac{1}{d} + \frac{1}{\lambda} \right).
$$
Inserting this estimate in~\eqref{eq:fm}, using Lemma~\ref{lem:bound_M} and the fact that $\kappa \geq 1$, we obtain
$$
|\abil_\etaa(u,g^z_h-g^z)|
\leq
C_{\kappa,\lambda,\etaa} \, h^{\lambda/2} \ \left( \frac{1}{h^\lambda} \right)^{(p-2)/(2p)} \ \left( \| \tau_z^{-1} \nabla u \|_{L^p(\Omega)} + \| \tau_z^{-1} u \|_{L^p(\Omega)} \right)
$$
where $C_{\kappa,\lambda,\etaa}$ is independent of $h$, but depends on $\kappa$, $\lambda$ and $\etaa$. We denote it $C_\etaa$ in the sequel (since $\lambda$ is fixed and $\kappa$ a priori depends on $\etaa$). We integrate the $p$-th power of the above relation with respect to $z$:
\begin{eqnarray}
&& \int_\Omega |\abil_\etaa(u,g^z_h-g^z)|^p \, dz
\nonumber
\\
& \leq &
C_\etaa \, h^\lambda \left( \int_\Omega \| \tau_z^{-1} \nabla u \|^p_{L^p(\Omega)} \, dz + \int_\Omega \| \tau_z^{-1} u \|^p_{L^p(\Omega)} \, dz \right)
\nonumber
\\
& = &
C_\etaa \, h^\lambda \left( \int_\Omega |\nabla u(x)|^p \left[ \int_\Omega \tau_z^{-p}(x) dz \right] dx + \int_\Omega |u(x)|^p \left[ \int_\Omega \tau_z^{-p}(x) dz \right] dx \right).
\nonumber
\\
&& \label{eq:pth-power}
\end{eqnarray}
Let us bound $\dis \int_\Omega \tau_z^{-p}(x) dz$. We write, using again~\eqref{eq:util} with $\theta=d+\lambda$, that
$$
\forall x \in \Omega, \quad
\int_\Omega \tau_z^{-p}(x) dz
=
\int_\Omega \weight_z^{-(d+\lambda)}
\leq
\frac{C}{h^\lambda}.
$$
We hence infer from~\eqref{eq:pth-power} that
\begin{equation}
\int_\Omega |\abil_\etaa(u,g^z_h-g^z)|^p \, dz
\leq
C_\etaa \left( \int_\Omega |\nabla u|^p + \int_\Omega |u|^p \right)
\leq
C_\etaa \, \| u \|^p_{W^{1,p}(\Omega)}.
\label{eq:bound_second}
\end{equation}

\subsubsection{Bound on the first term of the right hand side of~\eqref{eq:split}}
\label{sec:22bis}

We denote
$$
F(z)
=
\int_\Omega \delta^z(x) \ \nu \cdot \nabla u(x) \, dx
$$
the first term of the right-hand side of~\eqref{eq:split}. Recalling that $\delta^z$ is supported in $K^z$, and using the H\"older inequality, we have, for any $z \in \Omega$, 
$$
\left| F(z) \right|
\leq
\| \nabla u \|_{L^p(K^z)} \| \delta^z \|_{L^q_x(K^z)}.
$$
We compute that
$$
\| \delta^z \|^q_{L^q_x(K^z)} 
\leq
\| \delta^z \|^q_{L^\infty_x(K^z)} \int_{K^z} dx
\leq
C h^{-qd} \, h^d,
$$
thus $\dis \| \delta^z \|_{L^q_x(K^z)} \leq C h^{-d/p}$, hence
$$
\left| F(z) \right|
\leq
C \| \nabla u \|_{L^p(K^z)} \, h^{-d/p}.
$$
We integrate the $p$-th power of the above estimate with respect to $z$:
\begin{eqnarray}
\int_\Omega \left| F(z) \right|^p dz
&\leq& 
C h^{-d} \ \int_\Omega dz \int_{K^z} |\nabla u(x) |^p \, dx
\nonumber
\\
&=&
C h^{-d} \ \sum_{T \in \mathcal{T}_h} \int_T dz \int_{K^z} |\nabla u(x) |^p \, dx
\nonumber
\\
&=&
C h^{-d} \ \sum_{T \in \mathcal{T}_h} \int_T dz \int_T |\nabla u(x) |^p \, dx
\nonumber
\\
&=&
C h^{-d} \ \sum_{T \in \mathcal{T}_h} h^d \int_T |\nabla u(x) |^p \, dx
\nonumber
\\
&=&
C \int_\Omega |\nabla u(x) |^p \, dx,
\label{eq:bound_premier}
\end{eqnarray}
where we have used that $K^z = T$ when $z \in T$.

\subsubsection{Proof of~\eqref{eq:main1}}
\label{sec:22ter}

The right-hand side of~\eqref{eq:split} is the sum of two functions of $z$, the $L^p$ norm of which is bounded from above (up to a multiplicative constant independent of $h$) by $\| u \|_{W^{1,p}(\Omega)}$, in view of~\eqref{eq:bound_second} and~\eqref{eq:bound_premier}. We thus get
$$
\| \nabla u_h \|_{L^p(\Omega)}
\leq
C_\etaa \, \| u \|_{W^{1,p}(\Omega)}
$$
for any $h < h_0(\etaa)$ (this restriction comes from the fact that, in Lemma~\ref{lem:bound_M}, we work in the regime $\kappa h \leq 1$ for some $\kappa$ that depends on $\etaa$), where $C_\etaa$ and $h_0(\etaa)$ a priori depend on $\etaa$. This concludes the proof of~\eqref{eq:main1}.

\subsection{Proof of~\eqref{eq:main2}}

To obtain a bound on $u-u_h$, we introduce the interpolant $I_h u \in \Sigma_h$. We then note that $u-I_h u \in H^1(\Omega)$, $u_h - I_h u \in \Sigma_h$ and that, in view of~\eqref{eq:galerkin_orth}, we have, for any $v \in \Sigma_h$,  
$$
\abil_\etaa\Big((u-I_h u) - (u_h - I_h u),v\Big) = 0.
$$
Furthermore, we observe that $u - I_h u \in W^{1,p}(\Omega)$. We are thus in position to write~\eqref{eq:main1}, that is
$$
\| \nabla (u_h-I_h u) \|_{L^p(\Omega)}
\leq
C_\etaa \, \| \nabla (u - I_h u) \|_{L^p(\Omega)}.
$$
Thus, we get that
$$
\| \nabla (u-u_h) \|_{L^p(\Omega)}
\leq
\| \nabla (u-I_h u) \|_{L^p(\Omega)} + \| \nabla (I_h u-u_h) \|_{L^p(\Omega)}
\leq
C_\etaa \, \| \nabla (u - I_h u) \|_{L^p(\Omega)}
$$
and we conclude using an approximation result (see e.g.~\cite[eq. (1.5)]{rannacher-scott}), stating that, for any $u \in W^{2,p}(\Omega)$, we have $\| u - I_h u \|_{W^{1,p}(\Omega)} \leq C h \| u \|_{W^{2,p}(\Omega)}$. This yields~\eqref{eq:main2}.

\section{Proof of Lemma~\ref{lem:bound_M}}
\label{sec:proof_lem4}

The proof of Lemma~\ref{lem:bound_M} relies on four technical results, that we state below and prove in Appendix~\ref{sec:proofs}. We next turn here to the proof of Lemma~\ref{lem:bound_M}.

\begin{proposition}
\label{prop:831}
Assume that $b \in (L^\infty(\Omega))^d$ and that $\kappa$ and $h$ are such that $\kappa h \leq 1$. Let $w \in H^1(\Omega)$ and $w_h \in \Sigma_h$ such that
\begin{equation}
\label{eq:galerkin_orth_bis}
\forall v \in \Sigma_h, \quad \abil_\etaa(v,w-w_h) = 0.
\end{equation}
We set $e=w-w_h$. Then there exists $C > 0$, independent of $\etaa$, $\kappa$ and $h$, such that
\begin{multline*}
\int_\Omega \weight_z^{d+\lambda} \, |\nabla e|^2 
\leq 
C \int_\Omega \weight_z^{d+\lambda-2} \ |e|^2 
\\
+ C \int_\Omega \weight_z^{d+\lambda}  \, |\nabla (w-I_h w)|^2 + \weight_z^{d+\lambda-2} \, |w-I_h w|^2.
\end{multline*}
\end{proposition}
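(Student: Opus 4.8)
This is the standard weighted-norm argument of~\cite{rannacher-scott} and~\cite[Chap.~8]{BS}, adapted to the advection and zero-order terms of $\abil_\etaa$ — which are controlled using $b\in(L^\infty(\Omega))^d$ and $\etaa\le 1$ — with the parameter $\kappa$ in~\eqref{eq:def_sigma} fixed large enough (an absolute constant). Write $\mu=d+\lambda$ and, suppressing the subscript $z$, set $\weight=\weight_z$; recall that $\weight$ is slowly varying over each element and that, because $\kappa h\le 1$, one has $\weight\le C_\Omega$, hence $\weight^\mu\le C_\Omega\,\weight^{\mu-2}$. Decompose $e=w-w_h=\rho+\psi$ with $\rho=w-I_hw$ and $\psi=I_hw-w_h\in\Sigma_h$; since $\int_\Omega\weight^\mu|\nabla e|^2\le 2\int_\Omega\weight^\mu|\nabla\rho|^2+2\int_\Omega\weight^\mu|\nabla\psi|^2$ and the first term already appears on the right-hand side, it suffices to bound $\int_\Omega\weight^\mu|\nabla\psi|^2$.

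To this end I would start from the identity
$$\int_\Omega\weight^\mu|\nabla\psi|^2+\etaa\int_\Omega\weight^\mu\psi^2=\abil_\etaa(\weight^\mu\psi,\psi)-\int_\Omega\psi\,\nabla(\weight^\mu)\cdot\nabla\psi-\int_\Omega(b\,\weight^\mu\psi)\cdot\nabla\psi,$$
drop the nonnegative term $\etaa\int\weight^\mu\psi^2$, and bound the last two terms by Young's inequality (using $|\nabla(\weight^\mu)|\le C\weight^{\mu-1}$ from~\eqref{eq:bound_sigma2}): each contributes a small multiple of $\int\weight^\mu|\nabla\psi|^2$, to be absorbed, plus a term $C\int\weight^{\mu-2}\psi^2$. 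For the leading term I would split $\weight^\mu\psi=\eta_h+\zeta$ with $\eta_h=I_h(\weight^\mu\psi)\in\Sigma_h$ and $\zeta=(\mathrm{Id}-I_h)(\weight^\mu\psi)$. The Galerkin orthogonality~\eqref{eq:galerkin_orth_bis} applied with $v=\eta_h$ gives $\abil_\etaa(\eta_h,e)=0$, hence $\abil_\etaa(\eta_h,\psi)=-\abil_\etaa(\eta_h,\rho)$; this I would estimate by Cauchy--Schwarz and Young, using the weighted stability bounds $\int_\Omega\weight^{-\mu}\eta_h^2\le C\int_\Omega\weight^\mu\psi^2$ (a consequence of the slow variation of $\weight$) and $\int_\Omega\weight^{-\mu}|\nabla\eta_h|^2\le C\int_\Omega(\weight^\mu|\nabla\psi|^2+\weight^{\mu-2}\psi^2)$ — the latter obtained from $\nabla\eta_h=\nabla(\weight^\mu\psi)-\nabla\zeta$ together with the estimate for $\zeta$ below — which produces only absorbable terms and the admissible right-hand side quantities $\int\weight^{\mu-2}\psi^2$, $\int\weight^\mu|\nabla\rho|^2$, $\int\weight^{\mu-2}\rho^2$.

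The remaining piece $\abil_\etaa(\zeta,\psi)=\int_\Omega(\nabla\zeta+b\zeta)\cdot\nabla\psi+\etaa\int_\Omega\zeta\psi$ requires the superapproximation estimate: since $\psi$ is affine on each element $T$, one has $\nabla^2(\weight^\mu\psi)=\psi\,\nabla^2(\weight^\mu)+2\,\mathrm{sym}(\nabla(\weight^\mu)\otimes\nabla\psi)$ on $T$, so by~\eqref{eq:inter1} with $\Lambda=-\mu$ and~\eqref{eq:bound_sigma2},
$$\int_\Omega\weight^{-\mu}\zeta^2+h^2\int_\Omega\weight^{-\mu}|\nabla\zeta|^2\le Ch^4\sum_{T}\int_T\weight^{-\mu}|\nabla^2(\weight^\mu\psi)|^2\le Ch^4\int_\Omega\left(\weight^{\mu-4}\psi^2+\weight^{\mu-2}|\nabla\psi|^2\right);$$
since $\weight\ge\kappa h$ and $\kappa h\le 1$, the factors $h^2\weight^{-2}$ are bounded by $\kappa^{-2}$, whence $\int_\Omega\weight^{-\mu}\zeta^2+\int_\Omega\weight^{-\mu}|\nabla\zeta|^2\le C\kappa^{-2}\int_\Omega(\weight^\mu|\nabla\psi|^2+\weight^{\mu-2}\psi^2)$. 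Inserting this into $\abil_\etaa(\zeta,\psi)$ via Cauchy--Schwarz, and choosing $\kappa$ large enough that $C\kappa^{-1}$ is below the remaining absorption budget, again yields only absorbable terms and terms $C\int\weight^{\mu-2}\psi^2$. Collecting all contributions, the total absorbed coefficient is strictly less than one, so $\int_\Omega\weight^\mu|\nabla\psi|^2\le C\left(\int_\Omega\weight^{\mu-2}\psi^2+\int_\Omega\weight^\mu|\nabla\rho|^2+\int_\Omega\weight^{\mu-2}\rho^2\right)$, and replacing $\weight^{\mu-2}\psi^2$ by $C(\weight^{\mu-2}e^2+\weight^{\mu-2}\rho^2)$ (since $\psi=e-\rho$) gives the claim. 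The main obstacle is the precise bookkeeping in the superapproximation step: one must check that every quantity produced either carries a gain $\kappa^{-1}$, so that it can be absorbed, or lands on exactly one of the three admissible weighted norms, which is where the hypotheses $\kappa h\le 1$, $\etaa\le 1$, and the slow variation and boundedness of $\weight$ enter repeatedly.
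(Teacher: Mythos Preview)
Your overall strategy is essentially the paper's: decompose $e=\rho+\psi$ with $\psi=I_hw-w_h\in\Sigma_h$ (the paper's $\widetilde{e}$), insert the weight into the bilinear form, use Galerkin orthogonality to reduce to an interpolation residual of $\weight^\mu\psi$, and close via superapproximation. The bookkeeping you outline for the advection and zero-order terms, and for $\abil_\etaa(\eta_h,\rho)$, is fine.

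There is, however, a genuine gap in the superapproximation step. You write that $h^2\weight^{-2}\le\kappa^{-2}$ and then \emph{absorb} the resulting term $C\kappa^{-1}\int\weight^\mu|\nabla\psi|^2$ by ``choosing $\kappa$ large enough (an absolute constant)''. But the proposition asserts that the estimate holds for \emph{every} $\kappa\ge1$ with $\kappa h\le1$, with a constant $C$ independent of $\kappa$; your argument does not prove this for, say, $\kappa=1$. The point is not academic: in the subsequent Proposition~\ref{prop:835} and Lemma~\ref{lem:bound_M}, $\kappa$ is chosen depending on $\etaa$, and the analysis relies on Proposition~\ref{prop:831} holding with a constant that does not interfere with that later choice.

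The fix is the one the paper uses: after superapproximation you obtain
\[
\int_\Omega \weight^{-\mu}\big(|\nabla\zeta|^2+\zeta^2\big)\ \le\ C h^2\!\int_\Omega\!\big(\weight^{\mu-4}\psi^2+\weight^{\mu-2}|\nabla\psi|^2\big),
\]
and instead of using $h^2\weight^{-2}\le\kappa^{-2}$ on the gradient term, you invoke the \emph{inverse inequality}~\eqref{eq:inter3} on $\psi\in\Sigma_h$ to get $h^2\!\int\weight^{\mu-2}|\nabla\psi|^2\le C\!\int\weight^{\mu-2}\psi^2$, and use only the crude bound $h^2\weight^{-2}\le1$ (valid because $\kappa\ge1$) on the $\psi^2$ term. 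Everything then lands in $C\!\int\weight^{\mu-2}\psi^2\le C\!\int\weight^{\mu-2}e^2+C\!\int\weight^{\mu-2}\rho^2$, which sits on the right-hand side of the claimed inequality rather than being absorbed. No largeness of $\kappa$ is needed, and the constant is manifestly independent of $\kappa$.
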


\medskip

\begin{lemma}
\label{lem:837}
Assume that~\eqref{eq:regul} holds for any $q$ such that $2d/(d+2) < q < \infty$. Assume also that $2 \leq d \leq 3$ and that $\dis 0 < \lambda < 4-d$. Let $\zeta>0$ and $x_0 \in \Omega$, and set
\begin{equation}
\label{eq:def_sigma0}
\weight(x) = \sqrt{|x-x_0|^2 + \zeta^2}.
\end{equation}
Let $f \in H^1(\Omega)$. The solution $v \in H^1(\Omega)$ to the problem
\begin{equation}
  \label{eq:def_v}
\forall \phi \in H^1(\Omega), \qquad \abil_\etaa(v,\phi) = \int_\Omega f \, \phi
\end{equation}
satisfies
$$
\int_\Omega \weight^{-d-\lambda} \, \left| \nabla^2 v \right|^2 \leq C_\etaa \, \zeta^{-2} \int_\Omega \weight^{4-d-\lambda} \, \left( |f|^2 + |\nabla f |^2 \right),
$$
where $C_\etaa$ is independent of $x_0$ and $\zeta$ (but a priori depends on $\etaa$), and where $\sigma_1$ is the invariant measure defined by~\eqref{invariant_measure_droniou}--\eqref{invariant_measure_droniou-int-1}. In addition, $C_\etaa \leq C/\etaa^2$ for some $C$ independent of $\etaa$.
\end{lemma}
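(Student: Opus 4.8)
The plan is to prove an \eqref{eq:regul}-type weighted elliptic regularity estimate for the Robin problem \eqref{eq:def_v}, adapting the weighted-norm arguments of \cite[Chap.~8]{BS}.

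\emph{Step 1: strong form and reduction.} Integrating by parts in \eqref{eq:def_v}, the solution $v$ satisfies $-\Delta v = f + \operatorname{div}(bv) - \etaa v$ in $\Omega$ with $(\nabla v + b v)\cdot n = 0$ on $\partial\Omega$; since $b$ is Lipschitz, $\operatorname{div} b \in L^\infty(\Omega)$. Taking $\phi \equiv 1$ in \eqref{eq:def_v} gives $\etaa \fint_\Omega v = \fint_\Omega f$, so I would write $v = \widetilde v + \big(\etaa^{-1}\fint_\Omega f\big)\,\sigma_1$ with $\widetilde v = v - \sigma_1\fint_\Omega v$. By Lemma~\ref{lem:these} — that is, by the assumed estimate \eqref{eq:regul} — one has $\|\widetilde v\|_{W^{2,q}(\Omega)} \le C\|f\|_{L^q(\Omega)}$, with $C$ independent of $\etaa$, for every admissible $q$; and $\sigma_1 \in W^{2,q}(\Omega)$ for all $q<\infty$ by Lemma~\ref{theorem_perthame} and Proposition~\ref{prop:sigma1}. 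When $2 \le d \le 3$, fixing $q$ large and using the Sobolev embeddings $H^1(\Omega)\hookrightarrow L^q(\Omega)$ and $W^{2,q}(\Omega)\hookrightarrow W^{1,\infty}(\Omega)$, the contribution of the explicit term $(\etaa^{-1}\fint_\Omega f)\sigma_1$ to $\int_\Omega\weight^{-d-\lambda}|\nabla^2 v|^2$ is controlled, using weight integrals of the type \eqref{eq:util} and $\weight\ge\zeta$ (and $\zeta\le 1$ in the regime of interest), by $C\,\etaa^{-2}\zeta^{-2}\int_\Omega\weight^{4-d-\lambda}(|f|^2+|\nabla f|^2)$; this accounts for the $\zeta^{-2}$ factor and for the bound $C_\etaa \le C/\etaa^2$.

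\emph{Step 2: weighted regularity for $\widetilde v$.} It then remains to bound $\int_\Omega\weight^{-d-\lambda}|\nabla^2\widetilde v|^2$. I would decompose $\Omega$ into dyadic shells $A_0 = \Omega\cap\{|x-x_0|<2\zeta\}$ and $A_j = \Omega\cap\{\,2^j\zeta \le |x-x_0| < 2^{j+1}\zeta\,\}$ for $j\ge 1$, on which $\weight \sim d_j := 2^j\zeta$; finitely many such shells exhaust $\Omega$. On a slightly fattened shell $\widetilde A_j$, which — after rescaling by $d_j$ — is uniformly comparable to a fixed Lipschitz domain thanks to the convexity of the polyhedron $\Omega$, I would apply local interior-and-boundary $H^2$ regularity for $-\Delta \widetilde v = F$ with the prescribed Robin data at scale $d_j$, getting $\|\nabla^2 \widetilde v\|_{L^2(A_j)} \lesssim \|F\|_{L^2(\widetilde A_j)} + d_j^{-1}\|\nabla \widetilde v\|_{L^2(\widetilde A_j)} + (\text{boundary trace terms})$. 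Multiplying by $d_j^{-d-\lambda}$, summing over $j$ (the $\widetilde A_j$ have bounded overlap), and using $|F| \lesssim |f| + |\nabla \widetilde v| + |\widetilde v| + |\nabla\sigma_1| + |\sigma_1|$ together with $\etaa\le 1$, this reduces matters to weighted $L^2$ bounds for $f$, $\nabla f$ and $\sigma_1$ (handled as in Step~1) and for $\widetilde v$, $\nabla\widetilde v$. The latter are obtained from a companion \emph{weighted $H^1$ energy estimate}: testing \eqref{eq:def_v} against a weighted multiple of $\sigma_1\widetilde v$ (the invariant measure providing coercivity, exactly as in the proof of Lemma~\ref{lem:these}) and absorbing the commutator terms produced by the weight, using derivative bounds for $\weight$ analogous to \eqref{eq:bound_sigma2}. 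The absorption of these commutator and lower-order terms is precisely what forces the admissible range $0 < \lambda < 4-d$.

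\emph{Main obstacle.} The delicate point is Step~2: one must perform the dyadic/weighted bookkeeping so that the constant stays independent of $x_0$ and $\zeta$ while producing the \emph{sharp} weight $\weight^{4-d-\lambda}$ on $f$ and $\nabla f$ (a naive bound would cost an extra power of $\zeta^{-1}$ and no derivative of $f$ — it is the interplay of the differentiated equation and the shell estimates that trades that power of $\zeta^{-1}$ for $\nabla f$), and so that the constant degrades at most like $\etaa^{-2}$. In addition, the boundary contributions coming from the Robin condition $(\nabla v + bv)\cdot n = 0$ have to be absorbed via trace inequalities on the dyadic shells, which relies on the convex-polyhedral geometry assumed in \eqref{hyp_H_11_num}.
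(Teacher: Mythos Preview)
Your decomposition in Step~1 is sound, but the route you take in Step~2 is far more elaborate than what the paper does and, as written, has a genuine gap.

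The paper avoids any dyadic decomposition or local shell regularity. Instead it applies two H\"older inequalities and the \emph{global} $W^{2,s^\star}$ bound~\eqref{eq:regul}. Pick $s$ with $2d/(d+2)<s<2$ and $s>2d/(4-\lambda)$, and set $s^\star = sd/(d-s)$, $q=s^\star/2$. One H\"older inequality gives
\[
\int_\Omega \weight^{-d-\lambda}\,|\nabla^2 v|^2 \le \Bigl(\int_\Omega \weight^{-(d+\lambda)q'}\Bigr)^{1/q'} \|\nabla^2 v\|_{L^{s^\star}(\Omega)}^2,
\]
and~\eqref{eq:util} controls the first factor by a power of~$\zeta$. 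Then~\eqref{eq:regul} (with the relation $\etaa\fint_\Omega v=\fint_\Omega f$) gives $\|v\|_{W^{2,s^\star}}\le C\etaa^{-1}\|f\|_{L^{s^\star}}$, and the Sobolev embedding $W^{1,s}\hookrightarrow L^{s^\star}$ reduces this to $\|f\|_{W^{1,s}}$. A second H\"older inequality on $\|f\|_{W^{1,s}}^s$ against the weight $\weight^{(4-d-\lambda)s/2}$, followed again by~\eqref{eq:util}, yields the weighted norm $\int_\Omega \weight^{4-d-\lambda}(|f|^2+|\nabla f|^2)$. Tracking the powers of~$\zeta$ one finds that everything collapses to exactly~$\zeta^{-2}$. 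The whole argument is a few lines; no localization, no energy estimates, no trace inequalities on shells.

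Your dyadic approach, by contrast, uses only $L^2$-based local regularity, and this is where it leaks. On a shell $A_j$ with $\weight\sim d_j$, local $H^2$ regularity produces a term $\int_{A_j}\weight^{-d-\lambda}|f|^2$; summed over~$j$ this is $\int_\Omega \weight^{-d-\lambda}|f|^2$, and bounding that by $\zeta^{-2}\int_\Omega \weight^{4-d-\lambda}|f|^2$ would require $\weight^{-4}\le\zeta^{-2}$, which fails near~$x_0$ for small~$\zeta$. You acknowledge this (``a naive bound would cost an extra power of $\zeta^{-1}$'') and propose to recover the loss by ``differentiating the equation'', but that suggestion is not worked out: differentiating produces an estimate on $\nabla^3 v$, not on $\nabla^2 v$, and it creates new boundary terms since the Robin condition is not preserved under differentiation. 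The missing idea is precisely the one the paper uses: pass to $L^{s^\star}$ with $s^\star>2$ so that the Sobolev gap $W^{1,s}\hookrightarrow L^{s^\star}$ converts one derivative of~$f$ into the extra integrability that exactly compensates the weight. Without that, the dyadic scheme cannot reach the sharp exponent~$\zeta^{-2}$.
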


\begin{remark}
Using the same arguments as for the proof of Lemma~\ref{lem:837}, it is also possible to show that
$$
\int_\Omega \weight^{-d-\lambda} \, \left| \nabla^2 \left( v - \sigma_1 \fint_\Omega v \right) \right|^2 \leq C \zeta^{-2} \int_\Omega \weight^{4-d-\lambda} \, \left( |f|^2 + |\nabla f |^2 \right),
$$
where $C$ is independent of $\etaa$, $x_0$ and $\zeta$.
\end{remark}

\medskip

\begin{proposition}
\label{prop:835}
Assume that $b \in (L^\infty(\Omega))^d$ and that~\eqref{eq:regul} holds for any $q$ such that $2d/(d+2) < q < \infty$. Assume also that $2 \leq d \leq 3$ and that $\dis 0 < \lambda < 4-d$. 

Let $w \in H^1(\Omega)$ and $w_h \in \Sigma_h$ such that the Galerkin orthogonality~\eqref{eq:galerkin_orth_bis} holds. We set $e=w-w_h$. Then, for any $\eps>0$ small enough, there exists $\kappa_1(\eps,\etaa) \geq 1$ (which a priori depends on $\eps$ and $\etaa$) such that, for any $\kappa \geq \kappa_1(\eps,\etaa)$ and any $h$ such that $\kappa h \leq 1$, we have
$$
\int_\Omega \weight_z^{d+\lambda-2} \, |e|^2 
\leq 
8 \eps \int_\Omega \weight_z^{d+\lambda} \, |\nabla e|^2,
$$
where $\weight_z$ is defined by~\eqref{eq:def_sigma}.
\end{proposition}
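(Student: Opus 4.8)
The plan is to estimate $A := \int_\Omega \weight_z^{d+\lambda-2}\,|e|^2$ by a weighted duality argument. Introduce the auxiliary function $\psi \in H^1(\Omega)$ defined as the unique solution (well-posed by Lemma~\ref{lem:these}) to
\[
\forall \phi \in H^1(\Omega), \qquad \abil_\etaa(\psi,\phi) = \int_\Omega \weight_z^{d+\lambda-2}\, e\, \phi .
\]
Since $e = w - w_h \in H^1(\Omega)$, taking $\phi = e$ gives $A = \abil_\etaa(\psi,e)$. Moreover $\weight_z^{d+\lambda-2}\,e \in L^2(\Omega)$ (note $d+\lambda-2 \ge \lambda > 0$ and $\weight_z$ is smooth and bounded below by $\kappa h$), so the assumed regularity~\eqref{eq:regul} (equivalently Lemma~\ref{lem:these}, for $\etaa$ small) gives $\psi \in H^2(\Omega)$. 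Using the Galerkin orthogonality~\eqref{eq:galerkin_orth_bis} with test function $I_h \psi \in \Sigma_h$, I rewrite
\[
A = \abil_\etaa(\psi - I_h\psi, e) = \int_\Omega \nabla(\psi - I_h\psi)\cdot\nabla e + \int_\Omega b\,(\psi - I_h\psi)\cdot\nabla e + \etaa \int_\Omega (\psi - I_h\psi)\, e .
\]

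\textbf{Key steps.} Next I would bound each of the three terms by a weighted Cauchy--Schwarz inequality, splitting the weights as $\weight_z^{\pm(d+\lambda)/2}$ and absorbing superfluous powers of $\weight_z$ by boundedness of $\weight_z$ on $\overline\Omega$. Writing $B := \int_\Omega \weight_z^{d+\lambda}|\nabla e|^2$, the two gradient terms are controlled by $\bigl(\int_\Omega \weight_z^{-d-\lambda}|\nabla(\psi - I_h\psi)|^2\bigr)^{1/2}B^{1/2}$ and $\bigl(\int_\Omega \weight_z^{-d-\lambda}|\psi - I_h\psi|^2\bigr)^{1/2}B^{1/2}$, while the zero-order term (using $\etaa \le 1$ and boundedness of $\weight_z$) is controlled by $\bigl(\int_\Omega \weight_z^{-d-\lambda}|\psi - I_h\psi|^2\bigr)^{1/2}A^{1/2}$. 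The weighted interpolation estimate~\eqref{eq:inter1} with $\Lambda = -d-\lambda$, applicable since $\psi \in H^2(\Omega)$, bounds both $\int_\Omega \weight_z^{-d-\lambda}|\psi - I_h\psi|^2$ and $h^2 \int_\Omega \weight_z^{-d-\lambda}|\nabla(\psi - I_h\psi)|^2$ by $C h^4 \int_\Omega \weight_z^{-d-\lambda}|\nabla^2 \psi|^2$.

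\textbf{The decisive input} is Lemma~\ref{lem:837}, applied with $x_0 = z$, $\zeta = \kappa h$, and right-hand side $f = \weight_z^{d+\lambda-2}\, e \in H^1(\Omega)$ (the range $0 < \lambda < 4-d$ is exactly what that lemma requires): it yields
\[
\int_\Omega \weight_z^{-d-\lambda}|\nabla^2 \psi|^2 \le C_\etaa\,(\kappa h)^{-2}\int_\Omega \weight_z^{4-d-\lambda}\bigl(|f|^2 + |\nabla f|^2\bigr),
\]
and a direct computation using $|\nabla\weight_z| \le 1$, \eqref{eq:bound_sigma2} and boundedness of $\weight_z$ gives $\weight_z^{4-d-\lambda}(|f|^2 + |\nabla f|^2) \le C\bigl(\weight_z^{d+\lambda-2}|e|^2 + \weight_z^{d+\lambda}|\nabla e|^2\bigr)$, hence $\int_\Omega \weight_z^{-d-\lambda}|\nabla^2 \psi|^2 \le C_\etaa (\kappa h)^{-2}(A+B)$. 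Combining everything, the crucial cancellation $h^4 \cdot (\kappa h)^{-2} = h^2\kappa^{-2} \le \kappa^{-2}$ occurs, so all three terms are bounded by $C\sqrt{C_\etaa}\,\kappa^{-1}\bigl(\sqrt A + \sqrt B\bigr)^2$, where $C$ depends only on $\Omega$ and $\|b\|_{L^\infty(\Omega)}$. By Young's inequality this gives $A \le C\sqrt{C_\etaa}\,\kappa^{-1}(A+B)$; choosing $\kappa \ge \kappa_1(\eps,\etaa)$ large enough first to absorb $A$ into the left-hand side and then to make the prefactor of $B$ at most $8\eps$ yields the claimed inequality, valid for all $h$ with $\kappa h \le 1$.

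\textbf{Main obstacle.} I expect the delicate point to be the bookkeeping of the weight exponents, so that the $\kappa$-dependence collapses precisely to a negative power of $\kappa$ through the identity $h^4 (\kappa h)^{-2} = h^2\kappa^{-2}$ — this is what makes the absorption argument work — together with verifying that $\psi$ has the $H^2$-regularity needed to invoke both~\eqref{eq:inter1} and Lemma~\ref{lem:837}, which is exactly what the standing assumption that~\eqref{eq:regul} holds provides via Lemma~\ref{lem:these}.
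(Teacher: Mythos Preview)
Your proposal is correct and follows essentially the same approach as the paper's own proof: both introduce the auxiliary solution of $\abil_\etaa(\psi,\phi)=\int_\Omega \weight_z^{d+\lambda-2}e\,\phi$, use the Galerkin orthogonality~\eqref{eq:galerkin_orth_bis} to replace $\psi$ by $\psi-I_h\psi$, estimate the interpolation error via the weighted bound~\eqref{eq:inter1}, and then feed the key weighted regularity estimate of Lemma~\ref{lem:837} into the resulting expression to produce the $\kappa^{-2}$ factor that drives the absorption. The only cosmetic difference is that the paper applies Young's inequality with parameter $\eps$ immediately after the Cauchy--Schwarz step and later fixes $\kappa$ in terms of $\eps$, whereas you carry the product bound to the end and choose $\kappa$ directly; both routes yield the same conclusion.
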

The proof of Proposition~\ref{prop:835} shows that it is sufficient to take $\dis \kappa_1(\eps,\etaa) \geq \frac{C}{\eps \etaa}$ for some $C$ independent of $\eps$ and $\etaa$.

\medskip

\begin{lemma}
\label{lem:8311}
Assume that $b \in (L^\infty(\Omega))^d$, that~\eqref{eq:regul} holds for any $q$ such that $2d/(d+2) < q < \infty$, and that~\eqref{eq:regul_adjoint} holds. Assume also that $2 \leq d \leq 3$ and that $\dis 0 < \lambda < 4-d$. Consider $\weight$ defined by~\eqref{eq:def_sigma0} and assume that $\zeta \leq 1$. 
   
Let $f \in H^1_0(\Omega)$ and consider the solution $v \in H^1(\Omega)$ of the adjoint problem
\begin{equation}
\label{eq:eq_v}
\forall \phi \in H^1(\Omega), \qquad \abil_\etaa(\phi,v) = \int_\Omega (\nu \cdot \nabla f) \ \phi.
\end{equation}
Then $v$ satisfies
$$
\int_\Omega \weight^{d+\lambda} \, |\nabla^2 v |^2 \leq C \int_\Omega \weight^{d+\lambda} \, |\nabla f |^2 + C_\etaa \, \zeta^{-2} \int_\Omega \weight^{d+\lambda} \, f^2,
$$
where $C$ is independent of $\etaa$, $x_0$ and $\zeta$, and where $C_\etaa$ is independent of $x_0$ and $\zeta$ (and a priori depends on $\etaa$).
\end{lemma}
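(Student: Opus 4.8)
The plan is to adapt the weighted-norm machinery of \cite[Chap.~8]{BS} (in the spirit of~\cite{rannacher-scott}), proceeding as in the proof of the companion Lemma~\ref{lem:837} but now at the level of the adjoint problem~\eqref{eq:eq_v}. Since $f\in H^1_0(\Omega)$, the source $\nu\cdot\nabla f$ belongs to $L^2(\Omega)$, so Lemma~\ref{lem:these_adjoint} (estimate~\eqref{eq:regul_adjoint}) already gives $v\in H^2(\Omega)$; hence the left-hand side of the claimed inequality is finite and the whole issue is to track its dependence on $\zeta$ and $\etaa$. Two elementary observations will be used repeatedly: integrating by parts in~\eqref{eq:eq_v} and using $f\in H^1_0(\Omega)$, the equation may also be written as $\abil_\etaa(\phi,v)=-\int_\Omega f\,(\nu\cdot\nabla\phi)$ for all $\phi\in H^1(\Omega)$ (a form in which only $f$, not $\nabla f$, is paired against a gradient, which is ultimately what produces the single factor $\zeta^{-2}$ rather than a worse power); and, taking $\phi\equiv1$, one gets $\etaa\int_\Omega v=\int_{\partial\Omega}(\nu\cdot n)f=0$, so $\fint_\Omega v=0$.

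Next I would perform the weight substitution $w=\weight^{(d+\lambda)/2}\,v\in H^2(\Omega)$. By Leibniz' rule, the pointwise bounds $|\partial_\beta(\weight^\Lambda)|\le C\,\weight^{\Lambda-|\beta|}$ (the $\zeta$-uniform analogue of~\eqref{eq:bound_sigma2}), and the elementary inequalities $\zeta\le\weight\le\operatorname{diam}(\Omega)+\zeta\le C$, the claim reduces to controlling $\|\nabla^2 w\|_{L^2(\Omega)}^2$ together with the weighted energies $\int_\Omega\weight^{d+\lambda}(|\nabla v|^2+v^2)$ and $\int_\Omega\weight^{d+\lambda-2}v^2$. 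A direct computation shows that $w$ solves weakly an adjoint Neumann problem $a_\etaa(w,\phi)=\int_\Omega G\,\phi+\int_{\partial\Omega}h\,\phi$, with $G=\weight^{(d+\lambda)/2}(\nu\cdot\nabla f)-2\,\nabla(\weight^{(d+\lambda)/2})\cdot\nabla v-v\,\Delta(\weight^{(d+\lambda)/2})+v\,b\cdot\nabla(\weight^{(d+\lambda)/2})$ and non-homogeneous flux $h=v\,\partial_n(\weight^{(d+\lambda)/2})$ on $\partial\Omega$; the compatibility condition $\int_\Omega(G-b\cdot\nabla w-\etaa w)+\int_{\partial\Omega}h=0$ holds by taking $\phi\equiv1$ in the weak form. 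Rewriting the equation as $-\Delta w=G-b\cdot\nabla w-\etaa w$ with $\partial_n w=h$ and invoking Neumann $H^2$-regularity on the convex polyhedral domain $\Omega$ (cf.~\cite[Theorem~3.12]{EG}, in its version with possibly non-homogeneous flux), together with the trace theorem for $\|h\|_{H^{1/2}(\partial\Omega)}$ and the weight bounds, yields $\|\nabla^2 w\|_{L^2}^2\le C\int_\Omega\weight^{d+\lambda}|\nabla f|^2+C\int_\Omega\weight^{d+\lambda-2}|\nabla v|^2+C\int_\Omega\weight^{d+\lambda-4}v^2+C\int_\Omega\weight^{d+\lambda}v^2$ (the last term thanks to $\etaa\le1$). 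Here the first, clean term is kept without integrating $\nabla f$ by parts, which is why $\int_\Omega\weight^{d+\lambda}|\nabla f|^2$ appears in the conclusion with a constant independent of $\etaa$ and $\zeta$.

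It then remains to estimate the weighted energies of $v$. For $\int_\Omega\weight^{d+\lambda}|\nabla v|^2$ I would test the identity $\abil_\etaa(\phi,v)=-\int_\Omega f\,(\nu\cdot\nabla\phi)$ with $\phi=\sigma_1\,\weight^{d+\lambda}\,v$, exactly as in the proof of Lemma~\ref{lem:these_adjoint}: the term obstructing the use of $\etaa$-coercivity is removed by applying $\abil(\sigma_1,\cdot)=0$ to $\psi=\tfrac12\weight^{d+\lambda}v^2$, while $\inf_\Omega\sigma_1>0$ supplies a positive lower bound; Cauchy--Schwarz and Young's inequality then give, after absorbing the $\int_\Omega\weight^{d+\lambda}|\nabla v|^2$ contributions, a bound of the form $\int_\Omega\weight^{d+\lambda}|\nabla v|^2\le C\int_\Omega\weight^{d+\lambda}|\nabla f|^2+C\int_\Omega\weight^{d+\lambda}f^2+C\int_\Omega\weight^{d+\lambda-2}v^2$. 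The leftover lower-order term $\int_\Omega\weight^{d+\lambda-2}v^2$ — and likewise $\int_\Omega\weight^{d+\lambda-4}v^2$, written as $\zeta^{-2}\int_\Omega\weight^{d+\lambda-2}v^2$ — is handled by a weighted Hardy/Poincaré-type inequality of the kind proved in Proposition~\ref{prop:835}, using $\fint_\Omega v=0$: it is bounded by $\varepsilon\int_\Omega\weight^{d+\lambda}|\nabla v|^2$ (absorbed) plus a contribution localized in a ball of radius $\sim\zeta$ around $x_0$ that is estimated directly via the unweighted $H^1$-bound on $v$, which is what introduces the factor $\zeta^{-2}$ on the $f$-terms.

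Collecting the three ingredients and choosing the absorption parameters small yields the stated inequality, with $C_\etaa$ depending on $\etaa$ only through the $\etaa$-dependent constants produced by the $\sigma_1$-trick. \emph{The main obstacle} is the $\zeta$- and $\etaa$-bookkeeping: the weighted $H^2$, $H^1$ and $L^2$ estimates must be chained so that no step loses more than the single factor $\zeta^{-2}$ and so that the lower-order weighted terms genuinely absorb. This is why, rather than estimating crudely, one must (i) keep the source in the $H^2$-regularity step as $\weight^{(d+\lambda)/2}\nabla f$ while integrating $\nabla f$ by parts only in the energy step, (ii) deal with the non-homogeneous boundary flux $h$ that appears for $w$, and (iii) invoke a weighted Poincaré inequality à la Proposition~\ref{prop:835} instead of trading $\weight^{d+\lambda-2}$ against $\zeta^{-2}\weight^{d+\lambda}$ outright, which would degrade the final power of $\zeta$.
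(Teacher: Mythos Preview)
Your overall architecture---set $w=\weight^{(d+\lambda)/2}v$, derive the Neumann problem satisfied by $w$ (with a non-homogeneous flux $h=v\,\partial_n\weight^{(d+\lambda)/2}$), and invoke $H^2$-regularity to reduce to weighted first- and zero-order terms in $v$---is exactly what the paper does. Your observation that $\fint_\Omega v=0$ is also correct and useful. The paper's estimate of the weighted gradient term is slightly simpler than yours (it tests~\eqref{eq:eq_v} directly with $\phi=\weight^{d+\lambda-2}v$, no $\sigma_1$ needed), but both routes collapse the task to a single remaining quantity: the weighted zero-order term $\int_\Omega\weight^{d+\lambda-4}v^2$.

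That is where your proposal breaks down. Proposition~\ref{prop:835} is \emph{not} a Poincar\'e or Hardy inequality: its proof hinges on the Galerkin orthogonality $\abil_\etaa(v,e-I_hv)=\abil_\etaa(v,e)$ and the finite-element interpolation bound~\eqref{eq:inter1}, neither of which applies to the continuous function $v$ here. If you try the splitting you sketch, the ``localized'' piece in $B_{K\zeta}(x_0)$ gives at best $\zeta^{2d+\lambda-4}\|v\|_{L^\infty}^2\leq C\zeta^{2d+\lambda-4}\|\nabla f\|_{L^2}^2\leq C\zeta^{d-4}\int_\Omega\weight^{d+\lambda}|\nabla f|^2$, which for $d\leq 3$ diverges as $\zeta\to 0$ and violates the claim that the $\nabla f$-term carries a $\zeta$-independent constant. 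The ``far'' piece fares no better: writing $\weight^{d+\lambda-4}=\weight^{-4}\weight^{d+\lambda}$ only yields a factor $(K\zeta)^{-4}$ in front of $\int_\Omega\weight^{d+\lambda}v^2$, which cannot be absorbed. In short, there is no $\varepsilon$ that is simultaneously small and $\zeta$-independent.

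The paper closes this gap by a \emph{duality argument}, not a Poincar\'e-type one. It first H\"older-splits $\int_\Omega\weight^{d+\lambda-4}v^2\leq \|\weight^{d+\lambda-4}\|_{L^{P'}}\|v\|_{L^{2P}}^2$ for a suitable $P'>\max(d/2,\,d/(4-d-\lambda))$, then bounds $\|v\|_{L^{2P}}$ by solving the primal problem $\abil_\etaa(w,\phi)=\int_\Omega\mathrm{sign}(v)|v|^{2P-1}\phi$, testing with $\phi=v$, and---crucially---integrating $\nu\cdot\nabla f$ by parts against $w$ (this is the single place where $f\in H^1_0(\Omega)$ is used). The $W^{2,q}$-regularity~\eqref{eq:regul} applied to $w$ then gives $\|v\|_{L^{2P}}\leq C_\etaa\|f\|_{L^r}$ with $r=2Pd/(2P+d)$; a final H\"older step converts $\|f\|_{L^r}^2$ into $\zeta^{-2}\int_\Omega\weight^{d+\lambda}f^2$ with the exponents matching exactly. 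This is what isolates the entire $\etaa$- and $\zeta$-dependence onto the $f^2$-term, leaving the $|\nabla f|^2$-term clean.
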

The proof of Lemma~\ref{lem:8311} shows that $C_\etaa \leq C/\etaa^2$ for some $C$ independent of $\etaa$, $x_0$ and $\zeta$.

\medskip

Thanks to the above results, we are in position to prove Lemma~\ref{lem:bound_M}.

\begin{proof}[Proof of Lemma~\ref{lem:bound_M}]

From the assumptions of Lemma~\ref{lem:bound_M}, we know that~\eqref{hyp_H0_debut}--\eqref{hyp_H_11_num}--\eqref{hyp_H_22_num} hold. We can thus take $\lambda$ such that $0 < \lambda < 1 \leq 4-d$, and all the assumptions of Proposition~\ref{prop:831}, Lemma~\ref{lem:837}, Proposition~\ref{prop:835} and Lemma~\ref{lem:8311} are fulfilled.

Let $g^z$ and $g^z_h$ be the solutions to~\eqref{eq:def_gz} and~\eqref{eq:def_gzh}. They satisfy the Galerkin orthogonality~\eqref{eq:galerkin_orth_bis}, namely $\abil_\etaa(v,g^z-g^z_h) = 0$ for any $v \in \Sigma_h$. Let $\eps > 0$ be small enough, as in Proposition~\ref{prop:835}. Combining Propositions~\ref{prop:831} and~\ref{prop:835}, we obtain that there exists $\kappa_1(\eps,\etaa) \geq 1$ (a priori depending on $\eps$ and $\etaa$) such that, for any $\kappa \geq \kappa_1(\eps,\etaa)$ and any $h$ such that $\kappa h \leq 1$,
\begin{align*}
& \int_\Omega \weight_z^{d+\lambda}|\nabla (g^z-g^z_h)|^2 + \weight_z^{d+\lambda-2}(g^z-g^z_h)^2
\\
&\leq
(C+1) \int_\Omega \weight_z^{d+\lambda-2} (g^z-g^z_h)^2
\\
& \qquad
+
C\left( \int_\Omega \weight_z^{d+\lambda-2} (g^z-I_h g^z)^2 + \int_\Omega \weight_z^{d+\lambda}|\nabla (g^z-I_h g^z)|^2\right)
\\
&\leq
8 (C+1) \eps \int_\Omega \weight_z^{d+\lambda} |\nabla (g^z-g^z_h)|^2
\\
& \qquad
+
C\left( \int_\Omega \weight_z^{d+\lambda-2} (g^z-I_h g^z)^2 + \int_\Omega \weight_z^{d+\lambda}|\nabla (g^z-I_h g^z)|^2\right)
\end{align*}
where $C$ is independent of $\eps$, $\kappa$ and $\etaa$. We pick $\eps$ such that $8 (C+1) \eps \leq 1/2$ and we obtain that
\begin{multline*}
\int_\Omega \weight_z^{d+\lambda}|\nabla (g^z-g^z_h)|^2 + \weight_z^{d+\lambda-2}(g^z-g^z_h)^2
\\
\leq
C\left( \int_\Omega \weight_z^{d+\lambda-2} (g^z-I_h g^z)^2 + \int_\Omega \weight_z^{d+\lambda}|\nabla (g^z-I_h g^z)|^2\right).
\end{multline*}
We have simply written that the error $g^z-g^z_h$ is bounded by the best approximation error. However, this is not a trivial estimate, as all errors are weighted.

\medskip

We next proceed as follows, successively using~\eqref{eq:inter1} and the fact that $h \leq \kappa h \leq \weight_z$:
\begin{multline}
\int_\Omega \weight_z^{d+\lambda}|\nabla (g^z-g^z_h)|^2 + \weight_z^{d+\lambda-2}(g^z-g^z_h)^2 
\\
\leq 
C \int_\Omega \big( \weight_z^{d+\lambda-2} h^4+\weight_z^{d+\lambda} h^2 \big) \ |\nabla^2 g^z|^2
\leq 
C h^2 \int_\Omega \weight_z^{d+\lambda} |\nabla^2 g^z|^2.
\label{eq:francois9}
\end{multline}
For the final estimate, we have used the regularity assumption~\eqref{eq:regul_adjoint} with $q=2$. Applying Lemma~\ref{lem:8311} with $f \equiv \delta^z \in C^\infty_0(\Omega)$ and $\weight \equiv \weight_z$, we obtain
\begin{align*}
\int_\Omega \weight_z^{d+\lambda} \, |\nabla^2 g^z|^2 
&\leq 
C \int_\Omega \weight_z^{d+\lambda} |\nabla \delta^z|^2 + \frac{C_\etaa}{\kappa^2 h^2} \int_\Omega \weight_z^{d+\lambda} (\delta^z)^2
\\
&\leq C \int_{K^z} \weight_z^{d+\lambda} |\nabla \delta^z|^2 + \frac{C_\etaa}{\kappa^2 h^2} \int_{K^z} \weight_z^{d+\lambda} (\delta^z)^2
\\
&\leq C \, h^{-d-2} \ \| \weight_z \|^{d+\lambda}_{L^\infty(K^z)} \left( 1 + \frac{C_\etaa}{\kappa^2} \right),
\end{align*}
where we have used that $\| \delta^z \|_{L^\infty(K^z)} \leq C h^{-d}$ and $\| \nabla \delta^z \|_{L^\infty(K^z)} \leq C h^{-d-1}$. Using that $\| \weight_z \|^2_{L^\infty(K^z)} \leq C h^2 (1+\kappa^2)$, we get that
\begin{equation}
\int_\Omega \weight_z^{d+\lambda} \, |\nabla^2 g^z|^2
\leq C_{\kappa,\lambda,\etaa} \, h^{\lambda -2},
\label{eq:francois10}
\end{equation}
where $C_{\kappa,\lambda,\etaa}$ depends on $\kappa$, $\lambda$ and $\etaa$ but not on $h$ (more precisely, since $1 \leq \kappa$, one can take $\dis C_{\kappa,\lambda,\etaa} = C \, \kappa^{d+\lambda} \, \etaa^{-2}$). Introduce
$$
\mathcal{M}_{h,\lambda}(z) = \sqrt{ \int_\Omega \weight_z^{d+\lambda} \Big( |g^z - g^z_h|^2 + |\nabla (g^z - g^z_h) |^2 \Big) },
$$
so that $\dis M_{h,\lambda} = \sup_z \mathcal{M}_{h,\lambda}(z)$. Using that $\weight_z$ is bounded (this is a consequence of the regime $\kappa h \leq 1$), we write, collecting~\eqref{eq:francois9} and~\eqref{eq:francois10}, that
$$
\mathcal{M}^2_{h,\lambda}(z)
\leq 
C \int_\Omega \weight_z^{d+\lambda}|\nabla (g^z-g^z_h)|^2 + \weight_z^{d+\lambda-2}(g^z-g^z_h)^2
\leq 
C_{\kappa,\lambda,\etaa} \, h^{\lambda}.
$$
Taking the supremum over $z$ yields the claimed bound on $M_{h,\lambda}$ and thus concludes the proof of Lemma~\ref{lem:bound_M}.
\end{proof}

\section{Technical proofs}
\label{sec:proofs}

We collect in this Appendix the proofs of Proposition~\ref{prop:831}, Lemma~\ref{lem:837}, Proposition~\ref{prop:835} and Lemma~\ref{lem:8311}.

\subsection{Proof of Proposition~\ref{prop:831}}

We set $e=w-w_h$, $\widetilde{e}=I_h w - w_h$ and $\psi=\weight_z^{d+\lambda} \, \widetilde{e}$. We have
\begin{align*}
&\int_\Omega \weight_z^{d+\lambda} \, |\nabla e|^2 + \etaa \int_\Omega \weight_z^{d+\lambda} e^2
\\
&= \int_\Omega \nabla (\weight_z^{d+\lambda}e) \cdot \nabla e - \int_\Omega e \nabla (\weight_z^{d+\lambda}) \cdot \nabla e + \etaa \int_\Omega \weight_z^{d+\lambda} e^2
\\
&= \abil_\etaa(\weight_z^{d+\lambda}e,e) - \int_\Omega (b\cdot\nabla e) \ \weight_z^{d+\lambda} e - \int_\Omega e \nabla (\weight_z^{d+\lambda}) \cdot \nabla e
\\
&= \abil_\etaa\big(\weight_z^{d+\lambda}(w-I_h w+\widetilde{e}),e\big) - \int_\Omega (b\cdot\nabla e) \, \weight_z^{d+\lambda} e - \int_\Omega e \nabla (\weight_z^{d+\lambda}) \cdot \nabla e
\\
&= \abil_\etaa\big(\weight_z^{d+\lambda}(w-I_h w),e\big)+ \abil_\etaa(\psi,e) - \int_\Omega (b\cdot\nabla e) \, \weight_z^{d+\lambda} e - \int_\Omega e \nabla (\weight_z^{d+\lambda}) \cdot \nabla e.
\end{align*}
Using the Galerkin orthogonality~\eqref{eq:galerkin_orth_bis} and the fact that $\dis \etaa \int_\Omega \weight_z^{d+\lambda} e^2 > 0$, and next the estimate~\eqref{eq:bound_sigma2}, we get
\begin{align*}
\int_\Omega \weight_z^{d+\lambda} \, |\nabla e|^2 
&\leq \abil_\etaa\big(\weight_z^{d+\lambda}(w-I_h w),e\big)+ \abil_\etaa(\psi-I_h \psi,e) + \|b\|_{L^\infty} \int_\Omega |\nabla e| \, \weight_z^{d+\lambda} \, |e| 
\\
& \qquad + \int_\Omega |\nabla e| \ |\nabla (\weight_z^{d+\lambda})| \ |e|
\\
&\leq \abil_\etaa\big(\weight_z^{d+\lambda}(w-I_h w),e\big)+ \abil_\etaa(\psi-I_h \psi,e) 
\\
& \qquad + (C + \|b\|_{L^\infty} \|\weight_z \|_{L^\infty} )\int_\Omega \weight_z^{d+\lambda-1} \, |\nabla e| \, |e|.
\end{align*}
Since we work in the regime $\kappa h \leq 1$, we have that, for any $z \in \Omega$,
\begin{equation}
\label{eq:bound_chi}
\|\weight_z \|_{L^\infty} \leq C,
\end{equation}
where $C$ only depends on $\Omega$. We deduce from the above estimate that
\begin{align}
\int_\Omega \weight_z^{d+\lambda} \, |\nabla e|^2
& \leq
\abil_\etaa\big(\weight_z^{d+\lambda}(w-I_h w),e\big) + \abil_\etaa(\psi-I_h \psi,e)
+ C \int_\Omega \weight_z^{d+\lambda-1} |\nabla e| \, |e|
\nonumber
\\
& \leq
\abil_\etaa\big(\weight_z^{d+\lambda}(w-I_h w),e\big)+ \abil_\etaa(\psi-I_h \psi,e) 
\nonumber
\\
& \qquad + \frac{1}{4} \int_\Omega \weight_z^{d+\lambda} \, |\nabla e|^2 + C \int_\Omega \weight_z^{d+\lambda-2} \, |e|^2. 
\label{eq:francois5}
\end{align}
We successively estimate the first two terms of~\eqref{eq:francois5}. For the first term, we write, using estimate~\eqref{eq:bound_sigma2}, the Cauchy Schwarz inequality and the Young inequality,
\begin{align}
&\left|\abil_\etaa\big(\weight_z^{d+\lambda}(w-I_h w),e\big)\right|
\nonumber
\\
&\leq C\int_\Omega |\nabla e| \Big( \weight_z^{d+\lambda} \, |\nabla (w-I_h w)| + \weight_z^{d+\lambda-1} \, |w-I_h w| \Big)
\nonumber
\\
&
\qquad + \| b \|_{L^\infty(\Omega)} \int_\Omega \weight_z^{d+\lambda} \ |w-I_h w| \ |\nabla e| + \etaa \int_\Omega \weight_z^{d+\lambda} \ |w-I_h w| \ |e|
\nonumber
\\
&\leq C \int_\Omega |\nabla e| \, \Big( \weight_z^{d+\lambda} \, |\nabla (w-I_h w)| + \weight_z^{d+\lambda-1} \, |w-I_h w| \Big) 
\nonumber
\\
&
\qquad + C \int_\Omega \weight_z^{d+\lambda-2} \ |w-I_h w| \ |e|
\nonumber
\\
&\leq 
C \left( \int_\Omega \weight_z^{d+\lambda} \, |\nabla e|^2 \right)^{1/2} 
\left(\int_\Omega \weight_z^{d+\lambda} |\nabla (w-I_h w)|^2 + \weight_z^{d+\lambda-2} |w-I_h w|^2 \right)^{1/2} 
\nonumber
\\
&
\qquad + C \left( \int_\Omega \weight_z^{d+\lambda-2} \ |w-I_h w|^2 \right)^{1/2} \left( \int_\Omega \weight_z^{d+\lambda-2} \ |e|^2 \right)^{1/2}
\nonumber
\\
&
\leq \frac{1}{4}\int_\Omega \weight_z^{d+\lambda} |\nabla e|^2 
+ C \int_\Omega \weight_z^{d+\lambda} |\nabla (w-I_h w)|^2 + \weight_z^{d+\lambda-2} |w-I_h w|^2
\nonumber
\\
& \qquad + C \int_\Omega \weight_z^{d+\lambda-2} |e|^2.
\label{eq:titi1}
\end{align}
Estimating the second term of~\eqref{eq:francois5} is done in a similar fashion:
\begin{align}
&|\abil_\etaa(\psi-I_h \psi,e)|
\nonumber
\\
&\leq 
\int_\Omega |\nabla e| \, |\nabla (\psi-I_h \psi)|
+ \| b \|_{L^\infty(\Omega)} \int_\Omega |\psi-I_h \psi| \, |\nabla e|
+ \etaa \int_\Omega |\psi-I_h \psi| \, |e|
\nonumber
\\
&\leq C \int_\Omega |\nabla e| \Big( |\nabla (\psi-I_h \psi)| + |\psi-I_h \psi| \Big) + \int_\Omega |\psi-I_h \psi| \, |e|
\nonumber
\\
&\leq C \left(\int_\Omega \weight_z^{d+\lambda} |\nabla e|^2 \right)^{1/2} 
\left(\int_\Omega \weight_z^{-d-\lambda} \Big( |\nabla (\psi-I_h \psi)|^2+|\psi-I_h \psi|^2 \Big) \right)^{1/2}
\nonumber
\\
&\qquad + \left(\int_\Omega \weight_z^{d+\lambda} |e|^2 \right)^{1/2} 
\left(\int_\Omega \weight_z^{-d-\lambda} |\psi-I_h \psi|^2 \right)^{1/2}
\nonumber
\\
&
\leq \frac{1}{4}\int_\Omega \weight_z^{d+\lambda} |\nabla e|^2
+ C \int_\Omega \weight_z^{-d-\lambda} \Big( |\nabla (\psi-I_h \psi)|^2+|\psi-I_h \psi|^2 \Big)
\nonumber
\\
&\qquad + C \int_\Omega \weight_z^{d+\lambda} |e|^2.
\label{eq:titi2bis}
\end{align}
Collecting~\eqref{eq:francois5}, \eqref{eq:titi1} and~\eqref{eq:titi2bis}, we obtain
\begin{align}
\frac{1}{4} \int_\Omega \weight_z^{d+\lambda} \, |\nabla e|^2 
&\leq 
C \int_\Omega \weight_z^{d+\lambda-2} \, |e|^2 
\nonumber
\\
& + C \int_\Omega \weight_z^{d+\lambda} |\nabla (w-I_h w)|^2 + \weight_z^{d+\lambda-2} |w-I_h w|^2
\nonumber
\\
&+ C \int_\Omega \weight_z^{-d-\lambda} \Big( |\nabla (\psi-I_h \psi)|^2 +|\psi-I_h \psi|^2 \Big).
\label{eq:francois6}
\end{align}
We now bound the last term of~\eqref{eq:francois6} by finite element estimation (note that $\psi \in H^1(\Omega)$ and $\psi_{|T} \in H^2(T)$ for any $T \in \mathcal{T}_h$, since $\weight_z$ belongs to $C^\infty(\Omega)$ and $\widetilde{e} \in \Sigma_h$; we are thus in position to use~\eqref{eq:inter1}):
\begin{eqnarray*}
&& \int_\Omega \weight_z^{-d-\lambda} \Big( |\nabla (\psi-I_h \psi)|^2 +|\psi-I_h \psi|^2 \Big)
\\
& \leq & 
C h^2 \sum_{T \in \mathcal{T}_h} \int_T \weight_z^{-d-\lambda} \, \left|\nabla^2 (\weight_z^{d+\lambda} \widetilde{e}) \right|^2 \quad \text{[estimate~\eqref{eq:inter1} and def. of $\psi$]}
\\
&\leq& 
C h^2 \int_\Omega \weight_z^{-d-\lambda} \Big( |\weight_z^{d+\lambda-2} \, \widetilde{e}|^2 + |\weight_z^{d+\lambda-1}|^2 \, | \nabla \widetilde{e}|^2 \Big),
\end{eqnarray*}
where, in the last line, we have used~\eqref{eq:bound_sigma2} and the fact that $\widetilde{e}$ is piecewise affine. We next use the inverse inequality~\eqref{eq:inter3} and the fact that $\weight_z^{-2} \leq h^{-2}$:
\begin{multline*}
\int_\Omega \weight_z^{-d-\lambda} \Big( |\nabla (\psi-I_h \psi)|^2 +|\psi-I_h \psi|^2\Big)
\\
\leq
C h^2 \int_\Omega \weight_z^{d+\lambda-4} \ |\widetilde{e}|^2
+
C \int_\Omega \weight_z^{d+\lambda-2} \, |\widetilde{e}|^2
\leq 
C \int_\Omega \weight_z^{d+\lambda-2} \, |\widetilde{e}|^2.
\end{multline*}
Since $\widetilde{e} = I_h w - w_h = I_h w - w + e$, we get
\begin{multline*}
\int_\Omega \weight_z^{-d-\lambda} \Big( |\nabla (\psi-I_h \psi)|^2 +|\psi-I_h \psi|^2 \Big)
\\
\leq
C \int_\Omega \weight_z^{d+\lambda-2} \, |e|^2 + C \int_\Omega \weight_z^{d+\lambda-2} \, (I_h w - w)^2.
\end{multline*}
Inserting this estimate in~\eqref{eq:francois6}, we obtain
\begin{multline*}
\frac{1}{4} \int_\Omega \weight_z^{d+\lambda} \, |\nabla e|^2 
\leq 
C \int_\Omega \weight_z^{d+\lambda-2} \, |e|^2 
\\
+ C \int_\Omega \weight_z^{d+\lambda} |\nabla (w-I_h w)|^2 + \weight_z^{d+\lambda-2} |w-I_h w|^2.
\end{multline*}
This concludes the proof of Proposition~\ref{prop:831}.

\subsection{Proof of Lemma~\ref{lem:837}}

We choose some $s$ such that
\begin{equation}
\label{eq:inter_s}
1 \leq \frac{2d}{d+2} < s < 2
\end{equation} 
and let $s^\star = sd/(d-s)$ (note that $s < 2 \leq d$). From the Sobolev injections, we know that there exists $C_s$ such that
\begin{equation}
\label{eq:poinc}
\forall g \in W^{1,s}(\Omega), \quad \| g \|_{L^{s^\star}(\Omega)} \leq C_s \| g \|_{W^{1,s}(\Omega)}.
\end{equation}

The function $f$ in the statement of Lemma~\ref{lem:837} belongs to $H^1(\Omega)$. Since $s<2$, we see that $f \in W^{1,s}(\Omega) \subset L^{s^\star}(\Omega)$. Since $\dis s > \frac{2d}{d+2}$, we have $\dis s^\star > 2 > \frac{2d}{d+2}$. 
We can thus use the regularity assumption~\eqref{eq:regul} for $s^\star$, from which we deduce that $v \in W^{2,s^\star}(\Omega)$. We set $q = s^\star/2 > 1$ and write a H\"older inequality with exponents $q$ and $q'$:
%
%
%
%
\begin{eqnarray}
\int_\Omega \weight^{-d-\lambda} \, \left| \nabla^2 v \right|^2 
&\leq&
\left( \int_\Omega \weight^{-(d+\lambda)q'} \right)^{1/q'} \left\| \nabla^2 v \right\|^2_{L^{2q}(\Omega)}
\nonumber
\\
&=&
\left( \int_\Omega \weight^{-(d+\lambda)q'} \right)^{1/q'} \left\| \nabla^2 v \right\|^2_{L^{s^\star}(\Omega)}
\nonumber
\\
&\leq & 
C \left( \frac{1}{\zeta^{q'(d+\lambda)-d}} \right)^{1/q'} \left\| v \right\|^2_{W^{2,s^\star}(\Omega)} \quad \text{[eq.~\eqref{eq:util}]}
\label{eq:maison2}
\end{eqnarray}
In view of~\eqref{eq:regul}, we have
$$
\left\| v \right\|_{W^{2,s^\star}(\Omega)}
\leq
\left\| v - \sigma_1 \fint_\Omega v \right\|_{W^{2,s^\star}(\Omega)} + C \left| \fint_\Omega v \right|
\leq
C \left\| f \right\|_{L^{s^\star}(\Omega)} + C \left| \fint_\Omega v \right|.
$$
Taking $\phi \equiv 1$ in~\eqref{eq:def_v}, we see that $\dis \etaa \fint_\Omega v = \fint_\Omega f$. Since $s^\star \geq 1$, we get
$$
\left\| v \right\|_{W^{2,s^\star}(\Omega)}
\leq
C_\etaa \left\| f \right\|_{L^{s^\star}(\Omega)}
\qquad \text{(with $C_\etaa \leq C/\etaa$).}
$$
Inserting this estimate in~\eqref{eq:maison2}, and next using~\eqref{eq:poinc} for the function $f \in W^{1,s}(\Omega)$, we deduce that
\begin{equation}
\int_\Omega \weight^{-d-\lambda} \, \left| \nabla^2 v \right|^2 
\leq
\frac{C_\etaa^2}{\zeta^{\lambda+d/q}} \ \| f \|^2_{L^{s^\star}(\Omega)}
\leq
\frac{C_\etaa^2}{\zeta^{\lambda+d/q}} \ \| f \|^2_{W^{1,s}(\Omega)}.
\label{eq:francois11}
\end{equation}
We now define $\overline{q} = 2/s$. Since $s<2$, we have $\overline{q} > 1$ and we can use the H\"older inequality with exponents $\overline{q}$ and $\overline{q}'$ to bound from above $\| \nabla f \|^s_{L^s(\Omega)}$ (and likewise for $\| f \|^s_{L^s(\Omega)}$):
\begin{eqnarray}
\| \nabla f \|^s_{L^s(\Omega)} 
&=&
\int_\Omega \weight^{-(4-d-\lambda)s/2} \ \ \weight^{(4-d-\lambda)s/2} \, |\nabla f|^s
\nonumber
\\
& \leq &
\left( \int_\Omega \weight^{-(4-d-\lambda)s\overline{q}'/2} \right)^{1/\overline{q}'} \ \left( \int_\Omega \weight^{(4-d-\lambda)\overline{q}s/2} \, |\nabla f|^{\overline{q}s} \right)^{1/\overline{q}} 
\nonumber
\\
& = &
\left( \int_\Omega \weight^{-(4-d-\lambda)s/(2-s)} \right)^{(2-s)/2} \ \left( \int_\Omega \weight^{4-d-\lambda} \, |\nabla f|^2 \right)^{1/\overline{q}}.
\label{eq:hoho}
\end{eqnarray}
We now observe that $\dis \frac{2d}{4-\lambda} < 2$ since $\lambda < 4-d$. Consequently, we can pick a real number $s$ satisfying~\eqref{eq:inter_s} and $\dis s > \frac{2d}{4-\lambda}$. This implies that $(4-d-\lambda)s/(2-s) > d$. In~\eqref{eq:hoho}, we are thus in position to use~\eqref{eq:util} with $\theta = (4-d-\lambda)s/(2-s)$. We thus obtain
$$
\| \nabla f \|^s_{L^s(\Omega)} 
\leq
C \zeta^{(-4+d+\lambda)s/2 + d(2-s)/2} \ \left( \int_\Omega \weight^{4-d-\lambda} \, |\nabla f|^2 \right)^{1/\overline{q}}
$$
and likewise for $\| f \|^s_{L^s(\Omega)}$. Inserting these estimates in~\eqref{eq:francois11}, we deduce that
$$
\int_\Omega \weight^{-d-\lambda} \, \left| \nabla^2 v \right|^2 
\leq
\frac{C_\etaa^2}{\zeta^{\lambda+d/q}} \zeta^{(-4+d+\lambda) + d(2-s)/s} \ \int_\Omega \weight^{4-d-\lambda} \, \left( |f|^2 + |\nabla f|^2 \right).
$$
We have $1/q = 2/s^\star = 2/s - 2/d$, so that $\lambda+d/q = \lambda + 2d/s - 2$ while $(-4+d+\lambda) + d(2-s)/s = -4+\lambda  + 2d/s$. We then obtain 
$$
\int_\Omega \weight^{-d-\lambda} \, \left| \nabla^2 v \right|^2
\leq
C_\etaa^2 \, {\zeta^{-2}} \ \int_\Omega \weight^{4-d-\lambda} \, \left( |f|^2 + |\nabla f|^2 \right),
$$
which concludes the proof of Lemma~\ref{lem:837}.

\subsection{Proof of Proposition~\ref{prop:835}}

Consider the problem~\eqref{eq:pb} for the right-hand side $\dis f = \weight_z^{d+\lambda-2} \, e$, which is indeed in $L^2(\Omega)$. We denote $v \in H^1(\Omega)$ its solution, and thus have
$$
\forall \phi \in H^1(\Omega), \quad \abil_\etaa(v,\phi) = \int_\Omega \weight_z^{d+\lambda-2} \, e \, \phi.
$$
Taking $e$ as a test function in the above problem, we get, using the Cauchy Schwarz inequality,
\begin{eqnarray*}
&& \int_\Omega \weight_z^{d+\lambda-2} \, |e|^2
\\
&=&
\abil_\etaa(v,e)
\\
&=&
\abil_\etaa(v-I_h v,e) \quad \text{[Galerkin orthogonality~\eqref{eq:galerkin_orth_bis}]}
\\
& \leq & 
C \left[ \int_\Omega \weight_z^{d+\lambda} \left( e^2 + |\nabla e|^2 \right) \right]^{1/2} 
\left[ \int_\Omega \weight_z^{-d-\lambda} \Big( |\nabla (v-I_h v)|^2+|v-I_h v|^2 \Big) \right]^{1/2}.
\end{eqnarray*}
Let $\eps > 0$. Using the Young inequality, we deduce that
\begin{multline}
\int_\Omega \weight_z^{d+\lambda-2} \, |e|^2
\\
\leq
\eps \int_\Omega \weight_z^{d+\lambda} \left( e^2 + |\nabla e|^2 \right)
+ \frac{C^2}{4\eps} \int_\Omega \weight_z^{-d-\lambda} \Big( |\nabla (v-I_h v)|^2+|v-I_h v|^2 \Big).
\label{eq:ahah1}
\end{multline}
We now use the regularity assumption~\eqref{eq:regul} for the problem~\eqref{eq:pb} with the right hand side $f$ defined above, which states that $v$ belongs to $H^2(\Omega)$ (note indeed that $\dis \frac{2d}{d+2} < 2$). We are thus in position to use the finite element estimate~\eqref{eq:inter1}. Inserting~\eqref{eq:inter1} in~\eqref{eq:ahah1}, we get
\begin{equation}
\label{eq:adele}
\int_\Omega \weight_z^{d+\lambda-2} \, e^2
\leq
\eps \int_\Omega \weight_z^{d+\lambda} \left( e^2 + |\nabla e|^2 \right)
+ \frac{C h^2}{\eps} \int_\Omega \weight_z^{-d-\lambda} |\nabla^2 v|^2.
\end{equation}
We now use Lemma~\ref{lem:837}, with $\weight = \weight_z$, noting that the right-hand side $\dis f = \weight_z^{d+\lambda-2} \, e$ is in $H^1(\Omega)$. We thus deduce from~\eqref{eq:adele}, successively using Lemma~\ref{lem:837} and estimate~\eqref{eq:bound_sigma2}, that
\begin{eqnarray*}
&& \int_\Omega \weight_z^{d+\lambda-2} \, e^2
\\
&\leq& 
\eps \int_\Omega \weight_z^{d+\lambda} \left( e^2 + |\nabla e|^2 \right)
+ \frac{C h^2}{\eps} \frac{C_\etaa}{\kappa^2 h^2} \int_\Omega \weight_z^{4-d-\lambda} \left( |\weight_z^{d+\lambda-2} \, e |^2 + |\nabla (\weight_z^{d+\lambda-2} \, e) |^2 \right)
\\
&\leq& 
\eps \int_\Omega \weight_z^{d+\lambda} \left( e^2 + |\nabla e|^2 \right)
+\frac{C_\etaa}{\eps \kappa^2} \left( \int_\Omega \weight_z^{d+\lambda} |\nabla e |^2 + \int_\Omega \weight_z^{d+\lambda-2} e^2 \right),
\end{eqnarray*}
where $C_\etaa$ only depends on $\etaa$ (and satisfies $C_\etaa \leq C/\etaa^2$). For any fixed $\eps$, we take $\kappa_1(\eps,\etaa) \geq 1$ such that, when $\kappa \geq \kappa_1(\eps,\etaa)$, we have $\dis \frac{C_\etaa}{\eps \kappa^2} \leq \min(\eps,1/2)$. We thus deduce that, for any $\kappa \geq \kappa_1(\eps,\etaa)$,
\begin{eqnarray*}
\frac{1}{2} \int_\Omega \weight_z^{d+\lambda-2} \, e^2
&\leq&
\eps \int_\Omega \weight_z^{d+\lambda} e^2 
+
2 \eps\int_\Omega \weight_z^{d+\lambda} |\nabla e|^2
\\
&\leq&
\eps \|\weight_z \|_{L^\infty}^2 \int_\Omega \weight_z^{d+\lambda-2} e^2 
+
2 \eps\int_\Omega \weight_z^{d+\lambda} |\nabla e|^2.
\end{eqnarray*}
Since we work in the regime $\kappa h \leq 1$, we are in position to use~\eqref{eq:bound_chi}, and thus $\eps \|\weight_z \|_{L^\infty}^2 \leq C \eps$ for a constant $C$ that only depends on $\Omega$. Taking $\eps$ small enough (namely such that $C \eps \leq 1/4$), we get the claimed bound. This concludes the proof of Proposition~\ref{prop:835}.

\subsection{Proof of Lemma~\ref{lem:8311}}

Since~\eqref{eq:regul_adjoint} holds and $\nu \cdot \nabla f \in L^2(\Omega)$, we have that $v \in H^2(\Omega)$. Expanding the expression $\nabla^2 \left( \weight^{(d+\lambda)/2} \, v \right)$, we find (using~\eqref{eq:bound_sigma2} for $\weight$ rather than $\weight_z$) that
\begin{equation}
\weight^{d+\lambda} \, |\nabla^2 v|^2 \leq \left| \nabla^2 \left(\weight^{(d+\lambda)/2} v \right) \right|^2 + C \left( \weight^{d+\lambda-2} \, |\nabla v|^2 + \weight^{d+\lambda-4} \, v^2 \right).
\label{eq:integ}
\end{equation}
We now identify an equation satisfied by $\weight^{(d+\lambda)/2} v$, which will be useful to estimate its second derivatives. For any $\phi \in H^1(\Omega)$, we have
\begin{align*}
\abil_\etaa\left(\phi,\weight^{(d+\lambda)/2} v\right) 
&= 
\int_\Omega \nabla \phi \cdot \nabla \left(\weight^{(d+\lambda)/2} v \right) + \phi \, b \cdot \nabla \left(\weight^{(d+\lambda)/2} \, v \right) + \etaa \, \phi \, \weight^{(d+\lambda)/2} v 
\\
&=
\abil_\etaa\left(\weight^{(d+\lambda)/2}\phi,v\right) + \int_\Omega \nabla \phi \cdot \nabla \left(\weight^{(d+\lambda)/2} v \right) + \phi \, b \cdot \nabla \left(\weight^{(d+\lambda)/2} \, v \right) 
\\
& \qquad -\int_\Omega \nabla \left(\weight^{(d+\lambda)/2} \phi\right) \cdot \nabla v - \int_\Omega \weight^{(d+\lambda)/2} \phi \, b \cdot \nabla v
\\
&=
\abil_\etaa\left(\weight^{(d+\lambda)/2}\phi,v\right) + \int_\Omega v \nabla \phi \cdot \nabla \left(\weight^{(d+\lambda)/2}\right) 
\\
& \qquad - \int_\Omega \phi \nabla \left(\weight^{(d+\lambda)/2}\right) \cdot \nabla v + \int_\Omega \phi \, v \, b \cdot \nabla \left(\weight^{(d+\lambda)/2} \right)
\\
&= 
\int_\Omega (\nu\cdot\nabla f) \, \weight^{(d+\lambda)/2} \phi + \int_\Omega v \nabla \phi \cdot \nabla \left(\weight^{(d+\lambda)/2}\right)
\\
& \qquad - \int_\Omega \phi \nabla \left(\weight^{(d+\lambda)/2}\right) \cdot \nabla v + \int_\Omega \phi \, v \, b \cdot \nabla \left(\weight^{(d+\lambda)/2}\right)
\\
&= \int_\Omega F \, \phi + \int_{\partial \Omega} G \, \phi,
\end{align*}
where 
$$
F=\weight^{(d+\lambda)/2} \, (\nu\cdot\nabla f) - \text{div} \left[ v \nabla \left(\weight^{(d+\lambda)/2}\right)\right] - \nabla \left( \weight^{(d+\lambda)/2} \right) \cdot \nabla v + v \, b \cdot \nabla \left( \weight^{(d+\lambda)/2} \right)
$$
and
$$
G = v \, n \cdot  \nabla \left( \weight^{(d+\lambda)/2} \right).
$$
Let $\zeta = \weight^{(d+\lambda)/2} v$. We see that $\zeta \in H^1(\Omega)$ and is such that, for any $\phi \in H^1(\Omega)$,
$$
\abil_\etaa(\phi,\zeta) = \int_\Omega F \, \phi + \int_{\partial \Omega} G \, \phi.
$$
Due to the presence of $G$, we cannot directly use the regularity result~\eqref{eq:regul_adjoint}. We are instead going to use Lemma~\ref{theorem_girault}, which states a regularity result for (non-homogeneous) Neumann problems. We write that $\zeta$ satisfies
$$
-\Delta \zeta + b \cdot \nabla \zeta + \etaa \zeta = F \quad \text{in $\Omega$}, 
\qquad
\nabla \zeta\cdot n = G \quad\text{on $\partial\Omega$},
$$
that we recast in the form
\begin{equation}
\label{eq:zeta}
-\Delta \zeta = \widetilde{F} \quad \text{in $\Omega$}, 
\qquad
\nabla \zeta\cdot n = G \quad\text{on $\partial\Omega$},
\end{equation}
with $\widetilde{F} = F - b \cdot \nabla \zeta - \etaa \zeta$, that is
\begin{multline*}
\widetilde{F}
=
\weight^{(d+\lambda)/2} \, (\nu\cdot\nabla f) - \text{div} \left[ v \nabla \left(\weight^{(d+\lambda)/2}\right)\right] \\ - \nabla \left( \weight^{(d+\lambda)/2} \right) \cdot \nabla v - \weight^{(d+\lambda)/2} b \cdot \nabla v - \etaa \weight^{(d+\lambda)/2} v.
\end{multline*}
We wish to use Lemma~\ref{theorem_girault} for the problem~\eqref{eq:zeta}. Since $f \in H^1(\Omega)$, $\weight \in C^\infty$, $v \in H^1(\Omega)$ and $b \in (L^\infty(\Omega))^d$, we see that $\widetilde{F} \in L^2(\Omega)$. We have $v \in H^1(\Omega)$ thus $G \in H^{1/2}(\partial \Omega)$. We are thus in position to use Lemma~\ref{theorem_girault} with $p=2$ on~\eqref{eq:zeta} (see also~\cite[Theorem 3.12 and Remark 3.13]{EG}), which implies that
\begin{equation}
\label{eq:hoho2}
\|\nabla^2 \zeta \|_{L^2(\Omega)}
=
\left\|\nabla^2 \left( \weight^{(d+\lambda)/2} v \right) \right\|_{L^2(\Omega)} 
\leq C \left( \left\| \widetilde{F} \right\|_{L^2(\Omega)} + \| G \|_{H^{1/2}(\partial \Omega)} \right)
\end{equation}
where $C$ is of course independent of $\etaa$. We integrate~\eqref{eq:integ} and use~\eqref{eq:hoho2}:
$$
\int_\Omega \weight^{d+\lambda}|\nabla^2 v|^2
\leq 
C \left( \left\| \widetilde{F} \right\|^2_{L^2(\Omega)} + \|G\|^2_{H^{1/2}(\partial \Omega)} + \int_\Omega \weight^{d+\lambda-2} \, |\nabla v|^2+\int_\Omega \weight^{d+\lambda-4} \, v^2 \right).
$$
Since we work in the regime $\zeta \leq 1$, we have that $\|\weight \|_{L^\infty} \leq C$ for some $C$ that only depends on $\Omega$. Using in addition the bounds~\eqref{eq:bound_sigma2}, we deduce from the above estimate that there exists $C$ independent of $\etaa$, $x_0$ and $\zeta$ such that
\begin{equation}
\int_\Omega \weight^{d+\lambda}|\nabla^2 v|^2
\leq
C \|G\|^2_{H^1(\Omega)} + C \int_\Omega \weight^{d+\lambda} \, (\nu \cdot \nabla f)^2 + \weight^{d+\lambda-2} \, |\nabla v|^2 + \weight^{d+\lambda-4} \, v^2.
\label{eq:francois7}
\end{equation}
For the first term above, we see that $|G| \leq C \, |v| \, |\weight|^{(d+\lambda)/2-1}$, thus $\dis \| G\|^2_{L^2(\Omega)} \leq C \int_\Omega \weight^{d+\lambda-2} \, v^2 \leq C \int_\Omega \weight^{d+\lambda-4} \, v^2$. In addition, we have
\begin{eqnarray*}
|\nabla G| 
&\leq &
|\nabla v| \, \left| \nabla \left(\weight^{(d+\lambda)/2}\right) \right| + |v| \, |\nabla n| \, \left| \nabla \left(\weight^{(d+\lambda)/2} \right) \right| + |v| \, \left| \nabla^2 \left( \weight^{(d+\lambda)/2} \right) \right|
\\
&\leq &
C \left( |\nabla v| \ \weight^{(d+\lambda)/2-1} + |v| \ \weight^{(d+\lambda)/2-1} + |v| \ \weight^{(d+\lambda)/2-2} \right)
\end{eqnarray*}
and thus
$$
\| \nabla G\|^2_{L^2(\Omega)} \leq C \int_\Omega \weight^{d+\lambda-2} \, |\nabla v|^2 + \weight^{d+\lambda-4} \, v^2. 
$$
We hence deduce from~\eqref{eq:francois7} that
\begin{equation}
\int_\Omega \weight^{d+\lambda}|\nabla^2 v|^2
\leq 
C \left( \int_\Omega \weight^{d+\lambda} \, (\nu \cdot \nabla f)^2 + \weight^{d+\lambda-4} \, v^2 + \weight^{d+\lambda-2} \, |\nabla v|^2 \right).
\label{term_3}
\end{equation}
We are now left with bounding the two last terms in~\eqref{term_3} in terms of $f$. We start with the last term, and write
\begin{align*}
& \int_\Omega \weight^{d+\lambda-2} \, |\nabla v|^2
\\
&=
\int_\Omega \nabla \left( \weight^{d+\lambda-2} v \right) \cdot \nabla v - \int_\Omega v \, \nabla \left( \weight^{d+\lambda-2} \right) \cdot \nabla v
\\
&=
\abil_\etaa\left(\weight^{d+\lambda-2} v,v\right) - \int_\Omega \weight^{d+\lambda-2} v \, b \cdot \nabla v - \etaa \int_\Omega \weight^{d+\lambda-2} v^2 - \int_\Omega v \nabla \left(\weight^{d+\lambda-2}\right) \cdot \nabla v.
\end{align*}
Using~\eqref{eq:eq_v}, we see that
$$
\abil_\etaa\left(\weight^{d+\lambda-2} v,v\right) 
= 
\int_\Omega (\nu\cdot\nabla f) \, \weight^{d+\lambda-2} v.
$$
We thus get that
\begin{multline*}
\int_\Omega \weight^{d+\lambda-2} |\nabla v|^2
\\
=
\int_\Omega (\nu\cdot\nabla f) \, \weight^{d+\lambda-2} v - \int_\Omega \weight^{d+\lambda-2} v \, b \cdot \nabla v - \etaa \int_\Omega \weight^{d+\lambda-2} v^2 - \int_\Omega v \nabla \left(\weight^{d+\lambda-2}\right) \cdot \nabla v.
\end{multline*}
We next proceed using the Young inequality and~\eqref{eq:bound_sigma2}:
\begin{eqnarray*}
&& \int_\Omega \weight^{d+\lambda-2} |\nabla v|^2
\\
& \leq &
\frac{1}{2} \int_\Omega \weight^{d+\lambda} |\nu \cdot \nabla f|^2 + \frac{1}{2} \int_\Omega \weight^{d+\lambda-4} v^2
+
\frac{1}{4} \int_\Omega \weight^{d+\lambda-2} |\nabla v|^2 + \int_\Omega \weight^{d+\lambda-2} |b|^2 v^2
\\
&& \qquad 
+
\frac{1}{4} \int_\Omega \weight^{d+\lambda-2} |\nabla v|^2 + C \int_\Omega \weight^{d+\lambda-4} v^2,
\end{eqnarray*}
which implies that
\begin{equation}
\int_\Omega \weight^{d+\lambda-2} |\nabla v|^2
\leq
C \int_\Omega \weight^{d+\lambda} |\nabla f|^2 + C \int_\Omega \weight^{d+\lambda-4} v^2.
\label{term_2}
\end{equation}
We now turn to the second term of~\eqref{term_3}. We pick some $\dis P' > \max\left(\frac{d}{2},\frac{d}{4-d-\lambda}\right)$ (note that $4-d-\lambda>0$) and write the H\"older's inequality with $P'$ and its conjugate exponent $P$ (note that $P'>d/2\geq 1$):
\begin{align}
\int_\Omega \weight^{d+\lambda-4} v^2
& \leq \left(\int_\Omega \weight^{(d+\lambda-4)P'} \right)^{1/P'} \left(\int_\Omega v^{2P}\right)^{1/P}
\nonumber
\\
& \leq C \zeta^{(d+\lambda-4) + d/P'} \left(\int_\Omega v^{2P}\right)^{1/P},
\label{eq:bidule3}
\end{align}
where we have used~\eqref{eq:util} with $\theta = P'(4-d-\lambda)$, which is indeed larger than $d$. Note that the last factor of~\eqref{eq:bidule3} is finite, as we have $v \in H^2(\Omega) \subset L^\infty(\Omega)$ (recall that $d \leq 3$).

We next use a duality argument to bound $\| v \|_{L^{2P}(\Omega)}$ in terms of $f$. Let $w \in H^1(\Omega)$ solve~\eqref{eq:pb} with a right-hand side equal to $\text{sign}(v) \, |v|^{2P-1}$:
\begin{equation}
\label{eq:bidule5}
\text{For any $\phi \in H^1(\Omega)$, \quad $\abil_\etaa(w,\phi) = \int_\Omega \text{sign}(v) \, |v|^{2P-1} \, \phi$}.
\end{equation}
Taking $v$ as test function in~\eqref{eq:bidule5}, we get
\begin{align*}
\| v \|_{L^{2P}}^{2P} 
&=
\int_\Omega \left( \text{sign}(v) \, |v|^{2P-1} \right) v
\\
&=
\abil_\etaa(w,v) \quad \text{[def. of $w$]}
\\
&= \int_\Omega (\nu\cdot \nabla f) \, w \quad \text{[def.~\eqref{eq:eq_v} of $v$]}
\\
&= - \int_\Omega f \, (\nu \cdot \nabla w) \quad \text{[int. by parts and $f \in H^1_0(\Omega)$]}
\\
&\leq 
\|f\|_{L^r(\Omega)} \| \nabla w \|_{L^{r'}(\Omega)}
\end{align*}
with $\dis r=\frac{2Pd}{2P+d}$. Note that $r>1$ since $\dis P>1\geq \frac{d}{2(d-1)}$. We have that $\dis r'=\left(1-\frac{1}{2P}-\frac{1}{d}\right)^{-1}$, and we note that $W^{1,2P/(2P-1)}(\Omega) \subset L^{r'}(\Omega)$. Using that Sobolev injection, we deduce from above that
\begin{equation}
\label{eq:bidule4}  
\| v \|_{L^{2P}}^{2P}
\leq C \| f \|_{L^r(\Omega)} \| \nabla w \|_{W^{1,2P/(2P-1)}(\Omega)}.
\end{equation}
We now bound from above $\| \nabla w \|_{W^{1,2P/(2P-1)}(\Omega)}$ using the regularity~\eqref{eq:regul}, which indeed holds since $2P/(2P-1) > 2d/(d+2)$ (this condition is equivalent to the condition $P' > d/2$, which we have enforced when choosing $P'$). We thus write
\begin{align*}
\| \nabla w \|_{W^{1,2P/(2P-1)}(\Omega)}
& \leq
\| w \|_{W^{2,2P/(2P-1)}(\Omega)}
\\
& \leq
\left\| w - \sigma_1 \fint_\Omega w \right\|_{W^{2,2P/(2P-1)}(\Omega)} + C \left| \fint_\Omega w \right|
\\
& \leq C \left\| \text{sign}(v) |v|^{2P-1} \right\|_{L^{2P/(2P-1)}(\Omega)} + C \left| \fint_\Omega w \right|.
\end{align*}
Taking $\phi \equiv 1$ in~\eqref{eq:bidule5}, we obtain $\dis \etaa \fint_\Omega w = \fint_\Omega \text{sign}(v) \, |v|^{2P-1}$, and we thus deduce from above that
$$
\| \nabla w \|_{W^{1,2P/(2P-1)}(\Omega)} \leq C_\etaa \left\| \text{sign}(v) |v|^{2P-1} \right\|_{L^{2P/(2P-1)}(\Omega)} \qquad \text{(with $C_\etaa \leq C/\etaa$).}
$$
Inserting this estimate in~\eqref{eq:bidule4}, we get
\begin{align*}
\| v \|_{L^{2P}}^{2P}
&\leq
C_\etaa \| f \|_{L^r(\Omega)} \left\| \text{sign}(v) |v|^{2P-1} \right\|_{L^{2P/(2P-1)}(\Omega)}
\\
& =
C_\etaa \|f\|_{L^r(\Omega)} \| v \|^{2P-1}_{L^{2P}(\Omega)}.
\end{align*}
Since $v \in L^{2P}(\Omega)$, we deduce that $\dis \| v \|_{L^{2P}(\Omega)} \leq C_\etaa \|f\|_{L^r(\Omega)}$. Inserting this in~\eqref{eq:bidule3}, we obtain
\begin{align*}
\int_\Omega \weight^{d+\lambda-4} v^2
& \leq 
C_\etaa^2 \ \zeta^{2d(1-1/r)+\lambda -2} \ \|f\|^2_{L^r(\Omega)} \quad \text{[Writing $P'$ in terms of $r$]}
\\
& \leq C_\etaa^2 \ \zeta^{2d(1-1/r)+\lambda -2} \left(\int_\Omega \weight^{d+\lambda} f^2\right) \left(\int_\Omega \weight^{-(d+\lambda)r/(2-r)}\right)^{(2-r)/r}
\end{align*}
where we have eventually used a H\"older inequality with $\overline{q}=2/r$. Note that $\overline{q} > 1$ (that is, $r<2$) as a consequence of the fact that $P'>d/2$. We next see that $\dis r > 1 > \frac{2d}{2d+\lambda}$, which implies that $\dis (d+\lambda)r/(2-r) > d$, so we are in position to use~\eqref{eq:util}, which yields
\begin{align}
\int_\Omega \weight^{d+\lambda-4} v^2
&\leq C_\etaa^2 \ \zeta^{2d(1-1/r)+\lambda -2} \left(\int_\Omega \weight^{d+\lambda} f^2\right) \zeta^{-(d+\lambda)+d(2-r)/r}
\nonumber
\\
&\leq C_\etaa^2 \ \zeta^{-2} \int_\Omega \weight^{d+\lambda}f^2. 
\label{term_1}
\end{align}
Collecting~\eqref{term_3}, \eqref{term_2} and~\eqref{term_1} yields the desired estimate and concludes the proof of Lemma~\ref{lem:8311}.

\bibliographystyle{amsplain}
\bibliography{biblio-inv-measure}
\end{document}